\tikzset{external/only named=true}
\title{Classification of genus\=/two surfaces in \texorpdfstring{$\sphere[3]$}{S3}}
\author{Filippo Baroni}
\date{}
\begin{document}

\maketitle
\thispagestyle{fancy}
\begin{abstract}
We describe an algorithm to decide whether two genus\=/two surfaces embedded in the $3$\=/sphere are isotopic or not. The algorithm employs well\=/known techniques in $3$\=/manifolds topology, as well as a new algorithmic solution to a problem on free groups.
\end{abstract}



\section{Introduction}
\label{sec:introduction}

\subsection{The classification problem}

In mathematics, by ``classification'' we usually mean a list, finite or infinite, of all objects of a given type, up to a given equivalence relation. For instance, the list 
\[
\left\{\quot{\ZZ}{4\ZZ},\quot{\ZZ}{2\ZZ}\times\quot{\ZZ}{2\ZZ}\right\}
\]
is a classification of groups of order $4$ up to isomorphism, and the list
\[
\left\{\;
\tikzsetnextfilename{introduction-genus-g-surface}
\raisebox{-.8cm}{
\begin{tikzpicture}[torus/setup=65]   
\foreach \xs in {1,-1} {
\begin{scope}[xscale=\xs]
\tikzset{
torus/new={name=T1,at={(.9,0)},R=.45,r=.15},
torus/new={name=T2,at={(1.8,0)},R=.45,r=.15},
}
\draw[-,black,line cap=round][torus/outline={torus=T1,cut right,cut left}][torus/outline={torus=T2,cut left}];
\end{scope}
}
\node[black!60] {$\cdots$};
\draw[yshift=-.6cm,decorate,decoration={brace,mirror}] (-2.4,0) -- (2.4,0) node[midway,below=6pt] {$g$ holes};
\end{tikzpicture}}
\;:g\in\ZZ_{\ge 0}\right\}
\]
is a classification of closed orientable surfaces up to homeomorphism. Naturally, we want the description of such a list to be somewhat explicit: the reader will surely agree that
\[
\{\text{finitely generated Abelian groups up to isomorphism}\}
\]
is not a classification of finitely generated Abelian groups up to isomorphism, but
\[
\left\{\ZZ^k\times\quot{\ZZ}{a_1\ZZ}\times\cdots\times\quot{\ZZ}{a_n\ZZ}:\text{$k\in\ZZ_{\ge0}$, $n\in\ZZ_{\ge0}$, $a_1,\ldots,a_n\in\ZZ_{\ge 2}$, $a_1\mid\ldots\mid a_n$}\right\}
\]
is. This example raises the question of what degree of explicitness is required for something to be considered a classification.

For instance, let us consider the set of knots in the $3$\=/sphere: can we classify them up to isotopy?
\begin{itemize}
\item A task that is surely within our grasp is deciding whether two given knots in $\sphere[3]$ are isotopic. In fact, two knots being isotopic is equivalent to their two complements being homeomorphic via a homeomorphism which is meridian\=/preserving; this can be decided algorithmically thanks to \cref{thm:matveev-homeomorphism}.
\item Moreover, it is not hard to devise an algorithm which produces an infinite list $\LLL_0$ of knot diagrams, so that every knot is isotopic to one element (and possibly more) of our list; this follows from the fact that there are only finitely many knot diagrams with $n$ crossings up to isotopy for every $n\ge 0$, and they can be enumerated algorithmically.
\end{itemize}
We can combine these two ingredients to produce:
\begin{enumarabic}
\item a computable list $\LLL$ of knots containing exactly one representative for each isotopy class, by taking knots in $\LLL_0$ which are not isotopic to elements appearing earlier in the list;
\item an algorithm which takes a knot as input and returns the unique knot in $\LLL$ which is isotopic to it.
\end{enumarabic}
Given how complex the world of knots is, we cannot expect to find a classification which is as neat as the ones given above for surfaces or Abelian groups. This algorithmic answer is probably the best we can hope for, and we believe it is explicit enough to be considered a classification.

We hope that, in light of the above discussion, the reader will accept the following definition as a sensible one. A \emph{classification} of a set $\XXX$ up to an equivalence relation $\sim$ is the datum of
\begin{enumarabic}
\item an algorithm producing a list $\LLL$ containing exactly one representative for each equivalence class of $\quot{\XXX}{\sim}$, and
\item an algorithm which takes an element of $\XXX$ as input and returns the unique element of $\LLL$ equivalent to it.
\end{enumarabic}
Like in the knots example, in order to have a classification of $\XXX$ up to $\sim$, it is enough to provide a computable list $\LLL_0$ containing \emph{at least} one representative for each equivalence class of $\quot{\XXX}{\sim}$, and an algorithm to decide equivalence; since the second ingredient is usually the hardest to find, we call it the \emph{classification algorithm}.

Going back to the case of knots, a routine topological argument shows that the knot classification problem is equivalent to the classification of tori embedded in $\sphere[3]$ up to isotopy: the one\=/to\=/one correspondence is given by associating each knot to the boundary of its regular neighbourhood. It is only natural to then try and address the classification problem of higher genus surfaces in the $3$\=/sphere up to isotopy\footnote{It should be noted that the classification of genus\=/zero surfaces in $\sphere[3]$ is trivial, since there is only one isotopy class of embedded $2$\=/spheres.}. This is precisely the aim of this article, which provides a classification of genus\=/two surfaces in $\sphere[3]$. Producing a redundant list $\LLL_0$ is very easy, by simply enumerating all simplicial genus\=/two surfaces in all subdivisions of $\sphere[3]$ (see \cref{sec:introduction:classical algorithms}). Therefore, the only goal of this article can be summarised as follows.

\begin{theorem*}
There is an algorithm to decide whether two genus\=/two surfaces in $\sphere[3]$ are isotopic.
\end{theorem*}

The article is structured as follows.
In the rest of \cref{sec:introduction}, we introduce the notation and terminology we will use in our exposition, and we briefly discuss some foundational results in low\=/dimensional algorithmic topology.
We also give a few examples of phenomena which make the genus\=/two classification problem intrinsically harder than the knot classification problem.
\Cref{sec:homeomorphisms} is devoted to the computation of mapping class groups of $3$\=/manifolds.
The exposition in this section closely follows the work of \textcite{johannson-jsj}, revisited to provide constructive and effective proofs.
The arguments here are somewhat technical and involved, and may be skipped on a first reading; the main result of this section, that is \cref{thm:mapping class group of 3-manifold}, can be assumed as a black box without compromising the understanding of the rest of the article.
In \cref{sec:free groups}, we present a solution to an algorithmic problem on free groups, as well as an application to a topological decision problem.
Finally, in \cref{sec:classification algorithm}, we focus on the genus\=/two classification problem.
A careful case analysis and elementary topological arguments, combined with \cref{thm:mapping class group of 3-manifold} and \cref{thm:product of dehn twists extension to handlebody}, allow us to prove the theorem stated above.

\subsection{Notation}

\step{Manifolds.}
\begin{itemize}[beginpenalty=10000]
\item Throughout this article, we will always be working in the PL category: all manifolds will have a PL structure, and functions between manifolds will be assumed to be PL.
\item All $3$\=/manifolds will be compact, connected and oriented unless otherwise stated. Codimension-zero submanifolds and boundaries of $3$\=/manifolds will be implicitly oriented accordingly.
\item All surfaces ($2$\=/manifolds) will be compact, but we make no assumptions about connectedness or orientability.
\item For integers $g\ge 0$ and $k\ge 0$, we denote by $\surf{g,k}$ the connected orientable surface of genus $g$ with $k$ holes (also referred to as ``punctures''). If $g\ge 1$, we denote by $\nsurf{g,k}$\todo{Better symbol?} the surface obtained by taking the connected sum of $g$ projective planes and then removing $k$ open discs. In both cases, the integer $k$ may be omitted when it is equal to $0$. We will occasionally refer to the torus $\surf{2}$ by the symbol $\torus[2]$. 
\item If $X$ is a manifold and $Z\subs X$ is a subspace, we denote by $\closure{Z}$ and $\interior{Z}$ its closure and interior in $X$ respectively; the ambient space $X$ will always be clear from the context.
\end{itemize}

\step{Fibre bundles.}
\begin{itemize}[beginpenalty=10000]
\item For a fibre bundle $\map{p}{M}{B}$ with fibre $F$, we denote by $\boundary_h M$ its horizontal boundary, that is the induced $\boundary F$\=/bundle on $B$; we will refer to the vertical boundary $p^{-1}(\boundary B)$ as $\boundary_v M$.
\item We denote by $I$ the $1$\=/manifold $[0,1]$. For every surface $B$, there is exactly one (orientable) $I$\=/bundle over $B$; we denote it by $B\times I$ (the ``product $I$\=/bundle'') if $B$ is orientable, and by $B\twtimes I$ (the ``twisted $I$\=/bundle'') if it is not.
\item If $X$ is a manifold and $Z\subs X$ is a properly embedded submanifold, we denote by $\nbhd{Z}$ and $\onbhd{Z}$ the closed and open regular neighbourhoods of $Z$ in $X$ respectively. The closed neighbourhood $\nbhd{Z}$ is naturally endowed with a bundle structure over $Z$ with fibre the $(\codim Z)$\=/dimensional disc.
\end{itemize}

\step{Surfaces in $3$\=/manifolds.}
\begin{itemize}[beginpenalty=10000]
\item If $X$ is a manifold and $Z\subs X$ is a properly embedded codimension\=/one submanifold, we define $X\cut Z=X\setminus\onbhd{Z}$ to be the result of cutting $X$ along $Z$; if $Z$ is separating, the manifold $X\cut Z$ will be disconnected.
\item As far as the notions of irreducible, boundary irreducible, incompressible, boundary incompressible, and sufficiently large are concerned, we adopt the same definitions as \textcite{matveev}. Unless otherwise stated, by ``(boundary) compression disc'' we will always mean ``non\=/trivial (boundary) compression disc''.
\end{itemize}

\step{Homeomorphisms.}
\begin{itemize}[beginpenalty=10000]
\item Unless otherwise stated, homeomorphisms between $3$\=/manifolds will be orientation\=/preserving.
\item If $X$ and $Y$ are manifolds, and $A_1,\ldots,A_n$ and $B_1,\ldots,B_n$ are subspaces, a function
\[
\map{f}{(X,A_1,\ldots,A_n)}{(Y,B_1,\ldots,B_n)}
\]
is a function $\umap{X}{Y}$ such that $f(A_i)\subs B_i$ for $1\le i\le n$. We say that $f$ is a homeomorphism if it is bijective and $f(A_i)=f(B_i)$ for $1\le i\le n$. When the number $n$ is clear from the context or irrelevant, we will write $(X,\vec{A})$ as a shorthand for $(X,A_1,\ldots,A_n)$.
\item If $X$ and $Y$ are oriented, we denote by $\homeo{(X,\vec{A});(Y,\vec{B})}$ the set of homeomorphisms from $(X,\vec{A})$ to $(Y,\vec{B})$, modulo isotopies through homeomorphisms of the same kind. Note that $\homeo{(X,\vec{A});(X,\vec{A})}$ has a natural group structure; we use the shorthand $\homeo{X,\vec{A}}$ to refer to it. Moreover, if $S\subs X$ is a subspace, we denote by $\fixhomeo{S}{X,\vec{A}}$ the group of self\=/homeomorphisms of $(X,\vec{A})$ fixing $S$ pointwise, modulo isotopies through homeomorphisms of the same kind.
\item If $\map{f}{(X,\vec{A})}{(Y,\vec{B})}$ is a function, we define its \emph{trace}
\[
\map{\trace{f}}{(\boundary X,\vec{A}\cap\boundary X)}{(\boundary Y,\vec{B}\cap\boundary Y)}
\]
to be its restriction to the boundary. The trace behaves well with isotopies -- that is, isotopic homeomorphisms have isotopic traces. As a consequence, when $X$ and $Y$ are oriented, we have a well\=/defined \emph{trace map}
\[
\map{\trace{(-)}}{\homeo{(X,\vec{A});(Y,\vec{B})}}{\homeo{(\boundary X,\vec{A}\cap\boundary X);(\boundary Y,\vec{B}\cap\boundary Y)}},
\]
which is functorial in the sense that it preserves composition. For the sake of convenience, if $W$ is a set of (isotopy classes of) homeomorphisms, we define $\trace{W}=\{\trace{f}:f\in W\}$.
\end{itemize}

\step{Dehn twists.}
\begin{itemize}[beginpenalty=10000]
\item In the most general sense, a Dehn twist of a manifold $X$ about a properly embedded two\=/sided  codimension\=/one submanifold $Z$ is a self\=/homeomorphism of $X$ which is the identity outside $\nbhd{Z}$.
\item When $Z$ is an essential two\=/sided curve in a surface $X$, the group of Dehn twists about $Z$ modulo isotopies is isomorphic to $\ZZ$. If $X$ is oriented there is a way to select a preferred generator of this group, which we call \emph{the} Dehn twist about $Z$ and denote by $\twist{Z}$. If $X$ is orientable without a preferred orientation, fixing one arbitrarily allows us to pick the generating Dehn twists about different curves consistently. When $X$ is non\=/orientable, instead, we arbitrarily call one of the generating Dehn twists $\twist{Z}$, so that the other one is $\twist{Z}^{-1}$.
\item Similarly, if $Z$ is an incompressible boundary incompressible annulus in an (oriented) $3$\=/manifold $X$ the group of Dehn twists about $Z$ modulo isotopies is isomorphic to $\ZZ$. In this case, there is no preferred generator of this group\todo{Is this right?}, so we arbitrarily pick one to call $\twist{Z}$. We remark that, if $z_1$ and $z_2$ are the two boundary curves of $Z$, then $\map{\trace{\twist{Z}}}{\boundary X}{\boundary X}$ is equal to either $\twist{z_1}\twist{z_2}^{-1}$ or $\twist{z_1}^{-1}\twist{z_2}$, depending on the choice of $\twist{Z}$.
\end{itemize}

\subsection{Classical algorithms on \texorpdfstring{$3$\=/manifolds}{3-manifolds}}
\label{sec:introduction:classical algorithms}

\step{Triangulations of $3$\=/manifolds.} We are, of course, interested in algorithms on $3$\=/manifolds. The first issue we should address is, perhaps, what a $3$\=/manifold \emph{is} from an algorithmic point of view. The simplest way to think about a $3$\=/manifold $M$ in this setting is seeing it as a collection of $3$\=/simplices (or ``tetrahedra''), with some pairs of faces identified by simplicial isomorphisms. Such a description is called a \emph{triangulation} of $M$, and it fully encodes the topology of $M$ in a discrete and combinatorial fashion. Of course, from the perspective of algorithmic implementation, the name ``tetrahedra'' is purely suggestive: $3$\=/simplices are encoded as ordered $4$\=/tuples of ``vertices'' -- represented, for example, by integers -- and an identification between faces can be specified by two ordered triples of vertices, each describing a face of a tetrahedron.

Some care must be taken in order to ensure that the topological space $M$ obtained by gluing the tetrahedra is actually a manifold. We deal with orientation by orienting the tetrahedra, and requiring that gluing maps are orientation\=/reversing. Then $M$ is a $3$\=/manifold if and only if the link of each vertex is a disc -- for vertices on the boundary -- or a sphere. Note that both conditions can be easily checked algorithmically.

\step{Simplicial subdivisions.} Ideally, we would want all the ``objects'' we have to deal with algorithmically (namely, maps and submanifolds) to be simplicial; in other words, we want to describe them discretely at the level of simplices. Fortunately, in the world of $3$\=/manifolds, everything can be made simplicial, up to subdivision and isotopy.

\begin{definition}
Let $\TTT$ be a triangulation of a $3$\=/manifold $M$. A \emph{subdivision} of $\TTT$ is a triangulation $\TTT'$ of a $3$\=/manifold $M'$ together with a choice, for each vertex $v$ of $\TTT'$, of a tetrahedron $\Delta(v)$ of $\TTT$ and four non\=/negative rational numbers $q_1(v)$, $q_2(v)$, $q_3(v)$ and $q_4(v)$, satisfying the following constraints.
\begin{enumroman}
\item For every vertex $v$ of $\TTT'$, the equality $q_1(v)+q_2(v)+q_3(v)+q_4(v)=1$ holds.
\item Fix a vertex $v$ of $\TTT'$, and denote by $\vec{u}_1$, $\vec{u}_2$, $\vec{u}_3$ and $\vec{u}_4$ the vertices of $\Delta(v)$, which we interpret as the four canonical base vectors in $\RR^4$. If we think of $\Delta(v)$ as the convex hull of its vertices, then there is a unique point
\[
p(v)=\sum_{i=1}^4q_i(v)\vec{u}_i\in\Delta(v)\subs M.
\]
We require that, if $v_1$, $v_2$, $v_3$ and $v_4$ are vertices of a tetrahedron of $\TTT'$, then there is some tetrahedron of $\TTT$ containing $p(v_1)$, $p(v_2)$, $p(v_3)$ and $p(v_4)$.
\item Define a function $\map{f}{M'}{M}$ by extending the map $v\mapsto p(v)$ linearly on the tetrahedra of $\TTT'$; the previous constraint guarantees that $f$ is well\=/defined. We require that $f$ is a homeomorphism.
\end{enumroman}
\end{definition}

With slight abuse of notation, we will omit the choices of $\Delta(v)$ and $q_i(v)$, and simply call the triangulation $\TTT'$ a subdivision of $\TTT$. With this definition, subdivisions can be described in a fully combinatorial fashion. Note that, with some work, the statement ``$\TTT'$ is a subdivision of $\TTT$'' can be decided algorithmically.

The following two facts are crucial.
\begin{itemize}
\item Let $\TTT$ be a triangulation of a $3$\=/manifold $M$. Then every submanifold $N\subs M$ (of dimension $0$, $1$, $2$, or $3$) is isotopic to a simplicial submanifold of a subdivision of $\TTT$.
\item Let $\TTT_M$ and $\TTT_N$ be triangulations of the $3$\=/manifolds $M$ and $N$ respectively. Then every homeomorphism $\map{f}{M}{N}$ is isotopic to a simplicial isomorphism between a subdivision of $\TTT_M$ and a subdivision of $\TTT_N$.
\end{itemize}
Therefore, in every step of our algorithms, we will always assume that submanifolds of $3$\=/manifolds are simplicial. More precisely, a submanifold of a $3$\=/manifold $M$ with a triangulation $\TTT$ will be represented as a subdivision $\TTT'$ of $\TTT$, together with a set of simplices of $\TTT'$ (of the appropriate dimension). Similarly, a homeomorphism between $3$\=/manifolds $M$ and $N$ with triangulations $\TTT_M$ and $\TTT_N$ respectively will be represented as subdivisions $\TTT_M'$ and $\TTT_N'$ of $\TTT_M$ and $\TTT_N$ respectively, together with a simplicial isomorphism between $\TTT_M'$ and $\TTT_N'$.

\step{Algorithmic operations.} A useful property of subdivisions is the following: every two subdivisions $\TTT_1$ and $\TTT_2$ of the same triangulation $\TTT$ have a common subdivision $\TTT_{12}$, which can be computed algorithmically. Without going into too much detail, let us simply remark that this implies that we can -- and will -- always assume that different ``objects'' in the same $3$\=/manifold are simplicial with respect to the same subdivision.

All the natural operations one may want to perform on $3$\=/manifolds can be done algorithmically; here is a long -- but by no means complete -- list of examples:
\begin{itemize}
\item composing and inverting homeomorphisms;
\item finding images of submanifolds under homeomorphisms;
\item cutting along a properly embedded codimension\=/one submanifold;
\item finding (simplicial) regular neighbourhoods of properly embedded submanifolds;
\item isotoping properly embedded submanifolds so that they are in general position;
\item finding intersections of properly embedded submanifolds in general position.
\end{itemize}
Some of these constructions are trivial, while some require a lot of work. For more details on triangulations and subdivisions of PL manifolds, we refer the reader to the excellent introductory book by \textcite{rourke-pl-topology}.

\step{Infinite search template.} The infinite search template algorithm can be informally described as follows: if the elements of a set $\SSS$ can be enumerated algorithmically and $\SSS$ is guaranteed to be non\=/empty, then there is an algorithm to construct an element of $\SSS$. As obvious as it may sound, this template will often allow us to blur the distinction between ``proving that something exists'' and ``being able to algorithmically construct it''.

As an example, consider two $3$\=/manifolds $M$ and $N$, with triangulations $\TTT_M$ and $\TTT_N$ respectively. We can algorithmically enumerate all triples $(\TTT_M',\TTT_N',f)$ where $\TTT_M'$ is a subdivision of $\TTT_M$, $\TTT_N'$ is a subdivision of $\TTT_N$, and $f$ is a simplicial isomorphism between $\TTT_M'$ and $\TTT_M'$. If we know that $M$ and $N$ are homeomorphic, then the algorithm enumerating all the triples will eventually find one, effectively constructing a homeomorphism from $M$ to $N$. As a consequence, after proving that two $3$\=/manifolds are homeomorphic, we will always assume to have a homeomorphism available for our algorithmic purposes.

Let us remark that what we have described above is \emph{not} an algorithm to solve the homeomorphism problem -- that is, to decide whether $M$ and $N$ are homeomorphic. In fact, the algorithm will terminate if $M$ and $N$ are guaranteed to be homeomorphic, but will run forever if they are not; at any given point in time, there is no way to know if the algorithm has not found a homeomorphism because we have not waited long enough, or because there is none.

\step{Solved algorithmic problems.} In the realm of surfaces, all the elementary questions can be settled algorithmically.

\begin{theorem}
Given a surface $F$, the following operations can be carried out algorithmically.
\begin{enumarabic}
\item Deciding whether $F$ is orientable or not.
\item Computing the unique integers $g\ge 0$ and $k\ge 0$ such that $F$ is homeomorphic to $\surf{g,k}$ if it is orientable, and to $\nsurf{g,k}$ if it is not.
\item Given another surface $F'$, deciding if $F$ and $F'$ are homeomorphic.
\item Given a curve in $F$, deciding if it is trivial (that is, it bounds a disc in $F$) and if it is boundary parallel.
\item Given two (multi)curves in $F$, deciding if they are isotopic or not.
\item Given another surface $F'$ and two homeomorphisms $\umap{F}{F'}$, deciding whether they are isotopic or not.
\end{enumarabic}
\end{theorem}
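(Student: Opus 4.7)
The plan is to handle the six items in order of increasing difficulty, exploiting combinatorial data of the triangulation throughout.

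For (1), orientability amounts to a constraint satisfaction problem on the triangulation: attempt to orient each $2$\=/simplex so that adjacent simplices induce opposite orientations on their shared edge; this check is linear in the number of simplices. For (2), compute $\chi(F)$ by counting simplices of each dimension and compute $k$ by a connected\=/component search on the combinatorial $1$\=/manifold $\boundary F$; then $g$ is determined by $\chi = 2 - 2g - k$ in the orientable case or $\chi = 2 - g - k$ in the non\=/orientable case. Item (3) is then immediate by comparing the invariants produced by (1) and (2) for both surfaces.

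For (4), a simple closed curve $c \subs F$ is trivial iff some component of $F \cut c$ is a disc, and boundary parallel iff some component is an annulus one of whose boundary circles lies on $\boundary F$; both checks reduce to the algorithmic operation of cutting along $c$ followed by an application of (2) and (3) to the resulting components. For (5), we first discard trivial and boundary parallel components using (4). We then invoke the bigon criterion: two essential simple closed curves can be isotoped to be disjoint iff no bigon is visible after all bigon\=/removing isotopies have been applied, a process which terminates since each step strictly decreases the geometric intersection number. Once in disjoint position, two essential curves are isotopic iff some component of $F$ cut along their union is an annulus cobounded by them, which is again an instance of (2). The multicurve case reduces to the curve case by treating components one at a time.

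For (6), it suffices to decide when $h = g^{-1}\circ f$ is isotopic to the identity of $F$. Fix a pants decomposition $\Gamma$ of $F$; by (5) we can check whether $h(\Gamma)$ is isotopic to $\Gamma$ and, after replacing $h$ by an isotopic representative, may assume $h(\Gamma) = \Gamma$ setwise. The restriction of $h$ to each pair of pants is then classified up to isotopy by a finite permutation of boundary circles, while the remaining Dehn twist coefficients along components of $\Gamma$ are extracted by comparing $h(\alpha)$ with $\alpha$ on a dual arc system $\alpha$, which reduces to another instance of (5) applied to arcs (e.g.\ after doubling along $\boundary F$). The main obstacle is this last item: the delicate point is extracting the twist integers coherently, which is the combinatorial content of the change\=/of\=/coordinates principle for mapping class groups of surfaces and is where we expect to lean most heavily on existing literature.
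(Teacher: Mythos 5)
The paper does not actually prove this theorem: it is stated as a collection of classical facts, and the \texttt{proof} environment that follows in the source belongs to the subsequent theorem about $3$\=/manifolds. So there is no internal proof to compare against; I will assess your argument on its own terms.

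Your treatment of items (1)--(4) is correct and is the standard combinatorial approach. One small point worth stating explicitly in (4): if $c$ is one\=/sided (in a non\=/orientable $F$), then $F\cut c$ has a single new boundary circle doubly covering $c$, so the ``some component is a disc'' criterion still gives the right answer, but you should note this case rather than silently assume $c$ is two\=/sided.

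The genuine gap is in (5). The claim that ``the multicurve case reduces to the curve case by treating components one at a time'' is false. In $\surf{2}$, let $\gamma_1=\{a,b\}$ be two disjoint non\=/separating curves with connected complement (a four\=/holed sphere), and let $\gamma_2=\{a',b'\}$ be two disjoint parallel copies of $a$ (cobounding an annulus). Every component of $\gamma_1$ is isotopic to every component of $\gamma_2$, yet $\gamma_1$ and $\gamma_2$ are not isotopic as multicurves, because $\surf{2}\cut\gamma_1$ and $\surf{2}\cut\gamma_2$ are not homeomorphic. A correct algorithm for (5) must place the two multicurves in minimal position simultaneously (this requires a multicurve bigon criterion, not just the pairwise one), then cut along each and compare the resulting data: the homeomorphism types of the complementary pieces together with the identification pattern of the boundary circles -- this is precisely the content of the change of coordinates principle, and it cannot be reduced to componentwise comparison.

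Item (6) follows the right idea (the Alexander method via a pants decomposition), but is left as a sketch at the step you identify as delicate, and additionally the pants\=/decomposition approach does not apply as stated to the sporadic low\=/complexity surfaces (disc, annulus, sphere, torus, M\"obius band, Klein bottle, and the once\=/ and twice\=/punctured variants), which admit no pants decomposition and need separate, finite handling. You should also verify early on that $h$ induces the correct permutation of boundary components, since the paper's statement allows $\boundary F\neq\emptyset$.
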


Of course, the situation for $3$\=/manifolds is substantially more complicated. The homeomorphism problem has only been recently settled, the last piece needed being the geometrisation theorem, and complete written accounts only exist for the closed case -- we refer the reader to \textcite{kuperberg-homeomorphism}'s exposition for details. The isotopy problem for surfaces in a $3$\=/manifolds is, perhaps, even more difficult to investigate. The sole purpose of this article, after all, is to provide an algorithm solving the isotopy problem for one specific surface -- namely, $\surf{2}$ -- embedded in one specific $3$\=/manifold -- namely, $\sphere[3]$. However, classical algorithms exist to answer some basic questions about $3$\=/manifolds.

\begin{theorem}
Given a $3$\=/manifold $M$, the following operations can be carried out algorithmically.
\begin{enumarabic}\def\myitem#1{\item\label[statement]{thm:algorithms on 3-manifolds:#1}}
\myitem{irreducible} Deciding whether $M$ is irreducible.
\myitem{boundary irreducible} If $M$ is irreducible, deciding whether $M$ is boundary irreducible.
\myitem{non-separating disc} If $M$ is irreducible, deciding whether $\boundary M$ admits a non\=/separating compression disc in $M$.
\myitem{handlebody} Deciding whether $M$ is a handlebody (possibly a $3$\=/ball); if it is, computing its genus.
\myitem{boundary parallel} Given a surface properly embedded in $M$, deciding if it is boundary parallel.
\myitem{incompressible} Given an orientable surface which is either properly embedded in $M$ or embedded in $\boundary M$, deciding whether it is incompressible.
\myitem{boundary incompressible} If $M$ is irreducible and boundary irreducible, given an orientable incompressible surface properly embedded in $M$, deciding whether it is boundary incompressible.
\myitem{isotopic surfaces} If $M$ is irreducible, given two connected incompressible surfaces properly embedded in $M$, deciding whether they are isotopic.
\end{enumarabic}
\end{theorem}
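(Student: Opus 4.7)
The approach is to base all eight algorithms on normal surface theory, as developed by Haken and refined by Jaco--Tollefson, Matveev, and Rubinstein--Thompson. After fixing a triangulation $\TTT$ of $M$ (with $t$ tetrahedra), a properly embedded surface is in \emph{normal form} if it meets each tetrahedron in a disjoint union of normal triangles and normal quadrilaterals; such surfaces are parametrised, up to normal isotopy, by non\=/negative integer vectors in $\ZZ^{7t}$ satisfying finitely many linear matching equations. The solution set is a rational polyhedral cone with finitely many \emph{fundamental} vectors, all of which can be enumerated algorithmically. The crucial feature of normal surface theory is that every topologically essential surface (sphere, disc, or incompressible surface, depending on context) can be isotoped into normal form, and Haken's ``combinatorial sum'' argument upgrades this to the statement that one can find an essential representative that is fundamental, or even a vertex of the cone.

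From this machinery, \cref{thm:algorithms on 3-manifolds:irreducible,thm:algorithms on 3-manifolds:boundary irreducible,thm:algorithms on 3-manifolds:non-separating disc,thm:algorithms on 3-manifolds:incompressible,thm:algorithms on 3-manifolds:boundary incompressible} all reduce to the same template: enumerate fundamental normal spheres or discs (with boundary in the given surface when appropriate), and combinatorially check each one for the essentiality condition. For \cref{thm:algorithms on 3-manifolds:irreducible} the test is whether the normal sphere bounds a $3$\=/ball, which is decidable by the Rubinstein--Thompson algorithm; for the disc statements, the test is whether the boundary of the disc is trivial (respectively, boundary\=/parallel) in the relevant surface, which can be decided by the algorithms on surfaces mentioned just above the theorem. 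The separation test for \cref{thm:algorithms on 3-manifolds:non-separating disc} is a simple cut\=/and\=/count check.

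For \cref{thm:algorithms on 3-manifolds:handlebody}, the plan is to combine these primitives with an inductive hierarchy argument: iteratively cut $M$ along essential compression discs detected by \cref{thm:algorithms on 3-manifolds:incompressible} (using \cref{thm:algorithms on 3-manifolds:irreducible} to rule out the non\=/irreducible case by first removing summands along essential spheres); $M$ is a handlebody iff the process terminates with a disjoint union of $3$\=/balls, and in that case the genus can be recovered from the number and types of cuts together with $\chi(M)$. For \cref{thm:algorithms on 3-manifolds:boundary parallel}, after cutting along the given surface $S$ I would test whether the component of $M\cut S$ meeting the ``upper'' copy of $S$ is an $I$\=/bundle over $S$; this in turn reduces to handlebody/ball recognition and incompressibility checks on horizontal boundary components. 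Finally, for \cref{thm:algorithms on 3-manifolds:isotopic surfaces} I would appeal to Matveev's theorem that in an irreducible $3$\=/manifold every isotopy class of connected incompressible surface contains a canonical least\=/weight normal representative in any triangulation, unique up to a controlled combinatorial equivalence; two surfaces are then isotopic iff their least\=/weight normal forms agree, which is a finite check after enumerating fundamental surfaces of bounded weight.

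The main obstacle I would expect is \cref{thm:algorithms on 3-manifolds:isotopic surfaces}: proving the existence and effective uniqueness of least\=/weight normal representatives, and bounding the weight of a representative one has to search for, requires genuine work beyond the standard Haken enumeration. The handlebody recognition of \cref{thm:algorithms on 3-manifolds:handlebody} is also a substantial classical result rather than a one\=/line corollary, and I would treat it and the $3$\=/ball recognition subroutine as black\=/boxed citations. The remaining items are, by comparison, routine once one accepts normal surface theory: the bulk of the argument is the bookkeeping needed to translate each topological essentiality condition into a combinatorial check on a finite list of normal surfaces.
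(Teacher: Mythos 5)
Your proposal and the paper share the same foundation -- normal surface theory in the style of Haken, Jaco--Tollefson and Matveev -- and for statements \cref{thm:algorithms on 3-manifolds:irreducible,thm:algorithms on 3-manifolds:boundary irreducible,thm:algorithms on 3-manifolds:handlebody,thm:algorithms on 3-manifolds:incompressible,thm:algorithms on 3-manifolds:boundary incompressible} the two accounts are essentially the same; the paper simply cites the relevant theorems of Matveev and Jaco--Tollefson rather than re-deriving them. The genuine divergences, and the places where your argument has gaps, are \cref{thm:algorithms on 3-manifolds:non-separating disc,thm:algorithms on 3-manifolds:boundary parallel,thm:algorithms on 3-manifolds:isotopic surfaces}.

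For \cref{thm:algorithms on 3-manifolds:non-separating disc}, you say the separation test is ``a simple cut\=/and\=/count check,'' but the issue is not testing a single candidate disc -- it is guaranteeing that \emph{some} non\=/separating compression disc appears among the fundamental (or vertex) normal surfaces. A non\=/separating disc $D$ decomposes as a Haken sum $D = X + Y$, and the summands need not be discs at all (only their Euler characteristics sum to $1$), so the standard ``least\=/weight essential disc is fundamental'' argument does not immediately produce a non\=/separating representative. The paper avoids this entirely with a recursive argument: find any compression disc $D$; if it is separating, use the cohomology isomorphism $H^1(M)\cong H^1(M\cut D)$ to show that a non\=/separating compression disc for $\boundary M$ descends to a non\=/separating disc in some component of $M\cut D$, and recurse. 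You would either need to reproduce that argument or supply a vertex\=/surface lemma that you have not stated.

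For \cref{thm:algorithms on 3-manifolds:boundary parallel}, ``test whether the component is an $I$\=/bundle over $S$'' hides exactly the hard part: you need a homeomorphism of \emph{pairs} carrying $\boundary S$ to one end of the vertical boundary, which is a boundary\=/pattern homeomorphism problem, not a matter of handlebody recognition plus incompressibility checks. The paper uses precisely the boundary\=/pattern homeomorphism theorem (\cref{thm:matveev-homeomorphism}) to decide whether $(\closure{N},\boundary F)\cong(F\times I,\boundary F\times\{0\})$, and nothing simpler suffices, because (for instance) a solid torus cut along a boundary\=/parallel annulus and a solid torus cut along a non\=/boundary\=/parallel annulus can give homeomorphic pieces as unmarked manifolds.

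For \cref{thm:algorithms on 3-manifolds:isotopic surfaces}, you correctly flag this as the main difficulty, but the cure you propose -- a ``canonical least\=/weight normal representative, unique up to a controlled combinatorial equivalence'' -- is not a theorem in this form. Two isotopic incompressible normal surfaces of minimal weight need not be normally isotopic, and the existing normalisation results do not give a computable canonical form. The paper instead uses the classical Waldhausen product\=/region argument: if $F_1$ and $F_2$ are isotopic incompressible surfaces with $\boundary F_1=\boundary F_2$, then some component of $M\cut(F_1\cup F_2)$ is a product region $G\times I$ with one side in each $F_i$; such a region can be detected with \cref{thm:matveev-homeomorphism} and used to either conclude or reduce the number of intersection components. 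This iterates to termination, and no canonical form is needed.
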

\begin{proof}
\ExplSyntaxOn
\def\mycref#1{\seq_set_split:Nnn \l_tmpa_seq {,} {#1} \seq_set_map:NNn \l_tmpb_seq \l_tmpa_seq {thm:algorithms~ on~ 3-manifolds:##1} \cref{\seq_use:Nn \l_tmpb_seq {,}}}
\def\myCref#1{\seq_set_split:Nnn \l_tmpa_seq {,} {#1} \seq_set_map:NNn \l_tmpb_seq \l_tmpa_seq {thm:algorithms~ on~ 3-manifolds:##1} \Cref{\seq_use:Nn \l_tmpb_seq {,}}}
\ExplSyntaxOff
\myCref{irreducible,boundary irreducible,incompressible,boundary incompressible} are proved by \textcite{matveev} in Theorems 4.1.12, 4.1.13, 4.1.15, and 4.1.19 respectively. \myCref{handlebody} is solved by \cite[Algorithm 9.3]{jaco-jsj-algorithm}. Concerning \mycref{boundary parallel}, recall that a surface $F\subs M$ is boundary parallel if and only if it cobounds a product $F\times I$ with some surface in $\boundary M$, where $F\subs M$ is identified with $F\times\{0\}$. It is then enough to check whether $F$ is separating in $M$ and, if it is, whether there is a component $N$ of $M\setminus F$ such that $(\closure{N},\boundary F)$ is homeomorphic to $(F\times I,\boundary F\times\{0\})$; we can decide this thanks to \cref{thm:matveev-homeomorphism} below.

In order to address \mycref{isotopic surfaces}, let us loosely rephrase a result of \textcite[Proposition 5.4]{waldhausen-on-irreducible-manifolds}: if $F_1$ and $F_2$ are isotopic incompressible surfaces properly embedded in $M$ intersecting transversely with $\boundary F_1=\boundary F_2$, then a component of $F_1\setminus F_2$ is isotopic to a component of $F_2\setminus F_1$. First of all, we check if $\boundary F_1$ and $\boundary F_2$ are isotopic in $\boundary M$. If they are not, then $F_1$ and $F_2$ are not isotopic. Otherwise, we can assume that $\boundary F_1=\boundary F_2$. Then we iterate over all components of $M\cut(F_1\cup F_2)$ and check whether any of these is a product $G\times I$ with $G\times\{0\}\subs F_1$ and $G\times\{1\}\subs F_2$, once again by means of \cref{thm:matveev-homeomorphism}. If we find such a region, then either $G\times\{0\}=F_1$ and $G\times\{1\}=F_2$ -- thus showing that $F_1$ and $F_2$ are isotopic -- or we use the product $G\times I$ to isotope $G\times\{1\}\subs F_2$ across $G\times\{0\}\subs F_1$, so as to reduce the number of components of $F_1\cap F_2$. Hence, after finitely many steps, we either prove that $F_1$ and $F_2$ are isotopic, or we cannot find any more product regions; in this case, the two surfaces are not isotopic.

Finally, we prove \mycref{non-separating disc}. The crucial claim is the following: let $D\subs M$ be a separating compression disc for $\boundary M$; then $M$ admits a non\=/separating compression disc for its boundary if and only if the same holds for one component of $M\cut D$. In fact, let $E\subs M$ be a non\=/separating compression disc for $\boundary M$. Since $M$ is irreducible, we can arrange for $D$ and $E$ to intersect transversely in a collection of arcs. Denote by $E_1,\ldots,E_k$ the closures of the components of $E\setminus D$; they are all discs, and we can think of them as being properly embedded in $M\cut D$. Restriction induces an isomorphism $H^1(M)\iso H^1(M\cut D)$ on cohomology groups, sending $[E]$ -- the cohomology class represented by $E$ -- to $[E_1]+\ldots+[E_k]$. Since $E$ is non\=/separating in $M$, it is non\=/trivial in cohomology. Therefore, the same must hold for one of the discs $E_1,\ldots,E_k$, thus showing that at least one component of $M\cut D$ admits a non\=/separating compression disc for the boundary. The algorithm then proceeds as follows. If $M$ is boundary irreducible, then clearly the answer is no. Otherwise, let $D$ be a compression disc for the boundary; if it is non\=/separating we are done. Otherwise, we run the algorithm on the components of $M\cut D$, and return yes if and only if we find a non\=/separating disc in at least one of them. The algorithm will eventually terminate, since $\boundary M$ can only be compressed finitely many times.
\end{proof}

Concerning \cref{thm:algorithms on 3-manifolds:isotopic surfaces}, let us remark that surfaces in $\sphere[3]$ are quite the opposite of incompressible. This is what makes classifying them a challenging task, even for surfaces of genus two. We refer the reader to \cref{sec:introduction:examples} for a survey of the many different topological phenomena which can arise when embedding a genus\=/two surface in $\sphere[3]$, despite the trivial topology of the ambient space.

\step{Homeomorphism problem for manifolds with boundary pattern.} Most importantly, the homeomorphism problem has been solved for what \textcite{matveev} calls ``Haken $3$\=/manifolds with boundary pattern''. It is impossible to overstate how crucial this result is for our classification algorithm. In fact, the reader will probably find us referring to the following theorem more frequently then any other, often implicitly.

\begin{theorem}\label{thm:matveev-homeomorphism}
Let $M$ and $M'$ be irreducible $3$\=/manifolds with non\=/empty boundary, and let $p\subs\boundary M$ and $p'\subs\boundary M'$ be (possibly empty) unions of curves such that $\boundary M\cut p$ and $\boundary M'\cut p'$ are incompressible in $M$ and $M'$ respectively. There is an algorithm which, given as input $(M,p)$ and $(M',p')$, decides whether they are homeomorphic.
\end{theorem}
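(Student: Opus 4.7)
The plan is to follow the classical approach of Haken and Matveev, based on hierarchies of Haken manifolds with boundary pattern combined with Haken normal surface theory. Under the hypotheses, $(M,p)$ and $(M',p')$ are Haken manifolds with boundary pattern: their irreducibility is assumed, and $\boundary M\cut p$ and $\boundary M'\cut p'$ are incompressible; we may also verify algorithmically that $M$ and $M'$ are sufficiently large (or deal with the few exceptional cases separately). The idea is that such a manifold is determined, up to homeomorphism preserving the boundary pattern, by the combinatorial data of a \emph{hierarchy}, i.e.\ a sequence of cuts along essential (properly embedded, incompressible, boundary incompressible) surfaces $F_1,\ldots,F_n$ reducing the manifold to a disjoint union of balls equipped with a finite graph on their boundary.

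The first step is to construct a hierarchy for $(M,p)$ algorithmically. Normal surface theory gives, with respect to any triangulation, a finite-dimensional rational polytope whose vertices can be enumerated, and from these one can algorithmically find an essential surface in $(M,p)$ (as used in the proofs of \cref{thm:algorithms on 3-manifolds:incompressible,thm:algorithms on 3-manifolds:boundary incompressible}). Cutting along such a surface and updating the boundary pattern produces a new Haken manifold with boundary pattern whose Haken complexity has dropped, so iterating yields, after finitely many steps, a disjoint union of balls with prescribed arc patterns on their boundary spheres. Call this hierarchy $\HHH$. The base case of the decision procedure is then easy: two balls equipped with boundary patterns are homeomorphic if and only if the corresponding finite graphs on $\sphere[2]$ are equivalent under a homeomorphism of $\sphere[2]$, and this is a purely combinatorial question.

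For the actual decision procedure, enumerate algorithmically all candidate hierarchies $\HHH'$ for $(M',p')$ obtained by iteratively cutting along essential surfaces in normal form; for each such $\HHH'$, check whether its base-level patterned balls match those of $\HHH$ combinatorially, in a way compatible with the gluings. The crux of Haken's theorem is that a matching at the base level extends upwards through the hierarchy: a homeomorphism between patterned balls extends across a cut to the next level, using the Alexander trick together with Waldhausen's theorem that homotopy equivalences of Haken manifolds which respect the boundary pattern are isotopic to homeomorphisms. Hence $(M,p)\cong(M',p')$ if and only if some $\HHH'$ matches $\HHH$; the infinite search template applies because the hierarchies have bounded length (by the Haken number) and normal essential surfaces at each stage are finitely enumerable, so after bounded depth the search tree is finite. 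The delicate step — and the main obstacle — is controlling this extension-through-the-hierarchy argument in the presence of boundary patterns and Seifert fibered pieces, which is precisely where Matveev's treatment is technical: one must choose canonical representatives at each cut and carefully track how homeomorphisms of the characteristic submanifold are constrained by the pattern, in order to guarantee both termination and correctness of the matching procedure.
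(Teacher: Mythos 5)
Your proposal sets out to re-derive Matveev's homeomorphism algorithm from scratch via Haken hierarchies, whereas the paper simply cites \cite[Theorem 6.1.6]{matveev} as a black box and contributes only a small supplementary argument. Your outline of the hierarchy approach is a reasonable sketch of the classical ideas, and you are candid that the hard step --- controlling the extension of a matching up through the hierarchy in the presence of boundary patterns and characteristic submanifolds --- is exactly where Matveev's treatment is technical. As a summary of what the cited theorem's proof contains, this is fine, but it is not a proof.

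More importantly, it misses the one thing the paper actually proves. Under the paper's conventions, homeomorphisms between $3$-manifolds are orientation-preserving, so \cref{thm:matveev-homeomorphism} asks for an \emph{oriented} homeomorphism between $(M,p)$ and $(M',p')$. Matveev's theorem, as stated, decides only whether there exists a \emph{possibly orientation-reversing} homeomorphism. The paper's entire proof is devoted to bridging this gap: one augments the boundary patterns $p$ and $p'$ by inserting an identical small chiral graph into each complementary region of $\boundary M\cut p$ and $\boundary M'\cut p'$ to obtain patterns $\Gamma$ and $\Gamma'$, and observes that $(M,p)$ and $(M',p')$ are orientation-preservingly homeomorphic if and only if $(M,\Gamma)$ and $(M',\Gamma')$ are possibly orientation-reversingly homeomorphic; the latter question falls under Matveev's theorem. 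Your proposal never mentions orientation, and the hierarchy machinery you sketch would, if completed in the natural way, reproduce Matveev's unoriented answer. You would still need either the chiral-graph trick or a careful re-derivation that tracks orientations throughout the matching procedure in order to obtain the statement the paper needs.
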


\Cref{thm:matveev-homeomorphism} is a restricted version of \cite[Theorem 6.1.6]{matveev} which deals with the more general settings in which $p$ and $p'$ are allowed to be arbitrary graphs (what \citeauthor{matveev} calls ``boundary patterns'').
There is, however, a technical point that needs to be addressed. \Citeauthor{matveev}'s theorem only guarantees an algorithmic solution to the question ``is there a possibly orientation\=/reversing homeomorphism $\umap{(M,p)}{(M',p')}$?''. One could, in theory, go through the proofs in \cite[Chapter 6]{matveev} and convince themselves that only trivial modifications are needed to answer the oriented version of the homeomorphism question. There is, however, a trick which allows us to derive \cref{thm:matveev-homeomorphism} directly from \cite[Theorem 6.1.6]{matveev}. Simply augment the boundary pattern $p$ by adding the same small chiral graph to each component of $M\cut p$, obtaining the boundary pattern $\Gamma\subs\boundary M$; carry out the same procedure for $p'$ to construct the boundary pattern $\Gamma'\subs M'$. Then $(M,p)$ and $(M',p')$ are orientation\=/preservingly homeomorphic if and only if $(M,\Gamma)$ and $(M,\Gamma')$ are possibly orientation\=/reversingly homeomorphic; a graphical explanation is provided in \cref{fig:adding chiral graphs}.
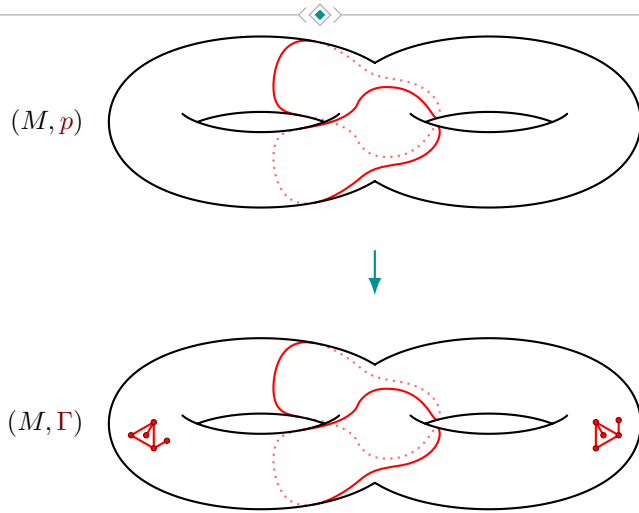
\begin{figure}
\centering
\tikzsetnextfilename{adding-chiral-graphs}
\begin{tikzpicture}[thick,torus/setup=65,vertex/.pic={\fill[main color 1,postaction={draw=main color 1!50!black,thin}] circle (1pt);}]
\pgfinterruptboundingbox
\tikzset{
torus/new={r=.5,R=1.5,name=T1},
torus/new={r=.5,R=1.5,at={(3,0)},name=T2},
torus/new={r=.5,R=1.5,at={(0,4)},name=T3},
torus/new={r=.5,R=1.5,at={(3,4)},name=T4},
torus/set canvas box={T1}{-.5}{0}{.5}{1},
torus/set canvas box={T2}{0}{0}{1}{1},
}
\path foreach \a in {0,90,180,270} { [torus/right boundary point={T1}{\a}] coordinate (b-\a) };
\path[spath/save=curve l]($(0,1)+(b-180)$) 
 to[out=-120,in=-150] ($(-.05,.05)+(b-90)$) to[out=30,in=90,in looseness=.5] ($(b-90)$);
\path[spath/save=curve l][spath/use=curve l,spath/use={curve l,transform={rotate=180,shift={(0,-1)}}}];
\path foreach \a in {0,90,180,270} { [torus/left boundary point={T2}{\a}] coordinate (b-\a) };
\path[spath/save=curve r] ($(0,1)+(b-270)$) to[out=-90,in=45,out looseness=.5] (b-180) (b-90) to[out=90,in=-135,out looseness=.5] ($(0,1)+(b-0)$);

\tikzset{
torus/split visible={curve l}{torus=T1,to visible=front l,to invisible=back l},
torus/split visible={curve r}{torus=T2,to visible=front r,to invisible=back r},
torus/path={front l}{torus=T1,to=front l},
torus/path={back l}{torus=T1,to=back l},
torus/path={front r}{torus=T2,to=front r},
torus/path={back r}{torus=T2,to=back r},
torus/sew={front l}{front r}{front}{5pt},
torus/sew={back l}{back r}{back}{5pt},
}
\endpgfinterruptboundingbox
\foreach \ys in {4,0} {
    \draw[main color 1!60,dotted] [spath/use={back,transform={shift={(0,\ys)}}}];
    \draw[main color 1] [spath/use={front,transform={shift={(0,\ys)}}}];
}
\draw[black,line cap=round,torus/outline={torus=T1,cut right},torus/outline={torus=T2,cut left}];
\draw[black,line cap=round,torus/outline={torus=T3,cut right},torus/outline={torus=T4,cut left}];
\tikzset{chiral graph/.pic={
\coordinate (a) at (90:1);
\coordinate (b) at (-30:1);
\coordinate (c) at (-150:1);
\draw[red,line join=round] (0,0) pic{vertex} -- (a) -- (b) -- (c) -- (a) (b) -- +(60:1) pic{vertex};
\foreach \p in {a,b,c} \pic at (\p) {vertex};
}}
\tikzset{torus/point={(.6,.12)}{torus=T1,to=c1},torus/point={(-.1,.12)}{torus=T2,to=c2}}
\pic[scale=.2,rotate=-30] at (c1) {chiral graph};
\pic[scale=.2,rotate=30] at (c2) {chiral graph};

\draw[->,theme color] (1.5,{2+.3}) -- (1.5,{2-.3});
\node[left] at (-2.2,4) {$(M,\textcolor{main color 1!50!black}{p})$};
\node[left] at (-2.2,0) {$(M,\textcolor{main color 1!50!black}{\Gamma})$};
\end{tikzpicture}
\caption{The homeomorphism problem can be reduced to the unoriented homeomorphism problem by adding chiral graphs on the boundary.}
\label{fig:adding chiral graphs}
\end{figure}

\subsection{Examples}\label{sec:introduction:examples}

Classification of genus-two surfaces in $\sphere[3]$ is significantly harder than the same task for tori. In fact, increasing the genus by one is enough for a set of ``wild'' topological phenomena to appear; this is in contrast with the relative ``tameness'' of tori embedded in $\sphere[3]$.
\begin{substeps}
\item First of all, it is not always true that a genus\=/two surface $S\subs\sphere[3]$ bounds a handlebody. A class of examples, as seen in \cref{fig:example no handlebody 1}, can be constructed by starting with a knotted torus $T$ and adding a tube inside the solid torus component of $\sphere[3]\setminus T$. Alternatively, one can start with two knotted tori and join them with a tube; see \cref{fig:example no handlebody 2} for an example.
\item Even if one component of $\sphere[3]\setminus S$ is a handlebody, the other is not necessarily boundary irreducible; an example is described in \cref{fig:example not boundary irreducible}, where two knotted tori are connected by a ``trivial'' tube.
\item Moreover, even in the case where one component of $\sphere[3]\setminus S$ is a handlebody and the other is boundary irreducible (like in \cref{fig:example handlebody boundary irreducible}), it is not always easy to identify a canonical meridian curve on $S$; we invite the reader to compare this with the existence of a standard meridian in tori embedded in $\sphere[3]$, which makes the classification problem significantly easier.
\item Finally, for the genus\=/one case, one can completely bypass the matter of meridian curves by exploiting the fact that tori in $\sphere[3]$ are isotopic if and only if their complementary regions are homeomorphic. This result, which follows from the work of \textcite{gordon-luecke}, does not hold for genus\=/two surfaces, even when they bound a handlebody on one side; see \cite{lee-handlebody-knots} for an example of this phenomenon.
\end{substeps}

\begin{figure}
\centering
\tikzsetnextfilename{example-no-handlebody-1}
\begin{tikzpicture}[use Hobby shortcut,thick]
\path[scale=1.67,spath/save=trefoil] ([closed]90:2) foreach \k in {1,...,3} { .. (-30+\k*240:.5) .. (90+\k*240:2) } (90:2);
\tikzset{ks/double={closed,Hobby,path=trefoil,to=thick trefoil,width={50+80*\t*(\t-1)}}}
\tikzset{ks/extract components={draft mode=false,draft mode scale=.3,path=thick trefoil,to={thick trefoil transverse}{3,7,11,17,21,25},not={thick trefoil}{5,19,13,15,9,23}}}
\pic at (spath cs:trefoil 0) {code={\begin{scope}[yshift=-2.35cm,scale=1,rotate=90]\path[spath/save global=curve endpoint 1] (2.8,.3) to[out=200,in=0,out looseness=2,in looseness=.6] (2.1,.6) to[out=180,in=180,looseness=.6] (2.2,-.6) to[out=0,in=0,in looseness=.5] (2.4,.7) to[out=180,in=0] (2.0,.3); \path[spath/save global=curve endpoint 2] (2.3,-.2) to[out=20,in=160,out looseness=1.8,in looseness=1](2.8,-.3);\end{scope}}};
\path[spath/save=curve] (60:1) arc(60:120-360:1);
\tikzset{ks/subdivide={path=curve,to=curve,n=3},ks/path along={path=curve,to=curve,along=trefoil,width to=30pt}}
\path[spath/save=curve][spath/use=curve endpoint 1] to[out=-90,in=90,out looseness=.5,in looseness=1.5] (spath cs:curve 0)  [spath/use=curve] to[out=90,in=-70,out looseness=1.5,in looseness=.5] (spath cs:{curve endpoint 2} 0) [spath/use={curve endpoint 2,weld}];
\tikzset{spath/remove empty components=curve,spath/spot weld=curve}
\tikzset{ks/double={path=curve,to=thick curve,width={5+max(0,-(\t-.3)*(\t-.9)*100)}}}
\tikzset{ks/extract components={draft mode=false,draft mode scale=.4,path=thick curve,split with self=false,split with={thick trefoil transverse},to={thick curve}{3,10,5,12},to={thick curve b}{1,8},to={thick curve e}{7,14}}}
\tikzset{ks/extract components={draft mode=false,draft mode scale=.2,path=thick curve b,to={thick curve b}{1,2,3,14,15,16},to={thick curve bxx}{9,10,11,13,22,23,24,26},to={thick curve bx}{5,6,7,18,19,20}}}
\tikzset{ks/extract components={draft mode=false,draft mode scale=.2,path=thick curve e,split with self=false,split with={thick curve bx},not={thick curve ex}{2,7}}}
\tikzset{ks/extract components={draft mode=false,draft mode scale=.2,path=thick curve bx,split with self=false,split with={thick curve e},not={thick curve bx}{4,9}}}
\path[spath/save=thick curve][spath/use/.list={thick curve b,thick curve bx,thick curve bxx,thick curve,thick curve ex}];
\draw[black!60][spath/use=thick curve];
\path[spath/use=curve] \foreach \pos/\xs in {0/1,0.9999/-1} {pic[sloped,xscale=\xs,draw=black!60,pos=\pos] {ks/tube end={5pt}{arcs/.style={black}}}};
\draw[black][spath/use=thick trefoil];
\end{tikzpicture}
\caption{A genus\=/two surface in $\sphere[3]$ which does not bound a handlebody on either side.\label{fig:example no handlebody 1}}
\end{figure}
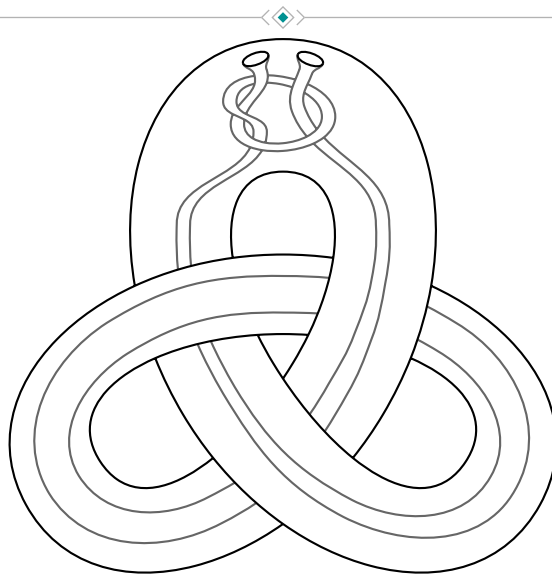

\begin{figure}
\centering
\tikzsetnextfilename{example-no-handlebody-2}
\begin{tikzpicture}[thick]
\path[spath/save=hole 1] (0,1.2) to[out=-90,in=90,out looseness=2] (-.75,-.4) to[out=-90,in=-90,out looseness=1.2,in looseness=2] (.75,0);
\path[spath/save=hole 1] [spath/use=hole 1,spath/transform={hole 1}{yscale=-1},spath/append reverse=hole 1];
\tikzset{ks/subdivide={path=hole 1,to=hole 1,n=2},ks/double={path=hole 1,to=thick hole 1,width=12}}
\tikzset{ks/extract components={draft mode=false,draft mode scale=.2,path=thick hole 1,not={thick hole 1}{8,21,12,25,4,17},to={hole transversals 1}{2,15,10,23,6,19}}}
\path[spath/use=hole 1] pic[sloped,pos=0,draw=black!60] {ks/tube end={12 pt}{disc/.style={spath/save global=tube end u 1},arcs/.style={draw=none}}} pic[sloped,pos=.999,xscale=-1,draw=black!60] {ks/tube end={12 pt}{disc/.style={spath/save global=tube end l 1},arcs/.style={draw=none}}};
\draw[black!60][spath/use=thick hole 1];
\begin{scope}[shift={(6,0)}]
\path[spath/save=hole 2] (0,1.2) to[out=-90,in=90,out looseness=2.2] (-.6,-.33) to[out=-90,in=-135] (.6,-.7) to[out=45,in=-15,out looseness=2] (0,0);
\path[spath/save global=hole 2] [spath/use=hole 2,spath/transform={hole 2}{xscale=-1,yscale=-1},spath/append reverse=hole 2];
\tikzset{ks/subdivide={path=hole 2,to=hole 2,n=2},ks/double={path=hole 2,to=thick hole 2,width=12}}
\tikzset{ks/extract components={draft mode=false,draft mode scale=.2,path=thick hole 2,not={thick hole 2}{12,29,16,33,4,21,8,25},to={hole transversals 2}{2,19,10,27,6,23,14,31}}}
\path[spath/save global=hole transversals 2,spath/use=hole transversals 2];
\path[spath/use=hole 2] pic[sloped,pos=0,draw=black!60] {ks/tube end={12 pt}{disc/.style={spath/save global=tube end u 2},arcs/.style={draw=none}}} pic[sloped,pos=.999,draw=black!60] {ks/tube end={12 pt}{disc/.style={spath/save global=tube end l 2},arcs/.style={draw=none}}};
\draw[black!60][spath/use=thick hole 2];
\end{scope}
\path[ks/subdivide={path=hole 2,to=hole 2,n=2},spath/save=curve] (45:1.6) to[out=30,in=90] (spath cs:{hole 2} 0) [spath/append=hole 2] to[out=-90,in=0,in looseness=.5] (3,-2.2) to[out=180,in=-90,out looseness=.5] (spath cs:hole 1 1) [spath/append={hole 1,reverse}] to[out=90,in=150] ($(6,0)+(135:1.6)$);
\tikzset{ks/double={path=curve,to=thick curve,width=5}}
\tikzset{ks/extract components={draft mode=false,draft mode scale=.2,path=thick curve,split with self=false,split with={hole transversals 1,hole transversals 2},to={thick curve}{3,18,5,20,7,22,11,26,13,28},to={thick curve ul}{1,16},to={thick curve ur}{15,30},to={thick curve d}{9,24}}}
\path[spath/save=spheres] (0,0) circle(2) (6,0) circle(2);
\tikzset{
ks/extract components={draft mode=false,draft mode scale=.2,path=thick curve d,split with self=false,split with={tube end l 1,tube end l 2,spheres},to={thick curve dx}{4,11},to={thick curve dxx}{1,2,8,9,6,7,13,14}},
ks/extract components={draft mode=false,draft mode scale=.2,path=tube end l 1,split with self=false,split with={thick curve d},not={tube end l 1}{2}},
ks/extract components={draft mode=false,draft mode scale=.2,path=tube end l 2,split with self=false,split with={thick curve d},not={tube end l 2}{2}},
ks/extract components={draft mode=false,draft mode scale=.2,path=thick curve ul,split with self=false,split with={tube end u 2},not={thick curve ul}{3,6},to={thick curve ulx}{3,6}},
ks/extract components={draft mode=false,draft mode scale=.2,path=thick curve ur,split with self=false,split with={tube end u 1,thick curve ul},not={thick curve ur}{1,6,4,9},to={thick curve urx}{1,6}},
ks/extract components={draft mode=false,draft mode scale=.2,path=tube end u 1,split with self=false,split with={thick curve ur},not={tube end u 1}{2}},
ks/extract components={draft mode=false,draft mode scale=.2,path=tube end u 2,split with self=false,split with={thick curve ul},not={tube end u 2}{3}},
spath/remove empty components=spheres,
ks/extract components={draft mode=false,draft mode scale=.2,path=spheres,split with self=false,split with={thick curve d,thick curve ul,thick curve ur},not={spheres}{3,5,9,11}},
}
\draw[black!60][spath/use/.list={thick curve ulx,thick curve urx,thick curve,thick curve dxx,tube end l 1,tube end l 2}];
\draw[black][spath/use/.list={thick curve dx,thick curve ul,thick curve ur,tube end u 1,tube end u 2,spheres}];
\path[spath/use=curve] \foreach \pos/\xs in {0/1,0.9999/-1} {pic[sloped,xscale=\xs,draw=black,pos=\pos] {ks/tube end={5pt}{tube above,arc 1/.style={black!60}}}};
\end{tikzpicture}
\caption{Another genus\=/two surface in $\sphere[3]$ which does not bound a handlebody on either side.\label{fig:example no handlebody 2}}
\end{figure}

\begin{figure}[htb]
\centering
\tikzsetnextfilename{example-not-boundary-irreducible}
\begin{tikzpicture}[thick]
\draw[black] circle (2cm);
\path[spath/save=hole 1] (0,1.2) to[out=-90,in=90,out looseness=2] (-.75,-.4) to[out=-90,in=-90,out looseness=1.2,in looseness=2] (.75,0);
\path[spath/save=hole 1] [spath/use=hole 1,spath/transform={hole 1}{yscale=-1},spath/append reverse=hole 1];
\tikzset{ks/subdivide={path=hole 1,to=hole 1,n=2},ks/double={path=hole 1,to=thick hole 1,width=12}}
\tikzset{ks/extract components={draft mode=false,draft mode scale=.2,path=thick hole 1,not={thick hole 1}{8,21,12,25,4,17}}}
\path[spath/use=hole 1] \foreach \pos/\xs/\as in {0/1/{black},0.9999/-1/{black!60}} {pic[sloped,xscale=\xs,draw=black!60,pos=\pos] {ks/tube end={12pt}{arcs/.style={\as}}}};
\draw[black!60][spath/use=thick hole 1];
\begin{scope}[shift={(6,0)}]
\draw[black] circle (2cm);
\path[spath/save=hole 2] (0,1.2) to[out=-90,in=90,out looseness=2.2] (-.6,-.33) to[out=-90,in=-135] (.6,-.7) to[out=45,in=-15,out looseness=2] (0,0);
\path[spath/save=hole 2] [spath/use=hole 2,spath/transform={hole 2}{xscale=-1,yscale=-1},spath/append reverse=hole 2];
\tikzset{ks/subdivide={path=hole 2,to=hole 2,n=2},ks/double={path=hole 2,to=thick hole 2,width=12}}
\tikzset{ks/extract components={draft mode=false,draft mode scale=.2,path=thick hole 2,not={thick hole 2}{12,29,16,33,4,21,8,25}}}
\path[spath/use=hole 2] \foreach \pos/\as in {0/{black},0.9999/{black!60}} {pic[sloped,draw=black!60,pos=\pos] {ks/tube end={12pt}{arcs/.style={\as}}}};
\draw[black!60][spath/use=thick hole 2];
\end{scope}
\path[spath/save=handle] (1.7,-.2) to[out=-15,in=-165] ({6-1.7},-.2);
\tikzset{ks/subdivide={path=handle,to=handle,n=2},ks/double={path=handle,to=thick handle,to contour=handle contour,width={5+15*sin(\t*180)}}}
\fill[white] [spath/use=handle contour];
\draw[black] [spath/use=thick handle];
\path[spath/use=handle] \foreach \pos/\xs in {0/1,0.9999/-1} {pic[sloped,xscale=\xs,draw=black,pos=\pos] {ks/tube end={5pt}{tube above,arc 1/.style={black!60}}}};
\end{tikzpicture}
\caption{A genus\=/two surface splitting $\sphere[3]$ into two components: one is a handlebody, but the other is not boundary irreducible.\label{fig:example not boundary irreducible}}
\end{figure}

\begin{figure}[htb]
\centering
\tikzsetnextfilename{example-handlebody-boundary-irreducible}
\begin{tikzpicture}[thick]
\path[use as bounding box] (-1.8,-2.2) (4.2,2.2);
\path[spath/save=curve,use Hobby shortcut] ([out angle=180]1,1) .. (120:.5) .. (-120:2) .. (0:.5) .. (120:2) .. (-120:.5) .. ([in angle=180]1,-1);
\tikzset{ks/subdivide={path=curve,to=curve,n=2},ks/double={path=curve,to=thick curve,width=10,to 1=thick curve 1,to 2=thick curve 2}}
\path[spath/save=ring 1,use Hobby shortcut] ([out angle=0]spath cs:thick curve 1 0) .. (1.5,.5) .. (2.5,-.4) .. (3.3,.4) .. (2.5,1.2) .. ([in angle=0]spath cs:thick curve 2 0) (2.5,.4) circle (.5);
\tikzset{spath/remove empty components=ring 1,spath/clone={ring 2}{ring 1},spath/transform={ring 2}{yscale=-1}}
\tikzset{
ks/extract components={draft mode=false,draft mode scale=.3,path=ring 1,split with self=false,split with=ring 2,not={ring 1}{2,6}},
ks/extract components={draft mode=false,draft mode scale=.3,path=ring 2,split with self=false,split with=ring 1,not={ring 2}{2,5}},
ks/extract components={draft mode=false,draft mode scale=.3,path=thick curve,not={thick curve}{8,21,4,17,12,25}}
}
\draw[black] [spath/use/.list={ring 1,ring 2,thick curve}];
\end{tikzpicture}
\caption{A genus\=/two surface splitting $\sphere[3]$ into two components: one is a handlebody, and the other is boundary irreducible.\label{fig:example handlebody boundary irreducible}}
\end{figure}

\section{Homeomorphisms of \texorpdfstring{$3$\=/manifolds}{3-manifolds}}\label{sec:homeomorphisms}

\subsection{Rationale}

\Cref{sec:homeomorphisms} is dedicated to the study of the mapping class groups of $3$\=/manifolds from an algorithmic point of view. These groups, of course, are often infinite, but they can be described with finite amount of data. To be more specific, it has been known for a long time (see \cite[Corollary 27.6]{johannson-jsj}) that the mapping class group of an irreducible sufficiently large $3$\=/manifold contains a finite\=/index subgroup $H$ which is generated by Dehn twists about annuli and tori. By going through the proofs in \citeauthor{johannson-jsj}'s book, one realises that, in fact, only finitely many Dehn twists\todo{How many different Dehn twists about a torus are there?} are required to generate $H$. Therefore, theoretically, the mapping class group of a $3$\=/manifold can be fully described by providing finitely many Dehn twists about annuli and tori, together with representatives of the cosets of $H$ in the mapping class group.

There is, however, a substantial gap between acknowledging the existence of $H$ and being able to algorithmically find generators of $H$ and representatives of its cosets. The proofs by \citeauthor{johannson-jsj} are for the most part constructive, and one could carefully convert them into algorithms for finding the generating Dehn twists (whereas representatives of the cosets of $H$ are somewhat trickier to extract from said proofs). This is exactly the path we will follow in \cref{sec:homeomorphisms}, albeit with a few \emph{caveats}.
\begin{substeps}
\item Instead of working in the full generality of what \citeauthor{johannson-jsj} calls ``$3$\=/manifolds with complete and useful boundary pattern'', we restrict our attention to \emph{irreducible $3$\=/manifold pairs}, following the work of \textcite{jaco-shalen-jsj}. We admit that our approach is perhaps less elegant and powerful that \citeauthor{johannson-jsj}'s, but the decrease in flexibility is compensated by the (relative) conciseness of some of the algorithmic procedures we will describe.
\item We make use of the geometrisation theorem (\cite{thurston-geometrisation,morgan-geometrisation}) and \textcite{kuperberg-homeomorphism}'s excellent exposition in order to compute the (finite) mapping class group of simple $3$\=/manifolds. This could probably be avoided, following \citeauthor{johannson-jsj}'s inductive argument which relies on hierarchies. We decided to go for the shorter solution, rather than the one which was more faithful to \citeauthor{johannson-jsj}'s original work.
\item As will become clear in \cref{sec:classification algorithm}, we are only interested in the trace of the mapping class group, and care little about what happens in the interior of the $3$\=/manifold. This is why \cref{thm:mapping class group of 3-manifold} is stated the way it is. It is true that little additional effort would have been required to give a complete description of the mapping class group, but once again we decided to avoid exceeding in generality, in order to keep this section reasonably short.
\end{substeps}

In \cref{sec:homeomorphisms:jsj decomposition}, we start by recalling the definition of JSJ system for an irreducible sufficiently large $3$\=/manifold pair, following \textcite{jaco-shalen-jsj} as closely as possible. \Cref{sec:homeomorphisms:seifert,sec:homeomorphisms:i-bundle,sec:homeomorphisms:simple} address the computation of the mapping class groups and the homeomorphism problem for Seifert fibred\=/spaces, $I$\=/bundles, and simple manifolds respectively. In \cref{sec:homeomorphisms:computing jsj}, we show how to actually compute the JSJ decomposition of a $3$\=/manifold pair (with some additional assumptions). Finally, in \cref{sec:homeomorphisms:piecing}, we put the results of the previous section together to deliver, as anticipated, a description of the mapping class group in the form of \cref{thm:mapping class group of 3-manifold}.

\subsection{JSJ decomposition}\label{sec:homeomorphisms:jsj decomposition}

This section is little more than a restatement of the definitions and the main theorem of \cite[Chapter V, \textsection 6]{jaco-shalen-jsj}.

\begin{definition}
For $n\ge 2$, a \emph{$3$\=/manifold $n$\=/tuple} is an $n$\=/tuple $(M,R_1,\ldots,R_{n-1})$ where $M$ is a $3$\=/manifold, and $R_1,\ldots,R_{n-1}$ are surfaces in $\boundary M$ such that $R_i\cap R_j$ is a collection of curves for $1\le i<j\le n-1$. We say \emph{pair} and \emph{triple} instead of $2$\=/tuple and $3$\=/tuple respectively.
\end{definition}

For a $3$\=/manifold pair $(M,R)$, we give the following definitions.
\begin{itemize}
\item We say that $(M,R)$ is \emph{irreducible} if $M$ is irreducible and $R$ is incompressible in $M$.
\item We say that $(M,R)$ is sufficiently large if $M$ is.
\item A surface $F\subs M$ is \emph{properly embedded} in $(M,R)$ if it is properly embedded in $M$ and $\boundary F$ lies in $R$.
\item Two disjoint surfaces $F$ and $F'$ in $M$ with $\boundary F$ and $\boundary F'$ lying in $R$ are \emph{parallel} in $(M,R)$ if there is a $3$\=/manifold $W\subs M$ homeomorphic to $F\times I$ such that $\boundary_h W=F\cup F'$ and $\boundary W\setminus\boundary_h W\subs R$.
\item A surface $F$ in $M$ is \emph{boundary parallel} in $(M,R)$ if it is parallel in $(M,R)$ to a surface in $R$.
\end{itemize}

Our aim is to develop an algorithmic theory for the JSJ decomposition of irreducible sufficiently large $3$\=/manifold pairs. Loosely speaking, as is the case for ordinary $3$\=/manifolds, the decomposition for a pair $(M,R)$ comes from an incompressible surface $F$ properly embedded in $(M,R)$ such that cutting $M$ along it yields pieces which are in some sense ``simpler'' than the original pair. Note that these pieces immediately inherit a $3$\=/manifold pair structure by considering the remnants of $R$ as a surface in the boundary of $M\cut F$. By doing so, however, we lose information about what parts of $\boundary M$ come from $F$. This is why, whenever we have a $3$\=/manifold pair $(M,R)$ with a properly embedded surface $F$, we naturally think of $M\cut F$ as a $3$\=/manifold triple. More precisely, we define $(M,R)\cut F$ to be the $3$\=/manifold triple $(M',R',F')$, where:
\begin{itemize}
\item $M'=M\cut F=M\setminus\onbhd{F}$;
\item $R'=R\cap M'$;
\item $F'=\nbhd{F}\cap M'$.
\end{itemize}

We now proceed to define the types of pieces we allow in the JSJ decomposition of a $3$\=/manifold pair.

\begin{definition}
A $3$\=/manifold $n$\=/tuple $(M,\vec{R})$ is a \emph{Seifert $n$\=/tuple} if $\boundary M=R_1\cup\ldots\cup R_{n-1}$ and $M$ admits a Seifert fibration such that $R_1,\ldots,R_{n-1}$ are unions of fibres. Such a fibration is called a \emph{Seifert fibration} for $(M,\vec{R})$. A $3$\=/manifold $n$\=/tuple  equipped with a Seifert fibration is called a \emph{Seifert\=/fibred $n$\=/tuple}.
\end{definition}

\begin{definition}
A $3$\=/manifold triple is an \emph{$I$\=/bundle triple} if it is homeomorphic to $(X,\boundary_h X,\boundary_v X)$ for some $I$\=/bundle $X$ over a surface.
\end{definition}

\begin{definition}
A $3$\=/manifold triple $(M,R,F)$ is a \emph{simple triple} if the following hold:
\begin{itemize}
\item every incompressible torus properly embedded in $M$ is parallel in $M$ to a component of $R$ or of $F$;
\item every incompressible annulus $A$ properly embedded in $M$ with $\boundary A\subs\interior{R}$ is parallel in $(M,R)$ to an annulus in $R$ or in $F$.
\end{itemize}
\end{definition}

We can finally define the JSJ system for a $3$\=/manifold pair. As expected, the JSJ system is unique up to isotopy.

\begin{definition}\label{def:jsj-system}
Let $(M,R)$ be an irreducible sufficiently large $3$\=/manifold pair. A \emph{JSJ system} of $(M,R)$ is a surface $F$ properly embedded in $(M,R)$, possibly empty, satisfying the following properties.
\begin{enumroman}
\item\label[property]{def:jsj system:1} Each component of $F$ is an incompressible torus or annulus, and it is not boundary parallel in $(M,R)$.
\item\label[property]{def:jsj system:2} Each component of $(M,R)\cut F$ is a Seifert triple, an $I$\=/bundle triple, or a simple triple.
\item\label[property]{def:jsj system 3} The surface $F$ is minimal with respect to inclusion among all surfaces properly embedded in $(M,R)$ satisfying \cref{def:jsj system:1,def:jsj system:2}.
\end{enumroman}
\end{definition}

\begin{theorem}[{\cite[Chapter V,\textsection 6, Generalised Splitting Theorem]{jaco-shalen-jsj}}]\label{thm:jsj}
Let $(M,R)$ be an irreducible sufficiently large $3$\=/manifold pair. Then $(M,R)$ admits a JSJ system, which is unique up to isotopy in $M$ fixing $\boundary M\setminus\interior{R}$.
\end{theorem}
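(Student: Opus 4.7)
My plan is to follow the classical proof due to \textcite{jaco-shalen-jsj}, on which the statement is modelled. The existence part proceeds by producing a family of disjoint essential tori and annuli that cuts $(M,R)$ into pieces of the three prescribed types, and then minimising it; the uniqueness part proceeds by a cut\=/and\=/paste argument between two putative JSJ systems.

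For existence, I would invoke a version of Haken's finiteness theorem for pairs: up to isotopy fixing $\boundary M\setminus\interior{R}$, there is a uniform bound on the number of pairwise disjoint, pairwise non\=/parallel, two\=/sided, incompressible, non\=/boundary\=/parallel surfaces properly embedded in $(M,R)$ whose components are tori or annuli. Starting from $F=\emptyset$, I would iteratively inspect the components of $(M,R)\cut F$; whenever a component is not Seifert, $I$\=/bundle, or simple, the failure of the simple condition exhibits an essential torus or annulus inside that component which is not parallel into its frontier, and I add it to $F$. Haken finiteness guarantees that this enlargement process terminates, producing an $F$ satisfying \cref{def:jsj system:1,def:jsj system:2}. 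A minimal subfamily $F'\subs F$ still satisfying these two conditions then fulfils \cref{def:jsj system 3} as well, giving a JSJ system.

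For uniqueness, let $F_1$ and $F_2$ be two JSJ systems. I would isotope $F_2$, through isotopies fixing $\boundary M\setminus\interior{R}$, to be in general position with $F_1$, and then reduce $\abs{F_1\cap F_2}$ as much as possible. Incompressibility of both surfaces kills all trivial circles of intersection via innermost\=/disc moves, and similarly for boundary\=/parallel arcs. To remove essential circles of intersection, I would look at a piece $X$ of $(M,R)\cut F_1$ and the surface $F_2\cap X$ sitting inside it: depending on whether $X$ is Seifert, $I$\=/bundle, or simple, classical structure theorems for essential annuli and tori in these pieces (horizontal/vertical dichotomy in the Seifert and $I$\=/bundle cases, boundary\=/parallelism in the simple case) allow me either to isotope a component of $F_2\cap X$ off of $F_1$ or to realise it as already isotopic into $F_1$. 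After exhausting these moves, $F_1$ and $F_2$ are disjoint, and the minimality clause \cref{def:jsj system 3}, together with the non\=/boundary\=/parallelism of each component, forces a bijective correspondence between their components, yielding the required isotopy.

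The hard part is the uniqueness step, specifically the case where $F_1\cap F_2$ contains essential curves and the piece $X$ of $(M,R)\cut F_1$ under scrutiny is an $I$\=/bundle or Seifert triple. There, one has to analyse how an incompressible, boundary\=/incompressible annulus sitting in an $I$\=/bundle or Seifert manifold can fail to be isotopic to a vertical or horizontal surface, and show that such failures cannot occur without violating the minimality of $F_1$ or $F_2$. This is the bulk of the work in Chapter V of \textcite{jaco-shalen-jsj}, and I would simply quote it rather than reproduce the case analysis in full.
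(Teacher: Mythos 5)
The paper offers no proof of this theorem at all: it is stated as a black\=/box citation of the Generalised Splitting Theorem from \textcite{jaco-shalen-jsj}, and the surrounding text merely adapts Jaco--Shalen's definitions to set up the statement. Your proposal ultimately does the same thing -- you explicitly defer the substantive case analysis to \textcite{jaco-shalen-jsj} -- so the two ``approaches'' coincide, and your sketch (Haken finiteness plus iterative splitting for existence, followed by a cut\=/and\=/paste argument organised piece by piece for uniqueness) is a fair précis of what the cited reference actually does.
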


We now embark on an in\=/depth study of the pieces of the JSJ decomposition, with the aim of achieving a better understanding of the original $3$\=/manifold pair. Specifically, for a piece $(N,R',F')$ of $(M,R)\cut F$, we  will be interested in the following algorithmic problems:
\begin{itemize}
\item solving the homeomorphism problem for $(N,R',F')$;
\item deciding whether a homeomorphism $\umap{F'}{F'}$ extends to a self\=/homeomorphism of $(N,R',F')$;
\item describing the group $\fixhomeo{F'}{N,R'}$ in a computationally feasible way (that is, with a finite amount of data).
\end{itemize}

\subsection{Seifert pieces}\label{sec:homeomorphisms:seifert}

We start with an analysis of Seifert pieces. In fact, we will work with general Seifert $n$\=/tuples instead of just triples. The reason will become apparent in \cref{sec:homeomorphisms:piecing}, when we will need to deal with Seifert $4$\=/tuples as well. We follow the convention that a \emph{Seifert manifold} is a $3$\=/manifold which admits a Seifert fibration, whereas a \emph{Seifert\=/fibred manifold} is a $3$\=/manifold endowed with a fixed Seifert fibration.

Now is a good time to briefly discuss a ``computationally friendly'' definition of Seifert fibration. For our algorithmic purposes, a Seifert fibration of a triangulated $3$\=/manifold is given by a $2$\=/subcomplex $Z$ of (some subdivision of) $M$ satisfying the following properties.
\begin{enumroman}
\item\label[property]{def:computational seifert:1} Every point of $Z$ has a neighbourhood which looks like one of the local models in \cref{fig:seifert 2-complex models}; the set of points whose local models are of type \subref{fig:seifert 2-complex models:b} or \subref{fig:seifert 2-complex models:c} is a union of circles called \emph{fibres}.
\item\label[property]{def:computational seifert:2} The closure of each component of $M\setminus Z$ is a solid torus, and has at least one fibre on its boundary.
\item\label[property]{def:computational seifert:3} No fibre bounds a disc in $M\setminus Z$.
\end{enumroman}
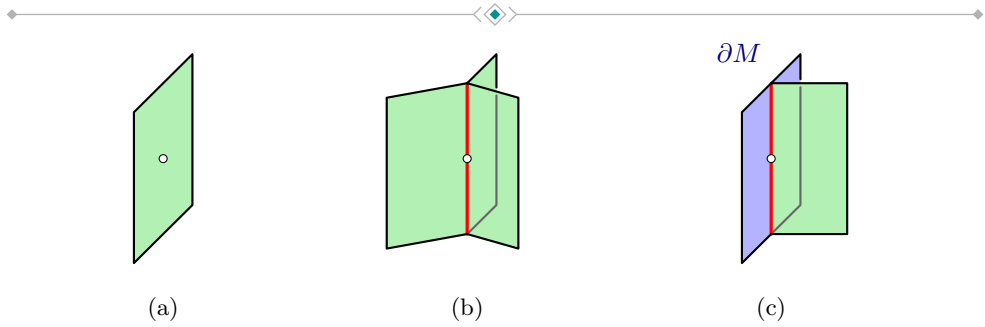
\begin{figure}
\centering
\begin{subcaptiongroup}
\begin{tikzpicture}[thick,line join=round,line cap=round,vertex/.pic={\fill[pic actions,postaction={draw=black,thin}] circle (1.5pt);}]
\def\sep{4}
\begin{scope}[shift={(0*\sep,0)}]
\draw[fill=main color 3!30] (0,-1,-1) -- (0,-1,1) -- (0,1,1) -- (0,1,-1) -- cycle;
\pic[white]{vertex};
\end{scope}
\begin{scope}[shift={(1*\sep,0)}]
\foreach \k/\name in {30/a,150/b,270/c} {
	\fill[main color 3!30,spath/save global=\name] (0,-1,0) -- ({cos(\k)},-1,{sin(\k)}) coordinate (\name-l) -- +(0,2,0) coordinate (\name-u) -- (0,1,0);
}
\path[spath/save=front] (0,1,0) -- (a-u);
\tikzset{spath/split at intersections with={c}{front},spath/get components of={c}{\components}}
\edef\cu{\getComponentOf{\components}{2}}
\edef\cl{\getComponentOf{\components}{1}}
\tikzset{spath/shorten at end={\cl}{1.5pt},spath/shorten at start={\cu}{1.5pt}}
\draw[black!60][spath/use=\cl];
\draw[main color 1,very thick] (0,-1,0) -- (0,1,0);
\draw[black] [spath/use/.list={a,b,\cu}];
\pic[white]{vertex};
\end{scope}
\begin{scope}[shift={(2*\sep,0)}]
\fill[main color 2!30,spath/save=a] (0,-1,-1) -- (0,-1,1) -- (0,1,1) -- (0,1,-1) -- cycle;
\fill[main color 3!30,spath/save=b] (0,-1,0) -- (1,-1,0) -- (1,1,0) -- (0,1,0);
\tikzset{spath/split at intersections with={a}{b},spath/join components={a}{2},spath/get components of={a}{\components}}
\edef\au{\getComponentOf{\components}{1}}
\edef\al{\getComponentOf{\components}{2}}
\tikzset{spath/shorten at end={\au}{1.5pt},spath/shorten at start={\al}{1.5pt}}
\draw[black!60][spath/use=\al];
\draw[main color 1,very thick] (0,-1,0) -- (0,1,0);
\draw[spath/use/.list={\au,b}];
\pic[white]{vertex};
\node[main color 2!50!black,left] at (0,1.4) {$\boundary M$};
\end{scope}
\foreach \k/\name in {0/a,1/b,2/c} {
\phantomcaption{}
\label{fig:seifert 2-complex models:\name}
\node at ({\k*\sep},-2) {\captiontext*{}};
}
\end{tikzpicture}
\end{subcaptiongroup}
\caption{Local models for a $2$\=/complex defining a Seifert fibration for a $3$\=/manifold $M$. The $2$\=/complex is coloured green, while the fibres are highlighted in red.}
\label{fig:seifert 2-complex models}
\end{figure}

On one hand, if $M$ is equipped with a Seifert fibration in the usual sense and $\map{p}{M}{B}$ is the projection to the base surface, we can obtain a $2$\=/complex $Z\subs M$ by taking the preimage under $p$ of a suitable $1$\=/complex cutting $B$ into discs which contain at most one singular point. Conversely, given a $2$\=/complex $Z$ as above, we can recover the Seifert fibration by fibring each complementary solid torus compatibly with the fibres of $Z$. Note that \cref{def:computational seifert:1,def:computational seifert:2,def:computational seifert:3} are easy to check algorithmically. As a consequence, given a Seifert manifold, we can find a Seifert fibration for it. This discussion naturally extends to a Seifert $n$\=/tuple $(M,\vec{R})$; in this case, we additionally require that $\boundary R_i$ is a union of fibres of $Z$ for $1\le i\le n-1$.

With the following we provide a version of \cite[Lemma VI.19]{jaco-lectures} which better suits our needs.
\begin{lemma}\label{thm:seifert manifolds fibre preserving on the boundary}
Let $M$ and $M'$ be Seifert\=/fibred manifolds. Let $\map{f}{M}{M'}$ be a homeomorphism, and suppose that there is a non\=/empty finite union of fibres, fibred annuli, and fibred tori $X\subs\boundary M$ such that the restriction of $f$ to $X$ is fibre\=/preserving. Then $f$ can be isotoped, fixing $X$ pointwise, to a fibre\=/preserving homeomorphism.
\end{lemma}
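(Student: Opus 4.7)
Let $\mathcal{F}$ and $\mathcal{F}'$ denote the given Seifert fibrations of $M$ and $M'$, and consider the pullback fibration $\mathcal{F}^\ast = f^{-1}(\mathcal{F}')$ on $M$. By hypothesis, $\mathcal{F}$ and $\mathcal{F}^\ast$ already agree on $X$. It suffices to produce an ambient isotopy $\{\Phi_t\}$ of $M$, with $\Phi_0=\mathrm{id}$ and fixing $X$ pointwise at every time, such that $\Phi_1$ sends $\mathcal{F}^\ast$ to $\mathcal{F}$: the composition $f\circ\Phi_1^{-1}$ is then the desired fibre\=/preserving homeomorphism isotopic to $f$ relative to $X$.

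The construction of $\Phi_t$ proceeds in three steps. First, I would fix a collection $\Sigma\subs M$ of disjoint essential vertical annuli and tori for $\mathcal{F}$, disjoint from $X$, whose complement is a disjoint union of fibred solid tori each containing at most one exceptional fibre; such a $\Sigma$ is obtained by lifting a suitable arc system on the base orbifold, chosen to avoid the projection of $X$. Second, I would apply the standard Waldhausen\=/type normal position arguments: every incompressible annulus or torus properly embedded in a Seifert\=/fibred manifold is isotopic to a vertical or a horizontal surface. Each component of $\Sigma$ can then be isotoped, through an isotopy supported away from $X$, to be vertical also with respect to $\mathcal{F}^\ast$; the horizontal alternative is ruled out by the essentiality of $\boundary\Sigma$ in the fibred annuli of $\boundary M$. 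Third, I would cut $M$ along $\Sigma$ to obtain a collection of fibred solid tori, each carrying two Seifert fibrations agreeing on a nonempty portion of the boundary; on a solid torus, two such fibrations are ambient\=/isotopic fixing the agreed\=/upon boundary region, by a direct fibre\=/slope comparison. Gluing the piecewise isotopies across $\Sigma$ yields the global isotopy $\Phi_t$.

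The hard part is guaranteeing that after the second step the fibre slopes of $\mathcal{F}$ and $\mathcal{F}^\ast$ coincide on every torus component of $\Sigma$: on a torus, distinct slopes give non\=/isotopic fibrations, so vertical\=/vertical agreement is not automatic. The nonemptiness of $X$ is crucial here, because the match on $X$ propagates through the solid\=/torus pieces and pins down the admissible slopes on the components of $\Sigma$ adjacent to $X$; an inductive argument across the pieces handles the remaining components. Finally, the usual short list of exceptional small Seifert manifolds ($D^2\times S^1$, $T^2\times I$, and the twisted $I$\=/bundle over the Klein bottle), where the fibration is not rigid and the normal position dichotomy can fail, must be treated by direct inspection, using the hypothesis that $X$ is nonempty to eliminate the ambiguity.
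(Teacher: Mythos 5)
Your proposal is in the same spirit as the paper's proof (which follows Jaco's Lemma~VI.19): both hinge on finding vertical annuli and using a Waldhausen\-/type verticality argument to match the two fibrations, with the non\-/emptiness of $X$ seeding the matching of fibre slopes on the solid\-/torus pieces. The main structural difference is packaging: you fix an entire arc system on the base, pull it back to a family $\Sigma$ of vertical annuli, and then isotope $\mathcal{F}^\ast$ to agree with $\mathcal{F}$ on all the resulting solid\-/torus pieces at once, gluing the piecewise isotopies and propagating the slope match outward from $X$. The paper instead inducts on $-\chi(B')+n$, choosing a single well\-/placed vertical annulus $A'$ in $M'$ (through a fibre of $f(Y)$), pulling it back to $A=f^{-1}(A')\subs M$, isotoping $A$ to be vertical rel $A\cap Y$, cutting, and recursing. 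The inductive version buys a cleaner bookkeeping of exactly what is fixed at each stage and avoids having to argue global consistency of the gluing; your one\-/shot version exposes the solid\-/torus normal form explicitly, which is arguably more transparent. Two points in your sketch deserve the same care the paper gives them: the claim that the annuli of $\Sigma$ can be isotoped to $\mathcal{F}^\ast$\-/vertical position \emph{through isotopies fixing $X$ pointwise} is exactly the step the paper outsources to Johannson's Proposition~5.6 (and then upgrades to an isotopy fixing all of $Y$), and it is not automatic just because $\Sigma$ is disjoint from $X$; and the slope\-/propagation across pieces must account for pieces having several $\Sigma$\-/boundary annuli, all of whose fibrations must match after the local isotopy, which is precisely what the paper's step\-/by\-/step induction keeps under control.
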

\begin{proof}
The proof is essentially identical to that of \citeauthor{jaco-lectures}, and we present it here for the sake of completeness. Let $\map{p'}{M'}{B'}$ be the projection to the base surface of the Seifert fibration, and let $M'$ have $n$ exceptional fibres. The proof proceeds by induction on $-\chi(B')+n$.

We can easily isotope $f$, fixing $X$, to be fibre\=/preserving on each component of $\boundary M$ which intersects $X$. Therefore, if $Y\subs\boundary M$ is the union of all such components, then we can assume that $\map{f|_Y}{Y}{\boundary M'}$ is fibre\=/preserving, and prove that $f$ can be isotoped, fixing $Y$, to a fibre\=/preserving homeomorphism.

If $B'$ is a disc and $n\le 1$, then $M$ and $M'$ are fibred solid tori of the same type, and it is easy to construct the desired isotopy. If instead $B'$ is not a disc or $n\ge 2$, then let $y'\in\boundary B'$ the image under $p'$ of one of the fibres in $f(Y)$. There exists an arc $a'$ properly embedded in $B'$ connecting $y'$ to a different point in $\boundary B'$ and avoiding exceptional points, such that $a'$ is not trivial in $\pi_1(B'_{\bullet},\boundary B')$, where $B'_{\bullet}$ is the punctured surface obtained by removing the exceptional points from $B'$. Then $A'=(p')^{-1}(a')$ is an incompressible boundary incompressible vertical annulus properly embedded in $M'$. The annulus $A=f^{-1}(A')$ is incompressible and boundary incompressible in $M$. It need not be vertical, but at least one of the components of $\boundary A$ is a fibre, since it lies in $Y$. We can isotope $A$ to be vertical in $M$, and by \cite[Proposition 5.6]{johannson-jsj}\footnote{It is immediate to see that exceptions 5.1.1--5.1.5 listed by \citeauthor{johannson-jsj} do not occur, since $M$ has non\=/empty boundary and the case of a solid torus with a single exceptional fibre has already been addressed.}\todo{Would like to find a better reference, without boundary patterns.} we can assume that the isotopy fixes $A\cap Y$ pointwise. In fact, it is not hard to see that we can even take the isotopy to fix all of $Y$. As a consequence, we find that $f$ restricts to a homeomorphism $\umap{M\cut A}{M'\cut A'}$ which is fibre\=/preserving on $Y\cap(M\cut A)$ as well as on the two annuli $\closure{\boundary(M\cut A)\setminus \boundary M}$. The conclusion now follows by induction.
\end{proof}

The next two propositions generalise well known facts about Seifert spaces -- namely uniqueness of the Seifert fibration and solution to the homeomorphism problem -- to the setting of Seifert $n$\=/tuples.

\begin{proposition}\label{thm:unique seifert fibration}
Let $(M,\vec{R})$ be a Seifert $(n+1)$\=/tuple with non\=/empty boundary. Then $(M,\vec{R})$ admits exactly one Seifert fibration up to isotopy in $(M,\vec{R})$, unless the surfaces $R_1,\ldots,R_n$ are unions of tori and one of the following holds.
\begin{enumarabic}
\item\label[exception]{thm:unique seifert fibration:1} The $3$\=/manifold $M$ is a solid torus; in this case, every fibration of $\boundary M$ which does not contain a meridian of $M$ extends to a unique Seifert fibration of $M$ with at most one exceptional fibre; the fibration of $\boundary M$ containing a meridian of $M$ does not extend to a Seifert fibration of $M$.
\item\label[exception]{thm:unique seifert fibration:2} The $3$\=/manifold $M$ is homeomorphic to $\torus[2]\times I$; in this case, every fibration of $\torus[2]\times\{0\}$ extends to a unique fibration of $M$.
\item\label[exception]{thm:unique seifert fibration:3} The $3$\=/manifold $M$ is homeomorphic to $\nsurf{2}\twtimes I$; in this case, $M$ admits two Seifert fibrations, one over a M\"obius band and one over a disc.
\end{enumarabic}
\end{proposition}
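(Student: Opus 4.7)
The plan is to reduce the relative uniqueness statement to two well-known ingredients: \cref{thm:seifert manifolds fibre preserving on the boundary} just above, and the classical (absolute) uniqueness theorem for Seifert fibrations on $3$-manifolds with non-empty boundary. Let $\mathcal{F}_1,\mathcal{F}_2$ be two Seifert fibrations of $(M,\vec R)$. Since $M$ is oriented and Seifert-fibred with $\partial M\neq\emptyset$, every boundary component of $M$ is a torus, and since each $R_i$ is saturated by fibres in either fibration, every component of $R_i$ is either an embedded annulus or a full boundary torus of $M$.

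First I would handle the case where some component of some $R_i$ is an annulus on a given boundary torus. If a boundary torus $T$ of $M$ contains an annular component $A$ of some $R_i$, then both $\mathcal{F}_1|_T$ and $\mathcal{F}_2|_T$ are forced to have fibre slope equal to the (common) slope of $\partial A$ on $T$; so if no boundary torus of $M$ is a full component of any $R_i$, then $\mathcal{F}_1$ and $\mathcal{F}_2$ induce fibrations on $\partial M$ which agree slope-by-slope. An ambient isotopy of $(M,\vec R)$ supported in a collar of $\partial M$ then lets us arrange that $\mathcal{F}_1=\mathcal{F}_2$ in a neighbourhood of $\partial M$. The identity map $\mathrm{id}\colon(M,\mathcal{F}_1)\to(M,\mathcal{F}_2)$ is therefore fibre-preserving on $\partial M$, and \cref{thm:seifert manifolds fibre preserving on the boundary} applied with $X=\partial M$ produces an isotopy from $\mathrm{id}$ to a fibre-preserving homeomorphism fixing $\partial M$ (and hence $\vec R$) pointwise; this delivers the desired isotopy of fibrations within $(M,\vec R)$.

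In the remaining case, where $R_1,\ldots,R_n$ are all unions of tori, the boundary slopes are unconstrained by $\vec R$, and we fall back on Waldhausen's classical theorem: a Seifert $3$-manifold with non-empty boundary admits a unique Seifert fibration up to ambient isotopy of $M$, unless it is homeomorphic to $S^1\times D^2$, $\torus[2]\times I$, or $\nsurf{2}\twtimes I$. If $M$ is not on this list, then an ambient isotopy taking $\mathcal{F}_1$ to $\mathcal{F}_2$ automatically preserves $\vec R$: it induces a continuous path in the (finite, hence discrete) set of permutations of the boundary tori of $M$, so every boundary torus is fixed setwise throughout the isotopy. The three exceptional manifolds must then be treated directly, using the standard descriptions of their Seifert structures: the solid torus admits a unique extension for each non-meridian boundary slope (and none for the meridian, since a meridian fibre would bound a disc); the product structure on $\torus[2]\times I$ extends any fibration of $\torus[2]\times\{0\}$ uniquely; and $\nsurf{2}\twtimes I$ carries precisely the two classical Seifert structures, one over the Möbius band without exceptional fibres and one over the disc with two exceptional fibres of multiplicity two. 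The main technical obstacle is the boundary-matching step of the previous paragraph: one must promote an isotopy of $(\partial M,\vec R\cap\partial M)$ taking $\mathcal{F}_1|_{\partial M}$ to $\mathcal{F}_2|_{\partial M}$ into an ambient isotopy of $(M,\vec R)$ that keeps $\vec R$ fixed throughout, which requires a careful collar-neighbourhood construction respecting the annular decomposition of each boundary component.
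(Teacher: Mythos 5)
Your overall strategy matches the paper's: use \cref{thm:seifert manifolds fibre preserving on the boundary} for the relative step, then fall back on Waldhausen's absolute uniqueness theorem when the $R_i$ are unions of tori. Your handling of the torus case is in fact slightly more careful than the paper's, in that you justify why an ambient isotopy of $M$ automatically respects $\vec R$ (the induced permutation of boundary tori is locally constant along the isotopy). The exceptional manifolds are dispatched by citing their standard Seifert structures, as the paper also does.

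However, your case analysis has a gap. Your first case is ``no boundary torus of $M$ is a full component of any $R_i$'', which is the same as saying every component of every $R_i$ is an annulus; your second case is ``all $R_i$ are unions of tori''. These two cases are not complementary: one boundary torus $T$ can be a full component of some $R_j$ while a different boundary torus $T'$ carries annular components of other $R_i$'s (the condition that $R_i\cap R_j$ be a union of curves only forbids this on the \emph{same} boundary torus). In that hybrid situation the slope of the fibration on $T$ is not pinned down by the $R$-decomposition, so your plan of first matching $\mathcal{F}_1$ and $\mathcal{F}_2$ slope-by-slope on \emph{all} of $\partial M$ and then applying the lemma with $X=\partial M$ stalls. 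The paper avoids this entirely by taking $X=\partial R_1\cup\ldots\cup\partial R_n$, the union of the boundary curves of the annular components: this is non-empty precisely when some $R_i$ has an annulus component, and these curves are automatically fibres of \emph{both} fibrations, so the identity is already fibre-preserving on $X$ with no preliminary matching required. Your gap is closable — when a full-torus component coexists with an annular one, $M$ has at least two boundary tori, hence is not a solid torus or $\nsurf{2}\twtimes I$, and the slope on $T$ is then determined either by the product structure of $\torus[2]\times I$ or by absolute uniqueness — but this is extra work that the paper's choice of $X$ renders unnecessary.
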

\begin{proof}
First of all, let us remark that, as soon as one of the surfaces $R_1,\ldots,R_n$ has an annulus component, then the Seifert fibration for $(M,\vec{R})$ is necessarily unique: given two such fibrations, the homeomorphism $\map{\id}{M}{M}$ is fibre\=/preserving on $X=\boundary R_1\cup\ldots\cup\boundary R_n$; by \cref{thm:seifert manifolds fibre preserving on the boundary}, we can isotope one fibration to the other in $(M,R_1,\ldots,R_n)$.

When instead all the components of $R_1,\ldots,R_n$ are tori, it is clear that two Seifert fibrations for $(M,\vec{R})$ are isotopic in $(M,\vec{R})$ if and only if they are isotopic in $M$. The conclusion then follows from \cite[Theorem VI.18]{jaco-lectures}\todo{Not quite, it doesn't address the exceptions.}.
\end{proof}
\begin{remark}\label{rmk:inequivalent seifert fibration on i-bundle over klein bottle}
The two Seifert fibrations of $\nsurf{2}\twtimes I$ are \emph{inequivalent}, by which we mean that no self\=/homeomorphism of $\nsurf{2}\twtimes I$ can send one to the other. In other words, if we fix a Seifert fibration for $\nsurf{2}\twtimes I$, then every self\=/homeomorphism of this $3$\=/manifold can be isotoped to be fibre\=/preserving.
\end{remark}

\begin{proposition}\label{thm:seifert homeomorphism problem}
There is an algorithm which, given as input two Seifert $(n+1)$\=/tuples $(M,\vec{R})$ and $(M',\vec{R}')$, decides whether they are homeomorphic or not.
\end{proposition}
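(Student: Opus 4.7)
The plan is to reduce the problem to a homeomorphism question on labelled $2$\=/orbifolds with boundary, which is in turn decidable by classical surface algorithms.

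First, I would algorithmically find Seifert fibrations for both input tuples. As discussed around \cref{fig:seifert 2-complex models}, a Seifert fibration of $M$ is encoded by a $2$\=/subcomplex $Z\subs M$ of some subdivision of the triangulation, satisfying conditions \cref{def:computational seifert:1,def:computational seifert:2,def:computational seifert:3}, all of which can be checked algorithmically. Applying the infinite search template produces such a $Z$, and we may additionally require that the boundary curves of $R_1,\ldots,R_n$ are unions of fibres, since this is also a verifiable condition. From the resulting fibration I would extract a complete combinatorial description of the base orbifold $B$: its underlying surface, the multiplicities of the exceptional fibres, the Seifert Euler number of each component, and, on each boundary circle of $B$, the partition into arcs determined by the boundaries of annulus components of $R_1,\ldots,R_n$, together with a labelling recording which of the $R_i$ each arc or full boundary circle belongs to. Collectively I refer to this data as the \emph{labelled base orbifold} of $(M,\vec{R})$.

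Suppose first that neither tuple falls into one of the exceptional situations of \cref{thm:unique seifert fibration}. Then the Seifert fibration of each tuple is unique up to isotopy preserving $\vec{R}$, and \cref{thm:seifert manifolds fibre preserving on the boundary} applies since $\boundary M$ is a non\=/empty union of fibres. Consequently, any homeomorphism $(M,\vec{R})\to(M',\vec{R}')$ can be isotoped to a fibre\=/preserving one, which descends to a homeomorphism of the labelled base orbifolds; conversely, any homeomorphism of the labelled base orbifolds lifts to a fibre\=/preserving homeomorphism of the tuples. Deciding whether two labelled $2$\=/orbifolds with boundary are homeomorphic is a finite combinatorial problem on surfaces with marked interior points and marked boundary arcs, which is algorithmic by the classical theorem on surfaces.

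The remaining task is to dispatch the exceptional cases of \cref{thm:unique seifert fibration}, in which $M$ is a solid torus, $\torus[2]\times I$, or $\nsurf{2}\twtimes I$, and similarly for $M'$, since in these situations the Seifert fibration may fail to be unique. However, \cref{thm:unique seifert fibration} describes all possible fibrations explicitly, and there are only finitely many (up to isotopy) compatible with making $\vec{R}$ a union of fibres. I would therefore iterate over all pairs of such choices for the two tuples, run the combinatorial check above for each, and return yes as soon as one pair matches. The main subtlety is ensuring the case analysis in these exceptional situations is exhaustive; since the exceptional $3$\=/manifolds are individually very simple and completely understood, no serious obstacle remains.
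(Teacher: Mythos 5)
Your reduction to a combinatorial problem on the base orbifold is a genuinely different route from the paper. The paper first uses the Seifert invariants to decide whether $M$ and $M'$ are homeomorphic as plain manifolds, then takes one homeomorphism $f$ and checks whether post-composing with a \emph{finite} correction set $\FFF$ (permutations of boundary tori, and an ``$-\id$'' on boundary homology) ever yields a homeomorphism of tuples; the heavy lifting is done by \cref{thm:seifert manifolds fibre preserving on the boundary} and the observation that after correction the remaining error is a power of a Dehn twist about a fibre, which can always be pushed inside. Your plan instead encodes everything into a labelled marked surface and appeals to the surface homeomorphism algorithm. In principle both can be made to work, and yours has the appeal of offloading everything to a single combinatorial check.

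However, as written there is a genuine gap: your ``labelled base orbifold'' records the underlying surface, the \emph{multiplicities} $\alpha_i$ of the exceptional fibres, an Euler number, and the boundary $R_i$-labelling, but it omits the second component $\beta_i$ of each Seifert invariant $(\alpha_i,\beta_i)$. This data does not determine the fibre-preserving homeomorphism type. Concretely, take $B$ a disc and consider the two Seifert tuples $(M,\boundary M)$ and $(M',\boundary M')$ where $M$ has exceptional fibres $\{(5,1),(5,3)\}$ and $M'$ has $\{(5,2),(5,2)\}$. Both have underlying base a disc, multiplicities $\{5,5\}$, the same boundary labelling, and the same Euler number modulo $\ZZ$ (namely $4/5$). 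The base orbifold is hyperbolic, so each fibration is unique, yet the multisets of Seifert invariants differ even after the allowed simultaneous sign flip $\beta_i\mapsto-\beta_i$; the two tuples are therefore \emph{not} homeomorphic, but your algorithm would declare them equal. (One can produce closed examples too, e.g.\ over $\surf{0}$ with three cone points.) To repair this, the labelling on each exceptional point must include the residue $\beta_i\bmod\alpha_i$, and the claim that ``any homeomorphism of the labelled base orbifolds lifts'' should then be stated as: a homeomorphism of bases lifts to a fibre-preserving homeomorphism of the tuples precisely when it respects the $(\alpha_i,\beta_i)$ labels (and the orientation conventions), which is exactly what your labelled-orbifold condition would then enforce. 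With that correction -- and a word about orientation-reversal of the base, which is allowed since it can be compensated by reversing the fibre orientation -- the argument should go through.
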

\begin{proof}
Fix two Seifert fibrations for $(M,\vec{R})$ and $(M',\vec{R}')$. It is very well known (for a thorough and complete discussion, see \cite[Theorem 10.4.19]{martelli-geometric-topology}) that computing the Seifert invariants for the fibrations of $M$ and $M'$ leads to a straightforward algorithm for deciding whether $M$ and $M'$ are homeomorphic. If they are not, then clearly neither are $(M,\vec{R})$ and $(M',\vec{R}')$. If $M$ and $M'$ are closed and homeomorphic, then we are done. On the other hand, if $(M',\vec{R}')$ is one of the exceptions described in \ref{thm:unique seifert fibration:1} and \ref{thm:unique seifert fibration:2} of \cref{thm:unique seifert fibration}, then answering the homeomorphism problem is easy.

Therefore, let us assume that $(M',\vec{R}')$ has non\=/empty boundary and is not one of these exceptions, and let $\map{f}{M}{M'}$ be a homeomorphism. We can find a finite set $\FFF_0$ of fibre\=/preserving self\=/homeomorphisms of $M'$ such that every permutation of the boundary components of $M'$ is induced by an element of $\FFF_0$. Let $\map{\iota}{M'}{M'}$ be a fibre\=/preserving homeomorphism such that $\trace{\iota}$ acts as $-\id$ on $H_1(\boundary M')$, and define
\[
\FFF=\bigcup_{g\in\FFF}\{g,\iota g\}.
\]
We claim that $(M,\vec{R})$ and $(M',\vec{R}')$ are homeomorphic if and only if $\map{g^{-1}f}{M}{M'}$ can be isotoped to a homeomorphism $\umap{(M,\vec{R})}{(M',\vec{R}')}$ for some $g\in\FFF$ -- and note that this condition is easy to check algorithmically. The reverse implication is trivial. Conversely, assume that there is a homeomorphism
\[
\map{f'}{(M,\vec{R})}{(M',\vec{R}')}.
\]
We can isotope $f$ in $M$ so that $\map{f'f^{-1}}{M'}{M'}$ is fibre\=/preserving. There is a homeomorphism $g\in\FFF$ such that $f'f^{-1}g$ preserves each boundary component of $M'$, as well as the orientation of the fibres on $\boundary M'$. But then the homeomorphism $f'f^{-1}g$ acts as a power of the Dehn twist about a fibre on each boundary component of $M'$, hence it can be isotoped to a self\=/homeomorphism of $(M',\vec{R}')$. We immediately conclude that $g^{-1}f$ can be isotoped to a homeomorphism $\umap{(M,\vec{R})}{(M',\vec{R}')}$.
\end{proof}

We are now ready to give an algorithmic description of the mapping class group of (most) Seifert $n$\=/tuples with boundary.

\begin{proposition}\label{thm:mapping class group of seifert spaces}
There is an algorithm which, given as input a Seifert\=/fibred $(n+2)$\=/tuple $(M,\vec{R},F)$ with non\=/empty boundary and not homeomorphic to one of the exceptions described in \ref{thm:unique seifert fibration:1} and \ref{thm:unique seifert fibration:2} of \cref{thm:unique seifert fibration}, returns
\begin{itemize}
\item a finite collection $\FFF$ of self\=/homeomorphisms of $(\boundary M,\vec{R})$ fixing $F$ pointwise, and
\item a finite collection $\CCC=\{(a_1,b_1),\ldots,(a_m,b_m)\}$, where $a_1,\ldots,a_m,b_1,\ldots,b_m$ are  pairwise disjoint fibres in $\interior{R_1}\cup\ldots\cup\interior{R_n}$,
\end{itemize}
such that
\[
\trace{\fixhomeo{F}{M,\vec{R}}}=\bigcup_{f\in\FFF}\langle\twist{a_1}\twist{b_1}^{-1},\ldots,\twist{a_m}\twist{b_m}^{-1}\rangle f
\]
as a subgroup of $\fixhomeo{F}{\boundary M,\vec{R}}$.
\end{proposition}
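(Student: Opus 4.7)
The plan is to reduce to fibre-preserving homeomorphisms and then analyse them via the induced action on the base orbifold. First I would algorithmically produce a Seifert fibration of $(M,\vec{R},F)$ as in the discussion preceding the statement, letting $\map{p}{M}{B}$ denote the projection to the base orbifold and noting that $F$ is a union of fibres. By \cref{thm:unique seifert fibration} applied to the $(n+2)$\=/tuple $(M,\vec{R},F)$ this fibration is unique up to isotopy in $(M,\vec{R},F)$ (the stated exceptions are excluded by hypothesis, and the extra surface $F$ restricts the ambiguity of \cref{rmk:inequivalent seifert fibration on i-bundle over klein bottle} if present). Given any $\phi\in\fixhomeo{F}{M,\vec{R}}$, I would apply the uniqueness of the fibration and then \cref{thm:seifert manifolds fibre preserving on the boundary}, taking $X=F$ (a non\=/empty union of fibres), to isotope $\phi$ relative to $F$ to a fibre\=/preserving homeomorphism.

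Once $\phi$ is fibre\=/preserving, it induces a self\=/homeomorphism $\bar\phi$ of $B$ which preserves the exceptional point set and the images $\bar R_1,\ldots,\bar R_n$ of the $R_i$, and which fixes the image $\bar F$ pointwise. The key structural remark is that a fibre\=/preserving $\phi$ is determined, up to isotopy rel $F$, by the pair consisting of $\bar\phi$ (up to orbifold isotopy rel $\bar F$) and a tuple of integers recording the amount of fibre\=/wise twisting on each boundary component of $M$ not contained in $F$. Since $B$ is a compact surface with non\=/empty boundary (as $F$ contributes boundary to $B$), with finitely many marked points, and $\bar F$ is a non\=/empty sub\=/$1$\=/manifold of $\boundary B$ fixed pointwise, the mapping class group of $B$ rel $\bar F$ preserving the marked structure is finitely generated and can be computed algorithmically. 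I would decompose it as a finite\=/index extension of the subgroup generated by Dehn twists along essential simple closed curves and arcs of $B$ disjoint from $\bar F$ -- the latter being the analogue on the base of the vertical Dehn twists in $M$.

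Each generating curve $\bar c$ on $B$ (disjoint from the exceptional points and from $\bar F$) lifts to a vertical annulus or torus $A_{\bar c}\subs M$, and the Dehn twist about this vertical surface is a self\=/homeomorphism of $(M,\vec{R},F)$ fixing $F$ pointwise whose trace on $\boundary M$ is, by the formula recalled in the paragraph on Dehn twists about annuli, of the form $\twist{a}\twist{b}^{-1}$ where $a,b$ are the two boundary fibres of $A_{\bar c}$ (or the trivial boundary action if $A_{\bar c}$ is a torus, which can be discarded). I would collect these pairs as the set $\CCC$. For the coset representatives, I would compute finitely many elements of the orbifold mapping class group of $(B,\bar F,\text{marked points})$ covering a transversal of the Dehn\=/twist subgroup, lift each of them to a fibre\=/preserving homeomorphism of $(M,\vec{R},F)$ fixing $F$ (possible because the obstruction to lifting is already absorbed by the fibre\=/twist integers, which I can then restore by composing with suitable vertical twists), and record the traces on $\boundary M$ to form $\FFF$. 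The finiteness of both $\FFF$ and $\CCC$ will follow from the compactness of $B$ and the finiteness of the exceptional set.

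The main obstacle is twofold: first, one must show that the ambiguity of the lift from $\bar\phi$ to $\phi$ is entirely captured by vertical twists whose boundary traces are generated by the $\twist{a_i}\twist{b_i}^{-1}$, which requires a careful look at the exact sequence relating $\Mod(M)$ to $\Mod(B)$ and using the hypothesis that we are not in one of the exceptional cases of \cref{thm:unique seifert fibration} (where the lift is non\=/unique in a more serious way). Second, the computation of the orbifold mapping class group of $(B,\bar F,\text{marked points})$ modulo the Dehn twists along curves disjoint from the marked data must be made explicit: I would reduce this to the mapping class group of a compact surface with boundary and punctures relative to part of its boundary, which admits a finite presentation computable from a pants decomposition, and then pick coset representatives mechanically. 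Combining these steps yields the required description of $\trace{\fixhomeo{F}{M,\vec{R}}}$ as a finite union of cosets of $\langle\twist{a_1}\twist{b_1}^{-1},\ldots,\twist{a_m}\twist{b_m}^{-1}\rangle$.
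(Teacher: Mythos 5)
Your overall shape is right---isotope to fibre\=/preserving, study the induced map on the base, use vertical annuli to generate the ``twisting'' part of the group, and coset representatives for the rest---but you have left the hardest part unaddressed and you are doing substantially more work than is needed elsewhere.

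The key simplification you miss is that only the trace on $\boundary M$ matters. In the paper's argument, once $\bar f$ is arranged to be the identity on $\boundary B$ (which involves only a finite amount of data: permutations of boundary components and their orientations), the remaining freedom in $\bar f$ is a mapping class of $B$ rel $\boundary B$, and the crucial observation is that any such class that lifts to $M$ at all lifts to a self\=/homeomorphism of $M$ that is the identity on $\boundary M$. Composing with such a homeomorphism does not change the trace, so one may assume $\bar f = \id$ on all of $B$ without ever computing generators of the mapping class group of the base orbifold. Your plan to decompose $\Mod(B)$ rel $\bar F$ into a Dehn\=/twist part and a finite part, find a pants decomposition, etc., is therefore unnecessary; you would also need to worry about whether your resulting $\CCC$ consists of \emph{pairwise disjoint} fibres, which the statement requires but which a generic generating system of arcs in $B$ does not automatically provide.

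The genuine gap is exactly the one you flag but then wave at: showing that the twist integers recorded on boundary components of $M$ not in $F$ are constrained precisely to the subgroup generated by the $\twist{a_i}\twist{b_i}^{-1}$. Composing with Dehn twists about vertical annuli $A_{X,Y}$ lets you normalise all but one of these integers to zero, but one must then \emph{prove} that the last one is forced to vanish---otherwise you only get containment of $\trace{\fixhomeo{F}{M,\vec{R}}}$ in a \emph{larger} group and the claimed equality fails. This requires a homological argument with a horizontal section of the $\sphere[1]$\=/bundle over $B'=B$ minus neighbourhoods of the exceptional points: if $S$ is a horizontal copy of $B'$, the relation $0=f'_*[\boundary S]=[\boundary S]+k_X[c]$ in $H_1(M')$ kills the last twist when $B$ is orientable, and when $B$ is non\=/orientable one must pass to the orientable double cover and exploit equivariance with respect to the deck transformation to get $k_1=k_2=0$. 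Writing ``the obstruction to lifting is already absorbed by the fibre\=/twist integers, which I can then restore by composing with suitable vertical twists'' does not establish this, and the exact sequence $\Mod(M)\to\Mod(B)$ you invoke is not enough on its own.

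Finally, note that the statement allows $F=\emptyset$; in that case you cannot invoke \cref{thm:seifert manifolds fibre preserving on the boundary} with $X=F$, nor is $\bar F$ a non\=/empty submanifold of $\boundary B$. The paper handles this by using uniqueness of the Seifert fibration (\cref{thm:unique seifert fibration} together with \cref{rmk:inequivalent seifert fibration on i-bundle over klein bottle}) to isotope to a fibre\=/preserving map when $F$ is empty, and the subsequent analysis of $\bar f$ must then also allow orientation\=/reversing behaviour on $\boundary B$. Your proof should say this explicitly.
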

\begin{proof}
Let $\map{p}{(M,\vec{R},F)}{(B,\bar{\vec{R}},\bar{F})}$ be the projection to the base surface of the Seifert fibration, and consider a homeomorphism $f\in\fixhomeo{F}{M,\vec{R}}$. We can isotope $f$ to be fibre\=/preserving fixing $F$ pointwise, using \cref{thm:unique seifert fibration,rmk:inequivalent seifert fibration on i-bundle over klein bottle} if $F$ is empty and \cref{thm:seifert manifolds fibre preserving on the boundary} otherwise. Denote by $\bar{f}$ the induced self\=/homeomorphism of $(B,\bar{\vec{R}},\bar{F})$, which is not necessarily orientation\=/preserving.
\step{Identity on $B$.} As we will see, the role of $\FFF$ is to encode the action of $\fixhomeo{F}{M,\vec{R}}$ on the space $(\boundary B,\bar{\vec{R}},\bar{F})$. First of all, note that we can easily list all permutations of the components of $\boundary B$ which are induced by elements of $\fixhomeo{F}{M,\vec{R}}$. Therefore, we can compute a set of representatives $\FFF_0\subs\fixhomeo{F}{M,\vec{R}}$ such that, up to replacing $f$ with $ff_0$ for some $f_0\in\FFF_0$, we can assume that $f$ sends each component of $\boundary M$ to itself.

The homeomorphism $f$ induces a self\=/homeomorphism of $(\boundary B,\bar{\vec{R}},\bar{F})$ which fixes $\bar{F}$ pointwise and sends each component of $\boundary B$ to itself. If $F$ is non\=/empty, the homeomorphism $\bar{f}$ acts orientation\=/preservingly on each component of $\boundary B$; otherwise, it will consistently preserve or reverse the orientations of these components. Either way, there are only finitely many such self\=/homeomorphisms of $(\boundary B,\bar{\vec{R}},\bar{F})$, and they are all induced by elements of $\fixhomeo{F}{M,\vec{R}}$. We can then compute a set $\FFF_1\subs\fixhomeo{F}{M,\vec{R}}$ such that, up to replacing $f$ with $ff_1$ for some $f_1\in\FFF_1$, we can assume that $\bar{f}$ is the identity on $\boundary B$.

In fact, up to further composing $f$ with a self\=/homeomorphism of $M$ fixing $\boundary M$ pointwise, we can assume that $\bar{f}$ is the identity on all of $B$; this follows from the fact that if a self\=/homeomorphism of $B$ fixing $\boundary B$  lifts to a self\=/homeomorphism of $M$, then it lifts to a self\=/homeomorphism of $M$ fixing $\boundary M$.

\step{Identity on almost all of $\boundary M$.} Let $\XXX$ be the set of components of $\boundary M\setminus\interior{F}$. For each pair $X,Y$ of different components in $\XXX$, let $A_{X,Y}$ be a vertical annulus connecting $X$ and $Y$. We can of course assume that the boundary curves of all these annuli lie in $\interior{R_1}\cup\ldots\cup\interior{R_n}$ and are pairwise disjoint. Moreover, for each component $X\in\XXX$, we can define $k_X$ to be the unique integer such that $f|_X$ is isotopic to the composition of $k_X$ Dehn twists of $X$ about a fibre, through an isotopy which fixes $\boundary X$ pointwise. Up to composing with powers of Dehn twists about the annuli $A_{X,Y}$ -- which induce the identity on $B$ -- we can assume that $k_X=0$ for all components $X\in\XXX$ but one. In other words, we may assume that the restriction of $f$ to $\boundary M\setminus\interior{X}$ is the identity for some component $X\in\XXX$.

\step{Identity on $\boundary M$.} In fact, we now prove that $f$ restricts to the identity on all of $\boundary M$. Let $N_1,\ldots,N_m$ be fibred regular neighbourhoods of the exceptional fibres of $M$. Without loss of generality, we may assume that $f$ restricts to the identity on $N_1\cup\ldots\cup N_m$. Define $M'=M\setminus\interior{N_1\cup\ldots\cup N_m}$, which is an $\sphere[1]$\=/bundle over the surface $B'=B\setminus\interior{p(N_1\cup\ldots\cup N_m)}$; since $B'$ has non\=/empty boundary, the bundle is uniquely determined by $B'$. The homeomorphism $f$ restricts to $\map{f'}{M'}{M'}$ such that $f'$ is the identity on $\boundary M'\setminus\interior{X}$.
\begin{itemize}
\item Let us first deal with the case where $B$ is orientable. Let $S$ be a horizontal copy of $B'$ in $M'$. At the level of homology, we have that
\[
0=f'_{\ast}[\boundary S]=[\boundary S]+k_X\cdot[c]=k_X\cdot[c]\in H_1(M'),
\]
where $[c]$ is the homology class of a fibre. We conclude that $k_X=0$ and that $f$ does indeed restrict to the identity on $\boundary M$.
\item Assume now that $B$ -- and hence $B'$ -- is non\=/orientable. Let $\wtilde{M'}$ be the double covering of $M'$ which is a product $\sphere[1]$\=/bundle over the orientable double covering $\wtilde{B'}$ of $B'$. Denote by $\map{q}{\wtilde{M'}}{M'}$ the covering map, and by $\map{\iota}{\wtilde{M'}}{\wtilde{M'}}$ the non\=/trivial deck transformation of $q$. The homeomorphism $f'$ lifts to $\map{\wtilde{f'}}{\wtilde{M'}}{\wtilde{M'}}$, which is the identity on $\boundary\wtilde{M'}\setminus q^{-1}(\interior{X})$; denote by $\wtilde{X}_1$ and $\wtilde{X}_2$ the two components of $q^{-1}(X)$, and by $k_1$ and $k_2$ the integers such that $\wtilde{f'}|_{\wtilde{X}_i}$ is the $k_i$\=/th power of the Dehn twist about a fibre for $i\in\{1,2\}$. Let $\wtilde{S}$ be a horizontal copy of $\wtilde{B'}$ in $\wtilde{M'}$. Fixing an orientation of $\wtilde{S}$ allows us to define canonically oriented horizontal curves $a_1$ and $a_2$, which are the boundary components of $\wtilde{S}$ intersecting $\wtilde{X}_1$ and $\wtilde{X}_2$ respectively. Moreover, let $b_1$ and $b_2$ be fibres of $\wtilde{X}_1$ and $\wtilde{X}_2$, oriented so that they intersect $a_1$ and $a_2$ with positive sign. We can then define $k_i$ as the integer such that $[\wtilde{f'}(a_i)]=[a_i]+k_i\cdot[b_i]$ in $H_1(\boundary\wtilde{M'})$ for $i\in\{1,2\}$. The homological argument described in the orientable case shows that $k_1+k_2=0$. On the other hand, note that $[\iota(a_1)]=-[a_2]$ and $[\iota(b_1)]=-[b_2]$. Then the relation $\wtilde{f'}\iota=\iota\wtilde{f'}$ readily implies that $k_1=k_2$, since
\begin{align*}
[\wtilde{f'}\iota(a_1)]&=-[\wtilde{f'}(a_2)]=-[a_2]-k_2\cdot[b_2],\\
[\iota\wtilde{f'}(a_1)]&=[\iota(a_1)]+k_1\cdot[\iota(b_2)]=-[a_2]-k_1\cdot[b_2].
\end{align*}
We conclude that $\wtilde{f'}$ restricts to the identity on $\boundary\wtilde{M'}$, and hence $f$ restricts to the identity on $\boundary M$.
\end{itemize}

\step{Conclusion.} We have decomposed $f$ as a product
\[
t\circ h\circ f_1\circ f_0\in\fixhomeo{F}{M,\vec{R}},
\]
where $f_0\in\FFF_0$, $f_1\in\FFF_1$, $h\in\fixhomeo{\boundary M}{M}$, and $t$ is a product of powers of Dehn twists of $M$ about the annuli $A_{X,Y}$ for $X,Y\in\XXX$. The algorithm will then return
\belowdisplayskip=-12pt\begin{align*}
\FFF=\{\trace{(f_0f_1)}:f_0\in\FFF_0,f_1\in\FFF_1\}&&\text{and}&&\CCC=\{\boundary A_{X,Y}:\text{$X,Y\in\XXX$ and $X\neq Y$}\}.
\end{align*}\qedhere
\end{proof}

As an easy consequence, we can also solve the extension problem for Seifert $n$\=/tuples.

\begin{corollary}\label{thm:seifert extension of homeomorphisms}
Let $(M,\vec{R},F)$ and $(M',\vec{R}',F')$ be Seifert\=/fibred $(n+2)$\=/tuples not homeomorphic to one of the exceptions described in \ref{thm:unique seifert fibration:1} and \ref{thm:unique seifert fibration:2} of \cref{thm:unique seifert fibration}, and let $\map{f}{F}{F'}$ be a fibre\=/preserving homeomorphism. There is an algorithm which, given as input the two $(n+2)$\=/tuples and $f$, decides whether $f$ extends to a (not necessarily fibre\=/preserving) homeomorphism
\[
\umap{(M,\vec{R},F)}{(M',\vec{R}',F')}.
\]
\end{corollary}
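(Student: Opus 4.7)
The plan is to reduce the extension question to a self\=/extension problem on $(M,\vec{R},F)$, and then perform a finite enumeration modelled on the proof of \cref{thm:mapping class group of seifert spaces}.

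First, apply \cref{thm:seifert homeomorphism problem} to decide whether $(M,\vec{R},F)$ and $(M',\vec{R}',F')$ are homeomorphic as Seifert $(n+2)$\=/tuples. If they are not, then $f$ cannot extend and the algorithm returns NO. Otherwise, by the infinite search template produce an explicit homeomorphism $g\colon(M,\vec{R},F)\to(M',\vec{R}',F')$. Since we have excluded exceptions \ref{thm:unique seifert fibration:1} and \ref{thm:unique seifert fibration:2} of \cref{thm:unique seifert fibration}, combining \cref{thm:unique seifert fibration} (together with \cref{rmk:inequivalent seifert fibration on i-bundle over klein bottle} if the tuple is the $I$\=/bundle over the Klein bottle) with \cref{thm:seifert manifolds fibre preserving on the boundary} lets us assume that $g$ is fibre\=/preserving. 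The problem then becomes equivalent to deciding whether $\phi:=g^{-1}\circ f\in\homeo{F}$, which is itself fibre\=/preserving, extends to a self\=/homeomorphism of $(M,\vec{R},F)$.

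By \cref{thm:seifert manifolds fibre preserving on the boundary} it is enough to look for fibre\=/preserving extensions. A fibre\=/preserving extension projects to a self\=/homeomorphism of the base orbifold $(B,\bar{\vec{R}},\bar{F})$ extending $\bar\phi$. Adapting the decomposition in the proof of \cref{thm:mapping class group of seifert spaces} -- but dropping the hypothesis that the restriction to $F$ be the identity -- every fibre\=/preserving self\=/homeomorphism of $(M,\vec{R},F)$ can be written, up to isotopy, as $t\circ h_0\circ f_1\circ f_0$, where $f_0$ ranges over a finite set $\FFF_0$ of permutations of the boundary components of $M$ compatible with the orbifold data, $f_1$ ranges over a finite set $\FFF_1$ representing the induced action on each boundary component modulo Dehn twists about a fibre, $h_0$ restricts to the identity on $\boundary M$, and $t$ is a product of Dehn twists about the vertical annuli between different boundary components (whose boundary curves form the collection $\CCC$ produced by \cref{thm:mapping class group of seifert spaces}).

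Enumerate the finitely many pairs $(f_0,f_1)\in\FFF_0\times\FFF_1$ whose restriction to $F$ matches $\phi$ on each component of $F$ up to a Dehn twist about a fibre; for each such pair record the tuple of integers $(k_X)$ indexed by the components $X$ of $F$ that measures this discrepancy. By \cref{thm:mapping class group of seifert spaces} the admissible tuples of Dehn twist numbers on the components of $\boundary M\setminus\interior{F}$ form a computable sublattice generated by the Dehn twists encoded in $\CCC$; restricting this sublattice to the coordinates indexed by components of $F$ gives a computable subgroup. The algorithm returns YES if and only if some candidate pair $(f_0,f_1)$ yields a discrepancy $(k_X)$ lying in this subgroup, which is a decidable linear\=/algebra question. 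The main obstacle is step\=/by\=/step verification that the enumeration of $\FFF_0\times\FFF_1$ is exhaustive once $\phi|_F$ is no longer required to be trivial, but this is a routine adaptation of the argument of \cref{thm:mapping class group of seifert spaces}.
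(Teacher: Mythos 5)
Your strategy matches the paper's in outline: decide the homeomorphism question via \cref{thm:seifert homeomorphism problem}, pick an explicit fibre-preserving reference homeomorphism to reduce to a self-extension problem for $\phi=g^{-1}\circ f$ on $(M,\vec{R},F)$, decompose fibre-preserving self-homeomorphisms as in the proof of \cref{thm:mapping class group of seifert spaces}, and answer a finite lattice-membership question. The gap is in the key step. The collection $\CCC$ that \cref{thm:mapping class group of seifert spaces} returns when applied to $(M,\vec{R},F)$ consists of fibres in $\interior{R_1}\cup\cdots\cup\interior{R_n}$, i.e.\ \emph{disjoint from $F$}. The associated Dehn twists $\twist{a_i}\twist{b_i}^{-1}$ of $\boundary M$ are supported away from $F$ and hence restrict to the identity on $F$. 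Consequently, the sublattice you propose to restrict to the $F$-coordinates is trivial, and your algorithm would answer NO whenever some $k_T\ne 0$ — which is wrong (the paper's criterion is that the answer is YES whenever $F\ne\boundary M$, and when $F=\boundary M$ it is YES iff $\sum_T k_T=0$). Your sentence indexing the ``admissible tuples'' by components of $\boundary M\setminus\interior{F}$ and then restricting to the $F$-coordinates (which do not appear in that index set) is a symptom of the same confusion.

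What is actually needed — and what the paper does, if somewhat tersely — is to rerun the decomposition with all of $\boundary M$ treated as ``free,'' so that the vertical annuli may have boundary circles lying on torus components of $F$; the corresponding Dehn twists do move $F$. One then forms the graph whose vertices are the boundary tori of $M$ and whose edges are these annuli; its connectedness shows that the tuples $(k_T)$ achievable by $t$ are all of $\ZZ^{\{\text{torus components of }F\}}$ unless $\boundary M=F$, in which case they form the hyperplane $\sum_T k_T=0$. You gesture at the necessary modification (``dropping the hypothesis that the restriction to $F$ be the identity''), but the explicit appeal to the unmodified $\CCC$ undoes it, and without the connectivity argument the ``decidable linear-algebra question'' has the wrong generating set. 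Supplying the corrected generating annuli and the connectivity criterion closes the gap; the rest of your argument is sound.
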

\begin{proof}\todo{Might need serious rewriting.}
It suffices to be able to answer the following two questions algorithmically.
\begin{enumarabic}
\item Are the $(n+2)$\=/tuples $(M,\vec{R},F)$ and $(M',\vec{R}',F')$ homeomorphic? Note that if they are, then we can choose the fibration on $(M,\vec{R},F)$ so that they are fibre\=/preservingly so.
\item Given a fibre\=/preserving homeomorphism $\map{g}{F}{F}$, does it extend to a self\=/homeomorphism of $(M,\vec{R},F)$?
\end{enumarabic}
The first question is addressed in \cref{thm:seifert homeomorphism problem}. Thanks to \cref{thm:mapping class group of seifert spaces}, we can reduce the second to the following problem (note that if $\boundary M=\emptyset$ we are already done).
\begin{itemize}
\item[\tikzenumlabel{2'}] Given fibres $a_1,\ldots,a_m,b_1,\ldots,b_m$ in $\boundary M$, does $g$ extend to a self\=/homeomorphism of $(\boundary M,\vec{R},F)$ belonging to the subgroup of $\homeo{\boundary M,\vec{R},F}$ generated by $\twist{a_1}\twist{b_1}^{-1},\allowbreak\ldots,\allowbreak\twist{a_m}\twist{b_m}^{-1}$?
\end{itemize}

If $g$ permutes the components of $F$ in a non\=/trivial way or it reverses the orientation of the fibres on some component of $F$ then the answer is no, since Dehn twists along fibres preserve components of $F$ and the orientation of the fibres. Otherwise, the homeomorphism $g$ acts like the identity on each annulus component of $F$, and is an integer power of the Dehn twist about a fibre on each torus component of $F$. Moreover, for each torus component $T$ of $F$, we can compute the integer $k_T$ such that $g|_T$ is the composition of $k_T$ Dehn twists about a fibre. Clearly, the answer to the extension problem for $g$ only depends on the values $k_T$, and is surely yes if they are all equal to zero.

Let us define a graph $\Gamma$, whose vertices are in one\=/to\=/one correspondence with components of $\boundary M$. Each pair $(a_i,b_i)$ defines an edge of $\Gamma$, joining the two vertices corresponding to the two (not necessarily distinct) boundary tori of $M$ containing $a_i$ and $b_i$.  It is not hard to see that, if $T$ is a torus component of $F$, the pair $(a_i,b_i)$ represents an edge of $\Gamma$ with $a_i\subs T$, and $X$ is the component of $\boundary M$ which contains $b_i$, then the following operations do not change the answer to the extension problem for $g$:
\begin{itemize}
\item if $X\subs F$, then increase $k_T$ by $1$ and decrease $k_X$ by $1$, or vice versa;
\item otherwise, increase or decrease $k_T$ by $1$.
\end{itemize}
By the proof of \cref{thm:mapping class group of seifert spaces}, the graph $\Gamma$ is connected. Therefore, we immediately conclude that the answer to the extension problem is yes if and only if one of the following condition holds:
\begin{enumroman}
\item the boundary of $M$ is not completely covered by $F$;
\item the boundary of $M$ is completely covered by $F$, and $\sum_{T}k_T=0$.\qedhere
\end{enumroman}
\end{proof}

\subsection{\texorpdfstring{$I$\=/bundle}{I-bundle} pieces}\label{sec:homeomorphisms:i-bundle}

We now move on to the -- somewhat easier -- study of $I$\=/bundle pieces. We first show how to solve the extension problem, and then address the mapping class group in \cref{thm:mapping class group of i-bundles}.

\begin{proposition}\label{thm:i-bundle extension of homeomorphisms}
Let $(M,R,F)$ and $(M',R',F')$ be $I$\=/bundle triples, and let $\map{f}{F}{F'}$ be a homeomorphism. There is an algorithm which, given as input the two triples, decides whether $f$ extends to a homeomorphism
\[
\umap{(M,R,F)}{(M',R',F')}.
\]
\end{proposition}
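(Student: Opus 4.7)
The plan is to reduce the extension problem for $I$\=/bundle triples to the analogous problem for surface homeomorphisms, which is algorithmic by the classical results on surfaces quoted in the introduction.

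First, I would algorithmically recover the base surfaces $B$ and $B'$ of $M$ and $M'$ respectively, together with the discrete invariant recording whether each bundle is a product $B\times I$ or a twisted bundle $B\twtimes I$. This data can be read off from the horizontal boundary: a product bundle has $R=\partial_h M$ with two components, each a copy of $B$, while a twisted bundle has $R$ connected and equal to the orientation double cover of $B$. The homeomorphism types of $B$ and $B'$ can then be computed, and if they differ (or if the bundle types differ for non\=/exceptional bases) one concludes that no extension exists.

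Second, I would use the standard rigidity of $I$\=/bundles: outside of a short list of low\=/complexity exceptions, every homeomorphism $(M,R,F)\to(M',R',F')$ is isotopic to a bundle isomorphism, and such an isomorphism is determined up to isotopy by the induced homeomorphism of bases $\bar\phi\colon B\to B'$ together with a finite amount of discrete data (one $\ZZ/2$ choice per component of the total space, corresponding to $I$\=/fibre flips). The given map $f\colon F\to F'$ descends, after quotienting by the $I$\=/fibres of the vertical boundary, to a homeomorphism $\bar f\colon\partial B\to\partial B'$, and it also records (on each annulus or M\"obius band component of $F$) whether the two horizontal sides are swapped. The extension question for $f$ is therefore equivalent to the following purely $2$\=/dimensional question: does $\bar f$ extend to a homeomorphism $B\to B'$ compatible with the discrete data extracted from $f$? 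Both the existence and the enumeration of such extensions up to isotopy rel boundary are algorithmic, by the classical theorems on surfaces recalled earlier. Concretely, the algorithm enumerates finitely many combinatorial choices (permutations of components of $F$, side\=/swap data, $I$\=/flip data, and the global orientation of the $I$\=/direction), and for each choice it invokes the surface extension algorithm on $\bar f$; if any choice succeeds, one lifts the resulting base homeomorphism to a bundle map and verifies, using the algorithmic decidability of isotopy of surface homeomorphisms, that its restriction to $F$ is isotopic to $f$.

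The main obstacle will be handling the low\=/complexity exceptions where the $I$\=/bundle structure is not unique up to isotopy — principally when $M$ is a $3$\=/ball (base a disc), when the base is an annulus or M\"obius band, or when $M\cong\torus[2]\times I$ or $\nsurf{2}\twtimes I$. In each of these cases the triple $(M,R,F)$ admits only finitely many $I$\=/bundle structures up to isotopy of the triple, and one can simply enumerate them and run the reduction above for each pair of choices. A secondary but straightforward bookkeeping issue is to verify that Dehn twists along essential annuli contained in $F$ (which are isotopic to the identity through the bundle) are correctly accounted for when comparing the lifted bundle map to $f$ on $F$; this is handled by normalising $f$ modulo such twists before running the surface extension test.
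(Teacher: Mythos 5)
Your proposal is in the same broad spirit as the paper's proof but takes a considerably more circuitous route, and in the process obscures the simple combinatorial nature of the answer. The paper reduces the problem to two questions — (a) are the two triples homeomorphic, decided by computing and comparing base surfaces; and (b) given a self\=/homeomorphism $g$ of $F$, does it extend to $(M,R,F)$ — and answers (b) with a one\=/line criterion: after normalising $g$ to be fibre\=/preserving and to fix each component of $F$ (using that $\homeo{M,R,F}$ permutes the components of $F$ transitively), the map $g$ extends if and only if it \emph{consistently} preserves, or consistently swaps, the two boundary circles of every annulus component of $F$. Your proposal arrives at essentially the same constraint, but buried among a long list of discrete data (``permutations of components of $F$, side\=/swap data, $I$\=/flip data, and the global orientation of the $I$\=/direction'') and combined with a ``surface extension algorithm'' step that is a red herring: once $\bar f$ is known to be a homeomorphism $\partial B\to\partial B'$ with the right permutation of components, it always extends to $B\to B'$, so the surface machinery you invoke does no real work — the only genuine obstruction is the consistency of the $I$\=/flip data, which is exactly the paper's criterion.

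Two further remarks. First, your concern about ``low\=/complexity exceptions where the $I$\=/bundle structure is not unique'' is unfounded here: because the input is the full triple $(M,R,F)$ rather than just the total space $M$, the base surface is canonically recovered as $R$ (in the product case, one of its two components; in the twisted case, its non\=/orientable quotient), so there is no ambiguity to enumerate over; the multiple $I$\=/bundle structures on, say, a solid torus correspond to genuinely different triples. Second, your parenthetical claim that Dehn twists along annuli in $F$ ``are isotopic to the identity through the bundle'' is not quite right — the corkscrew map extending such a Dehn twist is generally not isotopic to the identity rel $F$ — but the normalisation you describe (quotienting $f$ by these twists before testing) is still the correct move, and is exactly the step the paper packages as ``we may assume that $g$ is fibre\=/preserving.''
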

\begin{proof}
It suffices to be able to solve the following two questions algorithmically.
\begin{enumarabic}
\item Are the triples $(M,R,F)$ and $(M',R',F')$ homeomorphic?
\item Given a homeomorphism $\map{g}{F}{F}$, does it extend to a self\=/homeomorphism of $(M,R,F)$?
\end{enumarabic}
The first question can be easily answered, since we can compute the base surfaces of the $I$\=/bundles $(M,R,F)$ and $(M',R',F')$ and check if they are the same. As far as the second is concerned, we may assume that $g$ is fibre\=/preserving. Moreover, since the group $\homeo{M,R,F}$ acts transitively on the components of $F$, we can further assume that $g$ maps each component of $F$ to itself. Now, the restriction of $g$ to an annulus component $A$ of $F$ can either preserve or swap the boundary components of $A$. If $g$ consistently preserves or swaps the boundary components of every component of $F$, then it extends to a self\=/homeomorphism of $(M,R,F)$; otherwise, it does not.
\end{proof}

\begin{proposition}\label{thm:mapping class group of i-bundles}
There is an algorithm which, given as input an $I$\=/bundle triple $(M,R,F)$, returns
\begin{itemize}
\item a finite collection $\FFF$ of self\=/homeomorphisms of $(\boundary M,R,F)$ fixing $F$ pointwise, and
\item a finite collection $\CCC=\{(a_1,b_1),\ldots,(a_m,b_m)\}$, where $a_1,\ldots,a_m,b_1,\ldots,b_m$ are curves in $\interior{R}$ with $a_1\cap b_1=\ldots=a_m\cap b_m=\emptyset$,
\end{itemize}
such that
\[
\trace{\fixhomeo{F}{M,R}}=\bigcup_{f\in\FFF}\langle\twist{a_1}\twist{b_1}^{-1},\ldots,\twist{a_m}\twist{b_m}^{-1}\rangle f
\]
as a subgroup of $\fixhomeo{F}{\boundary M,R}$. Moreover, if $(M,R,F)$ is the product $I$\=/bundle over $\surf{0}$, $\surf{0,1}$, $\surf{0,2}$, or $\surf{0,3}$, or the twisted $I$\=/bundle over $\nsurf{1}$, $\nsurf{1,1}$, or $\nsurf{1,2}$, then the curves in $\CCC$ are pairwise disjoint.
\end{proposition}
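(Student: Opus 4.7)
The plan is to reduce the computation of $\trace{\fixhomeo{F}{M,R}}$ to the mapping class group of the base surface of the $I$\=/bundle fixing its boundary pointwise. Write $(M,R,F)$ as $(X,\boundary_h X,\boundary_v X)$ for some $I$\=/bundle $\map{p}{X}{B}$.

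First, I would argue that every $f\in\fixhomeo{F}{M,R}$ is isotopic rel $F$ to a fibre\=/preserving self\=/homeomorphism. This is a standard fact, analogous in spirit to \cref{thm:seifert manifolds fibre preserving on the boundary}, but considerably easier because each fibre is an arc with no non\=/trivial self\=/homeomorphism rel its boundary. Such a fibre\=/preserving $f$ descends to some $\bar f\in\fixhomeo{\boundary B}{B}$, and conversely every $\bar f\in\fixhomeo{\boundary B}{B}$ has a unique fibre\=/preserving lift to $M$ fixing $F$ pointwise. Thus restriction to the base induces an isomorphism
\[
\fixhomeo{F}{M,R}\cong\fixhomeo{\boundary B}{B}.
\]

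Next, I would use the classical fact that $\fixhomeo{\boundary B}{B}$ admits a finite presentation with a generating set consisting of Dehn twists about finitely many two\=/sided simple closed curves $c_1,\ldots,c_m$ in $\interior B$, together with a bounded number of non\=/twist ``exceptional'' generators -- absent when $B$ is orientable by Lickorish's theorem, and consisting of crosscap slides otherwise -- which represent finitely many cosets modulo the subgroup generated by Dehn twists. Each $\twist{c_i}$ lifts to the Dehn twist about the vertical annulus $A_i=p^{-1}(c_i)\subs M$, whose trace on $\boundary M$ is $\twist{a_i}\twist{b_i}^{\pm 1}$, where $\{a_i,b_i\}=\boundary A_i\subs\interior R$. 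We then take $\CCC=\{(a_1,b_1),\ldots,(a_m,b_m)\}$ and let $\FFF$ be the set of traces on $\boundary M$ of the fibre\=/preserving lifts of the non\=/twist coset representatives. Each step can be implemented algorithmically using explicit finite presentations of mapping class groups of compact surfaces together with the tools from \cref{sec:introduction:classical algorithms}.

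For the final claim, I would check by inspection each listed case: the group $\fixhomeo{\boundary B}{B}$ is trivial when $B\in\{\surf{0},\surf{0,1},\nsurf{1}\}$, infinite cyclic generated by a single Dehn twist about a boundary\=/parallel curve when $B\in\{\surf{0,2},\nsurf{1,1}\}$, and generated by Dehn twists about pairwise disjoint boundary\=/parallel curves when $B\in\{\surf{0,3},\nsurf{1,2}\}$. In every case the generating Dehn twists are about pairwise disjoint curves in $B$, so the vertical annuli $A_1,\ldots,A_m$ are pairwise disjoint in $M$, and hence the curves in $\CCC$ are pairwise disjoint in $R$.

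The main obstacle in this plan is making the first step -- straightening $f$ to a fibre\=/preserving homeomorphism while fixing $F$ pointwise throughout the isotopy -- rigorous, including verifying that no pathological behaviour arises in any of the small\=/complexity $I$\=/bundles.
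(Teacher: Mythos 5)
Your approach is essentially the one the paper takes: isotope a self\-/homeomorphism of $(M,R,F)$ rel $F$ to a fibre\-/preserving one, descend to a self\-/homeomorphism of the base surface $B$ fixing $\partial B$ pointwise, express the latter using a finite generating set consisting mostly of Dehn twists, and lift each twist $\twist{c_i}$ to the Dehn twist about the vertical annulus $p^{-1}(c_i)$, whose trace is a product of twists about the two boundary curves.

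There is, however, one genuine gap in the way you set up the dictionary. The claimed isomorphism $\fixhomeo{F}{M,R}\cong\fixhomeo{\partial B}{B}$ is not correct in the case $F=\emptyset$ with $B$ closed and orientable (so $M=B\times I$ and $R=B\times\{0,1\}$): an orientation\-/preserving self\-/homeomorphism of $(M,R)$ that swaps $B\times\{0\}$ and $B\times\{1\}$ descends to an \emph{orientation\-/reversing} self\-/homeomorphism of $B$, and conversely every orientation\-/reversing homeomorphism of $B$ lifts (uniquely up to fibre\-/wise isotopy) to an orientation\-/preserving homeomorphism of $M$ that swaps the two boundary components. So the image of the ``restrict to the base'' map lands in the extended (i.e.\ $\pm$) mapping class group of $B$, not just in the orientation\-/preserving one. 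Your ``exceptional non\-/twist generators'' only cover crosscap slides for non\-/orientable $B$, so this extra generator is missed. The paper handles it explicitly by choosing a self\-/homeomorphism $\iota\in\homeo{M,R}$ whose projection to $B$ is orientation\-/reversing (it exists exactly when $F=\emptyset$) and putting $\FFF=\{\id,\trace{\iota}\}$. When $F\neq\emptyset$, the fact that $f$ fixes $F$ pointwise forces $\bar f$ to be orientation\-/preserving on each boundary component, and your argument goes through as stated.

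A second, smaller point of precision: in the non\-/orientable case, the union of Dehn twists and a single crosscap slide $y$ is not literally a left transversal of $\langle\twist{c_1},\ldots,\twist{c_m}\rangle$, since $y$ need not normalize that subgroup. One must enlarge the subgroup to include the conjugate twists $y\twist{c_i}y^{-1}$ (which are themselves Dehn twists, about $y(c_i)$) and $y^2$ (again a Dehn twist, since a crosscap slide squares to a boundary twist) before $\{\id,y\}$ becomes a set of coset representatives. This is exactly the algebraic massaging the paper performs. Your phrasing ``modulo the subgroup generated by Dehn twists'' is defensible if ``Dehn twists'' means all Dehn twists of $B$, but then one still needs to check that this larger subgroup is generated by finitely many explicit twists, which is what the $y\twist{c_i}y^{-1}$ trick accomplishes.

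Your final verification of the small base surfaces is correct and matches the paper's observation that the generating curves can be chosen pairwise disjoint in those cases.
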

\begin{proof}
Let $\map{p}{(M,F)}{(B,\boundary B)}$ be the projection to the base surface of the $I$\=/bundle, and consider a homeomorphism $f\in\fixhomeo{F}{M,R}$. We can isotope $f$ to be fibre\=/preserving fixing $F$ pointwise\todo{Why would I prove \cref{thm:seifert manifolds fibre preserving on the boundary} and not this? Is this even true when $\boundary B=\emptyset$?}. Denote by $\bar{f}$ the induced self\=/homeomorphism of $B$, which is not necessarily orientation\=/preserving. We now distinguish two cases, depending on whether $B$ is orientable or not.
\begin{substeps}
\item If $B$ is orientable, then $\bar{f}$ is necessarily orientation\=/preserving, unless $F=\emptyset$. If this is the case, let us choose a homeomorphism $\iota\in\homeo{M,R}$ such that the induced homeomorphism $\map{\bar{\iota}}{B}{B}$ is orientation\=/reversing; otherwise, we set $\map{\iota=\id}{M}{M}$. Up to composing $f$ with $\iota$, we can assume that $\bar{f}$ is orientation\=/preserving. There is a well\=/known explicit set of curves $c_1,\ldots,c_m\subs B$ such that $\twist{c_1},\ldots\twist{c_m}$ generate $\fixhomeo{\boundary B}{B}$ -- see, for instance, \cite[Section 4.4.4]{farb-margalit-primer}. As a consequence, the homeomorphism $f$ belongs to the subgroup of $\fixhomeo{F}{M,R}$ generated by $\twist{A_1},\ldots,\twist{A_m}$, where $A_i=p^{-1}(c_i)$ for $1\le i\le m$. The algorithm will then return
\begin{align*}
\FFF=\{\id,\trace{\iota}\}&&\text{and}&&\CCC=\{\boundary A_i:1\le i\le m\}.
\end{align*}
Note that, if $B$ is one of $\surf{0}$, $\surf{0,1}$, $\surf{0,2}$, or $\surf{0,3}$, then the curves $c_1,\ldots,c_m$ can be chosen to be disjoint.
\item If $B$ is non\=/orientable, denote by $\pmfixhomeo{\boundary B}{B}$ the group of self\=/homeomorphisms of $B$ fixing $\boundary B$ pointwise, modulo isotopies through homeomorphisms of the same kind. \textcite[Section 3]{chillingworth-mapping-class-group-non-orientable} and \textcite[Proposition 3.2]{kobayashi-mapping-class-group-non-orientable-boundary}%
, respectively for the cases $\boundary B=\emptyset$ and $\boundary B\neq\emptyset$, provide an explicit set of two\=/sided curves $c_1,\ldots,c_m\subs B$ and a homeomorphism $y\in\pmfixhomeo{\boundary B}{B}$ such that
\[
\pmfixhomeo{\boundary B}{B}=\langle\twist{c_1},\ldots,\twist{c_m},y\rangle.
\]
The homeomorphism $y$ is a \emph{Y\=/homeomorphism}, a description of which can be found in \cite[Section 2]{lickorish-mapping-class-group-non-orientable}; in particular, we have that $y^2$ is a Dehn twist about a curve $c_0$ which can be computed explicitly. Easy algebraic manipulations show that
\[
\pmfixhomeo{\boundary B}{B}=\bigcup_{g\in\{\id,y\}}\langle\twist{c_1},\ldots,\twist{c_m},y\twist{c_1}y^{-1},\ldots,y\twist{c_m}y^{-1},y^2\rangle g.
\]
Define $c_{m+i}=y(c_i)$ for $1\le i\le m$, so that $y\twist{c_i}y^{-1}=\twist{c_{m+i}}$; let $h\in\fixhomeo{F}{M,R}$ be a homeomorphism such that $\bar{h}=y$. The algorithm will then return
\begin{align*}
\FFF=\{\id,\trace{h}\}&&\text{and}&&\CCC=\{\boundary p^{-1}(c_i):0\le i\le 2m\}.
\end{align*}
Note that, if $B$ is one of $\nsurf{1,0}$, $\nsurf{1,1}$, or $\nsurf{1,2}$, then the curves $c_1,\ldots,c_m$ can be chosen to be disjoint.\qedhere
\end{substeps}
\end{proof}

\subsection{Simple pieces}\label{sec:homeomorphisms:simple}

We finally turn our attention to simple pieces. As anticipated, instead of using \citeauthor{johannson-jsj}'s original approach with hierarchies, we will prove the finiteness of the mapping class group of these pieces by means of hyperbolic geometry and the geometrisation theorem. While sacrificing generality, this strategy will allow us to bypass the algorithmically challenging induction procedure of \citeauthor{johannson-jsj}.


In \cref{sec:homeomorphisms:seifert,sec:homeomorphisms:i-bundle} we dealt with Seifert $n$\=/tuples and $I$\=/bundle triples in full generality (save for a few exceptions). For simple triples, instead, we will restrict our attention to a subclass satisfying additional conditions. This will, of course, impose further restrictions on the type of $3$\=/manifold pairs which our final algorithm will accept as input. However, we will maintain a level of generality which will be sufficient for our purposes. The additional constraints are listed in the following definition.

\begin{definition}
A $3$\=/manifold triple $(M,R,F)$ is \emph{reasonable} if it satisfies the following properties:
\begin{itemize}
\item the pair $(M,R)$ is irreducible;
\item no component of $R$ is a disc;
\item the surface $F$ is incompressible in $M$;
\item each component of $F$ is a torus or an annulus, and is not boundary parallel in $(M,R)$;
\item each component of $\boundary M\setminus\interior{R\cup F}$ is an annulus.
\end{itemize}
\end{definition}

Part of the appeal of simple reasonable triples is that the two surfaces $R$ and $F$ cover the whole boundary of the $3$\=/manifold $M$, save for a few explicit exceptions.

\begin{lemma}\label{thm:reasonable triple alternative}
Let $(M,R,F)$ be a reasonable $3$\=/manifold triple. Then one of the following holds.
\begin{enumroman}
\item The triple $(M,R,F)$ is not simple.
\item The surfaces $R$ and $F$ cover the boundary of $M$.
\item The $3$\=/manifold $M$ is a solid torus, and $\boundary M\setminus\interior{R\cup F}$ is a union of annuli which are incompressible in $M$.
\item The $3$\=/manifold $M$ is homeomorphic to $\torus[2]\times I$ in such a way that $\torus[2]\times\{1\}$ is a component of $R$ or $F$.
\end{enumroman}
\end{lemma}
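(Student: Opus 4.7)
The strategy is case analysis driven by picking a residual annulus and applying the simple hypothesis to its push-off. Assume $(M,R,F)$ is simple (so (i) fails) and that $R \cup F \neq \partial M$ (so (ii) fails); the goal is then to show (iii) or (iv). Choose a residual annulus $A \subset \partial M \setminus \operatorname{int}(R \cup F)$, and push it slightly into $M$ along a bicollar of $\partial M$ to obtain a properly embedded annulus $A' \subset M$ parallel to $A$ with $\partial A' \subset \operatorname{int}(R)$.

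Suppose first that $A'$ is compressible. Its compression disc $D$ has boundary a core of $A'$, which is parallel through the bicollar to a core $c$ of $A$. Extending $D$ across the bicollar yields a properly embedded disc $D^\ast \subset M$ with $\partial D^\ast = c$ sitting on $A$. Cutting $M$ along $D^\ast$ and using the irreducibility of $M$ together with the incompressibility of $R$ forces each resulting piece to be a $3$-ball with controlled boundary pattern, and reassembly exhibits $M$ as a solid torus with $A$ a longitudinal annular face. All other residual annuli in $\partial M$ are then necessarily incompressible: any compressible residual annulus on a solid torus boundary would either be meridional, contradicting the incompressibility of $R$ along the neighbouring curves of $\partial R$, or would bound a disc, which the reasonable hypothesis forbids. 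This puts us in case (iii).

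Suppose instead that $A'$ is incompressible. The simple hypothesis provides an annulus $B \subset R \cup F$ together with a product region $W \cong A' \times I$ between $A'$ and $B$ with vertical boundary in $R$. Chaining $W$ with the bicollar between $A$ and $A'$ produces a solid torus $V \cong A \times I$ inside $M$, with horizontal boundary $A \cup B$ and two vertical annuli in $R$ as vertical boundary. Distinguish on the topology of the component $C$ of $R \cup F$ that contains $B$: if $C$ is an annulus, a second application of simplicity to the two vertical annuli of $V$ absorbs the complement of $V$ in $M$, giving a solid torus and hence case (iii); if $C$ is a torus, then $B$ is a proper sub-annulus of $C$, and its complementary sub-annulus $C \setminus \operatorname{int}(B)$ cobounds with $A$ and the vertical annuli of $V$ another product region, whose union with $V$ exhibits $M \cong \torus[2] \times I$ with $C$ playing the role of $\torus[2] \times \{1\}$, giving case (iv). The main obstacle is precisely this absorbing step in the incompressible case: verifying that the complement of $V$ in $M$ carries no further essential topology, which requires iterated application of simplicity controlled by the finite combinatorics imposed by the reasonable hypothesis, together with careful tracking of how the vertical annuli meet $R$ and $F$.
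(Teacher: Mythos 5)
Your dichotomy is set up on the wrong object, and this causes several independent problems.

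First, a set-up issue: you push off the residual annulus $A$ itself and assert $\boundary A'\subs\interior{R}$, but the boundary curves of $A$ may lie in $\boundary F$ rather than $\boundary R$ (or the whole boundary component containing $A$ may contain no piece of $R$ at all). In that case the small push-off does not land in $\interior{R}$ and the ``annulus'' clause of simplicity cannot be invoked. The paper avoids this by taking $A'$ to be the \emph{entire} component of $\boundary M\setminus\interior{R}$ containing $A$, which is forced to be an annulus with $\boundary A'\subs\boundary R$, or a full boundary torus.

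Second, even granting $\boundary A'\subs\interior{R}$, your ``compressible case'' is vacuous. The boundary curves of $A'$ are parallel in $R$ to curves of $\boundary R$, hence essential in $R$ because $R$ has no disc components; a compression disc for $A'$ would therefore be an essential compression disc for $R$, contradicting incompressibility. So only the incompressible case can occur, and the $3$\=/ball argument you sketch (which is itself not justified) never runs.

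Third, in your incompressible case the region $V=W\cup\mathrm{collar}(A)$ has all of its boundary in $\boundary M$: the horizontal faces are $A$ and $B$, both in $\boundary M$, and the two vertical faces lie in $R\subs\boundary M$. Hence $\boundary V$ is a closed surface in $\boundary M$, so it equals a whole boundary component, and since $M$ is connected this forces $V=M$. There is no ``complement of $V$'' to absorb: $M$ is a solid torus outright. In particular your ``$C$ a torus'' sub-case is inconsistent with this set-up — if $C$ were a torus component of $R\cup F$ containing $B$ as a proper sub-annulus, the complementary sub-annulus of $C$ would have to coincide with $A$ together with the two vertical annuli, which lie in $R$ and the residual part, not in $C$. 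So your construction never produces case~(iv).

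The actual source of case~(iv) is the situation you never reach: when the component of $\boundary M\setminus\interior{R}$ containing $A$ is a \emph{torus} (no $R$ on that boundary component). There the ``annulus'' clause of simplicity is useless, and one must instead push the whole boundary torus $T$ inward and appeal to the ``torus'' clause, which you never invoke. The paper's route is cleaner precisely because it separates the two steps: first show the boundary component $T$ containing $A$ is a torus (trivially if the relevant component of $\boundary M\setminus\interior{R}$ is already a torus; via the annulus clause and a glue-up argument otherwise), and only then case on whether $T$ is compressible — compressible gives the solid torus of (iii), incompressible gives, via the torus clause, the $\torus[2]\times I$ of~(iv).
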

\begin{proof}
Suppose that $(M,R,F)$ is simple, and that $\boundary M\setminus\interior{R\cup F}$ is non\=/empty. Let $A$ be a component of this surface; by assumption, $A$ is an annulus. Let $T$ be the boundary component of $M$ containing $A$; we show that $T$ is a torus. Let $A'$ be the component of $\boundary M\setminus\interior{R}$ containing $A$. If $A'$ is a torus, then $A'=T$ and we are done. Otherwise, the surface $A'$ is an annulus. If $A'$ were compressible in $M$, then one component of $\boundary A'\subs\boundary R$ would bound a disc in $M$; since $R$ is incompressible and has no disc components, this is impossible. Therefore, $A'$ is incompressible. Since the triple $(M,R,F)$ is simple, it follows that $T$ is indeed a torus.
\begin{itemize}
\item If $T$ is compressible, then $M$ is in fact a solid torus, and the components of $R$, $F$, and $\boundary M\setminus\interior{R\cup F}$ are parallel annuli in $\boundary M$; by assumption, these annuli are incompressible in $M$.
\item If $T$ is incompressible, then it must be parallel in $(M,R)$ to a torus in $R$ or in $F$. This immediately implies that $M$ is homeomorphic to $\torus[2]\times I$, where $\torus[2]\times\{0\}$ is identified with $T$ and $\torus[2]\times\{1\}$ is a component of either $R$ or $F$.\qedhere
\end{itemize}
\end{proof}

As a final restriction, we only want to deal with simple triples which do not belong to the classes we have already analysed -- namely, $I$\=/bundles and Seifert pieces.

\begin{definition}
A $3$\=/manifold triple $(M,R,F)$ is \emph{strongly simple} if it is simple, it is not an $I$\=/bundle triple, and $(M,R,F,\boundary M\setminus\interior{R\cup F})$ is not a Seifert $4$\=/tuple.
\end{definition}

Under the additional assumptions we have listed, we can now prove that the mapping class group of a simple triple is finite, and present an algorithm to compute it. At the same time, we will describe a solution to the homeomorphism problem for simple triples.

\begin{proposition}\label{thm:strongly simple homeomorphism}
The following hold for reasonable strongly simple $3$\=/manifold triples $(M,R,F)$ and $(M',R',F')$.
\begin{enumarabic}
\item\label[statement]{thm:strongly simple homeomorphism:1} There is an algorithm which, given as input the triples $(M,R,F)$ and $(M',R',F')$, decides whether they are homeomorphic.
\item\label[statement]{thm:strongly simple homeomorphism:2} The group $\homeo{M,R,F}$ is finite. Moreover, there is an algorithm which, given as input the triple $(M,R,F)$, returns representatives of $\homeo{M,R,F}$.
\end{enumarabic}
\end{proposition}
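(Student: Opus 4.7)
The plan is to combine Matveev's homeomorphism theorem with the geometrisation theorem, as foreshadowed in the rationale.

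For \cref{thm:strongly simple homeomorphism:1}, I would encode each triple as a $3$\=/manifold with boundary pattern given by the curves $(\boundary R\cup\boundary F)\cap\boundary M$, and apply \cref{thm:matveev-homeomorphism}. Its hypotheses are satisfied: by \cref{thm:reasonable triple alternative}, after ruling out the solid torus exception (which always admits a Seifert $4$\=/tuple structure compatible with the boundary annuli) and the $\torus[2]\times I$ exception (which admits both an $I$\=/bundle and a Seifert $4$\=/tuple structure), we may assume $R\cup F=\boundary M$ with every region incompressible. To force any boundary pattern homeomorphism to respect the partition into $R$\=/components and $F$\=/components, I attach a chiral graph to every component of $R$ and a differently oriented chiral graph to every component of $F$, in the spirit of the discussion following \cref{thm:matveev-homeomorphism}. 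Matveev's algorithm then decides homeomorphism.

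For \cref{thm:strongly simple homeomorphism:2}, finiteness would come from geometrisation. Treating the components of $F$ as a parabolic locus, I would argue that the pared manifold $(M,F)$ is atoroidal and acylindrical: atoroidality is immediate from simplicity, and acylindricity combines simplicity's control over annuli with boundary in $\interior{R}$ with the exclusion of both $I$\=/bundle and Seifert $4$\=/tuple structures, which together promote any essential annulus with at least one endpoint in $F$ to a forbidden global structure. Thurston's hyperbolisation theorem for Haken pared manifolds then endows $\interior{M}$ with a geometrically finite hyperbolic structure having toroidal cusps at the components of $F$ and totally geodesic boundary at the components of $R$. Mostow rigidity implies that every element of $\homeo{M,R,F}$ is represented, up to isotopy, by a unique isometry of this structure, so the group is finite.

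For the algorithmic production of representatives, I would follow the effective approach to geometrisation of \textcite{kuperberg-homeomorphism} referenced in \cref{sec:introduction:classical algorithms}: construct the hyperbolic structure, enumerate its finite isometry group, and use the infinite\=/search template to lift each isometry to an explicit PL self\=/homeomorphism of the triple. Candidate lifts are compared pairwise using \cref{thm:strongly simple homeomorphism:1} applied to the appropriate mapping cylinders (equivalently, Matveev's theorem with the product boundary pattern) to avoid duplicates.

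The principal obstacle is verifying the pared acylindricity hypothesis. Simplicity by itself only constrains annuli with boundary in $\interior{R}$, so one must show that an essential annulus with a boundary component in $F$ would force $(M,R,F)$ to be an $I$\=/bundle triple or $(M,R,F,\closure{\boundary M\setminus\interior{R\cup F}})$ to be a Seifert $4$\=/tuple, contradicting strong simplicity. I expect this to require a careful case analysis along the lines of the characteristic submanifold arguments of \textcite{johannson-jsj} or \textcite{jaco-shalen-jsj}, exploiting the reasonable hypothesis (in particular the annular structure of $\boundary M\setminus\interior{R\cup F}$) to extend the offending annulus to a global fibred decomposition.
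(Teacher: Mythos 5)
Your approach diverges materially from the paper's on both parts, and while the overall strategy is sound, several of the gaps you flag yourself are exactly where the paper does most of its work, plus there is at least one gap you do not flag.

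For \cref{thm:strongly simple homeomorphism:1}, applying \cref{thm:matveev-homeomorphism} directly with a boundary pattern built from $\boundary R = \boundary F$ and distinguishing graphs on the $R$- and $F$-components is a legitimate and arguably cleaner route than the paper's. (The paper instead derives part 1 from the hyperbolic structure on the double, because that machinery is already needed for part 2.) Two small cautions: you need the full version of Matveev's Theorem 6.1.6 for general graph boundary patterns, not just the curve version stated in the paper; and ``differently oriented'' chiral graphs will not by themselves distinguish $R$-components from $F$-components under orientation-preserving homeomorphisms -- you should use two \emph{different} graph shapes, each individually chiral.

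For \cref{thm:strongly simple homeomorphism:2}, the pared-manifold framing with $P=F$ and totally geodesic boundary at $R$ is, at heart, the same geometric picture as the paper's; but the paper realises it by doubling along the non-torus components $R_0$ of $R$, which is precisely the device that converts the ``hyperbolic with totally geodesic boundary'' claim into an application of Mostow--Prasad rigidity on a cusped manifold. Your proposal leaves several gaps which the doubling argument is designed to close.
\begin{enumroman}
\item Torus components of $R$ cannot carry totally geodesic boundary; they must be absorbed into the parabolic locus. You set $P=F$, which is wrong whenever $R$ has torus components (a case the paper treats explicitly as the ``easy case'').
\item You correctly flag acylindricity as the crux and leave it open. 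The paper avoids this head-on and instead proves that the double $(\bar M,\bar R_1,\bar F)$ is strongly simple, via the equivariant torus and annulus theorems of Holzmann and Kobayashi; this is the step you would need to replace.
\item Finiteness of the isometry group only shows $\homeo{M,R,F}$ is finite \emph{if} the natural map $\homeo{M,R,F}\to\mathrm{Isom}$ (or, in the paper's formulation, $\homeo{M,R,F}\to\homeo{\bar M,\bar R_1,\bar F}$) is injective. This is not automatic and is the content of the longest step of the paper's proof, using the straight-line homotopy in $\hypspace{3}$, equivariance under the doubling involution, Epstein's theorem, and Waldhausen's isotopy theorem. Your proposal does not address this at all, so as written it only bounds $\homeo{M,R,F}$ by a quotient of a finite group, rather than identifying it.
\end{enumroman}
Finally, the remark about comparing candidate lifts ``using part (1) applied to mapping cylinders'' is not quite the right tool: deciding whether two self-homeomorphisms of a triple are isotopic is an isotopy problem, not a homeomorphism problem, and needs a separate argument (which, again, the paper supplies via the hyperbolic picture).
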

\begin{proof}
By \cref{thm:reasonable triple alternative}, we immediately see that, if $R$ and $F$ do not cover the boundary of $M$, the $4$\=/tuple $(M,R,F,\boundary M\setminus\interior{R\cup F})$ admits a Seifert fibration. Therefore, we have that $\boundary M=R\cup F$.

\step{Boundary incompressibility of $F$.} We claim that $F$ is boundary incompressible in $(M,R)$ in the following sense: for every disc $D$ in $M$ whose boundary can be written as the union of two arcs $a$ and $b$ with $a\cap b=\boundary a=\boundary b$, $D\cap F=a$, and $D\cap R=b$, we have that $a$ cuts a disc off of $F$.

In fact, suppose by contradiction that $D$ is a disc as described above, with $a$ an essential arc in an annulus component $A$ of $F$. Let $A'$ be the disc obtained by boundary compressing $A$ along $D$, that is $A'=(A\setminus\boundary_v\nbhd{D})\cup\boundary_h\nbhd{D}$. By irreducibility of $(M,R)$, the disc $A'$ cobounds a $3$\=/ball $B$ with some disc in $R$. If $B$ is disjoint from $D$ then $A$ is boundary parallel. If instead $B$ contains $D$ then $A$ is compressible, contradicting incompressibility of $F$.

\step{The easy case.} Let us deal first with the case where every component of $R$ is a torus; as a consequence, the same holds for the components of $F$. By assumption, we have that $M$ is irreducible, boundary irreducible, atoroidal\footnote{There are several slightly different definitions of ``atoroidal'' in the literature. In this article, by ``$X$ is atoroidal'' we mean that every incompressible torus in $X$ is boundary parallel in $X$ or, equivalently, that the triple $(X,\emptyset,\boundary X)$ is simple. Note that other definitions could be inequivalent in general, but they all agree with ours when $X$ is not a Seifert manifold.}, and not a Seifert manifold; additionally, every boundary component of $M$ is a torus. By geometrisation, the interior of $M$ admits a complete finite\=/volume hyperbolic structure. We can now invoke \cite[Theorem 6.1]{kuperberg-homeomorphism} and conclude that:
\begin{itemize}
\item if $(M',R',F')$ is another reasonable strongly simple $3$\=/manifold triple such that each component of $R'$ and $F'$ is a torus, then we can algorithmically decide whether $M$ and $M'$ are homeomorphic or not;
\item the group $\homeo{M}$ is finite and can be computed algorithmically.
\end{itemize}

Note that $\homeo{M,R,F}$ is a subgroup of $\homeo{M}$, and checking if an element of $\homeo{M}$ belongs to $\homeo{M,R,F}$ is trivial; this is enough to prove \cref{thm:strongly simple homeomorphism:2}. As far as \cref{thm:strongly simple homeomorphism:1} is concerned, we simply check whether $M$ and $M'$ are homeomorphic. If they are, we find a homeomorphism $\map{f}{M}{M'}$ and consider the (finitely many) elements of $\homeo{M'}f$: if any of these gives a homeomorphism $\umap{(M,R,F)}{(M',R',F')}$ then the two triples are homeomorphic, otherwise they are not.

\step{Doubling $M$.} We now turn to the more involved case where not all components of $R$ are tori. Let $R_0$ be the union of the components of $R$ which are not tori. Let $\bar{M}$ be the result of gluing two copies of $M$ along $R_0$. More precisely, we define
\[
\bar{M}=\quot{M\times\{0,1\}}{\{(x,0)\sim(x,1):x\in R_0\}}.
\]
By assumption, the surface $R_0$ is non\=/empty, so the $3$\=/manifold $\bar{M}$ is connected. Note that every boundary component of $\bar{M}$ is either a copy of a torus component of $R$, a copy of a torus component of $F$, or the union of two copies of an annulus component of $F$. As a consequence, we see that every component of $\boundary\bar{M}$ is a torus.

The surface $R_0$ has a canonical copy $\bar{R_0}$ in $\bar{M}$ -- specifically, the image in the quotient of $R_0\times\{0\}$; this surface is incompressible in $\bar{M}$. More generally, if $S$ is a surface in $M$, we let $\bar{S}$ be the ``doubled surface'', that is the image in the quotient of the surface $S\times\{0,1\}\subs M\times\{0,1\}$. Similarly, if $\map{f}{(M,R_0)}{(M,R_0)}$ is a function, we let $\map{\bar{f}}{\bar{M}}{\bar{M}}$ be the function defined by $\bar{f}(x,i)=(f(x),i)$ for $(x,i)\in M\times\{0,1\}$.

Note that $\bar{M}$ comes equipped with a natural orientation\=/reversing involution $\map{\iota}{\bar{M}}{\bar{M}}$ defined by $\iota(x,i)=(x,1-i)$; for every homeomorphism $\map{f}{(M,R,F)}{(M,R,F)}$, we have the relation $\bar{f}\iota=\iota\bar{f}$. Finally, let us remark that, since $\bar{R_0}$ is incompressible, the $3$\=/manifold $\bar{M}$ is sufficiently large.

\step{The double is irreducible and boundary irreducible.} Irreducibility of $\bar{M}$ follows immediately from the fact that $M$ is irreducible and $\bar{R_0}$ is incompressible in $\bar{M}$.

Let now $D$ be a disc properly embedded in $\bar{M}$. Up to isotopy, we can assume that $D$ is in general position with respect to $\bar{R_0}$. Additionally, let us isotope $D$ so that the number of components of $D\cap\bar{R_0}$ is as small as possible. A standard innermost circle argument on $D$ shows that we can assume that all these components are arcs. If $D\cap\bar{R_0}$ is empty, then $D$ can be interpreted as a compression disc for either $R$ or $F$ in $M$, hence it must be trivial in $M$ and in $\bar{M}$ as well. Otherwise, let $a$ be an arc in $D\cap\bar{R_0}$ which is outermost in $D$; the arc $a$ cuts off a disc $E$ from $D$ such that $E\cap\bar{R_0}=a$. Note that $a\subs\bar{R_0}$, and $\boundary D\setminus\interior{a}$ is an arc in $\bar{F}$. The disc $E$ can then be interpreted as a boundary compression disc for $F$ in $(M,R)$. By boundary incompressibility of $F$, it is easy to see that $D$ can be isotoped to remove $a$ from the intersection $D\cap\bar{R_0}$, thus contradicting our minimality assumption.

\step{The double is strongly simple.} More precisely, we prove that the triple $(\bar{M},\bar{R_1},\bar{F})$ is strongly simple, where $R_1=R\setminus R_0$ is the unions of the torus components of $R$. First, let us show that $\bar{M}$ is not a Seifert manifold. If by contradiction $\bar{M}$ admits a Seifert fibration, then up to isotopy we can assume that $\bar{R_0}$ is either horizontal or vertical. Since $M$ is homeomorphic to either component of $\bar{M}\cut\bar{R_0}$, it is easy to see that:
\begin{itemize}
\item if $\bar{R_0}$ is horizontal, then $M$ inherits an $I$\=/bundle structure; the surfaces $R$ and $F$ have no torus components, and in fact they are the horizontal and vertical boundary respectively of the fibration inherited by $M$;
\item if $\bar{R_0}$ is vertical, then $M$ is itself Seifert\=/fibred, in such a way that $R$ and $F$ are unions of fibres.
\end{itemize}
Both cases are incompatible with $(M,R,F)$ being strongly simple.

We now address the ``torus'' condition for simple triples. Let $T$ be an incompressible torus in $\bar{M}$. By the equivariant torus theorem of \textcite[Corollary 4.6]{holzmann-equivariant-torus}, we can assume that $T$ is either disjoint from or equal to $\iota(T)$.
\begin{itemize}
\item If $T$ is disjoint from $\iota(T)$, then we can think of $T$ as a torus in $M$; incompressibility of $\bar{R_0}$ in $\bar{M}$ implies that $T$ is incompressible in $M$. Since $(M,R,F)$ is a simple triple, $T$ must be parallel in $M$ to a component of $R$ or $F$. But then $T$ is boundary parallel in $\bar{M}$ as well.
\item If $T=\iota(T)$, then $T=\bar{A}$ for some annulus $A$ properly embedded in $(M,R)$. The annulus $A$ is incompressible in $M$ since $T$ is incompressible in $\bar{M}$. Then $A$ must be parallel in $(M,R)$ to an annulus in $R$ or in $F$; the first case is impossible, for otherwise $T$ would be compressible in $\bar{M}$. Therefore, $A$ is parallel in $(M,R)$ to an annulus component $A'$ of $F$. We conclude that $T=\bar{A}$ is parallel to the boundary component $\bar{A'}$ of $\bar{M}$.
\end{itemize}

Finally, let us consider the ``annulus'' condition. Let $A$ be an incompressible annulus properly embedded in $(\bar{M},\bar{R_1})$. If $A$ is boundary compressible then it is boundary parallel. Otherwise, by the equivariant annulus theorem of \textcite{kobayashi-equivariant-annulus}, we can assume that $A$ is either disjoint from or equal to $\iota(A)$.
\begin{itemize}
\item If $A$ is disjoint from $\iota(A)$, then we can think of $A$ as an incompressible annulus properly embedded in $(M,R)$. By simplicity of $(M,R,F)$, this implies that $A$ is boundary parallel in $M$ and, hence, in $\bar{M}$ as well.
\item If $A=\iota(A)$, then $A=\bar{A_0}$ for some incompressible annulus $A_0\subs M$ whose boundary intersects two boundary components of $M$, which contradicts simplicity of $(M,R,F)$.
\end{itemize}

\step{The double is hyperbolic.} It is not hard to verify that the triple $(\bar{M},\bar{R_1},\bar{F})$ is reasonable. Since, by definition, every component of $\bar{R_1}$ is a torus, we have already proved \cref{thm:strongly simple homeomorphism:1,thm:strongly simple homeomorphism:2} for this triple. In the process, we also noted that the interior of $\bar{M}$ admits a complete finite\=/volume hyperbolic metric. In fact, for our purposes, it will be more convenient to think of $\bar{M}$ as a compact hyperbolic $3$\=/manifold whose boundary components are flat tori -- in other words, as the result of truncating the cusps of a complete finite\=/volume non\=/compact hyperbolic $3$\=/manifold. By combining Mostow\=/Prasad rigidity with \cite[Theorem 7.1]{waldhausen-on-irreducible-manifolds}, we have the following\todo{A bit hand\=/wavy, but I think I have a rigorous proof. Nonetheless, this looks like the sort of statement people will believe without questioning it (\citeauthor{kuperberg-homeomorphism} does).}.
\begin{equation}\label{eqn:strongly simple homeomorphism:hyperbolic rigidity}
\parbox{\dimexpr\linewidth-4em}{%
Every self\=/homeomorphism of $\bar{M}$ is isotopic to a unique isometry $\umap{\bar{M}}{\bar{M}}$.
}
\end{equation}
As a consequence, let us remark that we can choose the hyperbolic metric on $\bar{M}$ in such a way that $\iota$ is an isometry. In fact, the involution $\iota$ is isotopic to an isometry $\map{\hat{\iota}}{\bar{M}}{\bar{M}}$. Note that $\hat{\iota}$ is itself an involution: the isometry $\hat{\iota}\circ\hat{\iota}$ is isotopic to $\iota\circ\iota=\id$ and, therefore, equal to the identity. We now apply \cite[Theorem B]{tollefson-involutions-sufficiently-large}\footnote{Using \citeauthor{tollefson-involutions-sufficiently-large}'s notation, the case where $\beta\neq\id$ does not apply here, since the fundamental group of complete finite volume hyperbolic $3$\=/manifolds has trivial centre.} to deduce that $\hat{\iota}=h\iota h^{-1}$ for some homeomorphism $\map{h}{\bar{M}}{\bar{M}}$. Then $\iota$ is an isometry with respect to the pull\=/back of the hyperbolic metric on $\bar{M}$ by $h$.

\step{Solving the homeomorphism problem.} Consider another reasonable strongly simple $3$\=/manifold triple $(M',R',F')$; suppose that, additionally, there is at least one component of $R'$ which is not a torus -- otherwise $(M,R,F)$ and $(M',R',F')$ could not possibly be homeomorphic.  Define $\bar{M'}$, $\iota'$, and $R_1'$ like we did with $\bar{M}$, $\iota$, and $R_1$. Recall that $\bar{M'}$ can be endowed with a finite\=/volume hyperbolic metric with flat boundary, with respect to which $\iota'$ is an isometry. Naturally, statement \cref{eqn:strongly simple homeomorphism:hyperbolic rigidity} also holds for homeomorphisms $\umap{\bar{M}}{\bar{M'}}$.

If the triples $(\bar{M},\bar{R_1},\bar{F})$ and $(\bar{M'},\bar{R_1'},\bar{F'})$ are not homeomorphic -- which we can algorithmically decide thanks to \cref{thm:strongly simple homeomorphism:1} -- then clearly neither are $(M,R,F)$ and $(M',R',F')$. Otherwise, let
\[
\map{f}{(\bar{M},\bar{R_1},\bar{F})}{(\bar{M'},\bar{R_1'},\bar{F'})}
\]
be a homeomorphism. We claim that the following are equivalent:
\begin{enumroman}
\item\label{thm:strongly simple homeomorphism:homeomorphism problem:1} there is a homeomorphism $\map{g}{(M,R,F)}{(M',R',F')}$ such that
\[
\map{\bar{g}}{(\bar{M},\bar{R_1},\bar{F})}{(\bar{M'},\bar{R_1'},\bar{F'})}
\]
is isotopic to $f$;
\item\label{thm:strongly simple homeomorphism:homeomorphism problem:2} the homeomorphism $\map{f^{-1}\iota'f\iota}{\bar{M}}{\bar{M}}$ is isotopic to the identity.
\end{enumroman}
It is clear that \ref{thm:strongly simple homeomorphism:homeomorphism problem:1} implies \ref{thm:strongly simple homeomorphism:homeomorphism problem:2}. Conversely, if \ref{thm:strongly simple homeomorphism:homeomorphism problem:2} holds, we can assume up to isotopy that $f$ restricts to an isometry on the interior of $\bar{M}$. But then $f^{-1}\iota'f\iota$ is an isometry of $\bar{M}$ which is isotopic to the identity and, hence, equal to it. In other words, the following diagram commutes:
\begin{diagram}
(\bar{M},\bar{R_1},\bar{F})\dar{\iota}\rar{f}&(\bar{M'},\bar{R_1'},\bar{F'})\dar{\iota'}\\
(\bar{M},\bar{R_1},\bar{F})\rar{f}&(\bar{M'},\bar{R_1'},\bar{F'}).
\end{diagram}
We then conclude that $f$ is induced by a homeomorphism $\map{g}{M}{M'}$; it is easy to see that $g$ must send $F$ to $F'$ and, therefore, $R$ to $R'$.

In order to decide whether $(M,R,F)$ and $(M',R',F')$ are homeomorphic, we can then carry out the following procedure. Let $f_0$ be any homeomorphism from $(\bar{M},\bar{R_1},\bar{F})$ to $(\bar{M'},\bar{R_1'},\bar{F'})$. Thanks to \cref{thm:strongly simple homeomorphism:1}, we can compute representatives for the finite group $\homeo{\bar{M'},\bar{R_1'},\bar{F'}}$. For each element $f$ of $\homeo{\bar{M'},\bar{R_1'},\bar{F'}}f_0$, we check whether $f^{-1}\iota'f\iota$ is isotopic to the identity of $\bar{M}$ (we can do this algorithmically since $\homeo{\bar{M},\bar{R_1},\bar{F}}$ is finite and we can compute representatives for it). If this happens for some $f$, then $(M,R,F)$ and $(M',R',F')$ are homeomorphic, otherwise they are not.

\step{Computing the mapping class group.} By the very same argument as the one presented in the previous step, we can compute representatives of the subgroup
\[
\{f\in\homeo{\bar{M},\bar{R_1},\bar{F}}:\text{$f$ is isotopic to $\bar{g}$ for some $g\in\homeo{M,R,F}$}\}\le\homeo{\bar{M},\bar{R_1},\bar{F}}.
\]
We claim that the group homomorphism $\umap{\homeo{M,R,F}}{\homeo{\bar{M},\bar{R_1},\bar{F}}}$ sending $g$ to $\bar{g}$ is injective; in other words, if $\bar{g}$ is isotopic to the identity in $\bar{M}$ then $g$ is isotopic to the identity in $(M,R,F)$.

\begin{substeps}
\item First of all, if $\bar{g}$ is isotopic to the identity, then we can isotope $g$ in $(M,R,F)$ so that it restricts to the identity on $R_1\cup F$. As a consequence, the homeomorphism $\bar{g}$ restricts to the identity on $\boundary\bar{M}$.
\item Let $\map{\pi}{\wtilde{M}}{\bar{M}}$ be the universal covering; we will think of $\wtilde{M}$ as a closed subset of hyperbolic $3$\=/space $\hypspace{3}$, whose complement is a collection of open horoballs. We define a retraction $\map{r}{\hypspace{3}}{\wtilde{M}}$ as follows. If $x$ is a point of $\wtilde{M}$ we set $r(x)=x$. If instead $x$ is contained in a horoball $B$, we set $r(x)=\gamma\cap\boundary B$, where $\gamma$ is the geodesic emanating from the centre of $B$ and passing through $x$. Finally, if $x$ and $y$ are two (not necessarily distinct) points of $\wtilde{M}$, we define $\map{\gamma_{x,y}}{[0,1]}{\hypspace{3}}$ to be the geodesic such that $\gamma_{x}(0)=x$ and $\gamma_{x,y}(1)=y$.
\item Let $\map{\wtilde{g}}{\wtilde{M}}{\wtilde{M}}$ be a homeomorphism that is a lift of $\bar{g}$. Since $\bar{g}$ is isotopic to the identity in $\bar{M}$ and restricts to the identity on $\boundary\bar{M}$, we can choose $\wtilde{g}$ so that it restricts to the identity on $\boundary\wtilde{M}$. Let us define a homotopy $\map{\wtilde{g}_t}{\wtilde{M}}{\wtilde{M}}$ as $\wtilde{g}_t(x)=r(\gamma_{x,\wtilde{g}(x)}(t))$, so that $\wtilde{g}_0=\id$ and $\wtilde{g}_1=\wtilde{g}$. It is easy to check that $\wtilde{g}_t$ is invariant under deck transformations of $\pi$, hence it descends to a homotopy $\map{\bar{g}_t}{\bar{M}}{\bar{M}}$ between the identity and $\bar{g}$. By definition, for each $t\in[0,1]$ the homeomorphism $\bar{g}_t$ fixes $\boundary\bar{M}$ pointwise.
\item Since $\iota\bar{g}=\bar{g}$ and the homotopy $\bar{g}_t$ has been defined only in terms of the metric, we must have that $\iota\bar{g}_t=\bar{g}_t$ for every $t\in[0,1]$. In particular, we find that $\bar{g}_t(\bar{R_0})\subs\bar{R_0}$, from which we deduce that $g|_{R_0}$ is homotopic to the identity in $R_0$, through a homotopy that fixes $\boundary R_0$. By a classical result of \textcite[Theorem 6.4]{epstein-surface-homeomorphisms}, this implies that $g|_{R_0}$ is actually isotopic to the identity in $R_0$, and we can therefore assume that $\map{\trace{g}}{\boundary M}{\boundary M}$ is the identity.
\item Note that the homotopy $\wtilde{g}_t$ preserves the two components of $\bar{M}\cut\bar{R_0}$ for each $t\in[0,1]$. Therefore, there is an induced homotopy $\map{g_t}{M}{M}$ between $g$ and the identity, which restricts to the identity on $\boundary M$ for each $t\in[0,1]$. By \cite[Theorem 7.1]{waldhausen-on-irreducible-manifolds}, this implies that $g$ is isotopic to the identity in $(M,R,F)$.
\end{substeps}

As a final remark, note that, given $f\in\homeo{\bar{M},\bar{R_1},\bar{F}}$, we can algorithmically find $g\in\homeo{M,R,F}$ such that $f$ is isotopic to $\bar{g}$, provided that one exists. Therefore, we have presented a complete algorithm to compute representatives of $\homeo{M,R,F}$.
\end{proof}

\subsection{Computing the JSJ decomposition}\label{sec:homeomorphisms:computing jsj}

After analysing the types of pieces individually, we now show how to actually compute the JSJ decomposition. The next three propositions explain how to recognise Seifert pieces, $I$\=/bundle pieces and (strongly) simple pieces respectively. Once we have these three algorithms, computing the JSJ decomposition is straightforward, as the proof of \cref{thm:computing jsj decomposition} witnesses.

\begin{proposition}\label{thm:seifert recognition}
There is an algorithm which, given a $3$\=/manifold $(n+1)$\=/tuple $(M,\vec{R})$ as input, decides whether it is a Seifert $(n+1)$\=/tuple or not.
\end{proposition}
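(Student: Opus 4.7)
The plan is as follows. The necessary condition $\boundary M = R_1 \cup \ldots \cup R_n$ is immediately verifiable from the triangulation, so we may assume it holds; the task then reduces to deciding whether $M$ admits a Seifert fibration in which each $R_i$ is a union of fibres.

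The positive direction is handled by the infinite search template applied to the computationally friendly description of a Seifert fibration given in \cref{sec:introduction:classical algorithms}. Concretely, I would enumerate pairs $(\TTT', Z)$, where $\TTT'$ is a subdivision of the given triangulation of $M$ and $Z$ is a $2$\=/subcomplex of $\TTT'$, checking combinatorially for each pair whether $Z$ satisfies \cref{def:computational seifert:1,def:computational seifert:2,def:computational seifert:3} and whether $\boundary R_i$ is a union of fibres of $Z$ for every $i$. If $(M, \vec{R})$ is a Seifert $(n+1)$\=/tuple, such a pair exists and this search terminates successfully.

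For the negative direction, I would run a second algorithm in parallel, enumerating abstract Seifert\=/fibred $(n+1)$\=/tuples $(M'', \vec{R}'')$ up to a complexity bound controlled by the number of tetrahedra of $M$, and testing homeomorphism with $(M, \vec{R})$ by means of \cref{thm:matveev-homeomorphism}. In order to encode the tuple structure into Matveev's setup -- which natively accepts only a single unlabelled boundary pattern -- I would decorate each $R_i$ with a distinct chiral graph, in the spirit of the trick illustrated in \cref{fig:adding chiral graphs}, so that a boundary\=/pattern\=/preserving homeomorphism is automatically forced to preserve the partition $R_1, \ldots, R_n$. If $(M,\vec{R})$ is Seifert, the enumeration eventually finds a matching candidate; if it is not, exhaustion of all bounded\=/complexity candidates certifies non\=/Seifertness, and the two parallel algorithms together yield the desired decision procedure.

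The main obstacle is establishing the complexity bound on the hypothetical Seifert invariants of $M$ in terms of its triangulation. This is essentially a normal\=/surface\=/style argument: a candidate Seifert $2$\=/complex $Z$ can be put in normal position with respect to the triangulation, so that its combinatorial size -- and hence the genus of the base orbifold, the number of exceptional fibres, and the magnitude of the Seifert invariants -- is controlled by the number of tetrahedra of $M$ together with the combinatorics of the $R_i$ on the boundary.
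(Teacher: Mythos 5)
Your positive direction is fine and essentially the infinite search template. The problem is the negative direction, and you have put your finger on it yourself: everything hinges on the claimed complexity bound on the Seifert data in terms of the triangulation, and that bound is asserted, not established. Note also that once such a bound is in hand your ``parallel'' structure is superfluous -- the bounded enumeration is already a full decision procedure on its own -- so the real content of the proposal is the bound, which you wave at with ``a normal-surface-style argument.'' This is not a step one can take for granted: a Seifert $2$-complex $Z$ has singular strata along the fibres, so it is not a normal surface, and even for honest normal surfaces the number of normal isotopy classes is not finite. Controlling the genus of the base orbifold, the number of exceptional fibres, \emph{and the orders of those fibres} by the number of tetrahedra is precisely the difficult content of Seifert recognition algorithms; it is not a corollary of the normal surface material laid out in the preliminaries, and supplying it would amount to reproving a substantial theorem.

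The paper's proof sidesteps all of this. It first runs an easy necessary check ($\partial M = R_1\cup\ldots\cup R_n$, each $R_i$ a union of tori and annuli), then invokes Jaco--Tollefson's Algorithm~8.2 as a black box to decide whether the underlying $M$ is Seifert at all -- this takes care of the negative direction for you. If $M$ is one of the three exceptional manifolds (solid torus, $\torus[2]\times I$, $\nsurf2\twtimes I$) where Seifert fibrations are not unique, it handles those by hand. Otherwise, by \cref{thm:unique seifert fibration} the fibration is unique up to isotopy, so one can find it by infinite search (termination is now guaranteed) and simply check compatibility with the $R_i$. The uniqueness statement is exactly what lets one check compatibility against a \emph{single} candidate fibration instead of having to enumerate them all, which is the bound you were missing. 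A separate, smaller issue with your plan: decorating each $R_i$ with a distinct chiral graph and feeding the result to \cref{thm:matveev-homeomorphism} requires the decorated boundary pattern to make the complementary boundary surface incompressible, which fails for instance when $M$ is a solid torus -- precisely one of the cases a Seifert-recognition algorithm cannot ignore.
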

\begin{proof}
If $R_1\cup\ldots\cup R_n\neq\boundary M$, or one of the surfaces $R_1,\ldots,R_n$ has a component which is not a torus or an annulus, then the answer is no. We can use \cite[Algorithm 8.2]{jaco-jsj-algorithm} to decide whether $M$ is a Seifert manifold or not. If it is not then we are done. If $M$ is homeomorphic to a solid torus, a product $\torus[2]\times I$, or a twisted $I$\=/bundle $\nsurf{2}\twtimes I$, then we can easily decide if $(M,\vec{R})$ is a Seifert $(n+1)$\=/tuple or not. Otherwise, find the unique Seifert fibration for $M$; if it is compatible with the surfaces $R_1,\ldots,R_n$ then the answer is yes; otherwise, the answer is no.
\end{proof}

\begin{proposition}\label{thm:i-bundle recognition}
There is an algorithm which, given a $3$\=/manifold triple $(M,R,F)$ as input, decides whether it is an $I$\=/bundle triple or not.
\end{proposition}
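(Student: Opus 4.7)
The strategy is to produce a (nearly) unique candidate $I$\=/bundle $(X,\boundary_h X,\boundary_v X)$ from the topological data of $(M,R,F)$ and then decide whether $(M,R,F)$ is homeomorphic to it as a triple. First I would check the easy necessary conditions: that $\boundary M=R\cup F$ with $R\cap F=\boundary R=\boundary F$, that every component of $F$ is an annulus, and that $(M,R\cup F)$ is irreducible with both $R$ and $F$ incompressible in $M$; all of these are algorithmically decidable by the preceding theorem on $3$\=/manifolds. The degenerate situation where the base surface would be a disc -- that is, $M$ a $3$\=/ball, $R$ a pair of discs, and $F$ a single annulus -- is immediate to detect and dispose of separately.

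Assuming the necessary conditions hold, the base surface $B$ of the candidate $I$\=/bundle is determined as follows. Since $M$ is connected, so is $B$; and since each boundary circle of $B$ produces one annulus component of $F=\boundary_v X$, we have $|\boundary B|$ equal to the number of components of $F$. For the product bundle $X=B\times I$ the horizontal boundary is $B\sqcup B$, while for the twisted bundle $X=B\twtimes I$ it is the orientation double cover of $B$. Since $M$ is orientable, the product case occurs exactly when $B$ is orientable. These two possibilities are distinguished by $R$: it has two mutually homeomorphic components in the product case (both copies of $B$), and is connected in the twisted case (the orientation double cover of the non\=/orientable $B$). In either scenario $B$ is uniquely determined by $\chi(B)=\chi(M)$, $|\boundary B|$, and orientability, so the candidate $(X,\boundary_h X,\boundary_v X)$ can be constructed explicitly; if $R$ fails to fit either pattern we return \emph{no}.

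It remains to decide whether $(M,R,F)\cong(X,\boundary_h X,\boundary_v X)$ as a triple. Applying \cref{thm:matveev-homeomorphism} with boundary pattern $\boundary F=\boundary R$ on $M$ and the analogous pattern on $X$ decides homeomorphism of the underlying pairs; any such pair\=/homeomorphism induces a bijection between the components of $R\sqcup F$ and those of $\boundary_h X\sqcup\boundary_v X$. When $R$ has no annulus component, this bijection is forced by topology to send $R$ to $\boundary_h X$ and $F$ to $\boundary_v X$, so the pair\=/level answer is already the triple\=/level answer. To guarantee the matching in general, one can augment both boundary patterns by placing mutually distinguishable small chiral graphs on $R$\=/components and on $F$\=/components, exactly as in the trick recalled in \cref{fig:adding chiral graphs}; then every pair\=/homeomorphism must preserve the $R$/$F$ labelling.

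The only genuine obstacle is the bookkeeping in the degenerate cases when $R$ itself has annulus components, which forces $B$ to be an annulus or a M\"obius band and so $M$ to be a solid torus with a handful of specific configurations of $R$ and $F$; these finitely many cases can be enumerated and tested directly. Once they are dispatched, the algorithm reduces to constructing a single explicit candidate $I$\=/bundle and calling \cref{thm:matveev-homeomorphism}.
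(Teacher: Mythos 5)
Your overall strategy matches the paper's: construct from $(M,R,F)$ a unique candidate $I$-bundle $(X,\boundary_h X,\boundary_v X)$, then invoke \cref{thm:matveev-homeomorphism} to decide whether the triple is homeomorphic to the model. Your way of pinning down the base $B$ (Euler characteristic, number of $F$-components, orientability read off from $|R|$) is equivalent to the paper's, which simply reads $B$ off of $R$ directly (a component of $R$, or the non-orientable surface double-covered by $R$). The place where you diverge -- and where the paper is cleaner -- is the choice of boundary pattern. You take $p=\boundary F=\boundary R$, so that $\boundary M\cut p$ has components coming from both $R$ and $F$, and you then need to force any homeomorphism to respect the $R$/$F$ partition by decorating the two kinds of component with distinguishable graphs, and to treat separately the degenerate cases where $R$ itself has annulus components. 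The paper sidesteps all of this by taking $p$ to be the union of the \emph{core curves} of the annulus components of $F$. With that choice, the components of $\boundary M\cut p$ are canonically in bijection with the components of $R$ alone (each is an $R$-component with half-annuli of $F$ collared on), and similarly for $(X,q)$; hence any homeomorphism $(M,p)\to(X,q)$ automatically restricts, after a small isotopy, to a homeomorphism of triples -- no extra markings, no separate solid-torus cases.

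One condition in your preliminary checks needs fixing. You require that ``$(M,R\cup F)$ is irreducible''; since $R\cup F=\boundary M$, with the paper's terminology this asserts that $\boundary M$ is incompressible, which fails precisely when $B$ has non-empty boundary (then $M$ is a handlebody). As literally stated the check would reject most genuine $I$-bundle triples. What you actually need -- and presumably meant -- is that $M$ is irreducible and that $\boundary M\cut p$ is incompressible for your chosen pattern $p$; the latter follows from $R$ and $F$ being individually incompressible, which is a correct necessary condition. The phrasing should be corrected accordingly.
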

\begin{proof}
If any of the following hold, then $(M,R,F)$ is not an $I$\=/bundle triple:
\begin{itemize}
\item at least one component of $F$ is not an annulus;
\item $R$ has zero or more than two components;
\item $R$ has two non\=/homeomorphic components;
\item there are one component of $R$ and one component of $F$ which are disjoint.
\end{itemize}
Otherwise, let $B$ be one component of $R$ if $R$ has two components, or the non\=/orientable surface doubly covered by $R$ if $R$ has only one component; if $(M,R,F)$ is an $I$\=/bundle triple, then it is the (product or twisted) $I$\=/bundle over $B$. Let $p\subs F$ be the union of the core curves of the annulus components of $F$. Let $X$ be the $I$\=/bundle over $B$, and denote by $q\subs\boundary X$ the union of the core curves of the annuli making up the vertical boundary of $X$. Then $(M,R,F)$ is an $I$\=/bundle triple if and only if the $3$\=/manifolds with boundary pattern $(M,p)$ and $(X,q)$ are homeomorphic.
\end{proof}

\begin{proposition}\label{thm:simple recognition}
There is an algorithm which, given a reasonable $3$\=/manifold triple $(M,R,F)$ as input, decides whether it is a simple triple or not.
\end{proposition}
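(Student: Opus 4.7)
The plan is to reduce simplicity to a finite collection of decidable checks, combining a direct examination of the boundary tori of $M$ with an application of classical normal surface theory to enumerate candidate essential tori and annuli.

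First I would address the torus condition of simplicity. An incompressible torus $T$ in $M$ failing to be parallel to a component of $R$ or $F$ must arise in one of two ways: either (a) $T$ is not boundary parallel in $M$ at all, or (b) $T$ is boundary parallel, but the boundary component $C$ it cobounds a product with is a torus which is not a single component of $R$ or of $F$. Case (b) is checked by iterating over the torus components $C$ of $\boundary M$: we discard those that coincide with a component of $R$ or of $F$, and test each surviving $C$ for incompressibility in $M$ via \cref{thm:algorithms on 3-manifolds:incompressible}; if any such $C$ is incompressible, a slight push\=/in of $C$ witnesses failure of the torus condition. Case (a) is classical: by Haken\=/Jaco\=/Oertel normal surface theory one can algorithmically produce a finite list of candidate essential tori in $M$ (for instance, the vertex normal tori in a fixed subdivision) with the property that every non\=/boundary\=/parallel incompressible torus is isotopic to one of them; each candidate is then tested for incompressibility and non\=/boundary\=/parallelism using \cref{thm:algorithms on 3-manifolds:incompressible,thm:algorithms on 3-manifolds:boundary parallel}.

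The annulus condition is handled by an analogous enumeration. I would produce a finite family of candidate normal annuli in $M$ whose boundary curves are confined to $\interior{R}$, test each for incompressibility and boundary incompressibility via \cref{thm:algorithms on 3-manifolds:incompressible,thm:algorithms on 3-manifolds:boundary incompressible}, and for each essential candidate $A$ check whether it is parallel in $(M,R)$ to some annulus component of $R$ or of $F$. Parallelism of two disjoint annuli $A$ and $A'$ in $(M,R)$ amounts to the region cobounded by them being a product $A\times I$ meeting $R$ only along its vertical boundary; this is a homeomorphism problem of pairs solvable via \cref{thm:matveev-homeomorphism}.

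The main obstacle lies in the normal surface enumeration step, particularly in the annulus case. While detection of essential tori in an irreducible $3$\=/manifold is a well\=/known corollary of Haken's theory, we additionally need to restrict the boundary of candidate annuli to a prescribed subsurface of $\boundary M$. This requires a routine but delicate adaptation: one fixes a subdivision in which $R$ is a subcomplex, restricts attention to normal surfaces whose boundary lies entirely in $R$, and verifies that the standard reduction from essential surfaces to fundamental normal surfaces respects this boundary constraint. Once this adaptation is in place, the algorithm returns \emph{yes} precisely when none of the candidate tori or annuli enumerated above witnesses failure of the two defining conditions of a simple triple.
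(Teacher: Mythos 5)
Your proposal starts from the same basic idea -- reduce the simplicity test to finitely many detections of essential tori and annuli via normal surface theory -- but it differs from the paper's proof in several ways, and there are genuine gaps that would break the argument as written.

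The most serious issue is that you do not treat the cases $M\cong\text{solid torus}$ and $M\cong\torus[2]\times I$ separately, whereas the paper must. In $\torus[2]\times I$ there are \emph{infinitely many} isotopy classes of essential annuli (one vertical annulus $c\times I$ for each slope $c$), so your claim that ``one can produce a finite family of candidate normal annuli with the property that every [relevant] annulus is isotopic to one of them'' is false in this case: no finite vertex or fundamental list will represent every isotopy class. The paper sidesteps this by giving an \emph{ad hoc} combinatorial criterion for non-simplicity of $\torus[2]\times I$ triples (in terms of the slopes of $\boundary R$), and a separate elementary argument for solid tori, before moving to the general case where the normal surface machinery actually applies. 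Without these exceptional cases carved out, the enumeration step you describe simply does not terminate with a correct answer.

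Second, the parallelism check is not quite right. The definition of a simple triple asks whether an annulus $A$ is parallel in $(M,R)$ to ``an annulus \emph{in} $R$ or in $F$'', not to an annulus \emph{component} of $R$ or $F$: the target annulus may be a proper subsurface of a component, and moreover the parallelism region is required to have its vertical boundary in $R$. Your proposed check -- testing parallelism to each annulus component -- misses witnesses of non-simplicity such as an annulus component of $R$ whose two boundary curves lie in different components of $F$ (a case the paper handles explicitly). Relatedly, your notion of ``boundary incompressible'' needs to be replaced by the relative notion: what matters is whether \emph{every} boundary compression disc for $A$ meets $F$; annuli that are boundary compressible through $R$ alone are automatically parallel and must be discarded, and this is not the same thing as checking boundary incompressibility in $M$.

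Finally, you flag the normal surface adaptation as ``routine but delicate'', but this is in fact the heart of the matter: one must show that if an annulus violating simplicity exists, then one exists among a computable finite set, while respecting the boundary pattern, the parallelism-in-$(M,R)$ notion, and the boundary-compression condition above. This is precisely what \textcite{matveev}'s Lemmas~6.4.7 and~6.4.8 accomplish, and the paper's proof invokes them directly after reducing to the case $\boundary M=R\cup F$ with $(M,R\cap F)$ boundary irreducible -- preconditions you do not establish. Citing those lemmas (or reproducing their proofs) would close your gap; as it stands, the argument asserts the hard step rather than proving it.
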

\begin{proof}
Let us first deal with the cases where $M$ is homeomorphic to a solid torus or to $\torus[2]\times I$.
\begin{substeps}
\item In a solid torus, every incompressible annulus is boundary parallel.
Therefore, there are only finitely many incompressible annuli properly embedded in $(M,R)$ up to isotopy.
We can enumerate them and check whether any of them is not parallel to a component of $R$ or $F$.
\item When $M$ is homeomorphic to $\torus[2]\times I$, the triple $(M,R,F)$ is not simple if and only if one of the following happens:
\begin{itemize}
\item $\torus[2]\times\{i\}\subs R$ and $\torus[2]\times\{i\}\neq R$ for some $i\in\boundary I$;
\item there are isotopic curves $c_0,c_1\subs\torus[2]$ such that $c_i\times\{i\}\subs\boundary R$ for each $i\in\boundary I$;
\item there are two annulus components $R_1$ and $R_2$ of $R\cap(\torus[2]\times\{i\})$ for some $i\in\boundary I$, such that neither component of $(\torus[2]\times\{i\})\setminus\interior{R_1\cup R_2}$ is contained in $F$.
\end{itemize}
\end{substeps}

From now on, we will assume that $M$ is neither a solid torus nor homeomorphic to $\torus[2]\times I$. Moreover, thanks to \cref{thm:reasonable triple alternative}, we can assume that $\boundary M=R\cup F$. Let $p=R\cap F$; using the terminology of \cite{matveev}, we have that the $3$\=/manifold with boundary pattern $(M,p)$ is boundary irreducible.

\step{Detecting essential tori.} We can use \cite[Lemma 6.4.7]{matveev} to decide if $M$ contains an incompressible torus which is not parallel to a boundary component of $M$. If it does, then $(M,R,F)$ is not simple. Likewise, if $M$ has a torus boundary component which is not a component of $R$ or $F$, then the triple $(M,R,F)$ is not simple. If none of these two condition is satisfied, then $(M,R,F)$ satisfies the ``torus'' condition for being simple, and we only need to check for incompressible annuli.

\step{Detecting essential annuli.} We can use \cite[Lemma 6.4.8]{matveev} to decide if $M$ contains an incompressible annulus $A$ with the following properties:
\begin{itemize}
\item the boundary of $A$ lies in $\interior{R}$;
\item $A$ is not parallel to a union of annulus components of $R$ and $F$;
\item every non\=/trivial boundary compression disc for $A$ intersects $F$.
\end{itemize}
If there is such an annulus, then $(M,R,F)$ is not simple. If there is an annulus component of $R$ whose two boundary curves lie in different components of $F$, then $(M,R,F)$ is not simple. If none of these two conditions holds, then $(M,R,F)$ satisfies the ``annulus'' condition for being simple, and is therefore simple.
\end{proof}

\begin{theorem}\label{thm:computing jsj decomposition}
Let $(M,R)$ be an irreducible sufficiently large $3$\=/manifold pair. Suppose that no component of $R$ is a disc and that $\boundary M\setminus\interior R$ is a (possibly empty) collection of annuli. There is an algorithm which, given as input the pair $(M,R)$, returns the JSJ system $F$ of $(M,R)$. Moreover, each component of $(M,R)\cut F$ is reasonable.
\end{theorem}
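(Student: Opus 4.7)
The plan is to apply the infinite search template from \cref{sec:introduction:classical algorithms}, leveraging the existence of the JSJ system guaranteed by \cref{thm:jsj}. Concretely, I would enumerate all properly embedded simplicial surfaces $F$ in $(M,R)$ across all subdivisions of the given triangulation and, for each candidate, verify the three conditions of \cref{def:jsj-system}; by \cref{thm:jsj}, the search is guaranteed to terminate on a surface which, by uniqueness, represents the canonical JSJ system up to isotopy.

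To verify \cref{def:jsj system:1}, I would process each component of $F$ individually: check that it is an incompressible torus or annulus using \cref{thm:algorithms on 3-manifolds:incompressible} together with the algorithmic classification of surfaces, and check that it is not boundary parallel in $(M,R)$ using \cref{thm:algorithms on 3-manifolds:boundary parallel}. For \cref{def:jsj system:2}, I would compute $(M,R)\cut F$ and run \cref{thm:seifert recognition,thm:i-bundle recognition,thm:simple recognition} on each component, accepting if every piece is recognised as at least one of a Seifert, $I$\=/bundle, or simple triple. Since \cref{thm:simple recognition} requires a reasonable triple as input, I would pre\=/check reasonableness by directly inspecting its five defining conditions, all of which are algorithmic. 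For \cref{def:jsj system 3}, I would iterate over the finitely many proper subcollections of components of $F$ and verify that none of them satisfies both \cref{def:jsj system:1} and \cref{def:jsj system:2}; if none does, $F$ is minimal.

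For the ``moreover'' claim, I would argue topologically that each piece $(M', R', F')$ of $(M,R)\cut F$ is reasonable once $F$ is a JSJ system. Irreducibility of the pair $(M', R')$ follows from standard cutting arguments applied to the incompressible surface $F$ in the irreducible pair $(M,R)$; the surface $R'$ inherits the absence of disc components from $R$, using that $\boundary F$ consists of curves essential in $R$ (a consequence of \cref{def:jsj system:1}); each component of $F'$ is an incompressible torus or annulus by construction and by \cref{def:jsj system:1}; such a component fails to be boundary parallel in $(M', R')$, since a product region witnessing boundary parallelism would lift to a product region in $M$ contradicting the non\=/boundary\=/parallelism of the corresponding component of $F$; and the components of $\boundary M' \setminus \interior{R' \cup F'}$ coincide with those of $\boundary M \setminus \interior R$, which are annuli by hypothesis.

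The main obstacle I anticipate is more conceptual than algorithmic: one must coordinate the various checks (reasonableness, the three recognition procedures, and minimality) so that the first candidate passing them all truly represents the canonical JSJ system, and one must separately justify topologically that the resulting pieces are reasonable. Uniqueness in \cref{thm:jsj} provides the coherence between the algorithmic output and the canonical object for free, and the infinite search template shields us from needing any explicit bound on the size or complexity of the JSJ surface.
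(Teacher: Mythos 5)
Your proposal is correct and takes essentially the same route as the paper: both invoke the infinite search template together with \cref{thm:jsj} and the three recognition propositions, check the defining conditions of \cref{def:jsj-system} on candidate surfaces, enforce reasonableness of the cut pieces so that \cref{thm:simple recognition} applies, and handle minimality (\cref{def:jsj system 3}) by testing unions of components. The only cosmetic difference is that the paper first locates any admissible surface $F'$ and then returns a minimal union of its components, whereas you verify all three conditions for each candidate directly and stop at the first hit; you also flesh out the ``moreover'' claim, which the paper dispatches with ``it is easy to check''.
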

\begin{proof}
It is easy to check that, under the assumptions made in the statement, the pieces of the JSJ decomposition of $(M,R)$ are reasonable. We can search through all surfaces $F'$ properly embedded in $(M,R)$ whose components are annuli and tori, until we find one satisfying \cref{def:jsj system:1,def:jsj system:2} of \cref{def:jsj-system} and, additionally, the property that each component of $(M,R)\cut F'$ is reasonable. All these conditions can be checked algorithmically, and we are guaranteed to find at least one such surface $F'$. Then, for every union of components of $F'$, we check whether it still satisfies the above properties and, among those that do, we return one that is minimal.
\end{proof}

Let us remark that the additional constraints we put on the pair $(M,R)$ -- namely the fact that $R$ has no disc components and $\boundary M\setminus\interior{R}$ is union of annuli -- are a consequence of the assumption of reasonableness we require for simple pieces.

\subsection{Piecing things together}\label{sec:homeomorphisms:piecing}

For the sake of convenience, we will slightly deviate from the statement of \cref{thm:jsj} and classify pieces of the JSJ decomposition as follows. If $(M,R)$ is an irreducible sufficiently large $3$\=/manifold pair and $F$ is its JSJ system, we say that a component $(N,R',F')$ of $(M,R)\cut F$ is:
\begin{itemize}
\item a \emph{Seifert piece} if $(N,R',F',\boundary N\setminus\interior{R'\cup F'})$ is a Seifert $4$\=/tuple;
\item an \emph{$I$\=/bundle piece} if $(N,R',F')$ is an $I$\=/bundle triple;
\item a \emph{strongly simple piece} if $(N,R',F')$ is a strongly simple triple.
\end{itemize}
It is clear that every component of $(M,R)\cut F$ is either a Seifert piece, an $I$\=/bundle piece, or a strongly simple piece. Note that the three possibilities are not mutually exclusive: it is possible for a component to be a Seifert piece and an $I$\=/bundle piece at the same time.

\begin{theorem}\label{thm:mapping class group of 3-manifold}
Let $(M,R)$ be an irreducible sufficiently large $3$\=/manifold pair. Suppose that no component of $R$ is a disc and that $\boundary M\setminus\interior{R}$ is a (possibly empty) collection of annuli. There is an algorithm which, given as input the pair $(M,R)$, returns
\begin{itemize}
\item a finite collection $\FFF$ of self\=/homeomorphisms of $(\boundary M,R)$, and
\item a finite collection $\CCC=\{(a_1,b_1),\ldots,(a_m,b_m)\}$, where $a_1,\ldots,a_m,b_1,\ldots, b_m$ are curves in $R$ with $a_1\cap b_1=\ldots=a_m\cap b_m=\emptyset$,
\end{itemize}
such that
\[
\trhomeo{M,R}=\bigcup_{f\in\FFF}\langle\twist{a_1}\twist{b_1}^{-1},\ldots,\twist{a_m}\twist{b_m}^{-1}\rangle f
\]
as a subgroup of $\homeo{\boundary M,R}$. Moreover, if every $I$\=/bundle piece in the JSJ decomposition of $(M,R)$ is a product $I$\=/bundle over $\surf{0}$, $\surf{0,1}$, $\surf{0,2}$, or $\surf{0,3}$, or the twisted $I$\=/bundle over $\nsurf{1}$, $\nsurf{1,1}$, or $\nsurf{1,2}$, then the curves in $\CCC$ are pairwise disjoint.
\end{theorem}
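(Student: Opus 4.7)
The plan is to exploit the JSJ decomposition and assemble the local descriptions obtained in \cref{sec:homeomorphisms:seifert,sec:homeomorphisms:i-bundle,sec:homeomorphisms:simple}. First I would compute the JSJ system $F\subs M$ using \cref{thm:computing jsj decomposition}, together with the pieces $(N_1,R_1,F_1),\ldots,(N_k,R_k,F_k)$ of $(M,R)\cut F$, and recognise each one as Seifert, $I$\=/bundle, or strongly simple via \cref{thm:seifert recognition,thm:i-bundle recognition,thm:simple recognition}. The uniqueness half of \cref{thm:jsj} ensures that every element of $\homeo{M,R}$ is isotopic to one preserving $F$ setwise, which fits $\homeo{M,R}$ into a short exact sequence
\[
1\to\fixhomeo{F}{M,R}\to\homeo{M,R}\to G\to 1,
\]
where $G$ sits inside the group of self\=/homeomorphisms of $F$ that permute its components.

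For the kernel, the natural identification
\[
\fixhomeo{F}{M,R}\cong\prod_{i=1}^k\fixhomeo{F_i}{N_i,R_i}
\]
reduces the computation to the three piece\=/wise propositions. Applying \cref{thm:mapping class group of seifert spaces,thm:mapping class group of i-bundles} together with \cref{thm:strongly simple homeomorphism:2} of \cref{thm:strongly simple homeomorphism}, each factor produces a finite collection of coset representatives (traced to $\boundary N_i$) and a finite family of disjoint pairs of curves in the interior of $R_i$ whose associated commutators $\twist{a}\twist{b}^{-1}$ generate everything else, with strongly simple pieces contributing no pairs at all. Assembling these factor\=/wise data gives the ``kernel part'' of the desired $(\FFF,\CCC)$.

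To describe the quotient $G$, the plan is to enumerate finitely many candidate combinatorial data: a permutation $\sigma$ of the components of $F$, compatible with a permutation of the pieces, together with an isotopy class of homeomorphism $F_j\to F_{\sigma(j)}$ for each $j$. Up to composing with Dehn twists about components of $F$ -- annulus components contribute additional pairs $(c_1,c_2)$ to $\CCC$ via the trace $\twist{c_1}\twist{c_2}^{-1}$, whereas torus components act trivially on $\boundary M$ and can be discarded -- this set of data is genuinely finite. For each candidate, deciding whether it extends to a self\=/homeomorphism of $(M,R)$ is a piecewise extension problem solved by \cref{thm:seifert homeomorphism problem,thm:seifert extension of homeomorphisms,thm:i-bundle extension of homeomorphisms} and \cref{thm:strongly simple homeomorphism:1} of \cref{thm:strongly simple homeomorphism}; a realising homeomorphism is added to $\FFF$ whenever the test succeeds.

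The main obstacle I anticipate is parametrising the quotient $G$ so that finiteness and completeness of the enumeration are transparent: pieces with non\=/unique Seifert fibrations (the exceptions of \cref{thm:unique seifert fibration}) or with additional internal symmetries force one to first rigidify the Seifert data on the relevant boundary annuli or tori, by means of \cref{thm:seifert manifolds fibre preserving on the boundary}, before invoking the per\=/piece extension algorithms. The disjointness addendum for $\CCC$ then follows from the per\=/piece disjointness in \cref{thm:mapping class group of seifert spaces,thm:mapping class group of i-bundles} under the stated $I$\=/bundle hypothesis, combined with the observation that the ``gluing'' Dehn twists come from annulus components of $F$ whose boundary curves can be isotoped into $\interior R$ disjoint from all curves already contributed by the pieces.
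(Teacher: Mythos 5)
Your plan follows the same outline as the paper's proof: compute the JSJ system via \cref{thm:computing jsj decomposition}, analyse the per-piece mapping class groups with \cref{thm:mapping class group of seifert spaces,thm:mapping class group of i-bundles,thm:strongly simple homeomorphism}, and reduce $\trhomeo{M,R}$ to finitely many combinatorial cases indexed by a variant of the JSJ graph. The assembly of $(\FFF,\CCC)$ from per-piece data and the disjointness addendum are treated correctly. Your explicit addition to $\CCC$ of the pairs $(c_1,c_2)=\boundary A$ for JSJ annuli $A$ is harmless but redundant: a slight pushoff of $A$ into an adjacent Seifert or $I$\=/bundle piece $N$ either has both boundary fibres on a single component of $\boundary N\setminus\interior{F'}$, in which case they are isotopic in $\boundary M$ and $\twist{c_1}\twist{c_2}^{-1}$ is trivial, or has them on distinct components, in which case the pair is already accounted for by $\CCC_N$.

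The gap is in the finiteness of the combinatorial data $G$, which you flag as the main obstacle but attribute to the wrong source. Even after modding out by Dehn twists about components of $F$, the set of isotopy classes of gluing homeomorphisms on a JSJ torus $T$ is not obviously finite: the mapping class group of $\torus[2]$ is the modular group, not a finite extension of the $\ZZ^2$ of Dehn twists about $T$. The paper's crucial observation, absent from your proposal, is that when $T$ joins two Seifert pieces any admissible gluing must preserve the pair of fibre slopes that $T$ inherits from the two adjacent fibrations, and these slopes are distinct by minimality of $F$, leaving only finitely many isotopy classes of gluings; when $T$ is adjacent to a strongly simple piece the finiteness from \cref{thm:strongly simple homeomorphism} applies, and for annulus components of $F$ there is nothing to prove. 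Attributing the obstacle to "non-unique Seifert fibrations" or "additional internal symmetries" misses this: the phenomenon is present already for Seifert pieces with unique fibration, and rigidifying the fibration on the boundary alone does not resolve it. A secondary imprecision is the claimed isomorphism $\fixhomeo{F}{M,R}\cong\prod_i\fixhomeo{F_i}{N_i,R_i}$: a homeomorphism fixing $F$ pointwise can still twist within $\nbhd{F}$ and so need not fix the doubled copies $F_i\subs\boundary N_i$, hence the restriction map has a kernel generated by Dehn twists about $F$; you re-introduce these by hand on the $G$ side of the sequence, so the book-keeping survives, but the sequence as written is not exact.
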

\begin{proof}
We use \cref{thm:computing jsj decomposition} to compute the JSJ system $F$ of $(M,R)$, and note that all the pieces are reasonable.

\step{Exceptional cases.} First of all, let us deal with the cases where the JSJ decomposition of $(M,R)$ is trivial or contains exceptional Seifert pieces.
\begin{substeps}
\item Suppose that $F$ is empty. Then $(M,R,\emptyset)$ is either a strongly simple piece, an $I$\=/bundle piece, or a Seifert piece. If it is strongly simple, then we can just return $\FFF=\trhomeo{M,R}$ and $\CCC=\emptyset$ thanks to \cref{thm:strongly simple homeomorphism}. If $(M,R,\boundary M\setminus\interior{R})$ is a Seifert triple, then we conclude immediately by \cref{thm:mapping class group of seifert spaces}. Finally, if $(M,R,\emptyset)$ is an $I$\=/bundle triple, then we apply \cref{thm:mapping class group of i-bundles}.
\item Suppose that a component $(N,R',F')$ is homeomorphic to $(\torus[2]\times I,\emptyset,\torus[2]\times\boundary I)$. The two boundary components of $N$ must come from the same torus in $F$, otherwise the surface $F$ would not be minimal. We then have that $M$ is homeomorphic to the mapping torus of some homeomorphism $\umap{\torus[2]}{\torus[2]}$, and we simply return $\FFF=\{\id\}$ and $\CCC=\emptyset$.
\end{substeps}

From now on, we can therefore assume that $F$ is non\=/empty and that there are no $(\torus[2]\times I,\emptyset,\torus[2]\times\boundary I)$ components in $(M,R)\cut F$. As a consequence of \cref{thm:unique seifert fibration}, it is easy to see that every Seifert piece either has a unique Seifert fibration, or it has two and it is homeomorphic to $(\nsurf{2}\twtimes I,\emptyset,\boundary(\nsurf{2}\twtimes I))$. Since the two fibrations of $\nsurf{2}\twtimes I$ are inequivalent, let us arbitrarily and consistently pick one for every $(\nsurf{2}\twtimes I,\emptyset,\boundary(\nsurf{2}\twtimes I))$ component of $(M,R)\cut F$ -- say the one over the M\"obius band. Then every Seifert piece in the JSJ decomposition of $(M,R)$ has a distinguished fibration, either the unique one -- for pieces which are not homeomorphic to the twisted $I$\=/bundle over a Klein bottle -- or the one we have selected. Moreover, every homeomorphism between Seifert pieces can be isotoped to be fibre\=/preserving.

\step{JSJ graph.} We closely follow the construction of the JSJ graph by \textcite[Section 6.3]{kuperberg-homeomorphism}, with slight changes to accommodate the presence of boundary. More precisely, we define a graph $\Gamma$ as follows.
\begin{itemize}
\item There is one vertex for each component of $(M,R)\cut F$.
\item There is one edge for each component of $F$. Each edge joins the two (not necessarily distinct) vertices corresponding to the JSJ pieces whose boundaries this component lies on.
\end{itemize}

An \emph{automorphism} of the JSJ graph $\Gamma$ is an automorphism $\map{\varphi}{\Gamma}{\Gamma}$ of the underlying graph, together with some additional data:
\begin{itemize}
\item for each vertex $(N,R',F')$ representing a strongly simple piece, a homeomorphism $\varphi|_N$ from $(N,R',F')$ to the component of $(M,R)\cut F$ associated to the vertex $\varphi(N,R',F')$, defined up to isotopy in $\varphi(N,R',F')$;
\item for each  edge $X$, a homeomorphism $\varphi|_X$ from $X$ to the component of $F$ associated to the edge $\varphi(X)$, defined up to isotopy in $\varphi(X)$; note that if $X$ is an annulus, there are only two possible values for $\varphi|_X$.
\end{itemize}
Moreover, we enforce the following consistency condition for each strongly simple piece $(N,R',F')$: for each edge $X$ having $(N,R',F')$ as an endpoint, we ask that the restriction of $\varphi|_N$ to $X$ is isotopic to $\varphi|_X$ in $\varphi(X)$. If both endpoints of $X$ are equal to $(N,R',F')$, then we ask that this condition holds for both copies of $X$ in $F'$.

\step{Finitely many automorphisms.} Every homeomorphism $\map{f}{(M,R)}{(M,R)}$ preserving $F$ induces an automorphism $\varphi_f$ of $\Gamma$ in a natural way. We claim that the set
\[
\Phi=\{\varphi_f:\text{$f$ is a self\=/homeomorphism of $(M,R)$ preserving $F$}\}
\]
is finite. In fact, the only issue that needs addressing is the potentially infinite number of homeomorphisms between JSJ tori connecting two Seifert pieces. Let $T$ be such a torus, connecting Seifert pieces $(N_1,R_1,S_1)$ and $(N_2,R_2,S_2)$. Let $c_1$ and $c_2$ be curves in $T$ which are fibres of $(N_1,R_1,S_1)$ and $(N_2,R_2,S_2)$ respectively; note that $c_1$ and $c_2$ are not isotopic in $T$, otherwise we could remove $T$ from the JSJ system $F$. Consider a homeomorphism $\map{f}{(M,R)}{(M,R)}$ preserving $F$. Necessarily, the curves $c_1$ and $c_2$ will be sent by $f$ to curves in $\varphi_f(T)$ which are isotopic to fibres $c_1'$ and $c_2'$ of $\varphi_f(N_1,R_1,S_1)$ and $\varphi_f(N_2,R_2,S_2)$ respectively. Since the curves $c_1$, $c_2$, $c_1'$, and $c_2'$ do not depend on $f$, and there are only finitely many isotopy classes of homeomorphisms $\umap{T}{\varphi_f(T)}$ sending $c_1$ to $c_1'$ and $c_2$ to $c_2'$, we conclude that there are only finitely many choices for $\varphi_f|_T$.

We remark that this argument is still valid if $(N_1,R_1,S_1)$ and $(N_2,R_2,S_2)$ are the same piece; in this case, we simply have to consider the possibility that $f$ sends $c_1$ to $c_2'$ and $c_2$ to $c_1'$.

\step{Computable automorphisms.} Our argument for the finiteness of $\Phi$ actually provides an algorithm to compute a finite superset $\Phi'$ of $\Phi$. As a consequence, in order to compute $\Phi$, we simply need an algorithm to decide whether an automorphism of $\Gamma$ is induced by a self\=/homeomorphism of $(M,R)$ preserving $F$.

Let $\varphi$ be an automorphism of $\Gamma$. If $(N,R',F')$ is not homeomorphic to $\varphi(N,R',F')$ for some piece $(N,R',F')$ then clearly $\varphi$ is not induced by a homeomorphism. Otherwise, for each piece $(N,R',F')$ which is not strongly simple, let us ask the following question: is there a homeomorphism $\map{f_N}{(N,R',F')}{\varphi(N,R',F')}$ such that, for each component $X$ of $F'$, the homeomorphisms $f_N|_X$ and $\varphi|_X$ are isotopic in $\varphi(X)$? If the answer -- which we can compute thanks to \cref{thm:seifert extension of homeomorphisms,thm:i-bundle extension of homeomorphisms} -- is no, then $\varphi$ cannot be induced by a homeomorphism. Otherwise, we easily see that $\varphi$ belongs to $\Phi$. In fact, we can isotope the homeomorphisms $f_N$ so that they agree on their shared boundary $F$, and then combine them to produce a homeomorphism $\map{f}{(M,R)}{(M,R)}$ preserving $F$; here, for ease of notation, we have used $f_N$ to refer to the homeomorphism $\varphi|_N$ for strongly simple pieces $(N,R',F')$.

We can therefore algorithmically construct a finite set $\FFF_0$ by picking, for each $\varphi\in\Phi$, a self\=/homeomorphism of $(M,R)$ preserving $F$ and inducing $\varphi$. This finite set has the property that every homeomorphism $\map{f}{(M,R)}{(M,R)}$ can be isotoped so that, for some $f_0\in\FFF_0$, the homeomorphism $ff_0$ is the identity on $F$ and on all the strongly simple pieces of the JSJ decomposition. In fact, it is enough to isotope $f$ so to preserve $F$ and pick $f_0\in\FFF_0$ such that $\varphi_{f_0}=\varphi_{f^{-1}}$, so that $ff_0$ induces the trivial automorphism of $\Gamma$. Up to further isotoping $f$, we can then assume that $ff_0$ is the identity on $F$ and on strongly simple pieces, as desired.

\step{Finitely many Dehn twists.} Finally, let us show how to compute the sets $\FFF$ and $\CCC$. For each Seifert or $I$\=/bundle piece $(N,R',F')$, we apply \cref{thm:mapping class group of seifert spaces,thm:mapping class group of i-bundles} to find
\begin{itemize}
\item a finite collection $\FFF_N$ of self\=/homeomorphisms of $(\boundary N,R')$ fixing $F'$ pointwise, and
\item a finite collection $\CCC_N=\{(a_{N,1},b_{N,1}),\ldots,(a_{N,m_N},b_{N,m_N})\}$ where $a_{N,1},\allowbreak\ldots,\allowbreak a_{N,m_N},\allowbreak b_{N,1},\allowbreak\ldots,\allowbreak b_{N,m_N}$ are curves in $\boundary N\setminus(F\cup\boundary R)$ with $a_{N,1}\cap b_{N,1}=\ldots=a_{N,m_N}\cap b_{N,m_N}=\emptyset$,
\end{itemize}
such that
\[
\trace{\fixhomeo{F'}{N,R'}}=\bigcup_{f\in\FFF_N}\langle\twist{a_{N,i}}\twist{b_{N,i}}^{-1}:1\le i\le m_N\rangle f
\]
as a subgroup of $\fixhomeo{F'}{\boundary N,R'}$. Moreover, if the piece $(N,R',F')$ is a product $I$\=/bundle over $\surf{0}$, $\surf{0,1}$, $\surf{0,2}$, or $\surf{0,3}$, or a twisted $I$\=/bundle over $\nsurf{1}$, $\nsurf{1,1}$, or $\nsurf{1,2}$, then the curves in $\CCC_N$ are pairwise disjoint. Let us extend the homeomorphisms in $\FFF_N$ to self\=/homeomorphisms of $(\boundary M,R)$ by setting them equal to the identity on $\boundary M\setminus N$. Finally, if some curve in $\CCC_N$ lies in $\boundary M\setminus R$, we can replace it with a parallel curve in $R$ without changing the isotopy class of the corresponding Dehn twist. It is then easy to see that the algorithm can return the collections
\begin{align*}
\FFF=\{\trace{f_0}:f_0\in\FFF_0\}\cup\bigcup_N \FFF_N&&\text{and}&&\CCC=\bigcup_N\CCC_N.&\qedhere
\end{align*}
\end{proof}

\begin{remark}
Translated in the language of \citeauthor{matveev}'s boundary patterns, \cref{thm:mapping class group of 3-manifold} can be used to compute the mapping class group of a Haken $3$\=/manifold $(M,p)$ with boundary pattern such that $p$ is a collection of simple closed curves and no component of $\boundary M\cut p$ is a disc.
\end{remark}

As a consequence of \cref{thm:mapping class group of 3-manifold} and the work of \textcite{gordon-luecke}, we have the following. Note that \cref{thm:homeomorphisms of knot complement:3} can also be proved using an oriented version of \citeauthor{matveev}'s boundary patterns.

\begin{corollary}\label{thm:homeomorphisms of knot complement}
Let $M\subs\sphere[3]$ be the complement of a non\=/trivial knot.
\begin{enumarabic}
\item\label[statement]{thm:homeomorphisms of knot complement:1} Every self\=/homeomorphism of $M$ sends the meridian curve of $\boundary M$ to itself.
\item\label[statement]{thm:homeomorphisms of knot complement:2} The group $\trhomeo{M}$ is either trivial or generated by the homeomorphism
\[
\map{(-\id)}{\boundary M}{\boundary M}
\]
inducing multiplication by $-1$ on $H_1(\boundary M)$.
\item\label[statement]{thm:homeomorphisms of knot complement:3} There is an algorithm which, given $M$ as input, returns representatives of $\trhomeo{M}$.
\end{enumarabic}
\end{corollary}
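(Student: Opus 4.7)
The plan is to prove the three statements in order. For (1), I would invoke the knot complement theorem of \textcite{gordon-luecke}. Since $M=\sphere[3]\setminus\nbhd{K}$, Dehn filling $M$ along the meridian $\mu$ recovers $\sphere[3]$, and by Gordon-Luecke $\mu$ is the unique slope on $\boundary M$ with this property (this is where the non-triviality of $K$ is essential). A self-homeomorphism $f\colon M\to M$ must carry $\mu$ to a slope sharing the same filling property, so $f(\mu)$ is isotopic to $\mu$.

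For (2), the plan is to combine (1) with standard facts about torus mapping class groups. Since $H_1(M)\cong\ZZ$ is generated by the class of the meridian, the longitude $\lambda$ spans, up to sign, the kernel of the map $H_1(\boundary M)\to H_1(M)$ induced by inclusion; every self-homeomorphism preserves this kernel, so sends $\lambda$ to $\pm\lambda$ in homology. Combined with (1), the matrix of the induced automorphism of $H_1(\boundary M)\cong\ZZ^2$ is diagonal with entries in $\{\pm 1\}$, and orientation-preservation of $f$ forces the two signs to agree. Since the mapping class group of the torus injects into $GL_2(\ZZ)$ via its action on homology, the trace $\trace{f}$ is either $\id$ or $-\id$.

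For (3), the plan is to invoke \cref{thm:mapping class group of 3-manifold}. I would let $R\subs\boundary M$ be a closed annular neighbourhood of any essential simple closed curve on $\boundary M$, so that $\boundary M\setminus\interior{R}$ is itself an annulus. The pair $(M,R)$ satisfies the hypotheses of the theorem: $M$ is irreducible as a knot complement, $\boundary M$ is incompressible in $M$ by non-triviality of $K$ (so $M$ is sufficiently large and $R$ is incompressible), and $R$ has no disc components. The key observation is that the hyperelliptic involution $-\id$ on $\boundary M$ preserves every unoriented simple closed curve up to isotopy, so both elements of $\{\id,-\id\}$ lie in $\trhomeo{M,R}$ whenever they lie in $\trhomeo{M}$; together with part (1) this gives $\trhomeo{M,R}=\trhomeo{M}$. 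The algorithm then inspects the finite output of \cref{thm:mapping class group of 3-manifold} for $(M,R)$ and checks whether $-\id$ appears, deciding between $\trhomeo{M}=\{\id\}$ and $\trhomeo{M}=\{\id,-\id\}$. The main subtlety here is reconciling the potential Dehn-twist generators in the theorem's output with the a priori finiteness of $\trhomeo{M}$ established in (2); this is where the observation that $\trhomeo{M,R}=\trhomeo{M}$ (rather than a proper subgroup) is crucial.
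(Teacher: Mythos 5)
Parts (1) and (2) of your argument are sound and follow the same route as the paper: part (1) is Gordon--Luecke applied to the Dehn filling that returns $\sphere[3]$, and part (2) then falls out from the homology action preserving the meridian (up to isotopy, hence exactly) and the longitude (up to sign).

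For part (3) your choice of pair is both unnecessary and the source of a real gap. The paper simply applies \cref{thm:mapping class group of 3-manifold} to the pair $(M,\boundary M)$; this is permissible because $\boundary M$ is a torus (so not a disc) and $\boundary M\setminus\interior{\boundary M}=\emptyset$ is vacuously a collection of annuli. With that choice, $\homeo{\boundary M,R}=\homeo{\boundary M}$ and $\trhomeo{M,R}=\trhomeo{M}$ \emph{on the nose}. Your choice of $R$ as a proper annular subsurface puts $\trhomeo{M,R}$ and $\trhomeo{M}$ inside two different groups, $\homeo{\boundary M,R}$ and $\homeo{\boundary M}$, related by a forgetful map whose kernel contains the Dehn twist about the core of $R$. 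Your elliptic-involution observation shows only that this forgetful map is \emph{surjective} onto $\trhomeo{M}$; writing ``$\trhomeo{M,R}=\trhomeo{M}$'' silently also assumes injectivity, which you never argue, and without which you cannot infer finiteness of $\trhomeo{M,R}$ from that of $\trhomeo{M}$. The missing ingredient is the one the paper states explicitly: a product of Dehn twists of the form $\twist{a}\twist{b}^{-1}$ about disjoint curves on a torus cannot act as $-\id$ on $H_1(\boundary M)$ --- indeed, since disjoint essential simple closed curves on a torus are parallel, each such generator is already trivial. That directly kills the Dehn-twist subgroup in the theorem's output, after which one simply inspects the finite collection $\FFF$. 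Your algorithm would be correct with this repair, but the justification as written does not close.
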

\begin{proof}
\Cref{thm:homeomorphisms of knot complement:1} follows from the work of \textcite{gordon-luecke}; \cref{thm:homeomorphisms of knot complement:2} is an immediate consequence. As far as \cref{thm:homeomorphisms of knot complement:3} is concerned, we can apply \cref{thm:mapping class group of 3-manifold} to the pair $(M,\boundary M)$. Since homeomorphisms of the form $\twist{a}\twist{b}^{-1}$ cannot act as $(-\id)$ on $H_1(\boundary M)$, it is enough to check whether the collection $\FFF$ returned by the algorithm contains any non\=/trivial homeomorphism.
\end{proof}

\section{An algorithmic problem on free groups} \label{sec:free groups}

\subsection{Introducing band systems}\label{sec:free groups:band systems}
We now take a break from topology and enter the realm of combinatorics and free groups. The bulk of \cref{sec:free groups} will be devoted to proving \cref{thm:exponential equation free group}. The reader should however not be surprised by how seemingly distant the statement of the theorem is to anything concerning isotopies of surfaces. In fact, an easy consequence of the theorem -- namely \cref{thm:product of dehn twists extension to handlebody} --  will allow us to answer an algorithmic question about extensions of homeomorphisms to the interior of handlebodies.

We start by introducing the notation we will need to talk about cancellation of words in free groups, namely (marked) band systems and bundling maps. \Cref{sec:free groups:the theorem} is entirely dedicated to the proof of \cref{thm:exponential equation free group}, while the topological corollary is presented in \cref{sec:free groups:topology}.

\begin{definition}
Let $n$ be a positive integer. A \emph{band system} of length $2n$ is a collection
\[
B=\{(a_1,b_1),\ldots,(a_n,b_n)\}
\]
such that:
\begin{enumroman}
\item $\{a_1,b_1,\ldots,a_n,b_n\}=\{1,\ldots,2n\}$;
\item for each $1\le i\le n$, the inequality $a_i<b_i$ holds;
\item\label[condition]{def:band system:3} there is no pair of indices $i,j$ such that $a_i<a_j<b_i<b_j$.
\end{enumroman}
The individual pairs $(a_i,b_i)$ are called \emph{bands}, with $a_i$ being the \emph{left endpoint} and $b_i$ being the \emph{right endpoint}.
\end{definition}

\Cref{fig:band system example} shows a graphical depiction of the band system
\[
B=\text{$\{(1,8),(2,5),(3,4),(6,7)\}$ of length $2n=8$},
\]
hopefully justifying the name. If we think of a pair $(a,b)\in B$ as a physical band connecting the elements $a$ and $b$, then \cref{def:band system:3} ensures that these bands do not intersect.

\begin{definition}
A \emph{marked band system} is a pair $(B,p)$ where $B$ is a band system of length -- say -- $2n$, and $p=(p_1,\ldots,p_r)$ is a weakly increasing sequence of numbers with
\begin{align*}
p_i\in\left\{\frac{1}{2},1+\frac{1}{2},\ldots,2n+\frac{1}{2}\right\}&&\text{for $1\le i\le r$.}
\end{align*}
\end{definition}

\Cref{fig:marked band system example} depicts the same band system as \cref{fig:band system example}, equipped with the marking
\[
p=\left(\frac{1}{2},3+\frac{1}{2},7+\frac{1}{2}\right).
\]
We think of an element $p_i=a+1/2$ as a ``separator'' between the integers $a$ and $a+1$.

\begin{figure}\centering
\tikzsetnextfilename{marked-band-system-example}
\def\mybandsystempic{
    \pic{sequence of points={points={n=8}}};
    \pic[default bs band]{bs band={pairs={1-8,2-5,3-4,6-7}}};
    \foreach \i in {1,...,\alen} {
    \pic[bs single box]{bs box={i=\i,label=$\i$}};
    }
}
\subcaptionbox{\label{fig:band system example}}[.45\linewidth]{
\begin{tikzpicture}[bs default name=a]
\mybandsystempic
\end{tikzpicture}
}
\subcaptionbox{\label{fig:marked band system example}}[.45\linewidth]{
\begin{tikzpicture}[bs default name=a]
\mybandsystempic
\pic[default bs mark]{bs mark={after={0,3,7}}};
\end{tikzpicture}
}
\caption{A band system \subref{fig:band system example} and a marked band system \subref{fig:marked band system example}.}
\end{figure}

Let $(B,p)$ be a marked band system. The \emph{maximal bundle} of $(B,p)$ is, loosely speaking, the marked band system $(B',p')$ constructed from $(B,p)$ by bundling together parallel bands as much as possible, while making sure that the bundles don't cross any marked spot. Formally, let $2n$ be the length of $B$. Consider the equivalence relation $\sim$ on $\{1,\ldots,2n\}$ generated by
\[
\text{$a_1\sim a_2$ and $b_1\sim b_2$ whenever }\begin{dcases}(a_1,b_1),(a_2,b_2)\in B,\\\text{$a_2=a_1+1$, $b_2=b_1-1$, and}\\a_1+\frac{1}{2},b_1-\frac{1}{2}\not\in p.\end{dcases}
\]
Let $E=\quot{\{1,\ldots,2n\}}{\sim}$ be the quotient set, and let $2n'$ be its cardinality (which is easily seen to be even). Since equivalence classes consist of consecutive integers, there is a natural bijection between $E$ and $\{1,\ldots,2n'\}$ such that classes containing smaller integers are mapped to smaller integers. Let
\[
\map{\iota}{\{1,\ldots,2n\}}{\{1,\ldots,2n'\}}
\]
be the composition of the projection map $\umap{\{1,\ldots,2n\}}{E}$ with said bijection; we will call $\iota$ the \emph{bundling map}. We can then define
\[
B'=\{(\iota(a),\iota(b)):(a,b)\in B\}.
\]
As far as the marking is concerned, if $p=(p_1,\ldots,p_r)$, for each $1\le i\le r$ we define
\[
p_i'=\begin{dcases*}
\frac{1}{2}&if $p_i=\dfrac{1}{2}$,\\
\iota\left(p_i-\frac{1}{2}\right)+\frac{1}{2}&otherwise,
\end{dcases*}
\]
and set $p'=(p_1',\ldots,p_r')$. It is not hard to verify that $(B',p')$ is a marked band system.

As an example, let us look at \cref{fig:maximal bundle example}. The marked band system on the top is defined by
\begin{align*}
B&=\text{$\{(1,12),(2,11),(3,8),(4,7),(5,6),(9,10)\}$ of length $2n=12$},\\
p&=\left(2+\frac{1}{2},6+\frac{1}{2},9+\frac{1}{2},12+\frac{1}{2}\right).
\end{align*}
In order to construct the maximal bundle, we group together the bands $(1,12)$ and $(3,4)$, since they are parallel and do not cross any marked spot. We do the same for bands $(3,8)$ and $(4,7)$; note that the band $(5,6)$ does not belong to the same bundle, since the spot $6+1/2$ is marked. The maximal bundle of $(B,p)$ is then $(B',p')$, where
\begin{align*}
B'&=\text{$\{(1,8),(2,5),(3,4),(6,7)\}$ of length $2n'=8$},\\
p'&=\left(1+\frac{1}{2},4+\frac{1}{2}, 6+\frac{1}{2},8+\frac{1}{2}\right).
\end{align*}

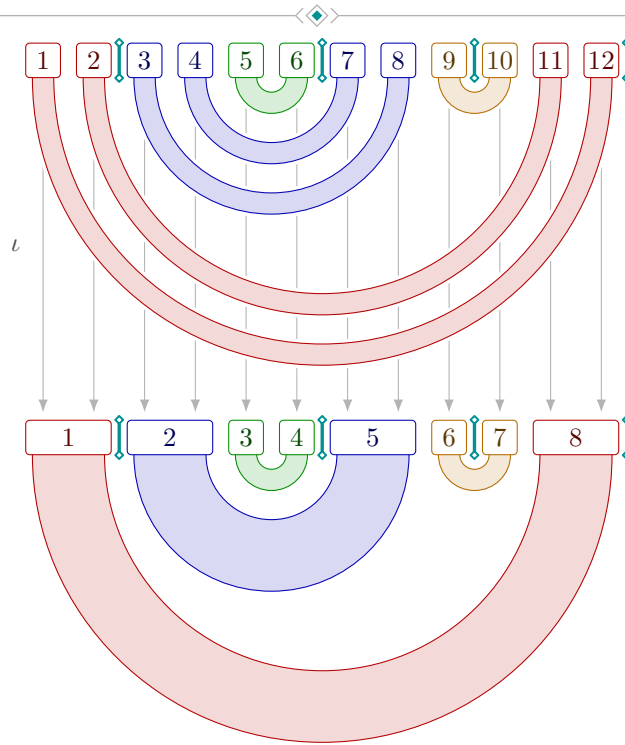
\begin{figure}\centering
\tikzsetnextfilename{maximal-bundle-example}
\begin{tikzpicture}
\def\mymarks{2,6,9,12}
\pic{sequence of points={name=a,points={n=12}}};
\pic[yshift=-5cm]{sequence of points={name=b,points={n=12}}};
\pic{at every point={name=a}{\draw[black!30,->] ($(l)!.5!(r)$) -- +([yshift=16pt]0,-5cm) node[black!70,midway,left=5pt]{\ifnum\i=1$\iota$\fi};}};
\foreach \lb/\le/\rb/\re/\li/\ri[count=\c] in {1/2/11/12/1/8,3/4/7/8/2/5,5/5/6/6/3/4,9/9/10/10/6/7} {
\colorlet{mycol}{main color \c!70!black}
\pic[colored bs band=mycol]{bs band={name=b,wide={\lb-\le}{\rb-\re}}};
\foreach \l[evaluate=\l as \r using int(\re-\l+\lb)] in {\lb,...,\le} {
	\pic[colored bs band=mycol]{bs band={name=a,pairs={\l-\r}}};
}
\foreach \ep in {l,r} {
	\edef\l{\csname\ep b\endcsname}\edef\r{\csname\ep e\endcsname}\edef\i{\csname\ep i\endcsname}
	\pic[bs colored single box=mycol]{bs box={name=b,l=\l,r=\r,label=$\i$}};
	\foreach \j in {\l,...,\r} {
		\pic[bs colored single box=mycol]{bs box={name=a,i=\j,label=$\j$}};
	}
}
}
\foreach \name in {a,b}\pic[default bs mark]{bs mark={name=\name,after={\mymarks}}};
\end{tikzpicture}
\caption{A marked band system (on the top) and its maximal bundle (on the bottom). Bands of the same colour get bundled together; vertical arrows represent the bundling map $\iota$.\label{fig:maximal bundle example}}
\end{figure}

Let us now remark a few properties of the bundling map $\iota$.
\begin{itemize}
\item The bundling map sends bands of $B$ to bands of $B'$. From now on, with slight abuse of notation, for each band $(a,b)\in B$ we will denote by $\iota(a,b)$ the band $(\iota(a),\iota(b))\in B'$.
\item If $\iota(a_1,b_1)=\iota(a_2,b_2)$ where $(a_1,b_1)$ and $(a_2,b_2)$ are bands of $B$ with $a_1<a_2<b_2<b_1$, then $(a_1+i,b_1-i)\in B$ and $\iota(a_1+i,b_1-i)=\iota(a_1,b_1)$ for each $0\le i\le a_2-a_1$. Moreover, no element of $p$ lies between $a_1$ and $a_2$ or between $b_2$ and $b_1$.
\end{itemize}

The following facts are completely elementary, but we prefer to state them here so as not to clutter the proof of \cref{thm:exponential equation free group}.
\begin{lemma}\label{thm:band of width 1}
Let $(B,p)$ be a marked band system.
\begin{enumarabic}
\item\label[statement]{thm:band of width 1:1} If $(a,b)$ is a band of $B$, then there is a band $(c,c+1)\in B$ for some $a\le c<b$.
\item\label[statement]{thm:band of width 1:2} Let $(B',p')$ be the maximal bundle of $(B,p)$, with bundling map $\iota$.
If $(a_1,b_1)$ and $(a_2,b_2)$ are bands of $B$ with $a_1<a_2<b_2<b_1$, then either
\begin{itemize}
\item $\iota(a_1,b_1)=\iota(a_2,b_2)$,
\item there is an element of $p$ lying between $a_1$ and $a_2$ or between $b_2$ and $b_1$, or
\item there is a band $(c,c+1)\in B$ for some
\[
c\in\{a_1,\ldots,a_2-1\}\cup\{b_2,\ldots,b_1-1\}.
\]
\end{itemize}
\end{enumarabic}
\end{lemma}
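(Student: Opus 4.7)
The plan is to establish Part 1 by a minimum-width argument, and then to use Part 1 to prove Part 2 by induction on a gap measure.

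For Part 1, I would consider the collection $\mathcal{B}$ of bands $(a',b')\in B$ with $a\le a'<b'\le b$, which is non-empty since $(a,b)\in\mathcal{B}$. A short non-crossing argument (using \cref{def:band system:3}) shows that every band with an endpoint in $(a,b)$ lies entirely inside $[a,b]$, so that in particular every integer in $\{a,\ldots,b\}$ is an endpoint of a band in $\mathcal{B}$. Pick $(a',b')\in\mathcal{B}$ minimising $b'-a'$; if $b'-a'=1$ we are done. Otherwise, the integer $a'+1$ lies strictly between $a'$ and $b'$ and is an endpoint of some other band $(a'',b'')\in\mathcal{B}$; non-crossing with $(a',b')$ then forces $a<a''<b''<b'$, giving a strictly narrower band in $\mathcal{B}$, which contradicts minimality.

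For Part 2, I would argue by contradiction: assume none of the three alternatives hold and derive an absurdity by induction on the gap sum $s=(a_2-a_1)+(b_1-b_2)\ge 2$. In the base case $s=2$ we have $a_2=a_1+1$ and $b_2=b_1-1$, the absence of marks in $(a_1,a_2)$ and $(b_2,b_1)$ translates exactly to $a_1+\tfrac12, b_1-\tfrac12\notin p$, and the generating relation of $\sim$ directly yields $\iota(a_1,b_1)=\iota(a_2,b_2)$, contradicting the failure of alternative~1. For $s\ge 3$, by symmetry we may assume $a_2-a_1\ge 2$. Non-crossing with $(a_1,b_1)$ forces $a_1+1$ to be the left endpoint of some band $(a_1+1,d)\in B$ with $d\le b_1-1$, and non-crossing with $(a_2,b_2)$ combined with the fact that $a_2,b_2$ are already used as endpoints of $(a_2,b_2)$ forces $d<a_2$ or $d>b_2$. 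If $d<a_2$, Part~1 applied to $(a_1+1,d)$ produces a width-one band $(c,c+1)$ with $c\in\{a_1+1,\ldots,d-1\}\subseteq\{a_1,\ldots,a_2-1\}$, contradicting the failure of alternative~3. If $d>b_2$, I apply the inductive hypothesis to the two pairs $(a_1,b_1),(a_1+1,d)$ and $(a_1+1,d),(a_2,b_2)$: both have strictly smaller gap sums (respectively $1+(b_1-d)$ and $(a_2-a_1-1)+(d-b_2)$, each easily seen to be $<s$), and the ``no mark'' and ``no width-one band'' hypotheses immediately restrict to both sub-problems; chaining the resulting identifications gives $\iota(a_1,b_1)=\iota(a_1+1,d)=\iota(a_2,b_2)$, contradicting the failure of alternative~1.

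The main technical point is the careful bookkeeping of non-crossing constraints: verifying that $a_1+1$ must be a left endpoint rather than a right one, that $d$ cannot equal $a_2$ or $b_2$, and that the inherited assumptions genuinely transfer to both sub-problems in the $d>b_2$ case. Once these are pinned down, the induction runs essentially mechanically.
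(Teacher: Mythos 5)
Your proof of Part~1 is essentially the paper's argument: pick a band of minimum width among those contained in $[a,b]$ and show it has width one (one small typo -- ``$a<a''<b''<b'$'' should read $a'<a''<b''<b'$, which is what your argument actually establishes and what you need for the strict width decrease).

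For Part~2 you take a genuinely different route. The paper argues directly: if no band of $B$ lies strictly inside $(a_1,a_2)$ or strictly inside $(b_2,b_1)$, then a non-crossing count forces the bands with left endpoints in $\{a_1,\ldots,a_2\}$ to be exactly $(a_1,b_1),(a_1+1,b_1-1),\ldots,(a_2,b_2)$, after which one reads off whether the whole chain bundles (alternative~1) or a mark interrupts it (alternative~2); the case of a band strictly inside one of the gaps is dispatched immediately by Part~1. Your proof replaces this structural observation by a strong induction on the gap sum $s=(a_2-a_1)+(b_1-b_2)$: in the inductive step you locate an intermediate band $(a_1+1,d)$, and either $d<a_2$ (handled via Part~1) or $d>b_2$, in which case you split into two nested sub-problems with strictly smaller gap sums and chain the resulting identifications. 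Both arguments are correct and of comparable length; the paper's version surfaces the underlying combinatorial picture (the full nested chain of parallel bands) more explicitly, while yours is more mechanical and arguably easier to verify line by line, since the non-crossing bookkeeping is localised to a single intermediate band at each step. The only points that need care in your version -- that $a_1+1$ must be a left endpoint, that $d\ne a_2,b_2$, and that the ``no mark/no width-one band'' hypotheses restrict to both sub-problems -- all check out.
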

\begin{proof}
In order to prove \cref{thm:band of width 1:1}, simply consider the narrowest band amongst those having left endpoint in $\{a,\ldots,b-1\}$. We now turn to \cref{thm:band of width 1:2}. If there is a band $(a,b)\in B$ with $a_1<a<b<a_2$ or $b_2<a<b<b_1$ then we can apply \cref{thm:band of width 1:1} and immediately conclude. Otherwise, for every band $(a,b)\in B$ we have that $a_1\le a\le a_2$ if and only if $b_2\le b\le b_1$. But then, using the notations from the definition of maximal bundle, it is clear that either
\[
(a_1,b_1)\sim (a_1+1,b_1-1)\sim\ldots\sim (a_2,b_2),
\]
or there is marked spot (in the marking $p$) lying between $a_1$ and $a_2$ or between $b_2$ and $b_1$.
\end{proof}

Let $(B',p')$ be a marked band system of length $2n'$. We say that $(B',p')$ is \emph{maximal} if it is its own maximal bundle. Of course, if $(B',p')$ is maximal, there are infinitely many marked band systems $(B,p)$ having $(B',p')$ as their maximal bundle; in fact, these marked band systems can be parametrised quite nicely. An \emph{unbundling map} for $(B',p')$ is a function $\map{\varphi}{\{1,\ldots,2n'\}}{\ZZ_{>0}}$ such that $\varphi(a')=\varphi(b')$ for each band $(a',b')\in B'$. There is a one-to-one correspondence between unbundling maps for $(B',p')$ and marked band systems having $(B',p')$ as their maximal bundle.
\begin{substeps}
\item Given a marked band system $(B,p)$ whose maximal bundle is $(B',p')$, we can define an unbundling map $\varphi$ as $\varphi(a')=\card{\iota^{-1}(a')}$, where $\iota$ is the bundling map for $(B,p)$.
\item Conversely, let $\map{\varphi}{\{1,\ldots,2n'\}}{\ZZ_{>0}}$ be an unbundling map for $(B',p')$. Let $2n=\varphi(1)+\ldots+\varphi(2n')$; for each $1\le a\le 2n$, define $\iota(a)$ to be the unique integer in $\{1,\ldots,2n'\}$ such that
\[
\varphi(1)+\ldots+\varphi(\iota(a)-1)<a\le\varphi(1)+\ldots+\varphi(\iota(a)).
\]
It is now easy to construct the unique marked band system $(B,p)$ having $(B',p')$ as maximal bundle and $\iota$ as bundling map. In fact, we can set
\[
B=\bigcup_{(a',b')\in B'}\left\{(a+i,b-i):
\begin{matrix*}[l]
\{a+1,\ldots,a+\varphi(a')\}=\iota^{-1}(a'),\\
\{b-\varphi(a'),\ldots,b-1\}=\iota^{-1}(b'),\\
1\le i\le\varphi(a')
\end{matrix*}
\right\}
\]
and $p=(p_1,\ldots,p_r)$, where $p'=(p_1',\ldots,p_r')$ and
\begin{align*}
&p_i=\begin{dcases*}
\frac{1}{2}&if $\displaystyle p_i'=\frac{1}{2}$,\\
\max\left(\iota^{-1}\left(p_i'-\frac{1}{2}\right)\right)+\frac{1}{2}&otherwise
\end{dcases*}
&\text{for $1\le i\le r$.}
\end{align*}
\end{substeps}

\Cref{fig:unbundling map example} shows an instance of this construction. The marked band system of length $2n'=4$ on the top is defined by $B'=\{(1,4),(2,3)\}$ and $p'=(1+1/2)$, and is clearly maximal. We choose the unbundling map $\varphi$ such that $\varphi(1)=\varphi(4)=4$ and $\varphi(2)=\varphi(3)=2$. If we replace each band $(a,b)\in B'$ with $\varphi(a)$ parallel bands, we obtain the marked band system $(B,p)$ displayed on the bottom, where
\begin{align*}
B&=\text{$\{(1,12),(2,11),(3,10),(4,9),(5,8),(6,7)\}$ of length $2n=12$},\\
p&=\left(4+\frac{1}{2}\right).
\end{align*}

\begin{figure}\centering
\begin{tikzpicture}[remember picture]
\def\mymarks{4}
\pic{sequence of points={name=b,points={n=12}}};
\pic[yshift=-6cm]{sequence of points={name=a,points={n=12}}};
\foreach \lb/\le/\rb/\re/\li/\ri[count=\c] in {1/4/9/12/1/4,5/6/7/8/2/3} {
\colorlet{mycol}{main color \c!70!black}
\pic[colored bs band=mycol]{bs band={name=b,wide={\lb-\le}{\rb-\re}}};
\foreach \l[evaluate=\l as \r using int(\re-\l+\lb)] in {\lb,...,\le} {
	\pic[colored bs band=mycol]{bs band={name=a,pairs={\l-\r}}};
}
\foreach \ep in {l,r} {
	\edef\l{\csname\ep b\endcsname}\edef\r{\csname\ep e\endcsname}\edef\i{\csname\ep i\endcsname}
	\pic[bs colored single box=mycol]{bs box={name=b,l=\l,r=\r,label=$\i$}};
	\draw[mycol,thick,decorate,decoration={brace}] ([yshift=19pt]a-\l-l) -- ([yshift=19pt]a-\r-r) node[midway,anchor=base,yshift=9pt,text=mycol!50!black,evaluate={\j=int(\r-\l+1);}] (phi-\c-\ep) {\ifnum\j>2$\varphi(\subnode{phi arg-\c-\ep}{\i})=\j$\fi};
	\coordinate (arrow start-\c-\ep) at ($(b-\l-l)!.5!(b-\r-r)$);
	\foreach \j in {\l,...,\r} {
		\pic[bs colored single box=mycol]{bs box={name=a,i=\j,label=$\j$}};
	}
}
}
\node[anchor=base,text=main color 2!50!black] at ($(phi-2-l)!.5!(phi-2-r)$) {$\varphi(\subnode{phi arg-2-l}{2})=\varphi(\subnode{phi arg-2-r}{3})=2$};
\foreach \u in {1-l,2-l,2-r,1-r} {
\scoped[on background layer]\draw[rounded corners=1pt,-{[quick]>},shorten >=2pt,black!30] (arrow start-\u) -- +(0,-3.8cm) to[out=-90,in=90,looseness=1.5] (phi arg-\u);
}
\foreach \name in {a,b}\pic[default bs mark]{bs mark={name=\name,after={\mymarks}}};
\end{tikzpicture}
\caption{A maximal marked band system (on the top) and the marked band system associated to an unbundling map $\varphi$ (on the bottom).\label{fig:unbundling map example}}
\end{figure}
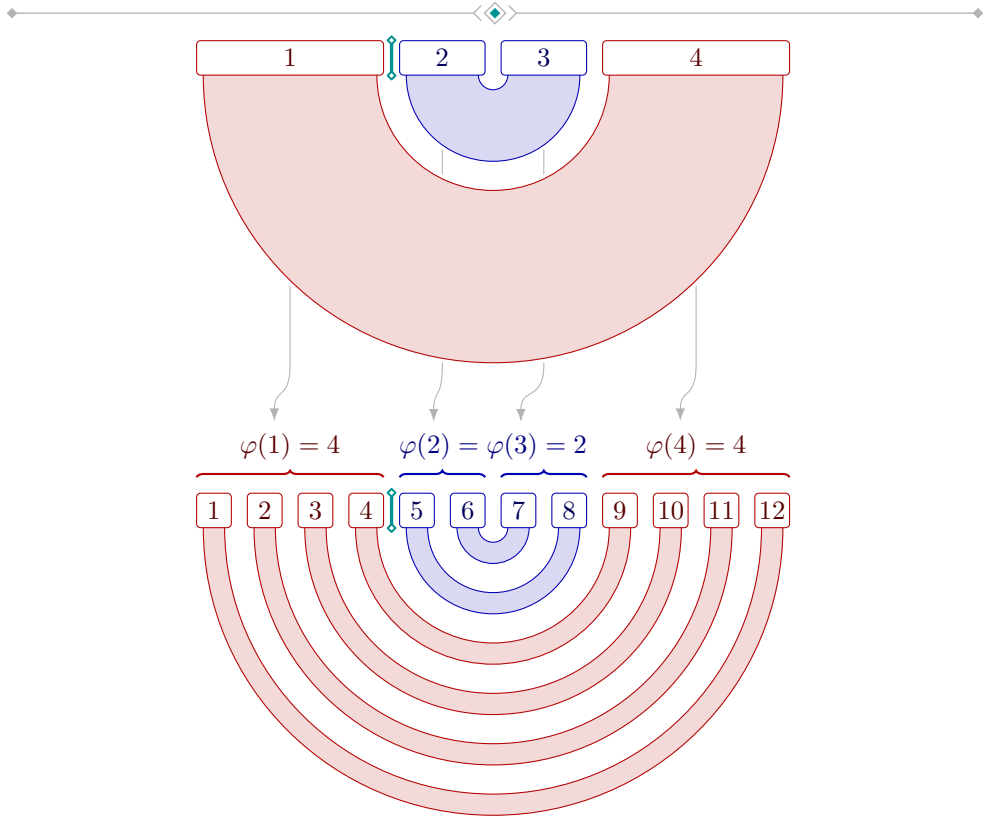

Given an unbundling map $\varphi$ for $(B',p')$, it is sometimes convenient to  define the corresponding \emph{cumulative unbundling map} $\map{\what{\varphi}}{\{1,\ldots,2n'+1\}}{\ZZ_{\ge 0}}$ as
\[
\what{\varphi}(a')=\varphi(1)+\ldots+\varphi(a'-1).
\]
With this definition, we can compactly write $\iota^{-1}(a')$ as $\{\what{\varphi}(a')+1,\ldots,\what{\varphi}(a'+1)\}$, without having to refer to the band system $(B,p)$ associated to $\varphi$.

\subsection{Band systems and free groups}\label{sec:free groups:the theorem}

Let us introduce some notation about free groups which we will employ in this section. Let $g$ be a positive integer. Fix $g$ symbols $\{x_1,\ldots,x_g\}$ and denote by $\free{g}$ the free group over them. If $s$ is a word of length $m$ in the symbols $x_1,\ldots,x_g$ and their inverses, we use the following notations:
\begin{itemize}
\item $s_i$ is the $i$\=/th symbol of $s$, where $1\le i\le m$;
\item $\wordrange{s}{i}{j}$ is the subword $s_is_{i+1}\ldots s_{j}$, where $1\le i\le j\le m$;
\item $s^{-1}$ is the word such that $(s^{-1})_i=(s_{m+1-i})^{-1}$ for $1\le i\le m$;
\item $s^k$ is the concatenation of $k$ copies of $s$, where $k\ge 0$; if $k<0$, we define $s^k=(s^{-1})^{-k}$;
\item $s^{\infty}$ the infinite word obtained by concatenating infinitely many copies of $s$; this does not correspond to an element of $\free{g}$, and we will only ever make use of finite subwords of infinite words.
\end{itemize}

Let $s=s_1s_2\cdots s_{2n}$ be a word which reduces to $1$ in $\free{g}$. Then it must do so via a sequence of cancellations of adjacent symbols of the form $xx^{-1}$ or $x^{-1}x$ (by ``adjacent'' we mean ``adjacent after performing the previous cancellations in the sequence''). For any such sequence, we can define a band system
\[
B=\{(a,b):\text{$1\le a<b\le 2n$, $s_a$ cancels with $s_b$}\}.
\]
We say that $B$ is a \emph{cancellation band system} for $s$; cancellation band systems need not be unique, but each word reducing to $1$ has at least one of them. For example, \cref{fig:cancellation band system example} shows how the band system
\[
B=\{(1,6),(2,3),(4,5),(7,10),(8,9)\}
\]
is a cancellation band system for the word $yy^{-1}yx^{-1}xy^{-1}xyy^{-1}x^{-1}$ in the free group generated by the symbols $x$ and $y$.

\begin{figure}\centering
\tikzsetnextfilename{cancellation-band-system-example}
\begin{tikzpicture}
\pic{sequence of points={points={width=20pt,sep=0pt,n=10}}};
\foreach \l/\r in {1/6,2/3,4/5,7/10,8/9} {
\pic[default bs band] {bs band={pairs={\l-\r}}};
}
\foreach \s[count=\i] in {2,-2,2,-1,1,-2,1,2,-2,-1} {
\pic[bs single box,evaluate={\s={"y^{-1}","x^{-1}","x","y"}[\s+2-(\s>0)];}]{bs box={height=20pt,i=\i,label=$\s$}};
}
\end{tikzpicture}
\caption{A cancellation band system for the word $yy^{-1}yx^{-1}xy^{-1}xyy^{-1}x^{-1}$.\label{fig:cancellation band system example}}
\end{figure}
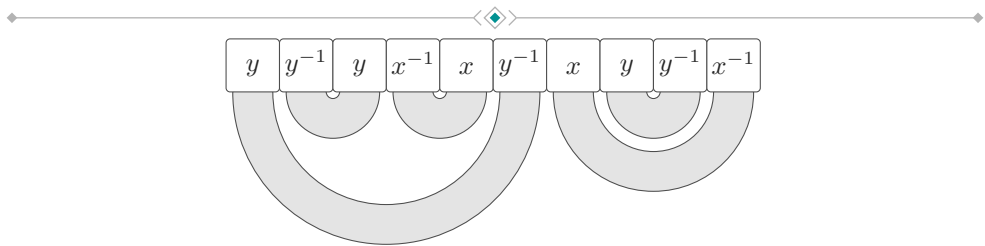

The main technical result of this section is the following.

\begin{theorem}\label{thm:exponential equation free group}
Let $w_0,\ldots,w_l,t_1,\ldots,t_l\in\free{g}$. Consider the set
\[
\AAA=\left\{(k_1,\ldots,k_l)\in\ZZ^l:w_0t_1^{k_1}w_1t_2^{k_2}w_2\cdots t_l^{k_l}w_l=1\right\}.
\]
There is an algorithm which, given as input $w_0,\ldots,w_l$ and $t_1,\ldots,t_l$, returns a collection of sets $A_1,\ldots,A_N$ such that $\AAA=A_1\cup\ldots\cup A_N$, and each $A_j$ is of the form
\[
A_j=\left\{\vec{z}_j+\mat{M}_j\vec{v}:\vec{v}\in\ZZ_{\ge0}^{d_j}\right\}
\]
for some vector $\vec{z}_j\in\ZZ^l$ and some matrix $\mat{M}_j\in\ZZ^{l\times d_j}$.
\end{theorem}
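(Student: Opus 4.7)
The plan is to rewrite $s(\vec k) = w_0 t_1^{k_1} w_1 \cdots t_l^{k_l} w_l = 1$ as the existence of a cancellation band system with prescribed markers, and then to show that the resulting solution set $\AAA$ is semilinear in the sense of the theorem. First I would partition $\ZZ^l$ into $2^l$ sign orthants, replacing $t_j$ by $t_j^{-1}$ when needed so that throughout the argument we may assume $k_j \ge 0$; I may also assume each $t_j$ is cyclically reduced, by conjugating $t_j = u_j t_j' u_j^{-1}$ and absorbing $u_j^{\pm 1}$ into the neighbouring $w_{j-1}$ and $w_j$. Form the word
\[
s(\vec k) \;=\; w_0\, t_1^{k_1}\, w_1 \cdots t_l^{k_l}\, w_l
\]
with marks $p(\vec k)$ placed at the $2l$ boundaries between the pieces $w_i$ and $t_j^{k_j}$; then $s(\vec k) = 1$ in $\free{g}$ holds exactly when $s(\vec k)$ admits a cancellation band system $B$, which together with $p(\vec k)$ gives a marked band system with a well-defined maximal bundle $(B', p')$.

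The heart of the argument is a classification of these maximal bundles. Although $|B'|$ itself may grow linearly with the $k_j$'s (internal cancellations within a long $t_j^{k_j}$\=/piece can produce many parallel singleton bundles), the claim is that every such $(B', p')$ arises from a finite, computably enumerable list of \emph{templates}. A template records, in bounded combinatorial data, (i) a \emph{skeleton} of bundles touching one of the bounded-length $w_i$\=/pieces or crossing a piece boundary, together with (ii) boundedly many \emph{periodic families} of bundles, each supported on a pair of $t_j^{k_j}$\=/pieces (possibly equal) and indexed by a linear function of the $k_j$'s, with widths that are also linear in the $k_j$'s. The finiteness rests on the periodicity of the $t_j^{k_j}$\=/pieces with period $|t_j|$ and the cyclic reducedness of $t_j$: the type of a bundle touching a $t_j^{k_j}$\=/piece is determined by the residues of its endpoints modulo $|t_j|$, of which there are only $|t_j|$ choices, and a careful case analysis on endpoint types shows that the template list is finite and can be enumerated.

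Once the templates are enumerated, realisability of a given template becomes a linear Diophantine problem in the $k_j$'s together with auxiliary variables $\vec w$ encoding the widths of skeleton bundles and the widths and counts of the periodic families. The constraints are: (a) all widths and counts are positive integers; (b) for each piece, the sum of the widths of bundle halves contained in that piece equals the length of the piece, namely $|w_i|$ for a $w_i$\=/piece and $|t_j| k_j$ for a $t_j^{k_j}$\=/piece; (c) residue-compatibility conditions mod $|t_j|$ ensuring that the letters paired by every bundle are genuine mutual inverses. The integer solution set of such a system is semilinear, and projecting onto the $\vec k$\=/coordinates yields a finite union of sets of the form $\{\vec z + M \vec v : \vec v \in \ZZ_{\ge 0}^{d}\}$, either by the standard structure theorem for semilinear (Presburger-definable) sets, or concretely by Hermite or Smith normal form combined with elimination of the auxiliary variables. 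The final algorithm returns the union over all sign orthants and all templates.

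The main obstacle will be establishing the bounded-template lemma rigorously: one must show that every maximal bundle genuinely decomposes into a bounded skeleton plus boundedly many periodic families, each describable by finite residue-and-attachment data. The essential input is cyclic reducedness of the $t_j$'s (which prevents the periodic pattern from collapsing) together with maximality of the bundling operation, which together force any two bundles that touch the same pair of $t_j^{k_j}$\=/pieces at the same residues modulo the appropriate periods to either coincide, overlap periodically, or be separated by a feature that appears in bounded combinatorial data. Careful residue bookkeeping then caps the number of distinct templates at a computable bound, after which the reduction to linear Diophantine systems and the semilinear conclusion follow by standard techniques.
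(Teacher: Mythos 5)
Your overall plan --- cyclic reduction, sign orthants, a marked cancellation band system, passage to a maximal bundle, and a final linear Diophantine reduction with residue constraints --- is the paper's approach. But there is a genuine misconception that would derail your ``bounded-template lemma.'' You claim the number of bundles in $(B',p')$ ``may grow linearly with the $k_j$'s'' because of ``internal cancellations within a long $t_j^{k_j}$\=/piece,'' and you accordingly introduce periodic families of bundles, including ones supported on a single $t_j^{k_j}$\=/piece. Once each $t_j$ is cyclically reduced this cannot happen: $t_j$ reduced and $t_jt_j$ reduced together mean no two adjacent symbols of $t_j^{k_j}$ are mutual inverses, and any band lying entirely inside the block $t_j^{k_j}$ would, by the elementary ``width-one'' observation (every band $(a,b)$ contains a band $(c,c+1)$ with $a\le c<b$), force exactly such an adjacent cancellation. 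A similar argument, using the absence of marks in the interior of a $T$\=/block, shows that for each unordered pair $\{i,j\}$ there is at most one bundle joining the $T_i$\=/block to the $T_j$\=/block. Consequently the length of $B'$ is bounded by $2(\card{w_0}+\cdots+\card{w_l})+l(l-1)$, independently of $\vec{k}$, and there are only finitely many candidate maximal bundles.

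That uniform bound is precisely the lemma you flag as ``the main obstacle,'' and it turns out to be simpler than your sketch anticipates; no template or periodic-family apparatus is needed. After enumerating the finitely many $(B',p')$ and fixing, for each, a finite choice of residue data (the paper's unbundling class $\Xi$), the remaining freedom is exactly one arithmetic progression per band of $B'$, and the exponents $k_i$ are affine-linear in those free parameters. This yields the $\vec{z}+\mat{M}\vec{v}$ representation directly, without invoking the general structure theory of semilinear sets. If you fill in the width-one lemma (\cref{thm:band of width 1}) and the resulting bound on $\card{B'}$, your sketch aligns with the paper's proof.
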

\begin{proof}
Let $w_0,\ldots,w_l$ and $t_1,\ldots,t_l$ be given as reduced words over $\{x_1,\ldots,x_g\}$. For a vector $\vec{k}\in\ZZ^l$, denote by $w(\vec{k})$ the word $w_0t_1^{k_1}w_1\cdots t_l^{k_l}w_l$.

\step{Cyclically reduced words.} For each $1\le i\le l$ we can write $t_i=u_it'_iu_i^{-1}$, where $t'_i$ is a cyclically reduced word and $u_i$ is a  reduced word. Define:
\begin{itemize}
\item $w_0'=w_0u_1$;
\item $w_i'=u_i^{-1}w_iu_{i+1}$ for $1\le i<l$;
\item $w_l'=u_l^{-1}w_l$.
\end{itemize}
Then
\[
\AAA=\left\{(k_1,\ldots,k_l)\in\ZZ^l:w_0'(t_1')^{k_1}w_1'\cdots (t_l')^{k_l}w_l'=1\right\}.
\]
Therefore, up to replacing each $w_i$ with $w_i'$ and each $t_i$ with $t_i'$, we can assume that each $t_i$ is cyclically reduced.

\step{Only non-negative powers.} For each $\vecsym{\epsilon}=(\epsilon_1,\ldots,\epsilon_l)\in\{-1,1\}^l$ define
\[
\AAA^{\vecsym{\epsilon}}=\left\{(\epsilon_1k_1,\ldots,\epsilon_lk_l):\,
\begin{matrix*}[l]
(k_1,\ldots,k_l)\in\ZZ_{\ge0}^l,\\
w_0t_1^{\epsilon_1k_1}w_1\cdots t_l^{\epsilon_lk_l}w_l=1
\end{matrix*}
\right\},
\]
so that
\[
\AAA=\bigcup_{\vecsym{\epsilon}\in\{-1,1\}^l}\AAA^{\vecsym{\epsilon}}.
\]
Denote by $\AAA^+$ the set
\[
\AAA^{(1,\ldots,1)}=\left\{(k_1,\ldots,k_l)\in\ZZ_{\ge 0}:w_0t_1^{k_1}w_1\cdots t_l^{k_l}w_l=1\right\}.
\]
Suppose we have an algorithm to decompose $\AAA^+$ as described in the statement, and fix $\vecsym{\epsilon}=(\epsilon_1,\ldots,\epsilon_l)\in\{-1,1\}^l$. Then we can algorithmically decompose
\[
\left\{(k_1,\ldots,k_l)\in\ZZ_{\ge 0}^l:w_0(t_1^{\epsilon_1})^{k_1}w_1\cdots (t_l^{\epsilon_l})^{k_l}w_l=1\right\}=A_1\cup\ldots\cup A_N,
\]
where each $A_j$ is of the form
\[
A_j=\left\{\vec{z}_j+\mat{M}_j\vec{v}:\vec{v}\in\ZZ_{\ge0}^{d_j}\right\}
\]
for some vector $\vec{z}_j\in\ZZ^l$ and some matrix $\mat{M}_j\in\ZZ^{l\times d_j}$. But then we have the decomposition
\[
\AAA^{\vecsym{\epsilon}}=A_1'\cup\ldots\cup A_N',
\]
where, for each $1\le j\le N$:
\begin{itemize}
\item $A_j=\left\{\vec{z}_j'+\mat{M}'_j\vec{v}:\vec{v}\in\ZZ_{\ge0}^{d_j}\right\}$;
\item $\vec{z}_j'$ is obtained from $\vec{z}_j$ by multiplying the $i$-th coordinate by $\epsilon_i$ for $1\le i\le l$;
\item $\mat{M}_j'$ is obtained from $\mat{M}_j$ by multiplying the $i$-th row by $\epsilon_i$ for $1\le i\le l$.
\end{itemize}
Since there are only finitely many choices of $\vecsym{\epsilon}\in\{-1,1\}^l$, it is enough to describe an algorithm to compute the set $\AAA^+$.

\bgroup
\let\oldpmatrix\pmatrix
\let\oldendpmatrix\endpmatrix
\def\pmatrix{\bgroup\def\arraystretch{1.5}\oldpmatrix}
\def\endpmatrix{\oldendpmatrix\egroup}
\step{Non-empty words.} Suppose we have an algorithm which works when every $t_i$ is a non-empty word (that is, $t_i\neq 1$ as an element of $\free{g}$). If $t_i$ is the empty word for some $1\le i\le l$, by induction on $l$ we can assume that the set
\bgroup
\def\firstline{(k_1,\ldots,k_{i-1},k_{i+1},\ldots,k_l)\in\ZZ_{\ge0}^{l-1}:}
\def\secondline{w_0t_1^{k_1}w_1\cdots t_{i-1}^{k_{i-1}}(w_{i-1}w_i)t_{i+1}^{k_{i+1}}w_{i+1}\cdots t_l^{k_l}w_l=1}
\begin{multline*}
\left\{\firstline\vphantom{\secondline}\right.\\
\left.\vphantom{\firstline}\secondline\right\}
\end{multline*}
\egroup
can be algorithmically decomposed as $A_1\cup\ldots\cup A_N$, where each $A_j$ is of the form
\[
A_j=\left\{\vec{z}_j+\mat{M}_j\vec{v}:\vec{v}\in\ZZ_{\ge0}^{d_j}\right\}
\]
for some vector $\vec{z}_j\in\ZZ^{l-1}$ and some matrix $\mat{M}_j\in\ZZ^{(l-1)\times d_j}$. For each $1\le j\le N$, write
\begin{align*}
\vec{z}_j=\begin{pmatrix}\vec{z}_j^{(1)}\\\vec{z}_j^{(2)}\end{pmatrix}&&\text{and}&&\mat{M}_j=\begin{pmatrix}\mat{M}_j^{(1)}\\\mat{M}_j^{(2)}\end{pmatrix},
\end{align*}
where $\vec{z}_j^{(1)}\in\ZZ^{i-1}$, $\vec{z}_j^{(2)}\in\ZZ^{l-i}$, and similarly $\mat{M}_j^{(1)}\in\ZZ^{(i-1)\times d_j}$, $\mat{M}_j^{(2)}\in\ZZ^{(l-i)\times d_j}$. Define
\begin{align*}
\vec{z}_j'=\begin{pmatrix}\vec{z}_j^{(1)}\\0\\\vec{z}_j^{(2)}\end{pmatrix}\in\ZZ^l&&\text{and}&&\mat{M}_j'=\begin{pmatrix}\mat{M}_j^{(1)}&\vec{0}\\\mat{0}&1\\\mat{M}_j^{(2)}&\vec{0}\end{pmatrix}\in\ZZ^{l\times(d_j+1)}.
\end{align*}
We get the required decomposition
\[
\AAA^+=\bigcup_{j=1}^N\left\{\vec{z}_j'+\mat{M}_j'\vec{v}:\vec{v}\in\ZZ_{\ge0}^{d_j+1}\right\}.
\]
Therefore, we can assume that each $t_i$ is non-empty.
\egroup

\step{Fixed cancellation bundle.} Let $\vec{k}=(k_1,\ldots,k_l)$ be a vector in $\AAA^+$. Consider a cancellation band system $B$ for the word $w(\vec{k})$. Moreover, define the \emph{block marking} $p=(p_1,\ldots,p_{2l+2})$ to separate different ``blocks'' of $w(\vec{k})$, by setting
\begin{align*}
&p_{2i-1}=\card{w_0}+k_1\card{t_1}+\card{w_1}+\ldots+k_{i-1}\card{t_{i-1}}+\frac{1}{2}&&\text{for $1\le i\le l+1$},\\
&p_{2i}=\card{w_0}+k_1\card{t_1}+\card{w_1}+\ldots+\card{w_{i-1}}+\frac{1}{2}&&\text{for $1\le i\le l+1$}.
\end{align*}
Denote by $(B',p')$ the maximal bundle of the marked band system $(B,p)$; we say that $(B',p')$ is a \emph{cancellation bundle} for $\vec{k}$ if it can be obtained in this way (that is, as the maximal bundle of a cancellation band system for $w(\vec{k})$ endowed with the block marking described above). Let
\[
\map{\iota}{\{1,\ldots,2n\}}{\{1,\ldots,2n'\}}
\]
be the bundling map, where $2n$ and $2n'$ are the lengths of $B$ and $B'$ respectively. Let us label the integers $\{1,\ldots,2n'\}$ with the symbols $W_0,\ldots,W_l,T_1,\ldots,T_l$ depending on which ``block'' of $w(\vec{k})$ they come from. More precisely, we label
\begin{itemize}
\item the integers between $p'_{2i+1}$ and $p'_{2i+2}$ with the symbol $W_i$ for $0\le i\le l$, and
\item the integers between $p'_{2i}$ and $p'_{2i+1}$ with the symbol $T_i$ for $1\le i\le l$.
\end{itemize}
We will use the notation $i\prec X$ to signify that the integer $i\in\{1,\ldots,2n'\}$ has label $X\in\{W_0,\ldots,W_l,T_1,\ldots,T_l\}$. See \cref{fig:cancellation bundle example} for a graphical representation of a cancellation bundle.

\begin{figure}
\centering
\tikzsetnextfilename{cancellation-bundle-example}
\begin{tikzpicture}
\pic{sequence of points={name=a,block={n=2},block={n=2},block={n=2},block={n=5},block={n=4},block={n=1}}};
\pic[yshift=-6cm]{sequence of points={name=b,block={n=2},block={n=2},block={n=2},block={n=5},block={n=4},block={n=1}}};
\pic{at every point={name=a}{\draw[black!30,->] ($(l)!.5!(r)$) -- +([yshift=16pt]0,-6cm);}};

\pic[default bs band]{bs band={name=a,pairs={1-16,2-15,3-14,4-9,5-8,6-7,10-13,11-12}}};
\foreach \l/\r/\label/\c in {1/2/w_0/1,3/4/t_1/2,5/6/t_1/2,7/11/w_1/1,12/15/t_2/2,16/16/w_2/1} {
\pic[bs colored multibox={main color \c}]{bs multibox={name=a,l=\l,r=\r,label=\tikzcontour{$\label$}}};
}

\pic[default bs band]{bs band={name=b,pairs={1-16,2-15,3-14},wide={4-6}{7-9},wide={10-11}{12-13}}};
\foreach \i/\n in {1/1,2/2,16/10} \pic[bs colored single box=main color 1]{bs box={name=b,i=\i,label=$\n$}};
\foreach \i\n in {3/3,11,14/8,15/9} \pic[bs colored single box=main color 2]{bs box={name=b,i=\i,label=$\n$}};
\foreach \l/\r/\n/\c in {4/6/4/2,7/9/5/1,10/11/6/1,12/13/7/2} \pic[bs colored single box=main color \c]{bs box={name=b,l=\l,r=\r,label=\n}};

\foreach \n in {a,b} \pic[default bs mark]{bs mark={name=\n,after={0,2,6,11,15,16}}};
\foreach \l/\r/\label/\c in {1/2/W_0/1,3/6/T_1/2,7/11/W_1/1,12/15/T_2/2,16/16/W_2/1} \draw[evaluate={\opacity=(\l==\r?0:1);},main color \c!50!black,opacity=\opacity,decorate,decoration={brace},preaction={draw=white,opacity=\opacity,line width=2pt,line cap=round}] ([yshift=19pt]b-\l-l) -- ([yshift=19pt]b-\r-r) node[opacity=1,midway,above=10pt,rotate=90,anchor=center] (tmp) {\tikzcontour{$\prec$}} node[opacity=1,above=2pt of tmp.center] {\tikzcontour{$\label$}};
\end{tikzpicture}
\caption{The cancellation bundle (on the bottom) induced by a cancellation band system (on the top). In this specific example, we have that $\card{w_0}=2$, $\card{t_1}=2$, $\card{w_1}=5$, $\card{t_2}=4$, and $\card{w_2}=1$. The word $w(\vec{k})$ on the top corresponds to the vector $\vec{k}=(2,1)$, and reduces to $1$ via the cancellation band system represented in the picture -- although this is of course impossible to verify without knowing the actual the words involved. The marked spots separate the different ``blocks'' of $w(\vec{k})$, namely $w_0$, $t_1^2$, $w_1$, $t_2$, and $w_2$. The maximal bundling procedure yields the marked band system depicted on the bottom, with vertical arrows representing the bundling map. Here, the integers in $\{1,\ldots,10\}$ are labelled with $W_0$, $T_1$, $W_1$, $T_2$, or $W_2$ according to the block they come from.\label{fig:cancellation bundle example}}
\end{figure}
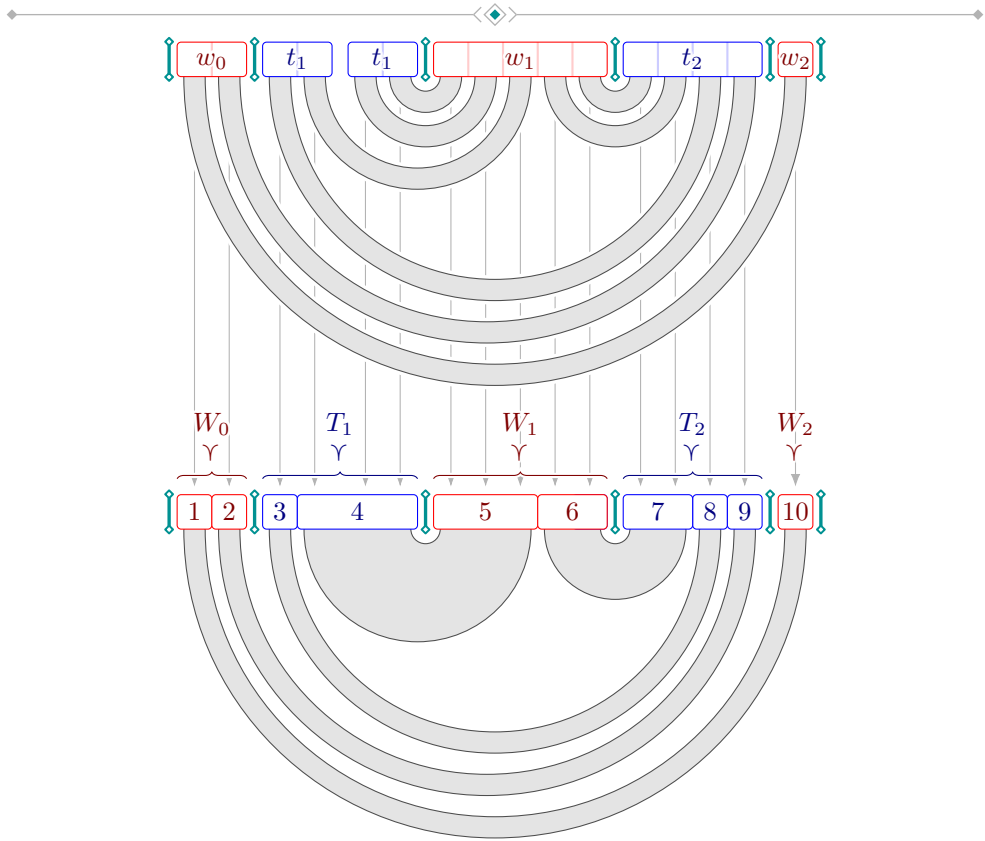

The crucial fact we seek to prove now is that $2n'$ is bounded above by a constant which does not depend on $\vec{k}$. This is a consequence of the following two statements.
\begin{enumarabic}
\item For each $1\le i\le l$, there is no band of $B'$ with both endpoints labelled $T_i$. In fact, suppose that $(a',b')\in B'$ is such a band, and let $(a,b)$ be a band of $B$ with $\iota(a,b)=(a',b')$. By \cref{thm:band of width 1:1} of \cref{thm:band of width 1}, there is a band $(c,c+1)\in B$ with $a\le c<b$. But this is impossible, since positions $a$ and $b$ (and therefore $c$ and $c+1$) belong to the block $t_i^{k_i}$; the word $t_i$ is cyclically reduced, therefore there can be no cancellation between the symbols in positions $c$ and $c+1$.
\item For each $1\le i<j\le l$, there is at most one band of $(a',b')\in B'$ with $a'\prec T_i$ and $b'\prec T_j$. In fact, suppose that $(a_1',b_1')$ and $(a_2',b_2')$ are two such bands, with $a_1'<a_2'<b_2'<b_1'$. Let $(a_1,b_1)$ and $(a_2,b_2)$ be bands of $B$ with $\iota(a_1,b_1)=(a_1',b_1')$ and $\iota(a_2,b_2)=(a_2',b_2')$. Consider \cref{thm:band of width 1:2} of \cref{thm:band of width 1}. Clearly,
\[
\iota(a_1,b_1)=(a_1',b_1')\neq(a_2',b_2')=\iota(a_2,b_2).
\]
Moreover, since $a_1',a_2'\prec T_i$ and $b_1',b_2'\prec T_j$, we have that
\[
p_{2i}<a_1<a_2<p_{2i+1}\le p_{2j}<b_2<b_1<p_{2j+1}.
\]
As a consequence, there must be a band $(c,c+1)\in B$ for some
\[
c\in\{a_1,\ldots,a_2-1\}\cup\{b_2,\ldots,b_1-1\},
\]
but this is once again impossible since $t_i$ and $t_j$ are cyclically reduced words.
\end{enumarabic}

As anticipated, these two facts imply the existence of an upper bound $C$ for the length $2n'$ of $B'$ which is independent of $\vec{k}$. An estimate for $C$ can be found as follows. Say that a band $(a',b')\in B'$ is:
\begin{itemize}
\item of \emph{type 1} if $a'\prec W_i$ or $b'\prec W_i$ for some $0\le i\le l$;
\item of \emph{type 2} if it is not of type 1, that is if $a'\prec T_i$ and $b'\prec T_j$ for some $1\le i,j\le l$.
\end{itemize}
An easy counting argument shows that there are at most $\card{w_0}+\ldots+\card{w_l}$ bands of type 1 and at most $l(l-1)/2$ bands of type 2. Therefore, a suitable upper bound for $2n'$ is
\[
2n'\le C=2(\card{w_0}+\ldots+\card{w_l})+l(l-1).
\]

Let us now forget our choice of $\vec{k}$, and fix any maximal marked band system $(B',p')$ such that $B'$ has length $2n'\le C$ and $p'=(p'_1,\ldots,p_{2l+2}')$ with $p_1'=1/2$ and $p_{2l+2}'=2n'+1/2$. Since there are only finitely many such marked band systems, and since every $\vec{k}\in\AAA^+$ has a cancellation bundle of this form, it is enough to describe an algorithm to compute the set
\[
\left\{\vec{k}=(k_1,\ldots,k_l)\in\ZZ_{\ge 0}^l:\,
\begin{matrix*}[l]
w_0t_1^{k_1}w_1t_2^{k_2}w_2\cdots t_l^{k_l}w_l=1,\\
\text{$(B',p')$ is a cancellation bundle for $\vec{k}$}
\end{matrix*}
\right\},
\]
which will be henceforth referred to as $\AAA^+_{(B',p')}$.

Let us remark that the labelling procedure described above can be carried out without any reference to $\vec{k}$; in fact, only the integer $2n'$ and the marking $p'$ are required. Therefore, we will from now on assume that the integers in $\{1,\ldots,2n'\}$ are labelled with symbols $W_0,\ldots,W_l,T_1,\ldots,T_l$ according to the marking $p'$.

\step{Fixed unbundling class.} Let us dwell some more on what it means for a vector $\vec{k}\in\ZZ_{\ge0}^l$ to have $(B',p')$ as a cancellation bundle. Recall that band systems having $(B',p')$ as their maximal bundle are parametrised by unbundling maps for $(B',p')$. It is easy to see that an unbundling map $\varphi$ for $(B',p')$ describes a cancellation band system for $\vec{k}$ if and only if the following conditions are satisfied:
\begin{enumroman}
\item $\displaystyle\sum_{a'\prec W_i}\varphi(a')=\card{w_i}$ for $0\le i\le l$;
\item\label{it:cancellation map for k:2} $\displaystyle\sum_{a'\prec T_i}\varphi(a')=k_i\card{t_i}$ for $1\le i\le l$;
\item $\displaystyle\wordrange{w(\vec{k})}{\what{\varphi}(a')+1}{\what{\varphi}(a'+1)}=\left(\wordrange{w(\vec{k})}{\what{\varphi}(b')+1}{\what{\varphi}(b'+1)}\right)^{-1}$ for each $(a',b')\in B'$.
\end{enumroman}
Moreover, condition \ref{it:cancellation map for k:2} implies that $\varphi$ describes a cancellation band system for at most one vector $\vec{k}(\varphi)\in\ZZ_{\ge0}^l$, whose coordinates can be easily computed as
\begin{align}\label{eq:definition of k(phi)}
&k_i(\varphi)=\frac{\sum_{a'\prec T_i}\varphi(a')}{\card{t_i}}&\text{for $1\le i\le l$}
\end{align}
(recall that $\card{t_i}\neq 0$ for each $i$).

Let us say that an unbundling map $\varphi$ for $(B',p')$ is \emph{compatible with the labelling} if
\begin{align*}
&\text{$\displaystyle\sum_{a'\prec W_i}\varphi(a')=\card{w_i}$ for $0\le i\le l$}&\text{and}&&\text{$\displaystyle\sum_{a'\prec T_i}\varphi(a')\equiv 0\pmod{\card{t_i}}$ for $1\le i\le l$.}
\end{align*}
If, additionally, we have
\begin{align*}
&\wordrange{w(\vec{k}(\varphi))}{\what{\varphi}(a')+1}{\what{\varphi}(a'+1)}=\left(\wordrange{w(\vec{k}(\varphi))}{\what{\varphi}(b')+1}{\what{\varphi}(b'+1)}\right)^{-1}&\text{for each $(a',b')\in B'$},
\end{align*}
then we say that $\varphi$ is \emph{cancelling}. We can then write the following equivalent definition for $\AAA^+_{(B',p')}$:
\[
\AAA^+_{(B',p')}=\left\{\vec{k}(\varphi):\,
\begin{matrix*}[l]
\text{$\varphi$ is an unbundling map for $(B',p')$ which is}\\\text{compatible with the labelling and cancelling}
\end{matrix*}
\right\}.
\]

Let us define an equivalence relation on the set of unbundling maps for $(B',p')$ which are compatible with the labelling. Given two unbundling maps $\varphi$ and $\psi$, we say that $\varphi\sim\psi$ if the following properties hold for each  $1\le a'\le 2n'$:
\begin{itemize}
\item $\varphi(a')=\psi(a')$ whenever $a'$ is an endpoint of a band of type 1;
\item $\varphi(a')\equiv\psi(a')\pmod{\card{t_i}}$ whenever $a'\prec T_i$ for some $1\le i\le l$.
\end{itemize}
Fix an equivalence class $\Xi$. After choosing a representative $\varphi\in\Xi$, with slight abuse of notation, for each $1\le a'\le 2n'$ let us denote by $\Xi(a')$:
\begin{itemize}
\item the integer $\varphi(a')$ if $a'\prec W_i$ for some $0\le i\le l$;
\item the residue class of $\varphi(a')$ modulo $\card{t_i}$ if $a'\prec T_i$ for some $1\le i\le l$.
\end{itemize}
In both cases, the value of $\Xi(a')$ does not depend on the choice of $\varphi$. The values of $\Xi(a')$ for $1\le a'\le 2n'$ uniquely identify $\Xi$, hence there are only finitely many equivalence classes of unbundling maps. As a consequence, it is enough to describe an algorithm to compute the set
\[
\left\{\vec{k}(\varphi):\text{$\varphi\in\Xi$ is cancelling}\right\},
\]
which will be henceforth referred to as $\AAA^+_{(B',p'),\Xi}$.

\step{Final computation.} The reason why fixing the unbundling class $\Xi$ is beneficial is that conditions of the form
\begin{equation}\label{eq:subword cancellation}
\wordrange{w(\vec{k}(\varphi))}{\what{\varphi}(a')+1}{\what{\varphi}(a'+1)}=\left(\wordrange{w(\vec{k}(\varphi))}{\what{\varphi}(b')+1}{\what{\varphi}(b'+1)}\right)^{-1}
\end{equation}
for bands $(a',b')\in B'$ become mutually independent. More precisely, whether condition \cref{eq:subword cancellation} is satisfied for a band $(a',b')$ or not only depends on the value of $\varphi(a')$, as long as $\varphi\in\Xi$; we will now clarify and justify this claim.

Let $a'\in\{1,\ldots,2n'\}$ be an integer, and let $X\in\{W_0,\ldots,W_l,T_1,\ldots,T_l\}$ be its label. Define
\[
h(a')=\sum_{\substack{b'\prec X\\b'<a'}}\Xi(b').
\]
Note that, when $X=T_i$ for some $1\le i\le l$, the integer $h(a')$ is only well-defined modulo $\card{t_i}$; by convention, we will assume that $0\le h(a')<\card{t_i}$. We can now rewrite the subwords involved in \cref{eq:subword cancellation}: for each $\varphi\in\Xi$ we have
\[
\wordrange{w(\vec{k}(\varphi))}{\what{\varphi}(a')+1}{\what{\varphi}(a'+1)}=\begin{dcases*}
\wordrange{(w_i)}{h(a')+1}{h(a')+\varphi(a')}&if $a'\prec W_i$,\\
\wordrange{(t_i^{\infty})}{h(a')+1}{h(a')+\varphi(a')}&if $a'\prec T_i$.
\end{dcases*}
\]

Consider a band $(a',b')\in B'$. It is now clear that whether $(a',b')$ satisfies \cref{eq:subword cancellation} or not only depends on the value of $\varphi(a')=\varphi(b')$. Our next task it to show how to compute the set
\[
S(a',b')=\{s\in\ZZ_{>0}:\text{$(a',b')$ satisfies \cref{eq:subword cancellation} if and only if $\varphi(a')=s$}\},
\]
and we do so by analysing two cases.
\begin{substeps}
\item Suppose that $(a',b')$ is a band of type 1, with $a'\prec W_i$ for some $0\le i\le l$ (the case where $b'\prec W_i$ is identical). Then $\varphi(a')=\Xi(a')$ is fixed, and condition \cref{eq:subword cancellation} reads
\begin{align*}
&\wordrange{(w_i)}{h(a')+1}{h(a')+\Xi(a')}=\left(\wordrange{(w_j)}{h(b')+1}{h(b')+\Xi(a')}\right)^{-1}&&\text{if $b'\prec W_j$, or}\\
&\wordrange{(w_i)}{h(a')+1}{h(a')+\Xi(a')}=\left(\wordrange{(t_j^{\infty})}{h(b')+1}{h(b')+\Xi(a')}\right)^{-1}&&\text{if $b'\prec T_j$.}
\end{align*}
Either way, whether the equality of words holds or not only depends on $\Xi$. We have that $S(a',b')=\{\Xi(a')\}$ if the two words are equal, and $S(a',b')=\emptyset$ if they are not.
\item Suppose that $(a',b')$ is a band of type 2, with $a'\prec T_i$ and $b'\prec T_j$ for some $1\le i<j\le l$. Then all the possible values for $\varphi(a')=\varphi(b')$ are of the form $q+Lv$, where
\begin{itemize}
\item $L=\lcm(\card{t_i},\card{t_j})$,
\item $v$ is a non-negative integer, and
\item $q$ is the only integer in $\{1,\ldots,L\}$ such that $q\equiv\Xi(a')\pmod{\card{t_i}}$ and $q\equiv\Xi(b')\pmod{\card{t_j}}$;
\end{itemize}
if no such $q$ exists then the class $\Xi$ is empty, and we conclude that $\AAA^+_{(B',p'),\Xi}=\emptyset$. Otherwise, all that is left to do is determining for which values of $v\ge 0$ the band $(a',b')$ satisfies \cref{eq:subword cancellation}. We can rewrite
\[
\wordrange{(t_i^{\infty})}{h(a')+1}{h(a')+\varphi(a')}=\left(\wordrange{(t_i^{\infty})}{h(a')+1}{h(a')+L}\right)^v\wordrange{(t_i^{\infty})}{h(a')+1}{h(a')+q},
\]
and similarly for $\wordrange{(t_j^{\infty})}{h(b')+1}{h(b')+\varphi(b')}$. Therefore, condition \cref{eq:subword cancellation} reads
\begin{multline*}
\left(\wordrange{(t_i^{\infty})}{h(a')+1}{h(a')+L}\right)^v\wordrange{(t_i^{\infty})}{h(a')+1}{h(a')+q}=\\
\left(\wordrange{(t_j^{\infty})}{h(b')+q+1}{h(b')+q+L}\right)^{-v}\left(\wordrange{(t_j^{\infty})}{h(b')+1}{h(b')+q}\right)^{-1}.
\end{multline*}
There are three cases.
\begin{itemize}
\item If
\[
\wordrange{(t_i^{\infty})}{h(a')+1}{h(a')+q}\neq\left(\wordrange{(t_j^{\infty})}{h(b')+1}{h(b')+q}\right)^{-1},
\]
then \cref{eq:subword cancellation} is not satisfied for any value of $v$, and $S(a',b')=\emptyset$.
\item If
\[
\wordrange{(t_i^{\infty})}{h(a')+1}{h(a')+q}=\left(\wordrange{(t_j^{\infty})}{h(b')+1}{h(b')+q}\right)^{-1}
\]
but
\[
\wordrange{(t_i^{\infty})}{h(a')+1}{h(a')+L}\neq\left(\wordrange{(t_j^{\infty})}{h(b')+q+1}{h(b')+q+L}\right)^{-1},
\]
then \cref{eq:subword cancellation} is satisfied only for $v=0$, leading to $S(a',b')=\{q\}$.
\item If
\[
\wordrange{(t_i^{\infty})}{h(a')+1}{h(a')+L}=\left(\wordrange{(t_j^{\infty})}{h(b')+q+1}{h(b')+q+L}\right)^{-1},
\]
then \cref{eq:subword cancellation} is satisfied for every value of $v\ge 0$, so $S(a',b')=\{q+Lv:v\ge0\}$.
\end{itemize}
\end{substeps}

In conclusion, we have shown how to compute a set $S(a',b')$ for each band $(a',b')\in B'$ with the following property: a function $\map{\varphi}{\{1,\ldots,2n'\}}{\ZZ_{>0}}$ is a cancelling representative of $\Xi$ if and only if $\varphi(a')=\varphi(b')\in S(a',b')$ for every $(a',b')\in B'$. If $S(a',b')=\emptyset$ for some band $(a',b')$ then clearly $\AAA^+_{(B',p'),\Xi}=\emptyset$. Otherwise, each set $S(a',b')$ can be written in the form
\[
S(a',b')=\{q(a',b')+L(a',b')v:v\in\ZZ_{\ge0}\}
\]
for some integers $q(a',b')>0$ and $L(a',b')\ge 0$. Therefore, every cancelling unbundling map $\varphi\in\Xi$ is described by a (not necessarily unique) vector $\vec{v}\in\ZZ_{\ge0}^{n'}$ whose coordinates $v_{(a',b')}$ are indexed by bands $(a',b')\in B'$, such that
\begin{align*}
&\varphi(a')=\varphi(b')=q(a',b')+L(a',b')v_{(a',b')}&\text{for each $(a',b')\in B'$}.
\end{align*}
Conversely, each vector $\vec{v}\in\ZZ_{\ge 0}^{n'}$ describes a cancelling unbundling map $\varphi(\vec{v})\in\Xi$, defined by the previous equations. As a consequence, we find that
\[
\AAA^+_{(B',p'),\Xi}=\left\{\vec{k}(\varphi(\vec{v})):\vec{v}\in\ZZ_{\ge0}^{n'}\right\}.
\]
Finally, recalling formula \cref{eq:definition of k(phi)}, it is easy to see that $\vec{k}(\varphi(\vec{v}))$ can be computed as $\vec{k}(\varphi(\vec{v}))=\vec{z}+\mat{M}\vec{v}$, where:
\begin{itemize}
\item $\vec{z}\in\ZZ_{\ge 0}^l$ is the vector whose coordinates are defined by
\begin{align*}
&z_i=\frac{\sum_{a'\prec T_i}q(a')}{\card{t_i}}&\text{for $1\le i\le l$},
\end{align*}
where for ease of notation we have defined $q(a')=q(b')=q(a',b')$ for each band $(a',b')\in B'$;
\item $\mat{M}\in\ZZ^{l\times n'}$ is the matrix whose entries are indexed by $\{1,\ldots,l\}\times B'$ and defined by
\begin{align*}
&M_{i,(a',b')}=\begin{dcases*}
\frac{L(a',b')}{\card{t_i}}&if $a'\prec T_i$ or $b'\prec T_i$,\\
0&otherwise
\end{dcases*}&\text{for $1\le i\le l$, $(a',b')\in B'$.}
\end{align*}
\end{itemize}
Therefore, we find that
\[
\AAA^+_{(B',p'),\Xi}=\left\{\vec{z}+\mat{M}\vec{v}:\vec{v}\in\ZZ_{\ge 0}^{n'}\right\}.\qedhere
\]
\end{proof}

\subsection{Topological applications}\label{sec:free groups:topology}

As anticipated, the reason why we are interested in \cref{thm:exponential equation free group} is the following topological consequence. In \cref{sec:classification algorithm:small i-bundles}, we will make use of \cref{thm:product of dehn twists extension to handlebody} to solve the isotopy problem for surfaces which bound a genus\=/two handlebody on one side, while the other side is such that the trace of its mapping class group can be described in terms of Dehn twists about disjoint curves. Even though we will only apply this corollary to handlebodies of genus $g=2$, we state and it and prove it in greater generality, at no additional cost in terms of effort or simplicity\footnote{In fact, the same proof works for a statement which is even more general. Given pairwise disjoint curves $a_1,\ldots,a_m\subs\boundary V$, a homeomorphism $\map{f}{\boundary V}{\boundary V}$, and a matrix $\mat{A}$ and a vector $\vec{b}$ with integer entries of suitable sizes, we can decide whether there exists a vector $\vec{h}\in\ZZ^m$ such that $\mat{A}\vec{h}\le\vec{b}$ and $\twist{a_1}^{h_1}\cdots\twist{a_m}^{h_m}f$ extends to a homeomorphism $\umap{V}{V}$.}.

\begin{corollary}\label{thm:product of dehn twists extension to handlebody}
Let $V$ be a handlebody of genus $g\ge 1$. Let $m$ be a non\=/negative integer, and let $a_1,\ldots,a_{2m}$ be pairwise disjoint curves in $\boundary{V}$. Let $\map{f}{\boundary{V}}{\boundary{V}}$ be a homeomorphism. There is an algorithm which, given as input $a_1,\ldots,a_{2m}$ and $f$, decides whether there exist integers $h_1,\ldots,h_m$ such that
\[
\map{\twist{a_1}^{h_1}\cdots\twist{a_m}^{h_m}\twist{a_{m+1}}^{-h_1}\cdots\twist{a_{2m}}^{-h_m}f}{\boundary V}{\boundary V}
\]
extends to a homeomorphism $\umap{V}{V}$.
\end{corollary}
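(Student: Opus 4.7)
The strategy is to reduce the extension question to a finite collection of instances of \cref{thm:exponential equation free group}, followed by a straightforward integer linear feasibility problem. Fix a complete meridian disc system $D_1,\ldots,D_g$ of $V$ and identify $\pi_1(V)$ with $\free{g}$ via the dual basis; denote by $m_i=\boundary D_i$ the corresponding meridian curves on $\boundary V$. A standard consequence of the loop theorem together with the irreducibility of $V$ is that a self\=/homeomorphism $\phi$ of $\boundary V$ extends to $V$ if and only if $\phi(m_i)$ is nullhomotopic in $V$ for every $1\le i\le g$: each such image then bounds a disc, the discs can be made disjoint by an innermost\=/disc argument, and cutting along them turns $V$ into a $3$\=/ball.

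I would first isotope $f$ so that $f(m_i)$ meets $a_1\cup\ldots\cup a_{2m}$ transversely for every $i$ and record the combinatorial crossing data: a cyclic sequence of arcs $\alpha_0^{(i)},\ldots,\alpha_{l_i}^{(i)}$ in $\boundary V\setminus(a_1\cup\ldots\cup a_{2m})$, the indices $j_k^{(i)}\in\{1,\ldots,2m\}$ of the curves crossed between consecutive arcs, and the signs $\epsilon_k^{(i)}\in\{\pm 1\}$ of those crossings. Since Dehn twists about pairwise disjoint curves commute and their composite effect on a transverse curve is to insert signed powers of the twisting curves at each crossing, the image $\phi_{\vec h}(m_i)$ is freely homotopic in $\boundary V$ to a cyclic word of the form $\alpha_0^{(i)}\,a_{j_1^{(i)}}^{e_1^{(i)}}\,\alpha_1^{(i)}\cdots a_{j_{l_i}^{(i)}}^{e_{l_i}^{(i)}}\,\alpha_{l_i}^{(i)}$, where $e_k^{(i)}=\sigma(j_k^{(i)})\,\epsilon_k^{(i)}\,h_{r(j_k^{(i)})}$ is a signed linear function of $\vec h$, with $r(j)=j$ if $j\le m$ and $r(j)=j-m$ otherwise, and $\sigma(j)=\pm 1$ depending on whether $j\le m$. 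Pushing forward to $\free{g}=\pi_1(V)$ converts each $\alpha_k^{(i)}$ into an element $w_k^{(i)}\in\free{g}$ and each $a_j$ into an element $t_j\in\free{g}$. The condition that $\phi_{\vec h}(m_i)$ is nullhomotopic in $V$ becomes (up to cyclic conjugation, which does not affect triviality in a free group)
\[
w_0^{(i)}\,t_{j_1^{(i)}}^{e_1^{(i)}}\,w_1^{(i)}\,\cdots\,t_{j_{l_i}^{(i)}}^{e_{l_i}^{(i)}}\,w_{l_i}^{(i)}=1\qquad\text{in }\free{g}.
\]

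For each $i$ I apply \cref{thm:exponential equation free group} to this word, with fresh exponent variables $k_1^{(i)},\ldots,k_{l_i}^{(i)}$ in place of the $e_k^{(i)}$, to obtain a finite decomposition of the solution set as a union of affine cones $\AAA^{(i)}=\bigcup_{s}\{\vec z_s^{(i)}+\mat{M}_s^{(i)}\vec v:\vec v\in\ZZ_{\ge 0}^{d_s^{(i)}}\}$. The question then becomes: does there exist $\vec h\in\ZZ^m$ and, for each $i$, a choice of piece $s_i$ and a parameter vector $\vec v_i\in\ZZ_{\ge 0}^{d_{s_i}^{(i)}}$ satisfying the linear constraints
\[
\bigl(\sigma(j_k^{(i)})\,\epsilon_k^{(i)}\,h_{r(j_k^{(i)})}\bigr)_{k=1}^{l_i}=\vec z_{s_i}^{(i)}+\mat{M}_{s_i}^{(i)}\vec v_i\qquad(1\le i\le g)?
\]
There are only finitely many combinations $(s_1,\ldots,s_g)$ to check, and for each one this is a classical integer linear feasibility problem in the unknowns $(\vec h,\vec v_1,\ldots,\vec v_g)$, decidable by standard techniques. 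The algorithm returns \emph{yes} if any of these finitely many systems is feasible, \emph{no} otherwise.

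The main obstacle is the geometric\=/to\=/algebraic translation in the middle step: a careful accounting of orientations, basepoints, and the precise effect of simultaneous Dehn twists about disjoint curves on the cyclic conjugacy class of a transverse curve in $\free{g}=\pi_1(V)$. Once this dictionary is set up correctly, the remainder of the proof is a mechanical combination of \cref{thm:exponential equation free group} with the decidability of integer linear feasibility.
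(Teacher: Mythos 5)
Your proposal is correct and follows essentially the same route as the paper's proof: reduce the extension criterion to nullhomotopy of the images of a fixed meridian system, translate the parameterised effect of the Dehn twists into exponential word equations in $\free{g}$, decompose the solution sets via \cref{thm:exponential equation free group}, and finish with finitely many integer linear feasibility checks. The only cosmetic difference is that you bake the sign and index constraints into the exponents $e_k^{(i)}$ directly, whereas the paper keeps $2m$ free variables $h_1,\ldots,h_{2m}$ and adds the constraints $h_r=-h_{m+r}$ in the final linear system; the two bookkeepings are equivalent.
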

\begin{proof}
Fix a basepoint $x_0\in\boundary{V}$ once and for all; additionally, let us arbitrarily pick orientations for $a_1,\ldots,a_{2m}$. Consider a complete system of oriented meridians $b_1,\ldots,b_g\subs\boundary{V}$. For each $b_r$, we give a few definitions. Let $f(b_r)$ intersect $a_1\cup\ldots\cup a_{2m}$ transversely at points $p_{r,1},\ldots,p_{r,l_r}$, numbered in the order they appear on $f(b_r)$. For $j=1,\ldots,l_r$, let (when $j=l_r$, we take $j+1$ to mean $1$):
\begin{itemize}
\item $\beta_{r,j}$ be the subarc of $f(b_r)$ going from $p_{r,j}$ to $p_{r,j+1}$;
\item $\gamma_{r,j}\subs\boundary{V}$ be an arc joining $x_0$ to $p_{r,j}$;
\item $i(r,j)$ be the only integer in $\{1,\ldots,2m\}$ such that $p_{r,j}\in a_{i(r,j)}$;
\item $w_{r,j}$ be the element of $\pi_1(V,x_0)$ represented by $\gamma_{r,j}\ast\beta_{r,j}\ast\gamma_{r,j+1}^{-1}$, where ``$\ast$'' denotes the concatenation of paths;
\item $t_{r,j}$ be the element of $\pi_1(V,x_0)$ represented by $\gamma_{r,j}\ast a_{i(r,j)}^\epsilon\ast\gamma_{r,j}^{-1}$, where we consider $a_{i(r,j)}$ as a loop based at $p_{r,j}$, and $\epsilon$ is $1$ or $-1$ depending on the ``sign'' of the intersection between $f(b_r)$ and $a_{i(r,j)}$ at $p_{r,j}$, as depicted in \cref{fig:product of dehn twists extension to handlebody:sign}.
\end{itemize}
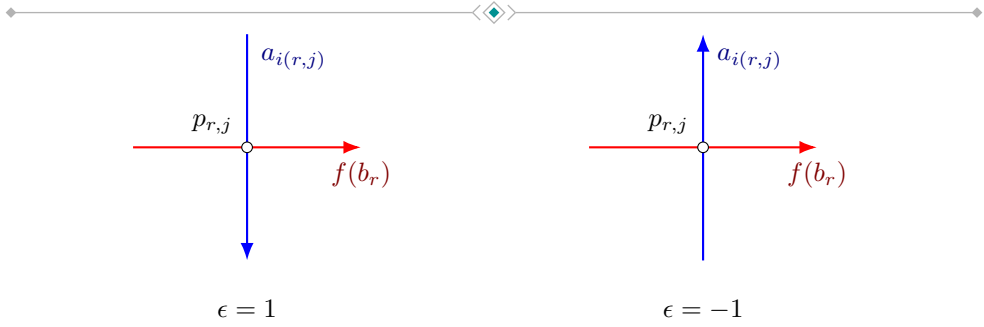
\begin{figure}
\centering
\tikzsetnextfilename{product-of-dehn-twists-extension-to-handlebody-sign}
\begin{tikzpicture}[thick,every node/.style={outer sep=2pt},evaluate={\r=1.5;}]
\foreach \xshift/\ep/\ar in {0/1/<-,6 cm/-1/->} {
\begin{scope}[xshift=\xshift]
\draw[->,main color 1] (-\r,0) -- (\r,0) node[below,main color 1!50!black] {$f(b_r)$};
\draw[\ar,main color 2] (0,-\r) -- (0,\r) node[below right,main color 2!50!black] {$a_{i(r,j)}$};
\draw[thin,black,fill=white] circle (2pt) node[above left] {$p_{r,j}$};
\node[anchor=base] at (0,-2.25) {$\epsilon=\ep$};
\end{scope}
}
\end{tikzpicture}
\caption{Choice of $\epsilon$ at the intersection $p_{r,j}$ between $f(b_r)$ and $a_{i(r,j)}$.\label{fig:product of dehn twists extension to handlebody:sign}}
\end{figure}

It is not hard to see that, given integers $h_1,\ldots,h_{2m}$, the conjugacy class of the curve $\twist{a_1}^{h_1}\cdots\twist{a_{2m}}^{h_{2m}}f(b_r)$ in $\pi_1(V,x_0)$ is represented by the element
\[
t_{r,1}^{h_{i(r,1)}}w_{r,1}\cdots t_{r,l_r}^{h_{i(r,l_r)}}w_{r,l_r};
\]
we hope that \cref{fig:product of dehn twists extension to handlebody:effect of dehn twists} will be convincing enough for the reader. Consider the set
\[
\AAA_r=\left\{(k_1,\ldots,k_{l_r})\in\ZZ^{l_r}\colon t_{r,1}^{k_1}w_{r,1}\cdots t_{r,l_r}^{k_{l_r}} w_{r,l_r}=1\right\}.
\]
By the previous remark, we have that $\twist{a_1}^{h_1}\cdots \twist{a_{2m}}^{h_{2m}}f(b_r)$ is a meridian if and only if $\left(h_{i(r,j)}\right)_{j=1}^{l_r}\in\AAA_r$. Consequently, $\twist{a_1}^{h_1}\cdots\twist{a_{2m}}^{h_{2m}}f$ extends to a homeomorphism of $V$ if and only if $\left(h_{i(r,j)}\right)_{j=1}^{l_r}\in\AAA_1\cap\ldots\cap\AAA_g$.

\begin{figure}
\centering
\tikzsetnextfilename{effect-of-dehn-twist}
\begin{tikzpicture}[scale=1.15,thick,
    vertex/.pic={\fill[pic actions,postaction={draw=black,thin}] circle (1.5pt);},
    place arrow at/.style={postaction={decorate,decoration={markings,mark=at position #1 with {\arrow[xshift=2pt]{>[scale=.8]};}}}},
    place reversed arrow at/.style={postaction={decorate,decoration={markings,mark=at position #1 with {\arrowreversed[xshift=-2pt]{>[scale=.8]};}}}}]
\tikzmath{\rr=.5/sqrt(1/.385^2-1);\sh=4 cm;}
\draw[main color 1,place arrow at=.3] (-1,0,.5) -- ++(2,0);
\draw[main color 2!60,dotted] (0,-.5) arc[start angle=-90,end angle=90,x radius=\rr,y radius=.5];
\draw[main color 2,place reversed arrow at=.6] (0,-.5) arc[start angle=270,end angle=90,x radius=\rr,y radius=.5];
\node[main color 1!50!black,anchor=base] at (-.6,-1) {$f(b)$};
\node[main color 2!50!black,anchor=base] at (0,.7) {$a$};
\pic[white] at (0,0,.5) {vertex};
\node[black,above left=4pt] at (0,0,.5) {$p$};
\begin{scope}[shift={(\sh pt,0)}]
\draw[main color 1!60,dotted] (-.25,-.5,0) to[out=0,in=180] (.25,.5,0);
\draw[main color 1,place arrow at=.2] (-1,0,.5) -- (-.5,0,.5) to[out=0,in=180] (-.25,-.5,0) (.25,.5,0) to[out=0,in=180] (.5,0,.5) -- (1,0,.5);
\node[main color 1!50!black,anchor=base] at (0,-1) {$\tau_af(b)$};
\end{scope}
\begin{scope}[shift={(2*\sh pt,0)}]
\coordinate (x0) at (-.75,{.5*sin(60)},{.5*cos(60)});
\draw[main color 2!60,dotted] (0,-.5,0) arc[start angle=-90,end angle=90,x radius=\rr,y radius=.5];
\draw[main color 2,rounded corners,line join=round,place arrow at=.6] (0,-.5,0) arc[start angle=-90,end angle=-145,x radius=\rr,y radius=.5] to (x0) (0,.5,0) arc[start angle=90,end angle=200,x radius=\rr,y radius=.5] to (x0);
\draw[main color 4,rounded corners,place arrow at=.15,place arrow at=.9] (-1,0,.5) -- (-.25,0,.5) to (x0) (x0) -- (.25,0,.5) -- (1,0,.5);
\pic[white] at (x0) {vertex};
\node[black,anchor=base] at (x0|-0,.7) {$x_0$};
\node[main color 4!50!black,anchor=base] at (-.9,-1) {$w$};
\node[main color 4!50!black,anchor=base] at (.7,-1) {$w'$};
\node[main color 2!50!black,anchor=base] at (.1,.7) {$t$};
\end{scope}
\foreach \i in {0,1,2} {
    \draw[xshift={\i*\sh}] (1,0) circle[x radius=\rr,y radius=.5];
    \draw[xshift={\i*\sh}] (1,-.5) -- (-1,-.5) arc[start angle=270,end angle=90,x radius=\rr,y radius=.5] -- (1,.5);
}

\draw[theme color,->] ({\sh/2-10pt},0) -- ({\sh/2+10pt},0) node[midway,above=2pt] {$\twist{a}$};
\node[theme color,scale=2] at ({3*\sh/2 pt},0) {$=$};

\end{tikzpicture}
\caption{The effect of applying a Dehn twist about $a$ to the curve $f(b)$; indices were omitted for clarity.}
\label{fig:product of dehn twists extension to handlebody:effect of dehn twists}
\end{figure}

By \cref{thm:exponential equation free group}, the sets $\AAA_r$ can be algorithmically decomposed as
\[
\AAA_r=A_{r,1}\cup\ldots\cup A_{r,N_r},
\]
where
\[
A_{r,q}=\left\{\vec{z}_{r,q}+\mat{M}_{r,q}\vec{v}:\vec{v}\in\ZZ_{\ge0}^{d_{r,q}}\right\}
\]
for some vector $\vec{z}_{r,q}\in\ZZ^{l_r}$ and some matrix $\mat{M}_{r,q}\in\ZZ^{l_r\times d_{r,q}}$. Therefore, we finally find that there exist integers $h_1,\ldots,h_m$ such that
\[
\twist{a_1}^{h_1}\cdots\twist{a_m}^{h_m}\twist{a_{m+1}}^{-h_1}\cdots\twist{a_{2m}}^{-h_m}f
\]
extends to a homeomorphism of $V$ if and only if, for some choice of indices $q_1\in\{1,\ldots,N_1\},\allowbreak{}\ldots,\allowbreak{}q_g\in\{1,\ldots,N_g\}$, the following system of linear equations has at least one solution in the variables $\vec{v}_1\in\ZZ_{\ge 0}^{d_{1,q_1}},\ldots,\vec{v}_g\in\ZZ_{\ge 0}^{d_{g,q_g}}$, $h_1,\ldots,h_m\in\ZZ$:
\[
\begin{dcases*}
\vec{z}_{r,q_r}+\mat{M}_{r,q_r}\vec{v}_r=\left(h_{i(r,j)}\right)_{j=1}^{l_r}&for $r=1,\ldots,g$,\\
h_r=-h_{m+r}&for $r=1,\ldots,m$.
\end{dcases*}
\]
This condition can be checked algorithmically (a classical result due to \textcite{integer-programming}) for every choice of indices $q_1,\ldots,q_g$, providing an algorithmic solution to the question in the statement.
\end{proof}

\section{The classification algorithm}\label{sec:classification algorithm}

\subsection{Outline}

\Cref{sec:classification algorithm} will be entirely devoted to providing a proof of the following.

\begin{theorem}\label{thm:classification algorithm}
There is an algorithm to decide whether two genus-two oriented surfaces embedded in $\sphere[3]$ are isotopic or not.
\end{theorem}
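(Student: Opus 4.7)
The plan is to reduce the isotopy problem for genus\=/two surfaces $S_1,S_2\subs\sphere[3]$ to the existence of an orientation\=/preserving homeomorphism of pairs $(\sphere[3],S_1)\to(\sphere[3],S_2)$, since every orientation\=/preserving self\=/homeomorphism of $\sphere[3]$ is isotopic to the identity. For each $S_i$ I would first extract the two complementary compact $3$\=/manifolds $M_i^+,M_i^-$ with $\boundary M_i^\pm=S_i$, and compute the basic invariants of each side by the classical algorithms of \cref{thm:algorithms on 3-manifolds:irreducible,thm:algorithms on 3-manifolds:boundary irreducible,thm:algorithms on 3-manifolds:handlebody}: irreducibility, boundary\=/irreducibility, and whether the side is a handlebody. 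The surfaces cannot be isotopic if any of these match up differently on the two sides. A preliminary reduction deals with the reducible case by using a non\=/trivial reducing sphere in some $M_i^\pm$ to split $S_i$ into simpler pieces whose isotopy data can be recovered from knot\=/theoretic invariants, ultimately reducing to the situation where both $M_i^+$ and $M_i^-$ are irreducible.

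The main argument then proceeds by the following case split, paralleling the examples in \cref{sec:introduction:examples}:
\begin{enumroman}
\item \emph{Both sides are handlebodies.} Then $S_i$ is a genus\=/two Heegaard surface of $\sphere[3]$, and by Waldhausen's theorem on Heegaard splittings of $\sphere[3]$ every such surface is isotopic to any other, so $S_1$ and $S_2$ are automatically isotopic.
\item \emph{Neither side is a handlebody.} Both $(M_i^+,S_i)$ and $(M_i^-,S_i)$ are irreducible sufficiently large $3$\=/manifold pairs satisfying the hypotheses of \cref{thm:mapping class group of 3-manifold} and \cref{thm:matveev-homeomorphism}, and the question reduces to deciding the existence of compatible boundary homeomorphisms on the two sides, which is algorithmically tractable by combining the two theorems.
\item \emph{Exactly one side, the handlebody $V_i$, is a handlebody; the other side $M_i$ is not.} This is the crucial case.
\end{enumroman}

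In case (iii), the strategy is the following. Use \cref{thm:matveev-homeomorphism} to decide whether $(M_1,S_1)$ and $(M_2,S_2)$ are homeomorphic as $3$\=/manifold pairs; if not, the surfaces are not isotopic. Otherwise, fix a homeomorphism $f\colon(M_1,S_1)\to(M_2,S_2)$ and a reference homeomorphism $g\colon V_1\to V_2$, which exists since any two genus\=/two handlebodies are homeomorphic. Then $S_1$ and $S_2$ are isotopic if and only if there is some $\psi\in\trhomeo{M_2,S_2}$ such that the boundary homeomorphism $(g|_{S_1})^{-1}\circ\psi\circ\trace{f}|_{S_1}\colon S_1\to S_1$ extends to a self\=/homeomorphism of $V_1$. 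Apply \cref{thm:mapping class group of 3-manifold} to $(M_2,S_2)$ to obtain finite lists $\FFF$ of coset representatives and $\CCC=\{(a_1,b_1),\ldots,(a_m,b_m)\}$ of curve pairs such that $\trhomeo{M_2,S_2}$ decomposes as $\bigcup_{\psi\in\FFF}\langle\twist{a_1}\twist{b_1}^{-1},\ldots,\twist{a_m}\twist{b_m}^{-1}\rangle\psi$. For each of the finitely many $\psi\in\FFF$, invoke \cref{thm:product of dehn twists extension to handlebody} on $V_1$ with the curves pulled back via $g$ and the boundary homeomorphism $(g|_{S_1})^{-1}\circ\psi\circ\trace{f}|_{S_1}$ to decide algorithmically whether a suitable product of the generating Dehn twists yields an extension.

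The principal obstacle is that \cref{thm:product of dehn twists extension to handlebody} requires the curves $a_1,\ldots,a_{2m}$ to be pairwise disjoint, whereas \cref{thm:mapping class group of 3-manifold} guarantees disjointness only when every $I$\=/bundle piece in the JSJ decomposition of $(M_2,S_2)$ has base surface in the short explicit list $\surf{0}$, $\surf{0,1}$, $\surf{0,2}$, $\surf{0,3}$, $\nsurf{1}$, $\nsurf{1,1}$, $\nsurf{1,2}$. The technical heart of the classification, to be carried out in the forthcoming subsections and especially in \cref{sec:classification algorithm:small i-bundles}, is to show that the bounded complexity of a genus\=/two surface in $\sphere[3]$ bounding a handlebody on one side forces every $I$\=/bundle piece in the JSJ of the non\=/handlebody side to be of one of these small types, thereby always bringing case (iii) within the scope of \cref{thm:product of dehn twists extension to handlebody} and completing the algorithm.
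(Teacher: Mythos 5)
Your outline correctly identifies the key ingredients (Waldhausen for the Heegaard case, \cref{thm:mapping class group of 3-manifold}, \cref{thm:matveev-homeomorphism}, and \cref{thm:product of dehn twists extension to handlebody}) and the right coarse case split by how many sides are handlebodies, but there are two genuine gaps.

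First, cases (ii) and (iii) quietly assume that a non\=/handlebody side is boundary irreducible, and this is false in general. Since every closed surface in $\sphere[3]$ is compressible, at least one side of $S_i$ always admits a compression disc; if that side is not a handlebody it is still not a boundary irreducible pair, so neither $(M_i^\pm,S_i)$ satisfies the hypotheses of \cref{thm:mapping class group of 3-manifold} or (with empty boundary pattern) \cref{thm:matveev-homeomorphism}. Both of the sub\=/cases ``neither side is a handlebody'' and ``one side is a handlebody, the other is not'' can produce a partially compressible side -- see \cref{fig:example no handlebody 1,fig:example no handlebody 2,fig:example not boundary irreducible}. The paper spends \cref{sec:classification algorithm:general strategies,sec:classification algorithm:compression discs,sec:classification algorithm:one non-separating compression disc,sec:classification algorithm:one separating compression disc} handling precisely this situation, via canonical compression discs and reduction to knot complements (\cref{thm:general strategy,thm:general strategy for knot complements,thm:unique non-separating compression disc}), and your proposal has no mechanism to replace that machinery. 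Only after ruling out partial compressibility is every non\=/handlebody side boundary irreducible and amenable to your case (iii) strategy.

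Second, the concluding claim -- that the Euler characteristic bound forces every $I$\=/bundle piece in the JSJ of the boundary\=/irreducible side to be a product $I$\=/bundle over $\surf{0,2}$ or $\surf{0,3}$, or a twisted $I$\=/bundle over $\nsurf{1,1}$ or $\nsurf{1,2}$ -- is false. As worked out at the start of \cref{sec:classification algorithm:small i-bundles}, the pieces $\surf{1,1}\times I$ and $\nsurf{2,1}\twtimes I$ can and do occur, and in those cases \cref{thm:mapping class group of 3-manifold} does not guarantee disjoint curves, so \cref{thm:product of dehn twists extension to handlebody} cannot be invoked. These two exceptional configurations require the separate \emph{ad hoc} arguments of \cref{sec:classification algorithm:product bundle over punctured torus,sec:classification algorithm:twisted bundle over punctured klein bottle} (e.g.\ \cref{thm:unique punctured torus in solid torus} for the $\surf{1,1}\times I$ case), which bypass the Dehn\=/twist machinery entirely. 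Without them your algorithm would loop forever or return an incorrect answer on such inputs.
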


More precisely, let $S_1$ and $S_2$ be genus\=/two surfaces embedded in $\sphere[3]$, and let us fix an orientation for $S_1$ and for $S_2$. We say that $S_1$ and $S_2$ are isotopic (as oriented surfaces) if there is an isotopy $\map{f_t}{\sphere[3]}{\sphere[3]}$ such that $f_0$ is the identity and $f_1$ maps $S_1$ to $S_2$ orientation\=/preservingly. Since every self\=/homeomorphism of $\sphere[3]$ is isotopic to the identity, the oriented surfaces $S_1$ and $S_2$ are isotopic if and only if there exists a homeomorphism $\map{f}{\sphere[3]}{\sphere[3]}$ sending $S_1$ to $S_2$ orientation\=/preservingly.

Another equivalent definition is the following. Let $M_1$ be the closure of the component of $\sphere[3]\setminus S_1$ which lies on the positive side of $S_1$, and let $N_1$ be the closure of the other component; we will sometimes call $M_1$ and $N_1$ the \emph{sides} of $S_1$. Define $M_2$ and $N_2$ in a similar way. Then $S_1$ and $S_2$ are isotopic if and only if there exist homeomorphisms $\map{f}{M_1}{M_2}$ and $\map{g}{N_1}{N_2}$ such that $\trace{f}$ and $\trace{g}$ are isotopic as homeomorphisms $\umap{S_1}{S_2}$.

\begin{remark}
A non\=/oriented version of \cref{thm:classification algorithm} states that there is an algorithm to decide whether two genus\=/two surfaces embedded in $\sphere[3]$ are isotopic or not. Of course, the non\=/oriented version is an immediate corollary of the oriented one. In fact, in order to decide whether the surfaces $S_1$ and $S_2$ are isotopic without any constraint on orientation, we can simply check if they are orientation\=/preservingly isotopic for at least one choice of orientations.
\end{remark}

From now on, we will take $S_1$, $S_2$, $M_1$, $N_1$, $M_2$, and $N_2$ to refer to the objects we have already introduced; in particular, we will assume orientations for $S_1$ and $S_2$ have been fixed. Our aim will be to provide a proof of \cref{thm:classification algorithm}. The full algorithm is quite involved, and for the sake of convenience we will split it into several cases. Before we start describing the actual algorithm, let us briefly outline the structure of the following sections.
\begin{substeps}
\item Several cases of the proof of \cref{thm:classification algorithm} will be addressed by finding \emph{canonical} compression discs for $S_1$ and $S_2$, then compressing $S_1$ and $S_2$ along these discs and -- carefully -- reducing the question to the isotopy problem of tori in $\sphere[3]$. In \cref{sec:classification algorithm:general strategies} we provide a general framework for dealing with these canonical compression discs; by abstracting the repetitive parts of the algorithm away, we will later be able to focus on what is meaningfully different in each of these cases.
\item In \cref{sec:classification algorithm:compression discs} we deal with the problem of finding such canonical compression discs. The recipe we present relies on showing the existence of discs satisfying properties which are \emph{stable under boundary compressions}; if these properties are suitably chosen, uniqueness will easily follow.
\item After the introductory sections, we finally start describing the classification algorithm. \Cref{sec:classification algorithm:one non-separating compression disc,sec:classification algorithm:one separating compression disc} deal with the case where $S_1$ and $S_2$ are \emph{partially compressible on one side}. Specifically, the case where one side has at least one -- and, \emph{a posteriori}, only one -- non\=/separating compression disc is addressed in \cref{sec:classification algorithm:one non-separating compression disc}; \cref{sec:classification algorithm:one separating compression disc} explains the strategy to follow when one side only has separating compression discs. In both cases, we heavily rely on the tools developed in the previous two sections.
\item We are then left to address the case where one side of $S_1$ is a handlebody and the other is boundary irreducible. The algorithm we use here will depend on the JSJ decomposition of the boundary irreducible component. If all the $I$\=/bundle pieces are ``small'', then the results presented in \cref{sec:free groups} will quickly lead to a solution of the isotopy problem, as described in \cref{sec:classification algorithm:small i-bundles}. Otherwise, only two cases can arise: either a product $I$\=/bundle over a punctured torus appears in the JSJ decomposition -- and we deal with this in \cref{sec:classification algorithm:product bundle over punctured torus} -- or one of the pieces is a twisted $I$\=/bundle over a punctured Klein bottle; the latter situation is addressed in \cref{sec:classification algorithm:twisted bundle over punctured klein bottle}. In both cases, an \emph{ad hoc} discussion is enough to settle the isotopy problem.
\end{substeps}

\subsection{General strategy for canonical compression discs}\label{sec:classification algorithm:general strategies}

We say that non\=/trivial compression discs $D_1$ for $S_1$ and $D_2$ for $S_2$ are \emph{canonical} if every self\=/homeomorphism of $\sphere[3]$ sending $S_1$ to $S_2$ orientation\=/preservingly sends $D_1$ to $D_2$, up to isotopy preserving $S_2$. As anticipated, we will often rely on compressing the surfaces along canonical compression discs in order to reduce the isotopy question to the more manageable problem for tori in $\sphere[3]$. Let us begin our discussion with a definition.

\begin{definition}
Let $X_1$ and $X_2$ be $3$\=/manifolds, and let $p_1\subs\boundary X_1$ and $p_2\subs\boundary X_2$ be simple closed curves. Let $X_1'$ and $X_2'$ be the $3$\=/manifolds obtained by attaching a $2$\=/handle, respectively, to $X_1$ along $p_1$ and to $X_2$ along $p_2$. Define the \emph{handle extension map}
\[
\map{\hext{p_1,p_2}}{\homeo{(X_1,p_1);(X_2,p_2)}}{\homeo{X_1';X_2'}}
\]
by extending homeomorphisms $\umap{(X_1,p_1)}{(X_2,p_2)}$ to the $2$\=/handles.
\end{definition}

It is easy to see that this map is well\=/defined, in that its output does not depend on the choice of representative for the isotopy class nor on the specific extension to the $2$\=/handles. Moreover, the handle extension map is functorial in the following sense: for $3$\=/manifolds $X_1$, $X_2$, $X_3$, curves $p_1\subs\boundary X_1$, $p_2\subs\boundary X_2$, $p_3\subs\boundary X_3$, and homeomorphisms
\begin{align*}
\map{f}{(X_1,p_1)}{(X_2,p_2)},&&\map{g}{(X_2,p_2)}{(X_3,p_3)},
\end{align*}
we have the equality
\[
\hext{p_1,p_3}(gf)=\hext{p_2,p_3}(g)\circ\hext{p_1,p_2}(f).
\]

Suppose now that we can find canonical compression discs $D_1$ and $D_2$ for $S_1$ and $S_2$ respectively. Up to flipping the orientations of $S_1$ and $S_2$, we can assume that $D_1$ and $D_2$ lie in $M_1$ and $M_2$ respectively. Let $p_1\subs S_1$ and $p_2\subs S_2$ be the boundary curves of $D_1$ and $D_2$ respectively. Let $P_1=M_1\cut D_1$ (which may be a disconnected $3$\=/manifold) and $Q_1=\closure{\sphere[3]\setminus P_1}$. Note that $Q_1$ can also be obtained by attaching the $2$\=/handle $\closure{Q_1\setminus N_1}$ to $N_1$ along the curve $p_1$. The $3$\=/manifolds $P_1$ and $Q_1$ have the same boundary $T_1$, which is either a torus or the union of two tori. Similarly, define $P_2=M_2\cut D_2$, $Q_2=\closure{\sphere[3]\setminus P_2}$, and $T_2=\boundary P_2=\boundary Q_2$.

\begin{proposition}\label{thm:general strategy}
In the situation described in the previous paragraph, the oriented surfaces $S_1$ and $S_2$ are isotopic if and only if there is a homeomorphism $\map{f}{(N_1,p_1)}{(N_2,p_2)}$ such that $\trace{\hext{p_1,p_2}(f)}$, seen as a map from $T_1$ to $T_2$, extends to a homeomorphism $\umap{P_1}{P_2}$.
\end{proposition}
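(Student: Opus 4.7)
The plan is to prove both implications by explicit construction, exploiting the decomposition of $\sphere[3]$ as $P_i\cup_{T_i}Q_i$ for $i=1,2$.

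For the forward implication, let $\map{F}{\sphere[3]}{\sphere[3]}$ be a homeomorphism realising an orientation\=/preserving isotopy from $S_1$ to $S_2$. The orientation convention forces $F(M_1)=M_2$ and $F(N_1)=N_2$. By canonicality of the two discs, $F(D_1)\subs M_2$ is isotopic to $D_2$ through an isotopy of $M_2$ fixing $S_2$ pointwise; extending this to an ambient isotopy of $\sphere[3]$ and composing with $F$, we may assume $F(D_1)=D_2$ and hence $F(p_1)=p_2$. Set $f=F|_{N_1}\in\homeo{(N_1,p_1);(N_2,p_2)}$. The restriction $F|_{Q_1}$ is then one particular extension of $f$ across the 2\=/handle $\nbhd{D_1}$, so $[F|_{Q_1}]=\hext{p_1,p_2}(f)$ by definition of the handle extension map, and taking traces yields $\trace{\hext{p_1,p_2}(f)}=[F|_{T_1}]\in\homeo{T_1;T_2}$. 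This isotopy class visibly extends to $F|_{P_1}\in\homeo{P_1;P_2}$.

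For the converse, suppose $f$ satisfies the hypothesis. Pick a representative $g$ of $\hext{p_1,p_2}(f)$ and a representative $h\in\homeo{P_1;P_2}$ of the extension of $\trace{g}$. After modifying $g$ by an isotopy supported in a collar of $T_1$ in $Q_1$, we may assume $g|_{T_1}=h|_{T_1}$ on the nose, so that $g$ and $h$ glue along $T_1$ into a self\=/homeomorphism $\map{F}{\sphere[3]}{\sphere[3]}$. The main thing to check is that $F(S_1)=S_2$ orientation\=/preservingly. Write $S_1=(S_1\setminus\interior{A})\cup A$, where $A\subs S_1$ is the annular attaching region of the 2\=/handle $\nbhd{D_1}$: the subset $S_1\setminus\interior{A}$ lies in $T_1$ and is sent to $S_2\setminus\interior{A'}\subs T_2$ by $h$ (equivalently, by $g$), where $A'$ denotes the corresponding annulus in $S_2$; meanwhile $A$ sits at the interface between $N_1$ and the 2\=/handle inside $Q_1$ and is mapped onto $A'$ by $g$, provided the thickenings $\nbhd{D_1}$ and $\nbhd{D_2}$ are chosen compatibly under $f$. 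Orientation\=/preservation is immediate since $F(N_1)=N_2$.

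The one technical subtlety lies in the reverse direction: the surface $S_1$ is split between $P_1$ and $Q_1$ in a slightly intricate way, with the visible portion $S_1\cap T_1$ lying on the common boundary $T_1$ while the collar $A$ of $p_1$ in $S_1$ is hidden in the interior of $Q_1$. Verifying that the assembled $F$ maps these two halves into the two corresponding halves of $S_2$ amounts to making consistent choices of 2\=/handle thickenings, which the freedom in the definition of $\hext{p_1,p_2}$ permits up to isotopy.
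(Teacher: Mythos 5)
Your proof takes the same route as the paper's: in the forward direction, isotope the ambient homeomorphism to carry $D_1$ to $D_2$ and restrict to $N_1$; in the converse, glue a $2$\=/handle extension of $f$ on $Q_1$ to an extension of its trace on $P_1$. The forward direction is fine. In the converse, however, there is an imprecision worth fixing: you modify $g$ by a collar isotopy so that $g|_{T_1}=h|_{T_1}$, but after this modification $g$ is no longer a genuine $2$\=/handle extension of $f$, and since $h|_{T_1}$ is merely \emph{isotopic} to the trace of such an extension, the claim that the glued map $F$ carries $S_1$ exactly onto $S_2$ -- resting as it does on $g$ sending $S_1\cap T_1$ to $S_2\cap T_2$ and the attaching annulus $\nbhd{p_1}$ to $\nbhd{p_2}$ -- is not actually justified. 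Two easy repairs: modify $h$ rather than $g$, by an ambient isotopy of $P_2$ supported near $T_2$, so that $g$ stays a genuine extension of $f$; then $S_1\subs Q_1$ gives $F|_{S_1}=g|_{S_1}$ and hence $F(S_1)=S_2$ on the nose. Alternatively, observe that your $F(S_1)$ differs from $S_2$ only by the supporting collar isotopy, so it is isotopic to $S_2$, and this is enough since the paper's notion of isotopy for oriented surfaces in $\sphere[3]$ is equivalent to the existence of an orientation\=/preserving self\=/homeomorphism of $\sphere[3]$ carrying one to the other.
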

\begin{proof}
If $S_1$ and $S_2$ are isotopic, let $\map{f}{\sphere[3]}{\sphere[3]}$ be a homeomorphism sending $S_1$ to $S_2$ orientation\=/preservingly. Since $D_1$ and $D_2$ are canonical, we can isotope $f$ so that it sends $D_1$ to $D_2$ (and, hence, $p_1$ to $p_2$). Then $\map{f|_{N_1}}{(N_1,p_1)}{(N_2,p_2)}$ is a homeomorphism such that $\trace{\hext{p_1,p_2}(f|_{N_1})}$ extends to $\map{f|_{P_1}}{P_1}{P_2}$.

Conversely, let $\map{f}{(N_1,p_1)}{(N_2,p_2)}$ and $\map{g}{P_1}{P_2}$ be homeomorphisms such that $\hext{p_1,p_2}(f)$ and $g$ have the same trace $\umap{T_1}{T_2}$. Then $\hext{p_1,p_2}(f)$ and $g$ can be combined to construct a self\=/homeomorphism of $\sphere[3]$ sending $S_1$ to $S_2$ orientation\=/preservingly.
\end{proof}

\Cref{thm:general strategy} will occasionally be useful on its own. However, for most of our applications, we will refer to the following result. The proof exploits the functoriality of the handle extension map -- together with the output of \cref{thm:mapping class group of 3-manifold} -- to translate the isotopy problem into a group\=/theoretic question which, while unsolvable in general, can be easily answered in the special cases we need.

\begin{proposition}\label{thm:general strategy for knot complements}
With the same notations as above, we can algorithmically decide whether the oriented surfaces $S_1$ and $S_2$ are isotopic provided that:
\begin{enumroman}
\item the $3$\=/manifold pair $(N_1,\boundary N_1\cut p_1)$ is irreducible;
\item $P_1$ is either a (possibly trivial) knot complement, or the union of two non\=/trivial knot complements.
\end{enumroman}
\end{proposition}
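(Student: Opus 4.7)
The plan is to combine \cref{thm:general strategy} with \cref{thm:mapping class group of 3-manifold} and \cref{thm:homeomorphisms of knot complement}, reducing the isotopy question to a decidable problem in the mapping class group of a torus or a disjoint union of two tori.

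By \cref{thm:general strategy}, the question is equivalent to whether there exists a homeomorphism $f\colon(N_1, p_1)\to(N_2, p_2)$ such that $\trace{\hext{p_1,p_2}(f)}$ extends to a homeomorphism $P_1\to P_2$. I first use \cref{thm:matveev-homeomorphism} to decide whether $(N_1, p_1)$ and $(N_2, p_2)$ are pair\=/homeomorphic; if they are not, $S_1$ and $S_2$ are not isotopic. Otherwise I fix one such $f_0$, so that $\trace{\homeo{(N_1, p_1);(N_2, p_2)}}=\trace{f_0}\cdot\trhomeo{N_1,\partial N_1\cut p_1}$.

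Next I apply \cref{thm:mapping class group of 3-manifold} to the pair $(N_1, \partial N_1\cut p_1)$. Its hypotheses hold: irreducibility is (i); no component of $\partial N_1\cut p_1$ is a disc since $p_1$ is essential ($D_1$ being non\=/trivial); $\partial N_1\setminus\interior{\partial N_1\cut p_1}$ is the annular neighbourhood of $p_1$; and $N_1$ is sufficiently large since $\partial N_1\cut p_1$ is a two\=/sided incompressible surface of positive genus. The theorem produces a finite set $\FFF$ and disjoint pairs $(a_i, b_i)$ of curves in $\partial N_1\cut p_1$ with
\[
\trhomeo{N_1,\partial N_1\cut p_1}=\bigcup_{h\in\FFF}\langle\twist{a_1}\twist{b_1}^{-1},\ldots,\twist{a_m}\twist{b_m}^{-1}\rangle\,h.
\]
By functoriality of $\hext$, the image of this set under $\trace\circ\hext{p_1,p_2}$ is a finite union of cosets, in $\homeo{T_1; T_2}$, of the subgroup $H'\le\homeo{T_1}$ generated by the products of Dehn twists $\twist{\bar a_i}\twist{\bar b_i}^{-1}$, where $\bar a_i, \bar b_i\subs T_1$ are the images of $a_i, b_i$ after capping off along $p_1$.

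Finally I would compute the set $\trace{\homeo{P_1; P_2}}$: by hypothesis (ii) and \cref{thm:homeomorphisms of knot complement}, when every component of $P_1$ is a non\=/trivial knot complement this set is finite and effectively computable (with an extra factor for swapping two homeomorphic components); when $P_1$ is a solid torus, it consists of all boundary homeomorphisms sending the meridian $\mu_1$ to $\pm\mu_2$, an explicit coset of a computable subgroup of $\homeo{T_1}$. The isotopy question then reduces to asking whether any of the finitely many cosets of $H'$ obtained above intersects $\trace{\homeo{P_1; P_2}}$. Since $T_1$ is a torus or a disjoint union of two tori, $\homeo{T_1}$ is a concrete finite extension of $GL_2(\ZZ)$ or $GL_2(\ZZ)^2$, and the resulting subgroup membership / coset intersection problem is decidable by classical means in $GL_2(\ZZ)$. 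The main obstacle I anticipate is the bookkeeping required to convert the output of \cref{thm:mapping class group of 3-manifold} into explicit matrices in $GL_2(\ZZ)$ acting on $H_1(T_1)$, and --- in the separating case where $T_1$ is a disjoint union of two tori --- exploiting the meridian\=/preservation constraint given by \cref{thm:homeomorphisms of knot complement} to reduce membership in a subgroup of $GL_2(\ZZ)^2$ to a problem that can be settled component by component.
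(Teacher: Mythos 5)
Your plan follows essentially the same architecture as the paper's proof: decide $(N_1,p_1)\cong(N_2,p_2)$ via \cref{thm:matveev-homeomorphism}, push the output of \cref{thm:mapping class group of 3-manifold} through the handle extension map using its functoriality, and then compare the resulting cosets against $\trace{\homeo{P_1;P_2}}$, computed via \cref{thm:homeomorphisms of knot complement} or by hand for solid tori. The one meaningful divergence is in how the final decision problem is settled. You invoke general decidability of coset intersection in $GL_2(\ZZ)$ (resp.\ $GL_2(\ZZ)^2$), which is true since $GL_2(\ZZ)$ is virtually free and rational subset membership is decidable there, but it requires citing this machinery and carrying out the ``bookkeeping'' you flag. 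The paper instead exploits the specific structure of $Q_2$: since $Q_2$ is a solid torus or a $2$\=/unlink complement, every $\twist{a_i}\twist{b_i}^{-1}$ that extends to $Q_2$ fixes the meridian class(es) in $H_1(T_2)$, so the group they generate is a subgroup of $\ZZ$ or $\ZZ\times\ZZ$ and the intersection test reduces to elementary linear algebra on $H_1(T_2)$, with a separate finite check when $P_2$ is a non\=/trivial knot complement (or two). The paper's case\=/by\=/case route avoids the heavier group\=/theoretic appeal and yields a more self\=/contained argument, whereas your route is more uniform but needs the decidability input to be made precise. Both are valid; I would encourage you to either cite the relevant result on rational subsets in virtually free groups explicitly, or to observe (as the paper does) that the generating Dehn twists preserve the meridian homology classes, which collapses the group to a computable cyclic or rank\=/two abelian group and makes the appeal to $GL_2(\ZZ)$ unnecessary.
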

\begin{proof}
Since $(N_1,\boundary N_1\cut p_1)$ is irreducible, we can decide whether $(N_1,p_1)$ is homeomorphic to $(N_2,p_2)$ or not. If it is not, then clearly $S_1$ and $S_2$ are not isotopic. Otherwise, let $\map{f_0}{(N_1,p_1)}{(N_2,p_2)}$ be a homeomorphism. Similarly, since $P_1$ is a union of possibly trivial knot complements, we can decide if $P_1$ and $P_2$ are homeomorphic. If they are not, then once again $S_1$ and $S_2$ are not isotopic. Otherwise, let $\map{g_0}{P_1}{P_2}$ be a homeomorphism. By \cref{thm:general strategy}, we can solve the isotopy problem if we can answer the following question: is there a homeomorphism $f\in\homeo{N_2,p_2}$ such that
\[
\trace{\hext{p_2,p_2}(f)}\circ(\trace{\hext{p_1,p_2}(f_0)}\circ\trace{g_0}^{-1})\in\trhomeo{P_2}?
\]
By applying \cref{thm:mapping class group of 3-manifold}, we can further reduce the question to the following: given homeomorphisms $f_1,\ldots,f_n\in\trhomeo{Q_2}$ which are products of powers of Dehn twists of $T_2$, and $h\in\homeo{T_2}$, does there exist $f\in\langle f_1,\ldots, f_n\rangle$ such that $fh\in\trhomeo{P_2}$?

\step{When $P_2$ and $Q_2$ are solid tori.} Let $\ell,m\subs T_2$ be an oriented longitude and an oriented meridian of $P_2$ respectively; denote by $[\ell]$ and $[m]$ the corresponding homology classes in $H_1(T_2)$. The Dehn twists of $T_2$ which extend to $Q_2$ generate a subgroup of $\homeo{T_2}$ which is isomorphic to $\ZZ$ -- namely, the homeomorphisms that fix $[\ell]$. We can therefore compute an integer $k_0$ such that\footnote{More explicitly, if $f_i$ sends $[m]$ to $k_i[\ell]+[m]$ for $1\le i\le n$, then $k_0=\gcd(k_1,\ldots,k_n)$. In fact, since the homeomorphisms $f_1,\ldots,f_n$ are actually products of two Dehn twists about disjoint curves, the integers $k_i$ will always be equal to $-1$, $0$, or $1$.} the group $\langle f_1,\ldots,f_n\rangle$ is generated by the self\=/homeomorphism of $T_2$ sending $[\ell]$ to $[\ell]$ and $[m]$ to $k_0[\ell]+[m]$. It is immediate to check that the answer to the isotopy question is positive if and only if $h$ sends $[m]$ to $k[\ell]\pm[m]$ where $k$ is a multiple of $k_0$.

\step{When $P_2$ is a solid torus but $Q_2$ is not.} In this case, the group $\langle f_1,\ldots, f_n\rangle$ is actually trivial, since $\trhomeo{Q_2}\le\langle -\id\rangle$ and Dehn twists cannot act as $(-\id)$ on $H_1(T_2)$. Therefore, it is sufficient to check whether $h\in\trhomeo{P_2}$ or not.

\step{When $P_2$ is a non\=/trivial knot complement.} By \cref{thm:homeomorphisms of knot complement}, the group $\trhomeo{P_2}$ is finite and can be computed. Therefore, it is sufficient to be able to decide if $h\in\langle f_1,\ldots, f_n\rangle$. Note that, in this case, $Q_2$ is a solid torus. As argued above, the homeomorphisms $f_1,\ldots, f_n$ belong to the subgroup of $\homeo{T_2}$ fixing the homology class of the meridian of $Q_2$. This group is isomorphic to $\ZZ$, and we can compute a generator of $\langle f_1,\ldots, f_n\rangle$. By studying the action of $h$ on $H_1(T_2)$, we can easily decide whether $h$ belongs to $\langle f_1,\ldots, f_n\rangle$ or not.

\step{When $P_2$ is the union of two non\=/trivial knot complements.} Like before, it is enough to be able to decide if $h\in\langle f_1,\ldots,f_n\rangle$. In this case, the $3$\=/manifold $Q_2$ is homeomorphic to the complement of the $2$\=/component unlink in $\sphere[3]$. Denote by $T$ and $T'$ the two components of $T_2$. Let $m\subs T$ and $m'\subs T'$ the two unique curves which are homologically trivial in $Q_2$ but not in $T_2$, and fix an orientation for each of them. Clearly, every self\=/homeomorphism of $Q_2$ preserves $m\cup m'$ as a set up to isotopy. Moreover, Dehn twists of $T_2$ cannot swap $T$ and $T'$ or invert the orientation of $m$ or $m'$. Therefore, the homeomorphisms $f_1,\ldots, f_n$ belong to the subgroup of $\homeo{T_2}$ fixing the homology classes $[m]$ and $[m']$ in $H_1(T_2)$. This group is isomorphic to $\ZZ\times\ZZ$; hence, by studying the action of $h$ on $H_1(T_2)$, we can decide whether $h$ belongs to $\langle f_1,\ldots,f_n\rangle$ or not.
\end{proof}

\subsection{Compression discs for the boundary}\label{sec:classification algorithm:compression discs}

The reader should by now reasonably believe that canonical compression discs will play a crucial role in our classification algorithm. A good strategy to prove that two discs -- say -- in $M_1$ and $M_2$ are canonical consists in characterising them as the unique discs satisfying some property which only depends on the intrinsic topologies of $M_1$ and $M_2$. The results in this section will provide us with useful tools to prove such a characterisation.

Let $(M,R)$ be a $3$\=/manifold pair. Consider a property $\PPP$ for discs properly embedded in $M$ which is invariant under isotopies in $(M,R)$, such as ``being separating''. We say that $\PPP$ is \emph{stable under boundary compressions} if
\begin{itemize}
\item for every disc $D$ properly embedded in $(M,R)$, and
\item for every possibly trivial boundary compression disc $E\subs M$ for $D$ with $\boundary E\subs D\cup R$,
\end{itemize}
we have that at least one of the two discs obtained by boundary compressing $D$ along $E$ satisfies $\PPP$. The crucial fact is that, (very) loosely speaking, discs satisfying a property which is stable under boundary compressions can be made disjoint. More precisely, we have the following.

\begin{proposition}\label{thm:discs with boundary compression-stable property}
Let $M$ be an irreducible $3$\=/manifold, and let $R\subs\boundary M$ be a surface. Let $F$ be a disc properly embedded in $(M,R)$. Suppose that there is a disc properly embedded in $(M,R)$ not isotopic to $F$ in $M$ and satisfying some property $\PPP$ which is stable under boundary compressions. Then there is a disc properly embedded in $(M,R)$ which is not isotopic to $F$ in $M$, is disjoint from $F$, and satisfies $\PPP$.
\end{proposition}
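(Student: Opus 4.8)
The plan is to run a standard innermost-disc / outermost-arc argument on the intersection of $F$ with a disc $D$ satisfying $\PPP$, using irreducibility of $M$ and the stability hypothesis to show that any intersections can be removed without destroying $\PPP$ or creating an isotopy to $F$.

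First I would take a disc $D$ properly embedded in $(M,R)$, not isotopic to $F$ in $M$, satisfying $\PPP$, and chosen so that $D$ meets $F$ transversely with $\card{D\cap F}$ minimal among all such discs (in their isotopy classes in $(M,R)$). If $D\cap F=\emptyset$ we are done, so assume it is non-empty. The intersection $D\cap F$ is a collection of circles and arcs (arcs appearing because $\boundary D,\boundary F\subs R$ may cross). I would first dispose of circle components: an innermost circle $c$ on $D$ bounds a subdisc $D_0\subs D$ with $D_0\cap F=c$; since $F$ is a disc, $c$ also bounds a subdisc $F_0\subs F$, and $D_0\cup F_0$ is a sphere, which bounds a ball by irreducibility of $M$. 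Using this ball I can isotope $D$ across $F_0$ to remove $c$ (and, since the isotopy is supported near the ball and away from $\boundary D$, it is an isotopy in $(M,R)$, keeps $D$ not isotopic to $F$, and preserves $\PPP$ because $\PPP$ is invariant under isotopy in $(M,R)$). This contradicts minimality, so we may assume $D\cap F$ consists only of arcs.

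Next, take an arc $a\subs D\cap F$ outermost in $F$: it cuts off a subdisc $E\subs F$ with $E\cap D=a$ and $\boundary E=a\cup b$ where $b\subs \boundary F\subs R$. Then $E$ is a (possibly trivial) boundary compression disc for $D$ with $\boundary E\subs D\cup R$, so by stability of $\PPP$ at least one of the two discs $D', D''$ obtained by boundary compressing $D$ along $E$ satisfies $\PPP$; call it $\what D$. Note $\what D$ is properly embedded in $(M,R)$ and, after a small pushoff, can be made to intersect $F$ in strictly fewer arcs than $D$ did (the compression removes the arc $a$ and does not create new intersections, as $E\subs F$). The only remaining point is to verify that $\what D$ is not isotopic to $F$ in $M$: if it were, then since $\what D$ is obtained from $D$ by boundary compression along a subdisc of $F$, one could reverse the operation to exhibit an isotopy — or more carefully, a handle/tubing argument — showing $D$ itself is isotopic to $F$ in $M$, contradicting our hypothesis on $D$. (If $\what D$ happens to be inessential or isotopic to $F$, I would instead pass to the other disc of the compression or re-examine the outermost arc; the key is that $\PPP$ being stable guarantees a usable disc survives.) Either way, $\what D$ contradicts the minimality of $\card{D\cap F}$, so in fact $D\cap F=\emptyset$ and the proposition follows.

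The main obstacle I expect is the bookkeeping in the last step: ensuring that the disc produced by the boundary compression is simultaneously (a) not isotopic to $F$ in $M$, (b) still satisfies $\PPP$, and (c) has fewer intersections with $F$ — all three at once. Stability of $\PPP$ gives (b) for \emph{one} of the two resulting discs, but that disc is not a priori the one that is non-isotopic to $F$; reconciling this may require choosing the outermost arc more cleverly (e.g. outermost on $D$ rather than on $F$, or iterating the compression), or an auxiliary innermost-circle cleanup between compressions. The triviality case of the boundary compression disc (when $E$ cuts a disc off $D$ trivially) also needs a brief separate check to confirm it still reduces the intersection count.
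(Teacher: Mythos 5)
Your overall strategy is the one the paper uses: minimise $\card{D\cap F}$ among discs satisfying $\PPP$ and not isotopic to $F$, remove circle components of $D\cap F$ using irreducibility, take an arc of $D\cap F$ outermost in $F$, and boundary compress $D$ along the disc $E\subs F$ it cuts off, invoking stability of $\PPP$. The gap is exactly at the point you flag as the main obstacle, and the fix you sketch does not work. Write $D_1$ for the disc produced by the compression that satisfies $\PPP$ and $D_2$ for the other one. Reversing the boundary compression does not convert an isotopy $D_1\simeq F$ into an isotopy $D\simeq F$: the inverse operation reconstructs $D$ from \emph{both} discs, as a band sum of $D_1$ and $D_2$ along a band dual to $E$ (running parallel to the arc $\boundary E\cap R$). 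So from $D_1\simeq F$ you only learn that $D$ is isotopic to a band sum of $F$ with $D_2$, which in general is not isotopic to $F$. Your fallbacks do not repair this: stability only guarantees that \emph{one} of $D_1,D_2$ satisfies $\PPP$, and it may be precisely the one isotopic to $F$, so ``passing to the other disc'' loses $\PPP$; and choosing the arc outermost on $D$ rather than on $F$ yields a boundary compression disc for $F$, not for $D$, so the stability hypothesis no longer applies to it.

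The paper closes this case by a different contradiction: it does not try to contradict the hypothesis that $D$ is not isotopic to $F$, but the minimality of $\card{D\cap F}$ a second time. Namely, if the $\PPP$-disc $D_1$ is isotopic to $F$, the paper argues that $D$ itself can then be isotoped to be disjoint from $F$; since such an isotopy preserves $\PPP$ and the property of not being isotopic to $F$, this is absurd, because $D$ was chosen to minimise an intersection number assumed to be non-zero. Thus both horns of the dichotomy -- $D_1$ isotopic to $F$ or not -- are played against minimality, and the hypothesis that $D$ is not isotopic to $F$ is only needed to know that the minimising family is non-empty. Apart from this step, your write-up (minimal position, the innermost-circle cleanup, and the count $\card{D_1\cap F}<\card{D\cap F}$ because $E\subs F$ contributes no new intersections) agrees with the paper's argument.
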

\begin{proof}
Among all discs properly embedded in $(M,R)$ which are in general position with respect to $F$, are not isotopic to $F$, and satisfy $\PPP$, pick $D$ to minimise the number of components of $F\cap D$; we will show that $D$ is in fact disjoint from $F$. Since $M$ is irreducible, a standard innermost circle argument shows that $F\cap D$ is a collection of arcs. Suppose that this intersection is non\=/empty, and let $a\subs F\cap D$ be an outermost arc in $F$. The arc $a$ cuts a disc $E$ off of $F$, such that $E$ is a (possibly trivial) boundary compression disc for $D$ and $\boundary E\subs D\cup R$. Let $D_1$ and $D_2$ be the two discs obtained by boundary compressing $D$ along $E$, with $D_1$ satisfying $\PPP$.

If $D_1$ is isotopic to $F$, then $D$ can in fact be isotoped to be disjoint from $F$, contradicting our minimality assumption. Otherwise, $D_1$ is a disc satisfying $\PPP$ which is not isotopic to $F$, and its intersection with $F$ has strictly fewer components than $F\cap D$. Again, this contradicts the minimality of $D$. It follows that $D$ must be disjoint from $F$, as required.
\end{proof}

The next proposition shows that some of the properties we care about are, in fact, stable under boundary compressions.

\begin{proposition}\label{thm:boundary compression-stable properties}
Let $M$ be an irreducible $3$\=/manifold, and let $R\subs\boundary M$ be a surface. 
The following properties for discs properly embedded in $(M,R)$ are stable under boundary compressions:
\begin{enumarabic}
\item being non\=/separating;
\item having boundary which is a non\=/trivial curve in $R$.
\end{enumarabic}
\end{proposition}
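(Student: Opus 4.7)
The plan is to exploit a clean geometric picture of the boundary compression. After general position, $D \cup E$ consists of three discs joined along the common arc $\alpha = D \cap E$ (the two pieces of $D$ cut by $\alpha$, together with $E$); this complex is contractible, and collapsing it onto $\alpha$ shows that a regular neighborhood $N = \nbhd{D \cup E}$ is a $3$-ball. I would then argue that its boundary sphere decomposes, up to isotopy, as $\partial N = P \cup D \cup D_1 \cup D_2$, where $P = N \cap \partial M$ is a regular neighborhood in $R$ of the theta graph $\partial D \cup \beta$ (with $\beta = E \cap R$), and $D_1, D_2$ are the two discs produced by the boundary compression. In particular, the three boundary circles of the pair of pants $P$ are parallel copies of $\partial D$, $\partial D_1$, and $\partial D_2$.

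For property (1), the $3$-ball $N$ witnesses the relation $[D] + [D_1] + [D_2] = 0$ in $H_2(M, \partial M; \mathbb{Z}/2)$. Being separating is equivalent to being nullhomologous mod $2$, so if $D$ is non-separating then $[D] \ne 0$, forcing $[D_1] \ne 0$ or $[D_2] \ne 0$; the corresponding disc is non-separating.

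For property (2), I would argue contrapositively: suppose both $\partial D_1$ and $\partial D_2$ bound discs $\Delta_1, \Delta_2 \subseteq R$, and deduce that $\partial D$ bounds a disc. Each $\Delta_i$ coincides with the closure of a component of $R \setminus \partial D_i$, so it lies on exactly one side of $\partial D_i$ in $R$: either the side containing $P$ or the opposite one. If both $\Delta_i$ are on the side opposite $P$, then $\Delta_1 \cup P \cup \Delta_2$ is a disc in $R$ bounded by a parallel copy of $\partial D$. If instead some $\Delta_i$ is on the $P$-side, then $\Delta_i \supseteq P$ and the pushoff of $\partial D$ — lying in the interior of the disc $\Delta_i$ — bounds a subdisc by the planar Schönflies theorem applied inside $\Delta_i \cong D^2$. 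Either way $\partial D$ is trivial, contradicting the hypothesis.

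The main obstacles are minor bookkeeping: verifying rigorously that $N$ is a ball (via the collapse described above) and handling a trivial boundary compression separately (in that case one of $D_1, D_2$ is isotopic to $D$ and directly inherits the property, so the interesting content lies in the non-trivial case analyzed above). Everything else reduces to careful but routine planar topology on $R$.
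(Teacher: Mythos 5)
Your proof of (1) matches the paper's in essence: both rest on the relation $[D]=[D_1]+[D_2]$, which you read in $H_2(M,\boundary M;\ZZ/2)$ and the paper in the Poincar\'e--Lefschetz dual $H^1(M)$; your explicit $3$-ball $N=\nbhd{D\cup E}$ witnessing the relation is a pleasant addition, but the core argument is the same. Your proof of (2), however, takes a genuinely different route. The paper argues inside $M$: assuming $\boundary D_1$ bounds a disc in $R$, irreducibility produces a ball $B$ cobounded by $D_1$ and that disc, and the cases split according to whether $B$ contains the compression disc $E$. You instead argue entirely on the surface $R$, positioning the putative discs $\Delta_1,\Delta_2$ relative to the pair of pants $P$ and invoking Sch\"onflies inside a disc; this is more elementary and, notably, never uses irreducibility of $M$ for property (2), whereas the paper's argument does. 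Both proofs are correct. Two small points: $\boundary N$ contains a \emph{parallel copy} of $D$ rather than $D$ itself (which lies in $\interior{N}$), though this is harmless for the homology computation; and your planar argument for (2) already covers trivial boundary compressions uniformly, so the separate handling you flag at the end is not actually needed.
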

\begin{proof}
Let $D$ be a disc properly embedded in $(M,R)$, and let $E\subs M$ be a possibly trivial boundary compression disc for $D$ with $\boundary E\subs D\cup\interior{R}$. Denote by $D_1$ and $D_2$ the two discs obtained by boundary compressing $D$ along $E$.
\begin{enumarabic}
\item The disc $D$ is non\=/separating if and only if $[D]\in H^1(M)$ is non\=/trivial. But $[D]=[D_1]+[D_2]\in H^1(M)$, therefore if $D$ is non\=/separating then one of $D_1$ and $D_2$ must be as well.
\item Suppose that $D_1$ cobounds a ball $B\subs M$ with some disc in $R$. If $B$ is disjoint from $E$, then $D$ is actually isotopic to $D_2$, so $\boundary D_2$ is trivial in $R$ if and only if $\boundary D$ is. If instead $B$ contains $E$ then it also contains $D_2$, and it is easy to see that $\boundary D$ is in trivial in $R$. \qedhere
\end{enumarabic}
\end{proof}

This is a good time to remark that, when we say that a disc properly embedded in a $3$\=/manifold is separating, we mean that it splits the $3$\=/manifold itself, and not only the boundary, in two connected components. However, as the following proposition shows, there is no difference for $3$\=/manifolds which can be embedded in $\sphere[3]$. Since all the $3$\=/manifolds we will work with have this property -- and, usually, already lie inside $\sphere[3]$ -- we will freely use the term ``separating'' to denote discs which separate the $3$\=/manifold they lie in and its boundary.

\begin{proposition}
Let $M$ be a $3$\=/manifold embedded in $\sphere[3]$ with connected boundary, and let $D\subs M$ be a properly embedded disc. Then $D$ separates $M$ if and only if $\boundary D$ separates $\boundary M$.
\end{proposition}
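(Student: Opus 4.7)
The plan is to prove both implications separately, with the reverse direction being the interesting one. The forward implication ($D$ separates $M$ $\Rightarrow$ $\boundary D$ separates $\boundary M$) is essentially combinatorial: if $M=M_1\cup_D M_2$, then each $\boundary M_i$ decomposes as $D\cup A_i$ with $A_i=\boundary M_i\cap\boundary M$, and I would argue that neither $A_i$ can have empty interior, since otherwise the closed surface $\boundary M_i$ would coincide with the disc $D$ (which has non\=/empty boundary, a contradiction). Consequently $\boundary M\setminus\boundary D=\interior{A_1}\sqcup\interior{A_2}$ is disconnected.

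For the reverse direction I would argue by contrapositive using linking numbers in $\sphere[3]$. Assume $D$ does not separate $M$. Then, by Poincar\'e\=/Lefschetz duality applied to the orientable $3$\=/manifold $M$, the class $[D]\in H_2(M,\boundary M;\ZZ/2)$ is non\=/zero, so one can find a loop $\gamma\subs\interior{M}$ intersecting $D$ transversely in a single point. Since $\gamma$ lies in the interior of $M$, it is disjoint from $\boundary M$ and in particular from $\boundary D$, so the mod\=/$2$ linking number $\lk_2(\gamma,\boundary D)$ in $\sphere[3]$ is well\=/defined.

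I would now compute $\lk_2(\gamma,\boundary D)$ in two ways, obtaining a contradiction. First, $D$ itself is a $2$\=/chain in $\sphere[3]$ with $\partial D=\boundary D$, giving
\[
\lk_2(\gamma,\boundary D)\equiv\card{\gamma\cap D}\equiv 1\pmod 2.
\]
Second, if $\boundary D$ were to separate $\boundary M$ into two subsurfaces $A_1$ and $A_2$, then $A_1$ would be an alternative $2$\=/chain bounded by $\boundary D$, lying entirely inside $\boundary M$. Since $\gamma\subs\interior{M}$ is disjoint from $\boundary M$, this forces
\[
\lk_2(\gamma,\boundary D)\equiv\card{\gamma\cap A_1}\equiv 0\pmod 2,
\]
contradicting the previous computation.

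The main point to get right is the linking number setup: $\sphere[3]$ being simply connected guarantees that mod\=/$2$ linking between disjoint closed curves is independent of the chosen bounding $2$\=/chain, and the closed surface $\boundary M\subs\sphere[3]$ is automatically two\=/sided so that transverse intersections are defined. Both are standard, and the rest of the proof is routine; the only real content is the idea of using a Seifert surface for $\boundary D$ that lives inside $\boundary M$ to detect non\=/separation of $D$ inside $M$.
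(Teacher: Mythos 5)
Your proof is correct, and it takes a genuinely different route from the paper's. The paper cuts $M$ along $D$ to obtain a connected $3$\=/manifold $P=M\cut D$ with two boundary components, each a closed surface embedded in $\sphere[3]$; it then constructs an explicit closed curve $a\cup b$ (co\=/core of the $1$\=/handle $\closure{M\setminus P}$, closed up by an arc in $P$) meeting each component of $\boundary P$ transversely in a single point, and derives a contradiction from the fact that closed surfaces embedded in $\sphere[3]$ are separating. You avoid cutting and instead compute the mod\=/$2$ linking number of $\gamma$ with $\boundary D$ in $\sphere[3]$ in two ways, once using $D$ as a bounding chain and once using the subsurface $A_1\subs\boundary M$. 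The two arguments are morally the same: the closed surface the paper invokes is (up to isotopy) the union $D\cup A_1$, the paper's $a\cup b$ plays the role of your $\gamma$, and both contradictions ultimately rest on the vanishing of $H_2(\sphere[3];\quot{\ZZ}{2\ZZ})$. Your version is arguably cleaner, trading the handle\=/decomposition bookkeeping for a short homological computation. Two minor expository points: invoking Poincar\'e--Lefschetz duality to produce $\gamma$ is not really needed, since $M\setminus D$ is connected by hypothesis and $\gamma$ can be built directly by closing a short arc through $D$ with a path in $M\setminus D$, pushed into the interior; and the two\=/sidedness of $\boundary M$ plays no role, because $\gamma\cap A_1=\emptyset$ makes the second intersection count trivially zero without any transversality discussion.
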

\begin{proof}
Clearly if $D$ separates $M$ then $\boundary D$ separates $\boundary M$. Conversely, suppose that $\boundary D$ separates $\boundary M$ but $D$ does not separate $M$. Let $P=M\cut D$ be the result of cutting $M$ along $D$; by assumption, we have that $P$ is connected, but $\boundary P$ is the union of two components. We can interpret $\closure{M\setminus P}$ as a $1$\=/handle which, when attached to $P$, yields the original $3$\=/manifold $M$. Let $a\subs M$ be the co\=/core arc of this $1$\=/handle, and let $b$ be an arc properly embedded in $P$ connecting the two endpoints of $a$. Then $a\cup b$ is a closed curve in $M$ which intersects each boundary component of $P$ exactly once. This provides a contradiction, since $M$ is embedded in $\sphere[3]$.
\end{proof}

\subsection{One \texorpdfstring{non\=/separating}{non-separating} compression disc}\label{sec:classification algorithm:one non-separating compression disc}

We finally start presenting the actual classification algorithm. As anticipated, this section addresses the case where one side of $S_1$ -- say $M_1$ -- has a properly embedded non\=/separating disc $D_1$ but is not a handlebody; see \cref{fig:non-separating compression disc example} for an example. Define $p_1$, $P_1$, $Q_1$, and $T_1$ as described in \cref{sec:classification algorithm:general strategies} (even though we don't know that $D_1$ belongs to a pair of canonical discs yet). The torus $T_1$ bounds a solid torus in $\sphere[3]$. But $P_1$ cannot be a solid torus, for otherwise $M_1$ would be a handlebody. Hence, the solid torus must be the other component of $\sphere[3]\cut T_1$, namely $Q_1$. In other words, $P_1$ is the complement of a non\=/trivial knot and, as such, is boundary irreducible. As described in \cref{fig:non-separating compression disc M1 N1}, the $3$\=/manifold $M_1$ is obtained by attaching a $1$\=/handle to $P_1$, while drilling an arc from the solid torus $Q_1$ yields $N_1$. The following lemma applied to $(P_1,\boundary P_1)$ implies, \emph{a posteriori}, that no arbitrary choice was made when selecting the disc $D_1$. We remark that the lemma is stated in greater generality then we actually need here, but we will need this more general version later.

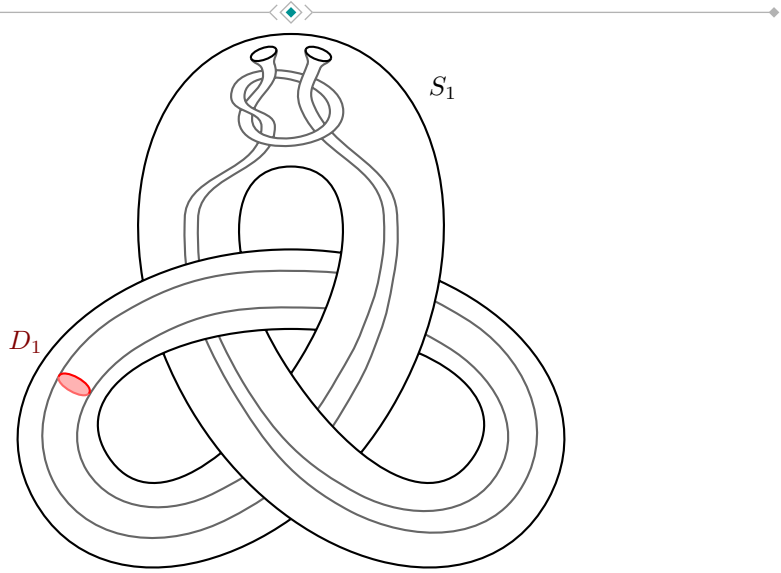
\begin{figure}
\centering
\tikzsetnextfilename{non-separating-compression-disc-example}
\begin{tikzpicture}[use Hobby shortcut,thick]
\path[scale=1.67,spath/save=trefoil] ([closed]90:2) foreach \k in {1,...,3} { .. (-30+\k*240:.5) .. (90+\k*240:2) } (90:2);
\tikzset{ks/double={closed,Hobby,path=trefoil,to=thick trefoil,width={50+80*\t*(\t-1)}}}
\tikzset{ks/extract components={draft mode=false,draft mode scale=.3,path=thick trefoil,to={thick trefoil transverse}{3,7,11,17,21,25},not={thick trefoil}{5,19,13,15,9,23}}}
\pic at (spath cs:trefoil 0) {code={\begin{scope}[yshift=-2.35cm,scale=1,rotate=90]\path[spath/save global=curve endpoint 1] (2.8,.3) to[out=200,in=0,out looseness=2,in looseness=.6] (2.1,.6) to[out=180,in=180,looseness=.6] (2.2,-.6) to[out=0,in=0,in looseness=.5] (2.4,.7) to[out=180,in=0] (2.0,.3); \path[spath/save global=curve endpoint 2] (2.3,-.2) to[out=20,in=160,out looseness=1.8,in looseness=1](2.8,-.3);\end{scope}}};
\path[spath/save=curve] (60:1) arc(60:120-360:1);
\tikzset{ks/subdivide={path=curve,to=curve,n=3},ks/path along={path=curve,to=curve,along=trefoil,width to=30pt}}
\path[spath/save=curve][spath/use=curve endpoint 1] to[out=-90,in=90,out looseness=.5,in looseness=1.5] (spath cs:curve 0)  [spath/use=curve] to[out=90,in=-70,out looseness=1.5,in looseness=.5] (spath cs:{curve endpoint 2} 0) [spath/use={curve endpoint 2,weld}];
\tikzset{spath/remove empty components=curve,spath/spot weld=curve}
\tikzset{ks/double={path=curve,to=thick curve,width={5+max(0,-(\t-.3)*(\t-.9)*100)}}}
\tikzset{ks/extract components={draft mode=false,draft mode scale=.4,path=thick curve,split with self=false,split with={thick trefoil transverse},to={thick curve}{3,10,5,12},to={thick curve b}{1,8},to={thick curve e}{7,14}}}
\tikzset{ks/extract components={draft mode=false,draft mode scale=.2,path=thick curve b,to={thick curve b}{1,2,3,14,15,16},to={thick curve bxx}{9,10,11,13,22,23,24,26},to={thick curve bx}{5,6,7,18,19,20}}}
\tikzset{ks/extract components={draft mode=false,draft mode scale=.2,path=thick curve e,split with self=false,split with={thick curve bx},not={thick curve ex}{2,7}}}
\tikzset{ks/extract components={draft mode=false,draft mode scale=.2,path=thick curve bx,split with self=false,split with={thick curve e},not={thick curve bx}{4,9}}}
\path[spath/save=thick curve][spath/use/.list={thick curve b,thick curve bx,thick curve bxx,thick curve,thick curve ex}];
\draw[black!60][spath/use=thick curve];
\path[spath/use=curve] \foreach \pos/\xs in {0/1,0.9999/-1} {pic[sloped,xscale=\xs,draw=black!60,pos=\pos] {ks/tube end={5pt}{arcs/.style={black}}}};
\draw[black][spath/use=thick trefoil];
\path[spath/use=curve] pic[sloped,main color 1,pos=.6,yshift=.5pt]{ks/disc={13pt}{disc/.style={fill=main color 1!30},arc/.style={main color 1!60}}} node[pos=.6,above left=12pt,main color 1!50!black] {$D_1$};
\node at (2,3.5) {$S_1$};
\end{tikzpicture}
\caption{The surface $S_1$ admits a non\=/separating compression disc $D_1$.\label{fig:non-separating compression disc example}}
\end{figure}

\begin{figure}
\centering
\begin{subcaptionblock}{.45\linewidth}
\centering
\tikzsetnextfilename{non-separating-compression-disc-M1}
\begin{tikzpicture}[thick]
\draw[black,fill=main color 2!30] circle(2cm);
\node[main color 2!50!black] at (-2.1,1.5) {$M_1$};
\path[spath/save=hole] (0,1.2) to[out=-90,in=90,out looseness=2] (-.75,-.4) to[out=-90,in=-90,out looseness=1.2,in looseness=2] (.75,0);
\path[spath/save=hole] [spath/use=hole,spath/transform={hole}{yscale=-1},spath/append reverse=hole];
\tikzset{ks/subdivide={path=hole,to=hole,n=2},ks/double={path=hole,to=thick hole,width=10,to contour=hole contour}}
\fill[main color 2!15] [spath/use=hole contour];
\tikzset{ks/extract components={draft mode=false,draft mode scale=.4,path=thick hole,not={thick hole}{8,21,12,25,4,17}}}
\path[spath/use=hole] \foreach \pos/\xs/\as in {0/1/{black},0.9999/-1/{black!60}} {pic[sloped,xscale=\xs,draw=black!60,pos=\pos] {ks/tube end={10pt}{arcs/.style={\as},disc/.style={fill=main color 2!15},tube/.style={fill=main color 2!15}}}};
\draw[black!60][spath/use=thick hole];
\path[spath/save=handle] (-1,1.5) to[bend left=120,looseness=2.5] (1,1.5);
\tikzset{ks/subdivide={path=handle,to=handle,n=5},ks/double={path=handle,to=thick handle,to contour=handle contour,width={5+28*\t*(1-\t)}}}
\fill[main color 2!30][spath/use=handle contour];
\path[spath/use=handle] \foreach \pos/\xs in {0/-1,0.9999/1} {pic[sloped,xscale=\xs,draw=black,pos=\pos] {ks/tube end={5pt}{tube above,arc 1/.style={black!60}}}};
\draw[black][spath/use=thick handle];
\path[spath/use=handle] pic[sloped,pos=.5,yshift=.25,main color 1] {ks/disc={11.75pt}{arc/.style={main color 1!60},disc/.style={fill=main color 1!30}}} node[main color 1!50!black,pos=.5,above=6pt] {$D_1$};
\end{tikzpicture}
\caption{\label{fig:non-separating compression disc M1}}
\end{subcaptionblock}
\begin{subcaptionblock}{.45\linewidth}
\centering
\tikzsetnextfilename{non-separating-compression-disc-N1}
\begin{tikzpicture}[thick]
\draw[black,fill=main color 3!30,even odd rule] circle (2cm) circle (.5cm);
\node[main color 3!50!black] at (-2.1,1.5) {$N_1$};
\pic at (0,1.25) {code={\begin{scope}[yshift=-2.4cm,scale=1,rotate=90]\path[spath/save global=curve endpoint 1] (2.8,.3) to[out=200,in=0,out looseness=2,in looseness=.6] (2.1,.6) to[out=180,in=180,looseness=.6] (2.2,-.6) to[out=0,in=0,in looseness=.5] (2.4,.7) to[out=180,in=0] (2.0,.3); \path[spath/save global=curve endpoint 2] (2.3,-.2) to[out=20,in=160,out looseness=1.8,in looseness=1](2.8,-.3);\end{scope}}};
\path[spath/save=curve] [spath/use=curve endpoint 1] to[out=-90,in=120,in looseness=2] (-150:1.25) arc(-150:15:1.25) to[out=105,in=-70] (spath cs:curve endpoint 2 0) [spath/use={weld,curve endpoint 2}];
\tikzset{ks/double={path=curve,to=thick curve,to contour=curve contour,width={5+max(0,-(\t-.5)*(\t-.8)*600)}}}
\fill[main color 3!15] [spath/use=curve contour];
\tikzset{ks/extract components={draft mode=false,draft mode scale=.2,path=thick curve,not={thick curve}{12,33,16,37,18,39,4,25,10,31}}}
\path[spath/use=curve] \foreach \pos/\xs in {0/1,0.9999/-1} {pic[sloped,xscale=\xs,draw=black!60,pos=\pos] {ks/tube end={5pt}{arcs/.style={black},tube/.style={fill=main color 3!15},disc/.style={fill=main color 3!15}}}};
\draw[black!60][spath/use=thick curve];
\path[spath/use=curve] pic[draw=main color 1,sloped,pos=.65] {ks/disc={16.5pt}{arc/.style={main color 1!60}}} node[main color 1!50!black,pos=.65,above right=-1 pt and 4 pt] {\tikzcontour{$p_1$}};
\end{tikzpicture}
\caption{\label{fig:non-separating compression disc N1}}
\end{subcaptionblock}
\caption{The $3$\=/manifold $M_1$ \subref{fig:non-separating compression disc M1} is homeomorphic to a knot complement $P_1$ with a $1$\=/handle attached. The $3$\=/manifold $N_1$ \subref{fig:non-separating compression disc N1} is homeomorphic to a solid torus $Q_1$ with an arc drilled out.\label{fig:non-separating compression disc M1 N1}}
\end{figure}

\begin{lemma}\label{thm:unique non-separating compression disc}
Let $(P,R)$ be an irreducible $3$\=/manifold pair. Let $M$ be the result of attaching a $1$\=/handle $H$ to $P$ along two discs in $\interior{R}$, and define $R'=\boundary M\setminus(\boundary P\setminus R)$. Then the co\=/core of $H$ is the unique compression disc for $R'$ which does not separate $M$.
\end{lemma}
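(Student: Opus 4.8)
The plan is to verify first that the co\=/core $F$ of $H$ is a non\=/trivial compression disc for $R'$ that does not separate $M$, and then to establish uniqueness via an innermost\=/arc argument packaged through \cref{thm:discs with boundary compression-stable property}.

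For the first point: cutting $M$ along $F$ recovers $P$ with two $3$\=/balls glued along disjoint discs in $\interior R$, so $M\cut F$ is homeomorphic to $P$; since $P$ is connected, $F$ does not separate $M$. Next one checks that $M$ is irreducible — it is obtained from the irreducible manifold $P$ by attaching a $1$\=/handle, and any embedded sphere can be slid off $F$ by a routine innermost\=/disc argument, after which it lies in $M\cut F\cong P$ and bounds a ball. Finally, if $\partial F$ were trivial in $R'\subs\boundary M$, then $F$ together with the disc it bounds in $\boundary M$ would form a sphere, hence bound a ball by irreducibility, so $F$ would be boundary parallel and therefore separating — contradicting what we just showed. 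Thus $\partial F$ is essential and $F$ is a genuine compression disc for $R'$.

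For uniqueness, suppose $D$ is a non\=/separating compression disc for $R'$ that is not isotopic to $F$ in $M$. By \cref{thm:boundary compression-stable properties}, the property ``non\=/separating'' is stable under boundary compressions, so \cref{thm:discs with boundary compression-stable property}, applied to the pair $(M,R')$ and the disc $F$, yields a disc $D'$ properly embedded in $(M,R')$ which is non\=/separating, disjoint from $F$, and not isotopic to $F$. Shrinking the neighbourhood of $F$, we may take $D'\subs M\cut F$; its boundary then lies in the part of $\partial(M\cut F)$ coming from $R'$, away from the disc scars left by $F$. The key point will be to fix a homeomorphism $g\colon M\cut F\to P$ that carries this surviving part of $R'$ into $R$ — such a $g$ exists because $M\cut F$ is $P$ with two balls absorbed along discs in $\interior R$. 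Then $g(D')$ is a non\=/separating properly embedded disc in $P$ with $\partial g(D')\subs R$.

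I would then split into two cases. If $\partial g(D')$ is essential in $R$, then $g(D')$ is a compression disc for $R$ in $P$, contradicting the incompressibility of $R$ in $P$. If instead $\partial g(D')$ bounds a disc $\Delta\subs R$, then $g(D')\cup\Delta$ is an embedded $2$\=/sphere in $P$, which bounds a ball by irreducibility of $P$; this ball displays $g(D')$ as separating in $P$, contradicting the previous sentence. Either way we reach a contradiction, so every non\=/separating compression disc for $R'$ is isotopic to $F$, which is the lemma. The main obstacle is purely bookkeeping: being careful enough with the identification $M\cut F\cong P$ to guarantee that $\partial g(D')$ lands on $R$ and not on the $F$\=/scars or on $\boundary P\setminus R$; once that is set up, the topological content lies entirely in \cref{thm:discs with boundary compression-stable property,thm:boundary compression-stable properties} and the irreducibility/incompressibility hypotheses.
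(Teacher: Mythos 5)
Your preliminary verifications (irreducibility of $M$, non\-/separation of the co\-/core $F$, essentiality of $\boundary F$) are fine, and the reduction via \cref{thm:discs with boundary compression-stable property,thm:boundary compression-stable properties} to a non\-/separating disc $D'$ disjoint from $F$ is exactly the paper's first move. The gap comes right after: you transport $D'$ into $P$ and assert that $g(D')$ is non\-/separating \emph{in $P$}. That is unjustified, and in fact it is always false here. Being non\-/separating in $M$ does not survive cutting along $F$, because the $1$-handle can reconnect the two sides of the disc. Indeed, since $(P,R)$ is irreducible, every disc properly embedded in $P$ with boundary in $R$ cobounds a ball with a disc $\Delta\subs R$ (incompressibility of $R$ rules out essential boundary, irreducibility of $P$ supplies the ball), so it is automatically separating -- even trivial -- in $P$. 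Consequently your Case 1 never occurs, and the ``contradiction'' in Case 2 only contradicts the unproved claim that $g(D')$ is non\-/separating in $P$, not anything you actually established; at that point you have derived nothing about $D'$ being isotopic to $F$, which is what uniqueness requires.

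The missing step is to use non\-/separation in $M$ together with the handle structure. Since $D'$ is trivial in $P$, it cobounds a ball with a disc $\Delta\subs R$; because $D'$ is non\-/separating in $M$, the disc $\Delta$ must contain exactly one of the two attaching discs of $H$ (if it contained none or both, $D'$ would still separate $M$). But then pushing $D'$ across this ball and half of the handle shows that $D'$ is isotopic to the co\-/core $F$, contradicting the conclusion of \cref{thm:discs with boundary compression-stable property}. This is precisely how the paper concludes, and your argument is repaired by replacing your two cases with this step.
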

\begin{proof}
Let $F$ be the co\=/core of $H$, and suppose there is another non\=/separating compression disc $D$ for $R'$ which is not isotopic to $F$. By \cref{thm:discs with boundary compression-stable property,thm:boundary compression-stable properties}, we may assume that $D$ is disjoint from $F$ and, therefore, contained in $P$. Since the pair $(P,R)$ is irreducible, $D$ must cobound a ball in $P$ with some disc $D'\subs R$. But $D$ is non\=/separating in $M$, and this is only possible if $D'$ contains exactly one of the attaching discs of $H$. This would imply that $D$ is isotopic to $F$, contrary to our assumption.
\end{proof}

Now, if $M_2$ is a handlebody or $S_2$ has no non\=/separating compression discs in $M_2$ then clearly $S_1$ and $S_2$ are not isotopic. Otherwise, by imitating the above procedure for $S_2$ instead of $S_1$, we define $p_2$, $D_2$, $P_2$, $Q_2$, and $T_2$. Since $D_1$ (respectively $D_2$) can be intrinsically defined as the unique non\=/separating compression disc for $S_1$ (respectively $S_2$) in $M_1$ (respectively $M_2$), every homeomorphism $\umap{M_1}{M_2}$ must send $D_1$ to a disc isotopic to $D_2$. If the $3$\=/manifold pair $(N_1,\boundary N_1\cut p_1)$ is irreducible, we immediately conclude by applying \cref{thm:general strategy for knot complements}; note that this is the case for the surface depicted in \cref{fig:non-separating compression disc example}, since the pair $(N_1,\boundary N_1\cut p_1)$ shown in \cref{fig:non-separating compression disc N1} is irreducible.

Otherwise, the surface $\boundary N_1\cut p_1$ is compressible in $N_1$, and we can find a compression disc $E_0\subs N_1$ for it. The disc $E_0$ is properly embedded in the solid torus $Q_1$; if it is a meridian disc of $Q_1$, then let us define $E_1=E_0$. Otherwise, $E_0$ must cut a ball off of $Q_1$; this ball necessarily contains the curve $p_1$, for otherwise $E_0$ would be a trivial compression disc for $\boundary N_1\cut p_1$. We can therefore find a meridian disc $E_1$ of $Q_1$ which is disjoint from $B$ (and, hence, from $p_1$).

Either way, we have found a disc $E_1$ properly embedded in $(N_1,\boundary N_1\cut p_1)$ which is non\=/separating in $Q_1$. Similarly, let $E_2$ be a disc properly embedded in $(N_2,\boundary N_2\cut p_2)$ which is non\=/separating in $Q_2$; see the top left of \cref{fig:non-separating compression disc h} for an example. We claim that $S_1$ and $S_2$ are isotopic if and only if $P_1$ is homeomorphic to $P_2$ and $(N_1,p_1)$ is homeomorphic to $(N_2,p_2)$. The forward implication is trivial. Conversely, suppose that the $3$\=/manifolds with boundary pattern $(N_1,p_1)$ and $(N_2,p_2)$ are homeomorphic. Note that $\boundary E_2$ does not separate $\boundary N_2\cut p_2$; hence, as shown in \cref{fig:non-separating compression disc h}\todo{Remake picture with \texttt{torus} TikZ library.}, it is easy to construct a self\=/homeomorphism $h$ of $(N_2,p_2)$ which maps $E_2$ to itself with the opposite orientation, and is the identity in a neighbourhood of $p_2$. Then the Dehn twists about $E_2$, together with $\hext{p_2,p_2}(h)$, generate the whole mapping class group of the solid torus $Q_2$. As a consequence, the map
\[
\map{\hext{p_1,p_2}}{\homeo{(N_1,p_1);(N_2,p_2)}}{\homeo{Q_1;Q_2}}
\]
is surjective. If, moreover, the knot complements $P_1$ and $P_2$ are homeomorphic, then every homeomorphism $\umap{P_1}{P_2}$ extends to a homeomorphism $\umap{Q_1}{Q_2}$ which, in turn, is the image under $\hext{p_1,p_2}$ of some homeomorphism $\umap{(N_1,p_1)}{(N_2,p_2)}$. By \cref{thm:general strategy}, this shows that $S_1$ and $S_2$ are isotopic.

\begin{figure}
\centering
\tikzsetnextfilename{non-separating-compression-disc-h}
\begin{tikzpicture}[thick]
\path[spath/save=curve] (-.5,1.5) to[out=-60,in=180] (.5,.7) to[out=0,in=0,out looseness=1.5,in looseness=2] (0,1.4);
\path[spath/save=curve] [spath/use=curve,spath/transform={curve}{xscale=-1},spath/append reverse=curve];
\tikzset{ks/subdivide={path=curve,to=curve,n=2},ks/double={path=curve,to=thick curve,to contour=curve contour,width=5pt}}
\tikzset{ks/extract components={draft mode=false,draft mode scale=.2,path=thick curve,not={thick curve}{8,21,12,25,4,17}}}
\def\commonpic#1{
	\draw[black,fill=main color 3!30,even odd rule] circle (2cm) circle (.5cm);
	\fill[main color 3!15][spath/transform={curve contour}{shift={#1}},spath/use=curve contour];
	\path[spath/transform={curve}{shift={#1}},spath/use=curve] \foreach \pos/\xs in {0/1,0.9999/-1} {pic[sloped,xscale=\xs,draw=black!60,pos=\pos] {ks/tube end={5pt}{arcs/.style={black},tube/.style={fill=main color 3!15},disc/.style={fill=main color 3!15}}}};
	\draw[spath/transform={thick curve}{shift={#1}},black!60][spath/use=thick curve];
	\path[spath/transform={curve}{shift={#1}},spath/use=curve] pic[sloped,draw=main color 1,pos=.4] {ks/disc={5pt}{arc/.style={draw=main color 1!60}}};
}
\def\cutpic#1#2{
	\foreach \a/\xs/\name in {-15/.44/r,195/-.44/l} \pic[rotate={90+\a},xscale=\xs,densely dashed] at (\a:1.25) {code={\draw[black!60,spath/save global=\name-a] (60:.75) coordinate (\name-a) arc (60:90:.75);\draw[black!36] (90:.75) arc (90:270:.75); \draw[black!60,spath/save global=\name-b] (-90:.75) arc (-90:0:.75) coordinate (\name-b);}};
	\path[pattern={Lines[angle=45,line width=3pt,distance=6pt]},opacity=.2,even odd rule][spath/use=l-a] +(0,0) arc(-165:-15:.5) [spath/use={r-a,weld,reverse}] to[bend right=100,looseness=2] (spath cs:l-a 0) [spath/use={l-b,reverse}] +(0,0) arc(-165:-15:2) [spath/use={r-b,weld}] to[bend right=80,looseness=1.7] (spath cs:l-b 1);
	\fill[white] (0,0) -- (-60:3) -- (-120:3) -- cycle;
	\pic[rotate=30,draw=#1] at (-60:1.25) {ks/disc={1.5cm}{disc/.style={fill=#1!30}}};
	\pic[rotate=-30,draw=#2] at (-120:1.25) {ks/disc={1.5cm}{disc/.style={fill=#2!30}}};
	\draw[black!60,densely dashed] (l-a) to[bend left=100,looseness=2] (r-a) (l-b) to[bend left=80,looseness=1.7] (r-b);
}
\begin{scope}[shift={(0,0)}]
\commonpic{(0,0)}
\node[main color 3!50!black] at (-2.1,1.5) {$N_2$};
\pic[draw=main color 4] at (0,-1.25) {ks/disc={1.5cm}{disc/.style={fill=main color 4!30},arc/.style={main color 4!60}}};
\node[main color 4!50!black] at (0,-1.25) {\tikzcontour{$E_2$}};
\node[main color 1!50!black] at (1,1.2) {\tikzcontour{$p_2$}};
\end{scope}
\begin{scope}[shift={(6,0)}]
\commonpic{(6,0)}
\cutpic{main color 5}{main color 4}
\end{scope}
\begin{scope}[shift={(6,-6)}]
\commonpic{(6,-6)}
\cutpic{main color 4}{main color 5}
\end{scope}
\begin{scope}[shift={(0,-6)}]
\commonpic{(0,-6)}
\pic[draw=main color 5] at (0,-1.25) {ks/disc={1.5cm}{disc/.style={fill=main color 5!30},arc/.style={main color 5!60}}};
\node[main color 5!50!black] at (0,-1.25) {\tikzcontour{$E_2$}};
\end{scope}
\tikzset{arrow/.style={->,very thick,theme color}}
\draw[arrow] (2.5,0) -- (3.5,0) node[midway,above] {cut};
\draw[arrow] (6,-2.5) -- (6,-3.5) pic[thick,midway,shift={(21pt,0)}] {code={
	\draw[ultra thick,-{Latex[length=6pt,bend]},main color 4] (165:4pt) arc(165:0:4pt);
	\draw[ultra thick,-{Latex[length=6pt,bend]},main color 5] (-15:4pt) arc(-15:-180:4pt);
	\foreach \ys/\coll/\colr in {1/main color 4/main color 5,-1/main color 5/main color 4} {
	\begin{scope}[yshift=16pt*\ys]
	\draw[black!60,densely dashed,preaction={fill=main color 3!30},preaction={solid,opacity=.2,black,pattern={Lines[angle=45,line width=3pt,distance=6pt]}}] circle(15pt and 8pt);
	\draw[\coll,fill=\coll!30] (-6pt,0) circle (4pt);
	\draw[\colr,fill=\colr!30] (6pt,0) circle (4pt);
	\end{scope}
	}
}};
\draw[arrow] (3.5,-6) -- (2.5,-6) node[midway,above] {glue};
\draw[arrow,densely dashed] (0,-2.5) -- (0,-3.5) node[midway,right] {$h$};
\end{tikzpicture}
\caption{There exists a self\=/homeomorphism $h$ of $(N_2,p_2)$ which flips the disc $E_2$ and restricts to the identity in a neighbourhood of $p_2$. This homeomorphism can be constructed by cutting $N_2$ along $E_2$, swapping the two sides of $E_2$ by sliding in the shaded region of $\boundary N_2$, and finally gluing the two sides back together.\label{fig:non-separating compression disc h}}
\end{figure}

Finally, we explain how to algorithmically decide whether $(N_1,p_1)$ and $(N_2,p_2)$ are homeomorphic or not. Note that $N_1\cut E_1$ is a knot complement, since $E_1$ is a meridian disc for $Q_1$; obviously, the same holds for $N_2\cut E_2$.
\begin{substeps}
\item If $N_1$ and $N_2$ are not handlebodies, then $N_1\cut E_1$ and $N_2\cut E_2$ are non\=/trivial knot complements, and $p_1$ and $p_2$ are meridian curves of $N_1\cut E_1$ and $N_2\cut E_2$ respectively. \Cref{thm:unique non-separating compression disc} then implies that $E_1$ and $E_2$ are the unique non\=/separating compression discs for $\boundary N_1\cut p_1$ and $\boundary N_2\cut p_2$ in $N_1$ and $N_2$ respectively. As a consequence, every homeomorphism $\umap{(N_1,p_1)}{(N_2,p_2)}$ maps $E_1$ to $E_2$; we deduce that $(N_1,p_1)$ and $(N_2,p_2)$ are homeomorphic if and only if $N_1\cut E_1$ and $N_2\cut E_2$ are.
\item If $N_1$ and $N_2$ are handlebodies, we claim that $(N_1,p_1)$ and $(N_2,p_2)$ are always homeomorphic. Note that attaching a $2$\=/handle to $N_1\cut E_1$ along $p_1$ yields a $3$\=/ball. In other words, the curve $p_1$ is a longitude of the solid torus $N_1\cut E_1$. As a consequence, there exists a homeomorphism $\umap{(N_1\cut E_1,p_1)}{(N_2\cut E_2,p_2)}$, and every such homeomorphism extends to a homeomorphism $\umap{(N_1,p_1)}{(N_2,p_2)}$.
\end{substeps}

\subsection{One separating compression disc}\label{sec:classification algorithm:one separating compression disc}

We now deal with the case where one of the sides of $S_1$ -- again, say $M_1$ -- has compressible boundary, but all the compression discs are separating; an example of this situation is depicted in \cref{fig:separating compression disc example}. Let $D_1$ be a compression disc for $S_1$ in $M_1$. Define $p_1$, $P_1$, $Q_1$, and $T_1$ as described in \cref{sec:classification algorithm:general strategies}. In this case, the $3$\=/manifold $P_1$ is the union of two connected components $P_{1,1}$ and $P_{1,2}$, with boundary tori $T_{1,1}$ and $T_{1,2}$ respectively, so that $T_1=T_{1,1}\cup T_{1,2}$.

\begin{figure}
\centering
\tikzsetnextfilename{separating-compression-disc-example}
\begin{tikzpicture}[thick]
\path[spath/save=hole 1] (0,1.2) to[out=-90,in=90,out looseness=2] (-.75,-.4) to[out=-90,in=-90,out looseness=1.2,in looseness=2] (.75,0);
\path[spath/save=hole 1] [spath/use=hole 1,spath/transform={hole 1}{yscale=-1},spath/append reverse=hole 1];
\tikzset{ks/subdivide={path=hole 1,to=hole 1,n=2},ks/double={path=hole 1,to=thick hole 1,width=12}}
\tikzset{ks/extract components={draft mode=false,draft mode scale=.2,path=thick hole 1,not={thick hole 1}{8,21,12,25,4,17},to={hole transversals 1}{2,15,10,23,6,19}}}
\path[spath/use=hole 1] pic[sloped,pos=0,draw=black!60] {ks/tube end={12 pt}{disc/.style={spath/save global=tube end u 1},arcs/.style={draw=none}}} pic[sloped,pos=.999,xscale=-1,draw=black!60] {ks/tube end={12 pt}{disc/.style={spath/save global=tube end l 1},arcs/.style={draw=none}}};
\draw[black!60][spath/use=thick hole 1];
\begin{scope}[shift={(6,0)}]
\path[spath/save=hole 2] (0,1.2) to[out=-90,in=90,out looseness=2.2] (-.6,-.33) to[out=-90,in=-135] (.6,-.7) to[out=45,in=-15,out looseness=2] (0,0);
\path[spath/save global=hole 2] [spath/use=hole 2,spath/transform={hole 2}{xscale=-1,yscale=-1},spath/append reverse=hole 2];
\tikzset{ks/subdivide={path=hole 2,to=hole 2,n=2},ks/double={path=hole 2,to=thick hole 2,width=12}}
\tikzset{ks/extract components={draft mode=false,draft mode scale=.2,path=thick hole 2,not={thick hole 2}{12,29,16,33,4,21,8,25},to={hole transversals 2}{2,19,10,27,6,23,14,31}}}
\path[spath/save global=hole transversals 2,spath/use=hole transversals 2];
\path[spath/use=hole 2] pic[sloped,pos=0,draw=black!60] {ks/tube end={12 pt}{disc/.style={spath/save global=tube end u 2},arcs/.style={draw=none}}} pic[sloped,pos=.999,draw=black!60] {ks/tube end={12 pt}{disc/.style={spath/save global=tube end l 2},arcs/.style={draw=none}}};
\draw[black!60][spath/use=thick hole 2];
\end{scope}
\path[ks/subdivide={path=hole 2,to=hole 2,n=2},spath/save=curve] (45:1.6) to[out=30,in=90] (spath cs:{hole 2} 0) [spath/append=hole 2] to[out=-90,in=0,in looseness=.5] (3,-2.2) to[out=180,in=-90,out looseness=.5] (spath cs:hole 1 1) [spath/append={hole 1,reverse}] to[out=90,in=150] ($(6,0)+(135:1.6)$);
\tikzset{ks/double={path=curve,to=thick curve,width={and(\t>.42,\t<.6)?20:5}}}
\tikzset{ks/extract components={draft mode=false,draft mode scale=.2,path=thick curve,split with self=false,split with={hole transversals 1,hole transversals 2},to={thick curve}{3,18,5,20,7,22,11,26,13,28},to={thick curve ul}{1,16},to={thick curve ur}{15,30},to={thick curve d}{9,24}}}
\path[spath/save=spheres] (0,0) circle(2) (6,0) circle(2);
\tikzset{
ks/extract components={draft mode=false,draft mode scale=.2,path=thick curve d,split with self=false,split with={tube end l 1,tube end l 2,spheres},to={thick curve dx}{4,11},to={thick curve dxx}{1,2,8,9,6,7,13,14}},
ks/extract components={draft mode=false,draft mode scale=.2,path=tube end l 1,split with self=false,split with={thick curve d},not={tube end l 1}{2}},
ks/extract components={draft mode=false,draft mode scale=.2,path=tube end l 2,split with self=false,split with={thick curve d},not={tube end l 2}{2}},
ks/extract components={draft mode=false,draft mode scale=.2,path=thick curve ul,split with self=false,split with={tube end u 2},not={thick curve ul}{3,6},to={thick curve ulx}{3,6}},
ks/extract components={draft mode=false,draft mode scale=.2,path=thick curve ur,split with self=false,split with={tube end u 1,thick curve ul},not={thick curve ur}{1,6,4,9},to={thick curve urx}{1,6}},
ks/extract components={draft mode=false,draft mode scale=.2,path=tube end u 1,split with self=false,split with={thick curve ur},not={tube end u 1}{2}},
ks/extract components={draft mode=false,draft mode scale=.2,path=tube end u 2,split with self=false,split with={thick curve ul},not={tube end u 2}{3}},
spath/remove empty components=spheres,
ks/extract components={draft mode=false,draft mode scale=.2,path=spheres,split with self=false,split with={thick curve d,thick curve ul,thick curve ur},not={spheres}{3,5,9,11}},
}
\draw[black!60][spath/use/.list={thick curve ulx,thick curve urx,thick curve,thick curve dxx,tube end l 1,tube end l 2}];
\draw[black][spath/use/.list={thick curve dx,thick curve ul,thick curve ur,tube end u 1,tube end u 2,spheres}];
\path[spath/use=curve] \foreach \pos/\xs in {0/1,0.9999/-1} {pic[sloped,xscale=\xs,draw=black,pos=\pos] {ks/tube end={5pt}{tube above,arc 1/.style={black!60}}}};
\path[spath/use=curve] pic[sloped,main color 1,pos=.585,yshift=0pt]{ks/disc={20pt}{disc/.style={fill=main color 1!30},arc/.style={main color 1!60}}} node[pos=.585,above=12pt,main color 1!50!black] {$D_1$};
\node[black] at (-2.1,1.5) {$S_1$};
\end{tikzpicture}
\caption{The surface $S_1$ admits a separating compression disc $D_1$.\label{fig:separating compression disc example}}
\end{figure}
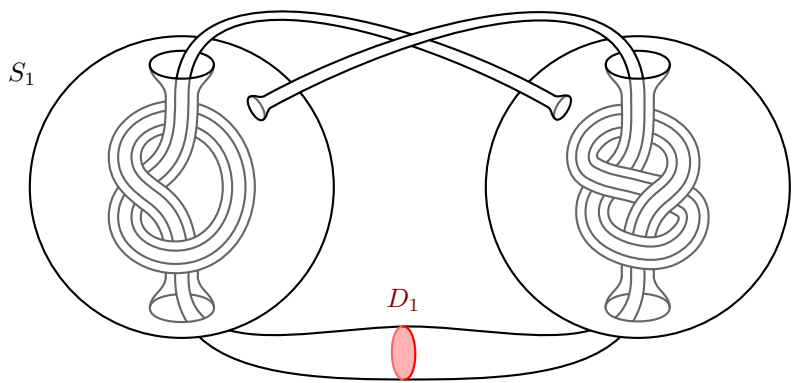

The torus $T_{1,1}$ bounds a solid torus $\storus$ in $\sphere[3]$. But $P_{1,1}$ cannot be a solid torus, since $S_1$ has no non\=/separating compression discs in $M_1$. Hence, the solid torus $\storus$ must be the closure of the other component of $\sphere[3]\setminus T_{1,1}$. In particular, we have that $P_{1,2}\subs\storus$. Now, the torus $T_{1,2}$ must be compressible in $\storus$, since it is not $\pi_1$\=/injective. It cannot be compressible in $P_{1,2}$ though, because $S_1$ has no non\=/separating compression discs in $M_1$. Therefore, it is easy to see that $P_{1,2}$ is contained in a ball and, moreover, it is a (non\=/trivial) knot complement. Clearly, the same holds for $P_{1,1}$. From this discussion, it follows that $Q_1$ is homeomorphic to the complement in $\sphere[3]$ of the $2$\=/component unlink. As described in \cref{fig:separating compression disc M1 N1}, the $3$\=/manifold $M_1$ is obtained by joining $P_{1,1}$ and $P_{1,2}$ by a $1$\=/handle, while drilling an arc from $Q_1$ yields $N_1$.

\begin{figure}
\centering
\begin{subcaptionblock}{\linewidth}
\begin{minipage}[c]{.09\linewidth}
\centering
\caption{\label{fig:separating compression disc M1}}
\end{minipage}
\begin{minipage}[c]{.9\linewidth}
\tikzsetnextfilename{separating-compression-disc-M1}
\centering
\begin{tikzpicture}[thick]
\draw[black,fill=main color 2!30] circle (2cm);
\path[spath/save=hole 1] (0,1.2) to[out=-90,in=90,out looseness=2] (-.75,-.4) to[out=-90,in=-90,out looseness=1.2,in looseness=2] (.75,0);
\path[spath/save=hole 1] [spath/use=hole 1,spath/transform={hole 1}{yscale=-1},spath/append reverse=hole 1];
\tikzset{ks/subdivide={path=hole 1,to=hole 1,n=2},ks/double={path=hole 1,to=thick hole 1,to contour=hole contour 1,width=12}}
\tikzset{ks/extract components={draft mode=false,draft mode scale=.2,path=thick hole 1,not={thick hole 1}{8,21,12,25,4,17}}}
\fill[main color 2!15][spath/use=hole contour 1];
\path[spath/use=hole 1] \foreach \pos/\xs/\as in {0/1/{black},0.9999/-1/{black!60}} {pic[sloped,xscale=\xs,draw=black!60,pos=\pos] {ks/tube end={12pt}{arcs/.style={\as},disc/.style={fill=main color 2!15},tube/.style={fill=main color 2!15}}}};
\draw[black!60][spath/use=thick hole 1];
\begin{scope}[shift={(6,0)}]
\draw[black,fill=main color 2!30] circle (2cm);
\path[spath/save=hole 2] (0,1.2) to[out=-90,in=90,out looseness=2.2] (-.6,-.33) to[out=-90,in=-135] (.6,-.7) to[out=45,in=-15,out looseness=2] (0,0);
\path[spath/save=hole 2] [spath/use=hole 2,spath/transform={hole 2}{xscale=-1,yscale=-1},spath/append reverse=hole 2];
\tikzset{ks/subdivide={path=hole 2,to=hole 2,n=2},ks/double={path=hole 2,to=thick hole 2,to contour=hole contour 2,width=12}}
\tikzset{ks/extract components={draft mode=false,draft mode scale=.2,path=thick hole 2,not={thick hole 2}{12,29,16,33,4,21,8,25}}}
\fill[main color 2!15][spath/use=hole contour 2];
\path[spath/use=hole 2] \foreach \pos/\as in {0/{black},0.9999/{black!60}} {pic[sloped,draw=black!60,pos=\pos] {ks/tube end={12pt}{arcs/.style={\as},disc/.style={fill=main color 2!15},tube/.style={fill=main color 2!15}}}};
\draw[black!60][spath/use=thick hole 2];
\end{scope}
\path[spath/save=handle] (1.7,-.2) to[out=-15,in=-165] ({6-1.7},-.2);
\tikzset{ks/subdivide={path=handle,to=handle,n=2},ks/double={path=handle,to=thick handle,to contour=handle contour,width={5+15*sin(\t*180)}}}
\fill[main color 2!30] [spath/use=handle contour];
\draw[black] [spath/use=thick handle];
\path[spath/use=handle] \foreach \pos/\xs in {0/1,0.9999/-1} {pic[sloped,xscale=\xs,draw=black,pos=\pos] {ks/tube end={5pt}{tube above,arc 1/.style={black!60}}}};
\path[spath/use=handle] pic[sloped,main color 1,pos=.5]{ks/disc={20pt}{disc/.style={fill=main color 1!30},arc/.style={main color 1!60}}} node[pos=.5,above=12pt,main color 1!50!black] {$D_1$};
\node[main color 2!50!black] at (-2.1,1.5) {$M_1$};
\end{tikzpicture}
\end{minipage}
\end{subcaptionblock}\\[3em]
\begin{subcaptionblock}{\linewidth}
\centering
\tikzsetnextfilename{separating-compression-disc-N1}
\begin{minipage}[c]{.09\linewidth}
\centering
\caption{\label{fig:separating compression disc N1}}
\end{minipage}
\begin{minipage}[c]{.9\linewidth}
\centering
\begin{tikzpicture}[thick]
\draw[black,fill=main color 3!30,even odd rule] circle (4 cm and 2cm) (-2.2,0) circle (.8cm and .4cm);
\path[spath/save=torus,fill=main color 3!15,even odd rule] (1.5,0) circle (1.8cm and 1.2cm) circle (.8cm and .4cm);
\path[draw=red,spath/save=curve] (.9,1.3) to[out=-105,in=90] (1.5,0) to[out=-90,in=-90,out looseness=1.2,in looseness=.7] (-3.5,0) to[out=90,in=135,out looseness=.7,in looseness=1.2] (.0,.5);
\tikzset{ks/subdivide={path=curve,to=curve,n=4},ks/double={path=curve,to=thick curve,to contour=curve contour,width={10-5*\t}}}
\tikzset{
ks/extract components={draft mode=false,draft mode scale=.3,path=thick curve,split with self=false,split with={torus},not={thick curve x}{4,10}},
spath/remove empty components=torus,
ks/extract components={draft mode=false,draft mode scale=.3,path=torus,split with self=false,split with={thick curve},not={torus}{1,7,3}},
}
\fill[main color 3!15][spath/use=curve contour];
\path[spath/use=curve] pic[sloped,draw=black!60,pos=0,xscale=-1] {ks/tube end={10pt}{disc/.style={fill=main color 3!15},tube/.style={fill=main color 3!15},arcs/.style=black}} pic[sloped,draw=black!60,pos=.999,xscale=-1] {ks/tube end={5pt}{arc 1/.style={draw=black!30},tube above}};
\draw[black!60][spath/use/.list={thick curve x,torus}];
\path[spath/use=curve] pic[draw=main color 1,sloped,pos=.5] {ks/disc={7.5pt}{arc/.style={main color 1!60}}} node[main color 1!50!black,pos=.5,above=4pt] {\tikzcontour{$p_1$}};
\node[main color 3!50!black] at (-3.5,1.6) {$N_1$};
\end{tikzpicture}
\end{minipage}
\end{subcaptionblock}
\caption{The $3$\=/manifold $M_1$ \subref{fig:separating compression disc M1} is homeomorphic to two knot complements $P_{1,1}$ and $P_{1,2}$ joined by a $1$\=/handle. The $3$\=/manifold $N_1$ \subref{fig:separating compression disc N1} is homeomorphic to the complement $Q_1$ of a $2$\=/unlink, from which an arc connecting the two boundary components has been drilled out.\label{fig:separating compression disc M1 N1}}
\end{figure}

The following lemma implies, \emph{a posteriori}, that no arbitrary choice was made when selecting the disc $D_1$.

\begin{lemma}
Let $P_1$ and $P_2$ be irreducible boundary irreducible $3$\=/manifolds, and let $M$ be the result of joining them by a $1$\=/handle $H$. Then the co\=/core of $H$ is the unique non\=/trivial compression disc for $\boundary M$ in $M$.
\end{lemma}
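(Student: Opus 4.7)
The plan is to mimic the proof of \cref{thm:unique non-separating compression disc}, replacing ``non\=/separating'' by ``non\=/trivial'' as the relevant boundary\=/compression\=/stable property (see \cref{thm:boundary compression-stable properties}). Let $F$ be the co\=/core of $H$, and suppose for contradiction that there exists another non\=/trivial compression disc $D$ for $\boundary M$ in $M$ which is not isotopic to $F$. First, I would apply \cref{thm:discs with boundary compression-stable property} to isotope $D$ to be disjoint from $F$; this requires noting that $M$ is itself irreducible, which follows from a standard innermost\=/circle argument exploiting irreducibility of $P_1$ and $P_2$ (any sphere in $M$ can be made disjoint from $F$, then lies in one $P_i$, hence bounds a ball). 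Since $M\cut F$ is a collar extension of $P_1\sqcup P_2$, the disc $D$ now lies in one of the two sides, which I may assume to be $P_1$.

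Next, boundary irreducibility of $P_1$ yields a disc $D'\subs\boundary P_1$ with $\boundary D'=\boundary D$, and irreducibility of $P_1$ then produces a ball $B\subs P_1$ with $\boundary B=D\cup D'$. Let $\delta\subs\boundary P_1$ denote the attaching disc of $H$. If $\delta$ is not contained in $D'$, then $D'$ survives as a disc in $\boundary M$ with $\boundary D'=\boundary D$, contradicting the non\=/triviality of $D$. The remaining case to handle is therefore $\delta\subs D'$.

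The main step is to show that if $\delta\subs D'$, then $D$ is in fact isotopic to $F$, which again produces the desired contradiction. The key observation is that $B\cup H$ is a $3$\=/ball whose boundary sphere decomposes as
\[
D\cup(D'\setminus\interior{\delta})\cup\mathrm{side}(H)\cup\delta',
\]
where $\delta'\subs\boundary P_2$ is the other attaching disc of $H$, and where the two pieces lying on $\boundary M$ are $D'\setminus\interior{\delta}$ and $\mathrm{side}(H)$. Pushing $\delta'$ slightly into $H$ produces a properly embedded disc $\tilde{\delta}'\subs M$ which is visibly isotopic to $F$ via the obvious isotopy through parallel discs inside $H$. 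A small retraction of $B\cup H$ is then a ball in $M$ cobounded by $D$ and $\tilde{\delta}'$, with the remaining part of its boundary lying in $\boundary M$; this ball directly exhibits an isotopy from $D$ to $\tilde{\delta}'$, hence from $D$ to $F$, contradicting our initial assumption. The only mildly delicate point is verifying that the cobounding region is truly an embedded ball in $M$ and that the accompanying annulus sits in $\boundary M$, but both follow immediately from the explicit decomposition of $\boundary(B\cup H)$.
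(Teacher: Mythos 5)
Your proposal is correct and follows essentially the same route as the paper's proof: isotope $D$ off $F$ using the stability of ``non\=/trivial boundary'' under boundary compressions, land in $P_1$, use irreducibility and boundary irreducibility of $P_1$ to cobound a ball with a disc $D'\subs\boundary P_1$, and conclude that $D'$ must swallow the attaching disc, forcing $D$ to be isotopic to $F$. The paper states the last two steps tersely; your version simply spells out the cobounding\=/ball argument via $B\cup H$ and explicitly records that $M$ is irreducible (needed to invoke \cref{thm:discs with boundary compression-stable property}), both of which are worthwhile but not substantively different.
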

\begin{proof}
Let $F$ be the co\=/core of $H$, and suppose there is another (non\=/trivial) compression disc $D$ for $\boundary M$ which is not isotopic to $F$; in particular, $D$ is not boundary\=/parallel. By \cref{thm:discs with boundary compression-stable property,thm:boundary compression-stable properties} applied to the pair $(M,\boundary M)$, we may assume that $D$ is disjoint from $F$ and therefore, without loss of generality, contained in $P_1$. Since $P_1$ is irreducible and boundary irreducible, $D$ must cobound a ball in $P_1$ with some disc $D'\subs\boundary P_1$. But $D$ is not boundary parallel in $M$, and this is only possible if $D'$ contains one of the attaching discs of $H$. This would imply that $D$ is isotopic to $F$, contrary to our assumption.
\end{proof}

Now, if $M_2$ is boundary irreducible or $S_2$ has a non\=/separating compression disc in $M_2$, then clearly $S_1$ and $S_2$ are not isotopic. Otherwise, by imitating the above procedure for $S_2$ instead of $S_1$, we define $p_2$, $D_2$, $P_2$, $Q_2$, $T_2$, $P_{2,1}$, $P_{2,2}$, $T_{2,1}$, and $T_{2,2}$. Since $D_1$ (respectively $D_2$) can be intrinsically defined as the unique compression disc for $S_1$ (respectively $S_2$) in $M_1$ (respectively $M_2$), every homeomorphism $\umap{M_1}{M_2}$ must send $D_1$ to a disc isotopic to $D_2$. If the $3$\=/manifold pair $(N_1,p_1)$ is irreducible, we immediately conclude by applying \cref{thm:general strategy for knot complements}.

Otherwise, we can assume that the surfaces $\boundary N_1\cut p_1$ and $\boundary N_2\cut p_2$ are compressible in $N_1$ and $N_2$ respectively. We will show that, in this case, the surfaces $S_1$ and $S_2$ are isotopic if and only if $P_1$ and $P_2$ are homeomorphic. The forward implication is trivial. Conversely, suppose that $P_1$ and $P_2$ are homeomorphic. Let $E$ be a non\=/trivial compression disc for $\boundary N_1\cut p_1$ in $N_1$. The disc $E$ is properly embedded in $Q_1$, and its boundary lies in a component of $T_1$, say $T_{1,1}$.
\begin{substeps}
\item If $\boundary E$ is non\=/trivial in $T_{1,1}$, then $N_1\cut E$ is homeomorphic to a solid torus $\storus$ via a homeomorphism sending $p_1$ to a trivial curve in $\boundary\storus$. Therefore, $N_1$ is a handlebody and $p_1$ is a meridian curve of $N_1$.
\item If $\boundary E$ is trivial in $T_{1,1}$, up to isotoping $E$, we can assume that $\boundary E=p_1$. The sphere $S=E\cup D_1$ is embedded in $Q_1$, and does not bound a ball, hence it separates $Q_1$ into two components, each homeomorphic to a punctured solid torus. As a consequence, the disc $E$ separates $N_1$ into two components, each homeomorphic to a solid torus. We conclude that $N_1$ is a handlebody, and $p_1$ is a meridian curve of $N_1$.
\end{substeps}

Either way, we have shown that $(N_1,p_1)$ and $(N_2,p_2)$ are both homeomorphic to a handlebody endowed with a separating meridian curve as a boundary pattern. Let $m_{1,1}\subs T_{1,1}$ and $m_{1,2}\subs T_{1,2}$ be the unique curves which are homologically trivial in $Q_1$ but not in $T_1$; define $m_{2,1}\subs T_{2,1}$ and $m_{2,2}\subs T_{2,2}$ analogously. By construction, the curves $m_{1,1}$, $m_{1,2}$, $m_{2,1}$ and $m_{2,2}$ are meridians of the knot complements $P_{1,1}$, $P_{1,2}$, $P_{2,1}$ and $P_{2,2}$ respectively. By \cref{thm:homeomorphisms of knot complement}, every homeomorphism $\umap{P_1}{P_2}$ sends $m_{1,1}\cup m_{1,2}$ to $m_{2,1}\cup m_{2,2}$ up to isotopy. Moreover, it is not hard to see that every homeomorphism $\umap{T_1}{T_2}$ with this property is induced by $\hext{p_1,p_2}(f)$ for some $f\in\homeo{(N_1,p_1);(N_2,p_2)}$; see \cref{fig:separating compression disc homeomorphisms of Q} for an explanation. Therefore, the trace of every homeomorphism $\umap{P_1}{P_2}$ extends to a homeomorphism $\umap{Q_1}{Q_2}$ which is the image under $\hext{p_1,p_2}$ of some homeomorphism $\umap{(N_1,p_1)}{(N_2,p_2)}$. Thanks to \cref{thm:general strategy}, this is enough to conclude that $S_1$ and $S_2$ are isotopic.

\begin{figure}
\centering
\def\handlebodysetup{
\pgfinterruptboundingbox
\tikzset{
torus/new={name=T1,at={(0,0)},R=.75,r=.25},
torus/new={name=T2,at={(1.5,0)},R=.75,r=.25},
torus/set canvas box={T1}{.7}{0}{.7}{1},
torus/set canvas box={T2}{.8}{0}{.8}{1}
}
\path[spath/save=m1] (.7,0) to +(0,1);
\path[spath/save=m2] (.8,0) to +(0,1);
\tikzset{
torus/split visible={m1}{torus=T1,to visible=m1 front,to invisible=m1 back},
torus/split visible={m2}{torus=T2,to visible=m2 front,to invisible=m2 back},
torus/path={m1 front}{to=m1 front,torus=T1},
torus/path={m2 front}{to=m2 front,torus=T2},
torus/path={m1 back}{to=m1 back,torus=T1},
torus/path={m2 back}{to=m2 back,torus=T2},
}
\draw[main color 1!60,dotted,dash expand off][spath/use=m1 back][spath/use=m2 back];
\draw[main color 1][spath/use=m1 front][spath/use=m2 front];
\endpgfinterruptboundingbox
}
\def\sh{6cm}
\begin{subcaptionblock}{\linewidth}
\begin{minipage}[c]{.09\linewidth}
\caption{}
\label{fig:separating compression disc homeomorphisms of Q:a}
\end{minipage}
\begin{minipage}[c]{.9\linewidth}
\centering
\tikzsetnextfilename{separating-compression-disc-homeomorphisms-of-Q-a}
\begin{tikzpicture}[thick,torus/setup=65]
\path[use as bounding box] (-1,.-1.5) rectangle (8.5,1.5);
\foreach \i/\ll/\lr in {0/1/2,1/2/1} {
    \begin{scope}[shift={({\i*\sh},0)}]
        \handlebodysetup
        \tikzset{
        torus/get right boundary crit={T1}{\ucrit},
        torus/point={(-\ucrit,{vcrit1(-\ucrit)})}{torus=T1,to=b}
        }
        \path [torus/right boundary point={T1}{90}] coordinate (u);
        \tikzset{torus/point={(u)}{torus=T1,to=u}}
        \draw[main color 3] (.75,-1) -- (b) edge[main color 3!60,dotted] (u);
        \draw[line cap=round][torus/outline={torus=T1,cut right}][torus/outline={torus=T2,cut left}];
        \draw[line cap=round,main color 3,preaction={draw=white,line width=2pt}] (.75,1) -- (u);
        \node[anchor=base,main color 1!50!black] at (-.25,-1) {$m_\ll$};
        \node[anchor=base,main color 1!50!black] at (1.75,-1) {$m_\lr$};
    \end{scope}
    \draw[shift={({.75cm+\sh/2},0)},->,theme color] (-15pt,0) -- (15pt,0) pic[midway,yshift=12pt] {code={\draw[-,line cap=round,main color 3] (0,-6pt) -- (0,6pt);\draw[yscale={cos(65)},-{>[scale=.6,bend]},main color 4,preaction={draw=white,-,line width=2pt}] (150:6pt) arc(150:390:6pt);}};
}
\end{tikzpicture}
\end{minipage}
\end{subcaptionblock}\\
\begin{subcaptionblock}{\linewidth}
\begin{minipage}[c]{.09\linewidth}
\caption{}
\label{fig:separating compression disc homeomorphisms of Q:b}
\end{minipage}
\begin{minipage}[c]{.9\linewidth}
\centering
\tikzsetnextfilename{separating-compression-disc-homeomorphisms-of-Q-b}
\begin{tikzpicture}[thick,torus/setup=65]
\path[use as bounding box] (-1,.-1.5) rectangle (8.5,1.5);
\foreach \i/\arrowd in {0/,1/reversed} {
    \begin{scope}[shift={({\i*\sh},0)}]
        \handlebodysetup
        \path[/pgf/fpu/install only={reciprocal},spath/use=m1 front,decorate,decoration={markings,mark=at position .5 with {\arrow[main color 1]{>[\arrowd,scale=.6]}}}];
        \begin{scope}[on background layer]
        \begin{scope}
            \clip (-1,-1) rectangle (.75,1);
            \fill[even odd rule,main color 3!30] [torus/contour={T1}];
        \end{scope}
        \end{scope}
        \draw[line cap=round][torus/outline={torus=T1,cut right}][torus/outline={torus=T2,cut left}];
        \node[anchor=base,main color 1!50!black] at (-.25,-1) {$m_1$};
        \node[anchor=base,main color 1!50!black] at (1.75,-1) {$m_2$};
    \end{scope}
}
\draw[shift={({.75cm+\sh/2},0)},->,theme color] (-15pt,0) -- (15pt,0) pic[midway,yshift=16pt] {code={
    \tikzset{
    torus/new={name=t1,at={(-.3cm,0)},R=.3pt,r=.1pt},
    torus/new={name=t2,at={(.3cm,0)},R=.3pt,r=.1pt},
    }
    \begin{scope}[on background layer]
    \begin{scope}
        \clip (-1,-1) rectangle (0,1);
        \fill[even odd rule,main color 3!30] [torus/contour={t1}];
    \end{scope}
    \end{scope}
    \draw[-,black,thin,line cap=round][torus/outline={torus=t1,cut right}][torus/outline={torus=t2,cut left}];
    \draw[xshift=-.8cm,xscale={cos(65)},-{>[scale=.6,bend]},main color 3] ($(60:6pt)$) arc (60:300:6pt);
}};
\end{tikzpicture}
\end{minipage}
\end{subcaptionblock}\\
\begin{subcaptionblock}{\linewidth}
\begin{minipage}[c]{.09\linewidth}
\caption{}
\label{fig:separating compression disc homeomorphisms of Q:c}
\end{minipage}
\begin{minipage}[c]{.9\linewidth}
\centering
\tikzsetnextfilename{separating-compression-disc-homeomorphisms-of-Q-c}
\begin{tikzpicture}[thick,torus/setup=65]
\path[use as bounding box] (-1,.-1.5) rectangle (8.5,1.5);
\foreach \i in {0,1} {
    \begin{scope}[shift={({\i*\sh},0)}]
    \handlebodysetup
    \path[spath/save=l] \ifnum\i=0
        (.2,.5) -- (1.2,.5)
    \else
        (.2,.5) -- (.45,.5) to[out=0,in=-180] (.95,1.5) -- (1.2,1.5)
    \fi;
    \tikzset{
        torus/set canvas box={T1}{.2}{.5}{1.2}{1.5},
        torus/split visible={l}{torus=T1,to visible=l front,to invisible=l back},
        torus/path={l front}{to=l front,torus=T1,samples=25},
        torus/path={l back}{to=l back,torus=T1,samples=25},
        }
    \scoped[on background layer] \draw[thick,dotted,main color 2!60,dash expand off] [spath/use=l back];
    \draw[main color 2][spath/use=l front];
    \draw[line cap=round][torus/outline={torus=T1,cut right}][torus/outline={torus=T2,cut left}];
    \node[anchor=base,main color 1!50!black] at (-.25,-1) {$m_1$};
    \node[anchor=base,main color 1!50!black] at (1.75,-1) {$m_2$};
    \end{scope}
}
\draw[shift={({.75cm+\sh/2},0)},->,theme color] (-15pt,0) -- (15pt,0) node[midway,above=2pt] {$\tau$};
\end{tikzpicture}
\end{minipage}
\end{subcaptionblock}
\caption{Three self\=/homeomorphisms of a handlebody $N$ endowed with a separating meridian curve $p$ as a boundary pattern (the boundary of the green region in \subref{fig:separating compression disc homeomorphisms of Q:b}): \subref{fig:separating compression disc homeomorphisms of Q:a} swaps the two meridian curves $m_1$ and $m_2$; \subref{fig:separating compression disc homeomorphisms of Q:b} inverts the orientation of $m_1$ and leaves $m_2$ unchanged; \subref{fig:separating compression disc homeomorphisms of Q:c} is a Dehn twist about the meridian disc bounded by $m_1$. If $Q$ denotes the $3$\=/manifold obtained by attaching a $2$\=/handle to $N$ along $p$ and $T=\boundary Q$, then the images of \subref{fig:separating compression disc homeomorphisms of Q:a}, \subref{fig:separating compression disc homeomorphisms of Q:b}, and \subref{fig:separating compression disc homeomorphisms of Q:c} under $\trace{(-)}\circ\hext{p,p}$ generate the subgroup of $\homeo{T}$ of homeomorphisms preserving the isotopy class of $m_1\cup m_2$.
}
\label{fig:separating compression disc homeomorphisms of Q}
\end{figure}

\subsection{Small \texorpdfstring{$I$\=/bundles}{I-bundles} in the JSJ decomposition}\label{sec:classification algorithm:small i-bundles}

Finally, we deal with the case where $S_1$ is not partially compressible on one side; in other words, we assume that each component of $\sphere[3]\cut S_1$ is either boundary irreducible or a handlebody. Since $S_1$ is compressible in $\sphere[3]$, at least one of the sides -- say $N_1$ -- must be a handlebody. If $M_1$ is a handlebody too, then $\sphere[3]=M_1\cup N_1$ is a Heegaard splitting of $\sphere[3]$. By a theorem of \textcite{waldhausen-heegaard-splitting-s3}, all genus\=/two Heegaard surfaces of $\sphere[3]$ are isotopic, hence $S_1$ and $S_2$ are isotopic if and only if $M_2$ and $N_2$ are handlebodies.

We can therefore assume that $M_1$ is boundary irreducible. If $M_2$ is not homeomorphic to $M_1$ or $N_2$ is not a handlebody, then clearly $S_1$ and $S_2$ are not isotopic. Otherwise, $M_2$ is boundary irreducible too. Let $F$ be the JSJ system of $M_2$. The boundary curves of the annuli components of $F$ split the surface $S_2$ into subsurfaces $R_1,\ldots,R_r$ (namely, the components of $S_2\cut\boundary F$) such that $\chi(R_1)+\ldots+\chi(R_r)=-2$. A simple analysis shows that there are only a handful of possible surface types for each component $R_i$:
\begin{itemize}
\item the annulus $\surf{0,2}$, with $\chi(\surf{0,2})=0$;
\item the pair of pants $\surf{0,3}$, with $\chi(\surf{0,3})=-1$;
\item the sphere with four punctures $\surf{0,4}$, with $\chi(\surf{0,4})=-2$;
\item the punctured torus $\surf{1,1}$, with $\chi(\surf{1,1})=-1$;
\item the torus with two punctures $\surf{1,2}$, with $\chi(\surf{1,2})=-2$;
\item the closed surface of genus two $\surf2$, with $\chi(\surf2)=-2$.
\end{itemize}

We are interested in the types of $I$\=/bundles which can occur in the JSJ decomposition of $M_2$.
\begin{enumarabic}
\item If one of the components is a product bundle $\surf{1,1}\times I$, then $S_2\cut\boundary F$ is the union of two surfaces homeomorphic to $\surf{1,1}$ (that is, the horizontal boundary of the $I$\=/bundle) and a positive number of annuli.
\item If one of the components is a twisted bundle $\nsurf{2,1}\twtimes I$, then $S_2\cut\boundary F$ is the union of a surface homeomorphic to $\surf{1,2}$ (that is, the horizontal boundary of the $I$\=/bundle) and a positive number of annuli.
\item Otherwise, each $I$\=/bundle component of the JSJ decomposition of $M_2$ is either a product $I$\=/bundle over $\surf{0,2}$ or $\surf{0,3}$, or a twisted $I$\=/bundle over $\nsurf{1,1}$ or $\nsurf{1,2}$.
\end{enumarabic}
We will now provide a solution to the isotopy problem for the last case, deferring the analysis of the first two to \cref{sec:classification algorithm:product bundle over punctured torus,sec:classification algorithm:twisted bundle over punctured klein bottle}.

Let $\map{f_0}{M_1}{M_2}$ be a homeomorphism. By \cref{thm:mapping class group of 3-manifold}, we can algorithmically compute
\begin{itemize}
\item a finite collection $\FFF$ of self\=/homeomorphisms of $\boundary M_2$, and
\item a finite collection $a_1,\ldots,a_m,b_1,\ldots,b_m$ of pairwise disjoint curves in $\boundary M_2$,
\end{itemize}
such that
\[
\trhomeo{M_1;M_2}=\bigcup_{f\in\FFF}\langle\twist{a_1}\twist{b_1}^{-1},\ldots,\twist{a_m}\twist{b_m}^{-1}\rangle f\trace{f_0}.
\]
Let $\map{g_0}{N_1}{N_2}$ be a homeomorphism. We have that $S_1$ and $S_2$ are isotopic if and only if some element of $\trhomeo{M_1;M_2}\trace{g_0}^{-1}$, seen as a self\=/homeomorphism of $\boundary N_2$, extends to a homeomorphism $\umap{N_2}{N_2}$. For each $f\in\FFF$, thanks to \cref{thm:product of dehn twists extension to handlebody}, we can algorithmically decide whether there is an element of
\[
\langle\twist{a_1}\twist{b_1}^{-1},\ldots,\twist{a_m}\twist{b_m}^{-1}\rangle f\trace{(f_0g_0^{-1})}
\]
which extends to a self\=/homeomorphism of $N_2$. Since $\FFF$ is finite, this is enough to solve the isotopy problem for $S_1$ and $S_2$.

\subsection{Product \texorpdfstring{$I$\=/bundle}{I-bundle} over punctured torus} \label{sec:classification algorithm:product bundle over punctured torus}

Only two very special cases are left -- namely, those where the JSJ decomposition of $M_2$ contains a ``large'' $I$\=/bundle piece. In this section, we address the case where a component $Z$ of $M_2\cut F$ is homeomorphic to $\surf{1,1}\times I$, where the horizontal boundary $\boundary_h Z$ consists of -- say -- the surfaces $R_1$ and $R_2$; see \cref{fig:product bundle over punctured torus example} for an example. Due to the Euler characteristic constraint, the complement $\closure{S_2\setminus\boundary_h Z}$ is an annulus. By gluing this annulus to the vertical boundary of $Z$, we get a torus $T_2=(S_2\setminus\boundary_h Z)\cup\boundary_v Z$ embedded in $\sphere[3]$. This torus separates $\sphere[3]$ into two components: let $P_2$ be the one lying inside $M_2$, and $Q_2=N_2\cup Z$ be the other one. If $P_2$ were a solid torus, then $\boundary R_1\subs\boundary P_2$ would be a longitude of $P_2$, since it bounds a surface in the complement of $P_2$. But then the meridian disc of $P_2$ would be a boundary compression disc for $\boundary_v Z$ in $M_2$, contradicting the fact that $\boundary_v Z$ is an annulus in a JSJ system and, hence, not boundary parallel. Therefore, $Q_2$ must be a solid torus, and $P_2$ is the complement of a non\=/trivial knot. The situation is described in \cref{fig:product bundle over punctured torus T2}.

\begin{figure}
\centering
\tikzsetnextfilename{product-bundle-over-punctured-torus-example}
\begin{tikzpicture}[thick]
\node[black] at (-1.9,1.7) {$S_2$};
\node[anchor=west,main color 1!50!black] at (3.3,1.7) {$Z$};
\path[use as bounding box] (-1.8,-2.2) (4.2,2.2);
\path[spath/save=curve,use Hobby shortcut] ([out angle=180]1,1) .. (120:.5) .. (-120:2) .. (0:.5) .. (120:2) .. (-120:.5) .. ([in angle=180]1,-1);
\tikzset{ks/subdivide={path=curve,to=curve,n=2},ks/double={path=curve,to=thick curve,width=10,to 1=thick curve 1,to 2=thick curve 2}}
\path[spath/save=ring 1,use Hobby shortcut] ([out angle=0]spath cs:thick curve 1 0) .. (1.5,.5) .. (2.5,-.4) .. (3.3,.4) .. (2.5,1.2) .. ([in angle=0]spath cs:thick curve 2 0) (2.5,.4) circle (.5);
\tikzset{spath/remove empty components=ring 1,spath/clone={ring 2}{ring 1},spath/transform={ring 2}{yscale=-1}}
\draw[main color 1,fill=main color 1!30] (spath cs: thick curve 1 0) to[bend left=45] (spath cs:thick curve 1 1) -- (spath cs:thick curve 2 1) to[bend right=120,looseness=5] (spath cs:thick curve 2 0) -- cycle;
\foreach \i in {1,2} {\fill[main color 1!15,even odd rule][spath/use=ring \i];}
\tikzset{
ks/extract components={draft mode=false,draft mode scale=.3,path=ring 1,split with self=false,split with=ring 2,not={ring 1}{2,6}},
ks/extract components={draft mode=false,draft mode scale=.3,path=ring 2,split with self=false,split with=ring 1,not={ring 2}{2,5}},
ks/extract components={draft mode=false,draft mode scale=.3,path=thick curve,not={thick curve}{8,21,4,17,12,25}}
}
\draw[black!60] [spath/use/.list={ring 1,ring 2}];
\draw[black] [spath/use=thick curve];
\path[spath/use=curve] \foreach \pos in {0,1} {pic[draw=main color 1,pos=\pos] {ks/disc={10pt}{arc/.style={main color 1!60},disc/.style={fill=main color 1!15}}}};
\end{tikzpicture}
\caption{The surface $S_2$ splits $\sphere[3]$ into two components: one is a handlebody, and the other has a JSJ piece $Z$ which is an $I$\=/bundle over a punctured torus.\label{fig:product bundle over punctured torus example}}
\end{figure}
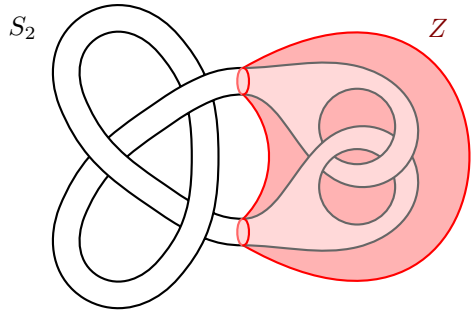

Let $K_2$ be a section of the projection $\umap{Z}{\surf{1,1}}$ lying in $Z\setminus\boundary_h Z$ (in other words, a surface of the form $\surf{1,1}\times\{1/2\}\subs\surf{1,1}\times I$). Note that $K_2$ is properly embedded in $Q_2$, and $Q_2\cut K_2$ is the handlebody $N_2$.

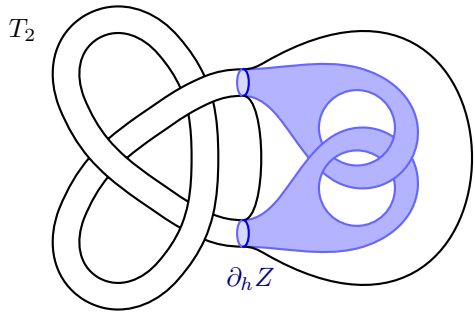
\begin{figure}
\centering
\tikzsetnextfilename{product-bundle-over-punctured-torus-T2}
\begin{tikzpicture}[thick]
\node[black] at (-1.9,1.7) {$T_2$};
\node[main color 2!50!black] at (1.1,-1.6) {$\boundary_h Z$};
\path[use as bounding box] (-1.8,-2.2) (4.2,2.2);
\path[spath/save=curve,use Hobby shortcut] ([out angle=180]1,1) .. (120:.5) .. (-120:2) .. (0:.5) .. (120:2) .. (-120:.5) .. ([in angle=180]1,-1);
\tikzset{ks/subdivide={path=curve,to=curve,n=2},ks/double={path=curve,to=thick curve,width=10,to 1=thick curve 1,to 2=thick curve 2}}
\path[spath/save=ring 1,use Hobby shortcut] ([out angle=0]spath cs:thick curve 1 0) .. (1.5,.5) .. (2.5,-.4) .. (3.3,.4) .. (2.5,1.2) .. ([in angle=0]spath cs:thick curve 2 0) (2.5,.4) circle (.5);
\tikzset{spath/remove empty components=ring 1,spath/clone={ring 2}{ring 1},spath/transform={ring 2}{yscale=-1}}
\foreach \i in {1,2} {\fill[main color 2!30,even odd rule][spath/use=ring \i];}
\tikzset{
ks/extract components={draft mode=false,draft mode scale=.3,path=ring 1,split with self=false,split with=ring 2,not={ring 1}{2,6}},
ks/extract components={draft mode=false,draft mode scale=.3,path=ring 2,split with self=false,split with=ring 1,not={ring 2}{2,5}},
ks/extract components={draft mode=false,draft mode scale=.3,path=thick curve,not={thick curve}{8,21,4,17,12,25}}
}
\draw[black] [spath/use=thick curve];
\draw[main color 2!60] [spath/use/.list={ring 1,ring 2}];
\draw[black] (spath cs: thick curve 1 0) to[bend left=90,looseness=.5] (spath cs:thick curve 1 1) (spath cs:thick curve 2 1) to [out=0,in=150] ++(.3,-.1) to[bend right=120,looseness=4.2] ($(spath cs:thick curve 2 0)+(.3,.1)$) to[out=-150,in=0] +(-.3,-.1);
\path[spath/use=curve] \foreach \pos in {0,1} {pic[draw=main color 2,pos=\pos] {ks/disc={10pt}{arc/.style={main color 2!60},disc/.style={fill=main color 2!15}}}};
\end{tikzpicture}
\caption{The torus $T_2$ splits $\sphere[3]$ into two components. One of them, namely $P_2$ (the ``outside'' in this picture), is a non\=/trivial knot complement; the other, namely $Q_2$, is a solid torus. The surface $\boundary_h Z$ is a union of two punctured tori, it lies inside $Q_2$, and splits it into two components: the handlebody $N_2$ and the product bundle $Z$.\label{fig:product bundle over punctured torus T2}}
\end{figure}

\begin{lemma}\label{thm:unique punctured torus in solid torus}
Let $K$ be a punctured torus properly embedded in a solid torus $\storus$ such that $\storus\cut K$ is a handlebody. Then:
\begin{enumarabic}
\item\label[statement]{thm:unique punctured torus in solid torus:1} the curve $\boundary K\subs\boundary\storus$ is a meridian;
\item\label[statement]{thm:unique punctured torus in solid torus:2} every punctured torus $K'$ properly embedded in $\storus$ such that $\storus\cut K'$ is a handlebody is isotopic to $K$.
\end{enumarabic}
\end{lemma}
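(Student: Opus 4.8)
The plan is to analyse the intersection of $K$ (and later $K'$) with a meridian disc of $\storus$, and to use an Euler-characteristic count to control how $K$ can sit inside $\storus$. For part \cref{thm:unique punctured torus in solid torus:1}, first I would observe that $\boundary K$ is a single curve on $\boundary\storus$, so it is either a meridian, a longitude-type curve (intersecting a meridian once), or a $(p,q)$-curve, or it is trivial in $\boundary\storus$. Since $\storus\cut K$ is a handlebody (in particular connected), the curve $\boundary K$ cannot separate $\boundary\storus$: cutting along a separating curve would make $\storus\cut K$ have the homology of two pieces glued along $K$, and a short first-homology computation shows $\storus\cut K$ would not be a handlebody of the right genus. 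This rules out $\boundary K$ being trivial or a $(p,q)$-curve with $q$ even in the torsion sense. To finish, note that $\storus\cut K$ is obtained from $\storus$ by removing a neighbourhood of $K$; computing $H_1$ via Mayer--Vietoris, the handlebody condition forces $\boundary K$ to be primitive and, more precisely, to kill the generator of $H_1(\storus)$, which is exactly the condition that $\boundary K$ is a meridian. (Alternatively: a regular neighbourhood $\nbhd K$ is a genus-two handlebody glued to $\storus\cut K$ along a once-punctured torus; an Euler-characteristic and genus bookkeeping of this Heegaard-type splitting of $\storus$ pins down $\boundary K$.)

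For part \cref{thm:unique punctured torus in solid torus:2}, I would use \cref{thm:unique punctured torus in solid torus:1} applied to both $K$ and $K'$: both boundary curves are meridians of $\storus$, hence isotopic on $\boundary\storus$. So, after an ambient isotopy of $\storus$, we may assume $\boundary K=\boundary K'$ is a fixed meridian $\mu$. Let $D$ be the meridian disc bounded by $\mu$. Then $\storus\cut D$ is a $3$-ball $B$, and $K$ and $K'$ become properly embedded once-punctured tori in $B$ (with boundary an arc, or more precisely with $\boundary K$ cut into an arc together with two copies of $D$). Here the key reduction is: a once-punctured torus properly embedded in a $3$-ball $B$ whose complement is a handlebody must be standard, i.e.\ unique up to isotopy of $B$. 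This is a classical fact -- it is essentially the statement that a genus-one Heegaard splitting of the $3$-ball rel boundary is unique -- and can be proved by an innermost-disc/outermost-arc argument: compress $B\cut K$ (a handlebody) along meridian discs, use that $B$ is irreducible and boundary-irreducible, and show that $K$ can be isotoped into a standardly embedded position. Re-gluing $D$, we conclude $K$ and $K'$ are isotopic in $\storus$.

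The main obstacle I anticipate is making the "standardly embedded punctured torus in a ball" step genuinely rigorous and self-contained, rather than hand-waving it as classical. The cleanest route is probably to invoke the uniqueness of Heegaard splittings of handlebodies (or of $\sphere[3]$, via \textcite{waldhausen-heegaard-splitting-s3} as already used in \cref{sec:classification algorithm:small i-bundles}): fill in $\storus$ with a second solid torus along the meridian $\mu$ to obtain either $\sphere[3]$ or a lens space, observe that $K$ becomes a Heegaard surface of genus... no, that changes the genus; more carefully, double $\storus$ along $\boundary\storus$ to get $\sphere[1]\times\sphere[2]$ or, better, cap off with a $2$-handle along $\mu$ and then a $3$-handle to get $\sphere[3]$, under which $K$ and $K'$ both become the boundary of a neighbourhood of a knot, and the handlebody-complement condition forces that knot to be trivial. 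I would then compare $K$ and $K'$ using \cref{thm:algorithms on 3-manifolds:isotopic surfaces} or Waldhausen's rigidity for incompressible surfaces -- although $K$ is compressible, so one must be careful and instead argue directly that both lie in the standard position and apply an isotopy extension. A secondary, more bookkeeping-heavy obstacle is keeping track of boundary patterns: the isotopies of $K'$ to $K$ should be taken rel $\boundary\storus$ (or at least sending $\boundary K'$ to $\boundary K$ along meridians), which matters for the application in \cref{sec:classification algorithm:product bundle over punctured torus}; I would make sure the final isotopy is constructed to fix $\boundary\storus$ setwise and to be realised by an ambient isotopy of $\storus$.
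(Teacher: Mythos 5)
Your argument for \cref{thm:unique punctured torus in solid torus:1} is essentially right but garbled in one place: the meridian of $\storus$ is \emph{not} the curve that ``kills the generator of $H_1(\storus)$'' (that describes a longitude); it is the unique non\=/trivial curve on $\boundary\storus$ that is already null\=/homologous in $\storus$. Since $\boundary K$ bounds the surface $K$ inside $\storus$, it is null\=/homologous there, and it cannot bound a disc in $\boundary\storus$ (otherwise $K$ would be separating and $\storus\cut K$ disconnected), so it is a meridian -- this one\=/line observation is all the paper uses. The real problem is \cref{thm:unique punctured torus in solid torus:2}. Your reduction ``cut $\storus$ along the meridian disc $D$ bounded by $\mu=\boundary K$'' is not legitimate: such a disc has the same boundary as $K$ and cannot be assumed disjoint from $K$, and controlling $K\cap D$ is precisely the content of the lemma; after cutting, $K$ does not become a properly embedded punctured torus in a ball. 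The ``classical fact'' you then invoke (uniqueness of a punctured torus in a ball with handlebody complement, alias a genus\=/one splitting of the ball rel boundary) is essentially the lemma itself restated, so the main step remains unproved. Your fallback is also incorrect as stated: attaching a $2$\=/handle along $\mu$ -- which by \cref{thm:unique punctured torus in solid torus:1} is a meridian -- and then a $3$\=/handle yields $\sphere[1]\times\sphere[2]$, not $\sphere[3]$; and even in the correct closed manifold, knowing the capped\=/off tori are ambiently isotopic would not by itself produce an isotopy of $K'$ to $K$ inside $\storus$.

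The paper avoids all of this by working directly in $\storus$ with a compression disc for $K$ (which exists because $K$ is not $\pi_1$\=/injective). Compressing $K$ along such a disc yields a meridian disc $E$ (and, in the case of a separating compressing curve, an additional torus $T$), and $K$ is recovered from the compressed surface by tubing along an arc $a$. The hypothesis that $\storus\cut K$ is a handlebody is then converted into a fundamental\=/group identity of the form $\pi_1(P)\ast\ZZ\cong\ZZ\ast\ZZ$ for an auxiliary knot complement $P$ (or $Q$), which by Grushko forces that knot complement to be a solid torus; this in turn forces the tube arc $a$ to be trivial, so $K$ is the standard punctured torus, and in the separating case one either reduces to the non\=/separating case or reaches a contradiction. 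Uniqueness of $K$ up to isotopy follows because both $K$ and $K'$ are isotopic to this standard model. If you want to salvage your write\=/up, replace the cut\=/along\=/$D$ reduction with this compression\=/and\=/tubing analysis (or supply an actual proof of your ``classical fact''); the homological part of your argument can stay, suitably corrected as above.
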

\begin{proof}
\Cref{thm:unique punctured torus in solid torus:1} follows immediately from the fact that the meridian in $\boundary\storus$ is the only non\=/trivial curve which is trivial in $H_1(\storus)$. As far as \cref{thm:unique punctured torus in solid torus:2} is concerned, there is a ``standard'' embedding of a punctured torus in $\storus$, as depicted in \cref{fig:standard punctured torus in solid torus}. We aim to show that $K$ is isotopic to this standard punctured torus. Since $K$ is not $\pi_1$\=/injective, it is compressible in $\storus$. Let $D\subs\storus$ be a compression disc for $K$. We now analyse two cases, depending on whether $\boundary D$ separates $K$ or not (although, \emph{a posteriori}, we could always pick $\boundary D$ to be non\=/separating).

\begin{figure}
\centering
\tikzsetnextfilename{standard-punctured-torus-in-solid-torus}
\begin{tikzpicture}[thick,torus/setup=65,torus/default name=T]
\tikzset{torus/new={R=2,r=.75}}
\pic[main color 2,tdplot_main_coords] at (0,-2) {ks/disc={1.5cm}{disc/.style={fill=main color 2!30}}};
\path[spath/save=curve,tdplot_main_coords] (.2,-2,.4) to[bend left=90,looseness=2.5] (.2,-2,-.4);
\tikzset{ks/subdivide={path=curve,to=curve,n=3},ks/double={path=curve,to=thick curve,to contour=curve contour,width=5pt}}
\fill[main color 2!30][spath/use=curve contour];
\path[spath/use=curve] \foreach \pos in {0,0.9999} {pic[sloped,draw=main color 2,pos=\pos] {ks/tube end={5pt}{tube above,arc 1/.style={main color 2!60}}}};
\draw[main color 2][spath/use=thick curve];
\draw[torus/outline={torus=T}];
\node[main color 2!50!black,left] at (-.4,-.8) {$K$};
\node[right] at (2.4,1.2) {$\storus$};
\end{tikzpicture}
\caption{A ``standard'' punctured torus $K$ embedded in a solid torus $\storus$.}
\label{fig:standard punctured torus in solid torus}
\end{figure}

\begin{substeps}
\item If $\boundary D$ is non\=/separating in $K$, then compressing $K$ along $D$ yields a meridian disc $E\subs\storus$. The punctured torus $K$ can be recovered by removing $\boundary_v\nbhd{a}$ and adding $\boundary_h\nbhd{a}$ to $E$, where $a$ is a suitable arc in $\storus$ such that $a\cap E=\boundary a$. Note that $\storus\cut E$ is a $3$\=/ball, and $P=(\storus\cut E)\setminus\onbhd{a}$ is a knot complement. Moreover, if we attach a $1$\=/handle to $P$, we obtain a $3$\=/manifold which is homeomorphic to the handlebody $\storus\cut K$. Looking at fundamental groups, this implies that $\pi_1(P)\ast\ZZ=\ZZ\ast\ZZ$, hence the knot complement $P$ must in fact be a solid torus. This readily implies that the arc $a$ must be trivial, by which we mean that it must cobound a disc $D'$ with an arc in $E$, such that $\interior{D'}\cap E=\emptyset$. There is only one meridian disc $E$ up to isotopy, and there are two trivial arcs, one on each side of $E$. But $K$ is completely determined by $E$ and $a$, and it is easy to see that both choices of $a$ yield standard punctured tori in $\storus$.
\item If $\boundary D$ is separating in $K$, then compressing $K$ along $D$ yields the union of a meridian disc $E$ and a torus $T$. The punctured torus $K$ can be recovered by removing $\boundary_v\nbhd{a}$ and adding $\boundary_h\nbhd{a}$ to $E\cup T$, where $a$ is a suitable arc in $\storus$ which has one endpoint in $E$, one in $T$, and is otherwise disjoint from $E\cup T$. Let $Q$ be the closure of the component of $\storus\setminus T$ which is disjoint from $E$. If $Q$ is a solid torus, then $K$ admits a compression disc whose boundary does not separate it; by our analysis of the first case, it follows that $K$ is the standard punctured torus. Otherwise, $Q$ is a non\=/trivial knot complement, and $P=(\storus\cut E)\setminus\interior{Q}$ is a punctured solid torus. Note that if we join the knot complement $Q$ and the solid torus $P\setminus\onbhd{a}$ with a $1$\=/handle, we obtain a $3$\=/manifold which is homeomorphic to the handlebody $\storus\cut K$. Looking at fundamental groups, this implies that $\pi_1(Q)\ast\ZZ=\ZZ\ast\ZZ$, contradicting the fact that $Q$ is a non\=/trivial knot complement.
\qedhere
\end{substeps}
\end{proof}

We know that $M_1$ is homeomorphic to $M_2$; in particular, it has the same JSJ decomposition. Therefore, we can define $T_1$, $P_1$, $Q_1$ and $K_1$ like we did with $T_2$, $P_2$, $Q_2$ and $K_2$. These definitions are all canonical up to isotopy (note that there is exactly one component in the JSJ decomposition of $M_2$ which is an $I$\=/bundle $\surf{1,1}\times I$). Consequently, every homeomorphism $\umap{\sphere[3]}{\sphere[3]}$ sending $S_1$ to $S_2$ orientation\=/preservingly can be isotoped so that it sends $P_1$ to $P_2$, $Q_1$ to $Q_2$, and $K_1$ to $K_2$. Clearly, if $S_1$ and $S_2$ are isotopic then the $3$\=/manifolds with boundary pattern $(P_1,\boundary K_1)$ and $(P_2,\boundary K_2)$ are homeomorphic; we claim that the converse is also true. In fact, let $\map{f}{(P_1,\boundary K_1)}{(P_2,\boundary K_2)}$ be a homeomorphism. Since $\boundary K_1$ and $\boundary K_2$ are meridians of the solid tori $Q_1$ and $Q_2$ respectively, $f$ extends to a homeomorphism $\map{f}{\sphere[3]}{\sphere[3]}$ sending $Q_1$ to $Q_2$. By \cref{thm:unique punctured torus in solid torus}, we can isotope $f$ so that it sends $K_1$ to $K_2$. Since $N_1=Q_1\setminus\onbhd{K_1}$ and $N_2=Q_2\setminus\onbhd{K_2}$, the homeomorphism $f$ sends $N_1$ to $N_2$ and, hence, $S_1$ to $S_2$ orientation\=/preservingly.

The knot complements $P_1$ and $P_2$ are irreducible and boundary irreducible, therefore we can algorithmically decide whether $(P_1,\boundary K_1)$ and $(P_2,\boundary K_2)$ are homeomorphic or not. As we have shown, this provides a solution to the isotopy problem for $S_1$ and $S_2$.

\subsection{Twisted \texorpdfstring{$I$\=/bundle}{I-bundle} over punctured Klein bottle} \label{sec:classification algorithm:twisted bundle over punctured klein bottle}

Finally, suppose there is a component $Z$ of $M_2\cut F$ homeomorphic to $\nsurf{2,1}\twtimes I$, where the horizontal boundary $\boundary_h Z$ is a torus with two punctures $K_2$ embedded in $S_2$. Due to the Euler characteristic constraint, the complement $\closure{S_2\setminus K_2}$ is an annulus. We know that $M_1$ is homeomorphic to $M_2$; in particular, it has the same JSJ decomposition. Therefore, we can define $K_1$ like we did with $K_2$. There are two cases.

\step{When $K_2$ is incompressible in $N_2$.} By gluing the annulus $\closure{S_2\setminus K_2}$ to the vertical boundary of $Z$, we get a torus $T_2=(S_2\setminus K_2)\cup\boundary_v Z$ embedded in $\sphere[3]$. This torus separates $\sphere[3]$ into two components: let $P_2$ be the one lying inside $M_2$, and $Q_2=N_2\cup Z$ be the other one. Note that $K_2$ is properly embedded in $Q_2$, and moreover it is incompressible in $Q_2$, since it is incompressible on both sides (that is, in $N_2$ and in $Z$). This implies that $Q_2$ cannot be a solid torus, because $\pi_1(K_2)=\ZZ\ast\ZZ\ast\ZZ$ does not embed in $\ZZ$. Therefore, $Q_2$ is a non\=/trivial knot complement and $P_2$ is a solid torus.

If $K_1$ is compressible in $N_1$, then clearly $S_1$ and $S_2$ are not isotopic. Otherwise, let us define $T_1$, $P_1$ and $Q_1$ like we did with $T_2$, $P_2$ and $Q_2$. These definitions are all canonical up to isotopy (note that there is exactly one component in the JSJ decomposition of $M_2$ which is an $I$\=/bundle $\nsurf{2,1}\twtimes I$). It is then easy to see that $S_1$ and $S_2$ are isotopic if and only if there is a homeomorphism $\umap{\sphere[3]}{\sphere[3]}$ sending $Q_1$ to $Q_2$ and $K_1$ to $K_2$. If $Q_1$ is not homeomorphic to $Q_2$ then once again $S_1$ and $S_2$ are not isotopic. Otherwise, by \cref{thm:homeomorphisms of knot complement}, we can algorithmically produce a list $\FFF$ of representatives of isotopy classes of homeomorphisms $\umap{Q_1}{Q_2}$. Note that every $f\in\FFF$ extends to a homeomorphism $\umap{\sphere[3]}{\sphere[3]}$ by \cref{thm:homeomorphisms of knot complement}. If for some $f\in\FFF$ the surfaces $f(K_1)$ and $K_2$ are isotopic in $Q_2$ (which we can check, since the two surfaces are incompressible), then $S_1$ and $S_2$ are isotopic. Otherwise, they are not.

\step{When $K_2$ is compressible in $N_2$.} Let $D\subs N_2$ be a compression disc for $K_2$. Let $N_2'=N_2\cut D$, and let $K_2'=\boundary N_2'\setminus(\boundary N_2\setminus K_2)$ be the result of compressing $K_2$ along $D$. If $D$ is separating in the handlebody $N_2$, then one of the two components of $N_2'$ is a solid torus whose boundary is fully contained in $K_2'$. Therefore, up to replacing $D$ with the meridian disc of this solid torus, we can assume that $D$ is non\=/separating.

\begin{substeps}
\item Let us first assume that $K_2'$ is incompressible in $N_2'$. \Cref{thm:unique non-separating compression disc} implies that $D$ is the unique non\=/separating compression disc for $K_2$ in $N_2$, up to isotopy in $N_2$; let $D_2=D$. By repeating the above procedure for $K_1$ in $N_1$, we can check whether $K_1$ admits a unique non\=/separating compression disc in $N_1$. If this is not the case, then clearly $S_1$ and $S_2$ are not isotopic. Otherwise, let $D_1\subs N_1$ be this unique non\=/separating compression disc. By construction, the discs $D_1$ and $D_2$ are canonical.
\item Assume now that $K_2'$ is compressible in $N_2'$. Since $K_2'$ is an annulus, this immediately implies that the annulus $\closure{\boundary N_2\setminus K_2}$ is compressible in $N_2$ or, equivalently, that the boundary components of $K_2$ are meridian curves of $N_2$. Let $D_2$ be a disc properly embedded in $N_2$ whose boundary $\boundary D_2$ is a boundary component of $K_2$; obviously, this disc is unique up to isotopy. If the boundary components of $K_1$ do not bound discs in $N_1$, then clearly $S_1$ and $S_2$ are not isotopic. Otherwise, let $D_1$ be a disc properly embedded in $N_1$ whose boundary is a boundary component of $K_1$. As we said before, the discs $D_1$ and $D_2$ are canonical.
\end{substeps}

Either way, we have found two canonical non\=/separating compression discs $D_1$ and $D_2$ for $S_1$ and $S_2$ respectively. Moreover, $M_1$ is boundary irreducible and the $3$\=/manifold $N_1\cut D_1$ is a solid torus. By \cref{thm:general strategy for knot complements}, we can algorithmically decide whether $S_1$ and $S_2$ are isotopic.




\printbibliography[heading=bibintoc]

\end{document}